\documentclass[hidelinks, 10pt]{amsart}
\usepackage{amsthm,amssymb,amscd,amsmath,amsfonts,amssymb,amscd,stmaryrd,hyperref,mathrsfs,yhmath,mathtools, amsrefs}                                                                                                                    
\usepackage[all]{xy}
\usepackage{hyperref}
\usepackage{enumerate}
\usepackage{physics}
\usepackage{ragged2e}
\usepackage{xfrac, faktor} 
\usepackage{lipsum}  
\usepackage{tikz-cd}
\usepackage{imakeidx}
\usepackage{subfiles}
\usepackage{bbm}
\usepackage{quiver}
\usepackage{mdframed}
\usepackage[margin = 1.2in]{geometry}

\usepackage{amsmath,amsfonts,amscd,amssymb,verbatim,amsthm}

\newcommand{\HH}{\mathrm{H}}
\newcommand{\OO}{\mathcal{O}}

\DeclareMathOperator{\ff}{ff}

\DeclareMathOperator{\Hom}{Hom}

\DeclareMathOperator{\Mod}{\mathbf{Mod}}
\DeclareMathOperator{\sm}{sm}
\DeclareMathOperator{\Gal}{Gal}

\DeclareMathOperator{\Char}{Char}

\DeclareMathOperator{\fd}{fd}
\DeclareMathOperator{\charfield}{char}

\DeclareMathOperator{\Vect}{\mathbf{Vect}}

\DeclareMathOperator{\Rep}{\mathbf{Rep}}
\DeclareMathOperator{\End}{End}
\DeclareMathOperator{\Deg}{Deg}

\DeclareMathOperator{\Ind}{Ind}
\DeclareMathOperator{\Irr}{Irr}
\DeclareMathOperator{\GL}{GL}
\DeclareMathOperator{\Br}{Br}
\DeclareMathOperator{\Cl}{Cl}
\DeclareMathOperator{\Fun}{Fun}

\DeclareMathOperator{\Aut}{Aut}
\DeclareMathOperator{\Stab}{Stab}

\DeclareMathOperator{\Spec}{Spec}

\DeclareMathOperator{\Ord}{Ord}

\DeclareMathOperator{\lcm}{lcm}
\DeclareMathOperator{\hcf}{hcf}



\makeatletter
\newcommand{\colim@}[2]{%
  \vtop{\m@th\ialign{##\cr
    \hfil$#1\operator@font colim$\hfil\cr
    \noalign{\nointerlineskip\kern1.5\ex@}#2\cr
    \noalign{\nointerlineskip\kern-\ex@}\cr}}%
}
\newcommand{\colim}{%
  \mathop{\mathpalette\colim@{\rightarrowfill@\textstyle}}\nmlimits@
}
\makeatother


\newcommand{\bC}{{\mathbb C}}

\newcommand{\bF}{{\mathbb F}}

\newcommand{\bP}{{\mathbb P}}
\newcommand{\bQ}{{\mathbb Q}}
\newcommand{\bR}{{\mathbb R}}
\newcommand{\bZ}{{\mathbb Z}}

\newcommand{\sA}{{\mathcal A}}
\newcommand{\sB}{{\mathcal B}}
\newcommand{\sC}{{\mathcal C}}

\newcommand{\sF}{{\mathcal F}}

\newcommand{\sO}{{\mathcal O}}

\newcommand{\fT}{{\mathfrak T}}


\newtheorem{thm}{Theorem}
\numberwithin{thm}{section}
\newtheorem{lemma}[thm]{Lemma} 
\newtheorem{prop}[thm]{Proposition} 
\newtheorem{cor}[thm]{Corollary} 
 
\newtheorem*{theoremA}{Theorem A}
\newtheorem*{theoremB}{Theorem B}

\theoremstyle{definition}
\newtheorem{defn}[thm]{Definition}
\newtheorem{eg}[thm]{Example}
\newtheorem{remark}[thm]{Remark}
\newtheorem{question}[thm]{Question}

\title{Character Theory for Semilinear Representations}
\date{\today}
\author{James Taylor}
\email{james.taylor@math.unipd.it}
\address{Dipartimento di Matematica, Universit\`{a} degli Studi di Padova, Via Trieste 63, 35131 Padova, Italy}
\subjclass[2020]{20C15, 20C99, 11R52, 15A04, 16G99}
\keywords{Semilinear representation, Galois extension, Schur index, character theory}


\begin{document}
\begin{abstract}
Let $G$ be a group acting on a field $L$, and suppose that $L /L^G$ is a finite extension. We show that the category of semilinear representations of $G$ over $L$ can be described in terms of the category of linear representations of $H$, the kernel of the map $G \rightarrow \Aut(L)$. When $G$ is finite and $L$ has characteristic 0 this provides a character theory for semilinear representations of $G$ over $L$, which recovers ordinary character theory when the action of $G$ on $L$ is trivial.
\end{abstract}
\maketitle
\vspace{-1.5em}
\setcounter{tocdepth}{1}
\tableofcontents
\vspace{-3em}
\section{Introduction}

Suppose that $L$ is a field, and $G$ is a group which acts on $L$ through a group homomorphism $\sigma \colon G \rightarrow \Aut(L)$, where $\Aut(L)$ is the group of field automorphisms of $L$. Write $K \coloneqq L^G$, so we can consider the action as a group homomorphism $\sigma \colon G \rightarrow \Aut(L/K)$.

In this context we can consider the category $\Rep_L^{\rtimes}(G)$ of finite-dimensional $L$-vector spaces $V$ equipped with a semilinear action of $G$: a group homomorphism $\rho \colon G \rightarrow \Aut_K(V)$ such that each $\rho(g)$ is $\sigma_g$-semilinear, meaning that
\[
    \rho(g)(\lambda \cdot v) = \sigma_g(\lambda) \cdot \rho(g)(v) \ \mbox{  for all  } \ g\in G, \lambda \in L , v\in V.
\]
In this category the morphisms are those $L$-linear maps which commute with the action of $G$.

Such semilinear representations arise naturally, this category being identified through taking global sections with $\Vect^G(X)$, the category of $G$-equivariant vector bundles on $X \coloneqq \Spec(L)$.

When the action of $G$ on $L$ is trivial, this is simply the category $\Rep_L(G)$ of $L$-linear representations of $G$. When the action of $G$ on $L$ is non-trivial however, $\Rep_L^{\rtimes}(G)$ is not very well understood.

\subsection{Matrix Interpretation}

One can interpret the category $\Rep_L^{\rtimes}(G)$ more concretely in terms of matrices. For any $n \geq 1$, to give a semilinear representation of dimension $n$ over $L$ is the same as giving a family of matrices $(A_g)_{g \in G}$ in $\GL_n(L)$, which satisfy
\[
    A_{g_1 g_2} = A_{g_1} \cdot {}^{g_1 \!} A_{g_2} \ \mbox{  for all  } \ g_1, g_2 \in G,
\]
where ${}^{g_1 \!} A_{g_2}$ is the matrix obtained by applying $\sigma_{g_1}$ to all entries of $A_{g_2}$. A homomorphism from this to another representation given by $(B_g)_{g \in G}$ in $\GL_m(L)$ is a matrix $M \in M_{m \times n}(L)$ such that
\begin{equation}\label{eqn:matrixhomrelation}
B_g \cdot {}^{g} M = M \cdot A_g \ \mbox{  for all  } \ g \in G.
\end{equation}
In particular, the matrix representations equivalent to $(A_g)_{g \in G}$ are all given by $(B_g)_{g \in G}$, where
\[
B_g \coloneqq P \cdot A_g \cdot {}^{g \hspace{-0.1em}}(P^{-1})
\]
for $P \in \GL_n(L)$. This point of view naturally identifies the set of isomorphism classes of $n$-dimensional semilinear representations of $G$ over $L$ with the pointed set $H^1(G, \GL_n(L))$.

Unlike linear representations, one cannot easily use techniques from linear algebra to describe such families of matrices when the action of $G$ on $L$ is non-trivial. For example, even a description of $\Rep_L^{\rtimes}(G)$ when $G$ is finite and cyclic is not apparent, as one can no longer appeal to standard techniques such as diagonalisation.

\subsection{The Extension Problem}

Let $H \coloneqq \ker(G \rightarrow \Aut(L/K))$, a normal subgroup of $G$. As $H$ acts trivially on $L$, any $V \in \Rep_L^{\rtimes}(G)$ natural restricts to an object $V|_H \in \Rep_L(H)$.

Hand in hand with understanding the category $\Rep_L^{\rtimes}(G)$ is the extension problem for $\Rep_L(H)$:
\begin{enumerate}
    \item Which $W \in \Rep_L(H)$ extend to a semilinear representation of $G$ over $L$?
    \item How many different extensions are possible?
\end{enumerate}
The category $\Rep_L^{\rtimes}(G)$ blends the representation theory of $G$ with the arithmetic of $L$ and $K$. Therefore these questions often contain arithmetic information about the extension $L/K$.

\begin{eg}\label{eg:C2C4intro}
    Suppose that $d \in \bQ^{\times} \setminus (\bQ^{\times})^2$, $L / K$ is the extension $\bQ(\sqrt{d}) / \bQ$, and $G = C_4$, acting on $\bQ(\sqrt{d})$ through the natural surjection $C_4 \twoheadrightarrow \Gal(\bQ(\sqrt{d}) / \bQ)$ with kernel $H = C_2$. Write $\overline{\cdot}$ for the non-trivial automorphism of $\Gal(\bQ(\sqrt{d}) / \bQ)$.

    From the matrix description of semilinear representations, to give a one-dimensional semilinear representation of $C_4$ over $\bQ(\sqrt{d})$ is the same as giving an element $a \in \bQ(\sqrt{d})$ with 
    \[
    a \overline{a} a \overline{a} = 1.
    \]
    Similarly, if $W$ is the non-trivial one-dimensional representation of $C_2$ over $\bQ(\sqrt{d})$, then to give a semilinear representation of $C_4$ over $\bQ(\sqrt{d})$ extending $W$ is the same as giving an $a \in \bQ(\sqrt{d})$ with
    \[
    a \overline{a} = -1.
    \]
    Writing $a = x + y \sqrt{d}$, for $x, y \in \bQ$, we see that $W$ extends to a semilinear representation if and only if
    \[
    x^2 - d y^2 = -1
    \]
    for some $x, y \in \bQ$, i.e.\ that $(x,y)$ is a rational solution to the negative Pell equation.

    For such an $a \in \bQ(\sqrt{d})$, let $V_a$ denote the extension of $W$ to a semilinear representation defined by $a$. From the matrix description of semilinear representations, the semilinear extensions isomorphic to $V_a$ are exactly $V_{ba\overline{b}^{-1}}$ for $b \in \bQ(\sqrt{d})^{\times}$. If we write $b = u + v \sqrt{d}$, then we can consider the isomorphic extensions as being parametrised by $[u:v] \in \bP^1(\bQ)$, and one can compute that
    \begin{align*}
    b a \overline{b}^{-1} = \frac{(u^2 + d v^2)x + (2uv)dy}{u^2 - d v^2} + \sqrt{d} \frac{(u^2 + d v^2)y + (2uv)x}{u^2 - d v^2}.
    \end{align*}

    In particular, the statement that there is at most one extension of $W$ to a semilinear representation of $C_4$ over $\bQ(\sqrt{d})$ if equivalent to the claim that if $X^2 - d Y^2 = -1$ has a solution $(x,y) \in \bQ^2$, then the set of all solutions in $\bQ^2$ is parametrised by $[u:v] \in \bP^1(\bQ)$ via the formula
    \[
        \left( \frac{(u^2 + d v^2)x + (2uv)dy}{u^2 - d v^2}, \frac{(u^2 + d v^2)y + (2uv)x}{u^2 - d v^2} \right).
    \]
    Repeating this with $W$ replaced by the trivial representation $\mathbf{1}$ of $C_2$ over $\bQ(\sqrt{d})$, one considers instead the (positive) Pell equation
    \[
        X^2 - d Y^2 = 1.
    \]
    This always has the trivial solution $(1,0)$, which corresponds to the fact that $\mathbf{1}$ always extends to a semilinear representation of $C_4$ over $\bQ(\sqrt{d})$. Similarly, taking $(1,0)$ as an initial solution, this has at most one extension to a semilinear representation if and only if all solutions are given by
    \[
         \left( \frac{u^2 + d v^2}{u^2 - d v^2}, \frac{2uv}{u^2 - d v^2} \right).
    \]
    In particular, taking $d = -1$, this is the standard parametrisation
    \[
         \left( \frac{u^2 - v^2}{u^2 + v^2}, \frac{2uv}{u^2 + v^2} \right).
    \]
    of the $\bQ$-rational points of $X^2 + Y^2 = 1$ by $[u:v] \in \bP^1(\bQ)$.
\end{eg}

\subsection{Contribution} In this paper we provide what we believe to be the first systematic description of the semilinear representation categories $\Rep_L^{\rtimes}(G)$ beyond the linear case of $\Rep_L(G)$. We work under the following mild assumption, which we impose from now until the end of the introduction:
\begin{itemize}
    \item $L / K$ is a finite extension.
\end{itemize}
For example, this is automatically satisfied if either $G$ is finite or $L$ is finite over its prime subfield. We also note this further implies that $L/K$ is Galois and $G \rightarrow \Gal(L/K)$ is surjective (Remark \ref{rem:introsatisfiesassumptions}).
\subsection{Unique Extension}

Our first result concerns the second half of the extension problem, and shows that any $W \in \Rep_L(H)$ has at most one extension to a semilinear representation of $G$ over $L$.

\begin{theoremA}
    Suppose that $V,W \in \Rep_L^{\rtimes}(G)$. Then:
    \begin{itemize}
        \item $V \cong W$ if and only if $V|_H \cong W|_H$ in $\Rep_L(H)$,
        \item The natural map
    \[
        L \otimes_{K} \Hom_{L \rtimes G}(V, W) \rightarrow \Hom_{L[H]}(V|_H, W|_H)
    \]
    is an isomorphism.
    \item In particular, the natural map 
    \[
        L \otimes_{K} \End_{L \rtimes G}(V) \rightarrow \End_{L[H]}(V|_H)
    \]
    is a $K$-algebra isomorphism.
    \end{itemize}
\end{theoremA}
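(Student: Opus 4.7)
The plan is to endow $\Hom_{L[H]}(V|_H, W|_H)$ with a natural $\Gamma$-semilinear action, where $\Gamma := G/H$ (which equals $\Gal(L/K)$ by the remark referenced in the introduction), and to identify the fixed set with $\Hom_{L \rtimes G}(V, W)$. Speiser's theorem on Galois descent for $L$-vector spaces with a semilinear $\Gamma$-action then delivers the second and third bullets, and a non-abelian Hilbert~90 argument handles the first.

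For $g \in G$ and an $L[H]$-linear map $f \colon V|_H \to W|_H$, I would define
\[
(g \cdot f)(v) := \rho_W(g) \, f(\rho_V(g^{-1}) v).
\]
Routine verifications---using $\sigma_g \sigma_{g^{-1}} = \id_L$ and normality of $H$ in $G$---show that $g \cdot f$ remains $L$-linear and $H$-equivariant, that the action is $\sigma$-semilinear in the sense $g \cdot (\lambda f) = \sigma_g(\lambda)(g \cdot f)$, and that $H$ acts trivially (the $H$-equivariance of $f$ forces $h \cdot f = f$). The action therefore descends to $\Gamma$, and a direct check identifies the fixed set as $\Hom_{L \rtimes G}(V, W)$.

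Since $\Hom_{L[H]}(V|_H, W|_H)$ is a finite-dimensional $L$-vector space, Speiser's theorem immediately yields the second bullet. The third bullet is the specialisation $V = W$: the $\Gamma$-action respects composition (one verifies $g \cdot (f_2 \circ f_1) = (g \cdot f_2) \circ (g \cdot f_1)$), so the Speiser isomorphism is automatically a $K$-algebra---indeed $L$-algebra---isomorphism.

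For the first bullet the forward direction is immediate. For the converse, suppose $f_0 \colon V|_H \to W|_H$ is an iso in $\Rep_L(H)$; then $c(g) := f_0^{-1} \circ (g \cdot f_0)$ takes values in $\Aut_{L[H]}(V|_H) \cong (L \otimes_K \End_{L \rtimes G}(V))^\times$ and satisfies the $1$-cocycle identity $c(g_1 g_2) = c(g_1) \cdot (g_1 \cdot c(g_2))$. A $\Gamma$-fixed iso---equivalently, an iso in $\Hom_{L \rtimes G}(V, W)$---exists iff $c$ is a coboundary $c(g) = \beta^{-1}(g \cdot \beta)$, for then $f_0 \circ \beta^{-1}$ is invertible and $\Gamma$-fixed. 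The hard part will be this non-abelian Hilbert~90 vanishing, generalising $H^1(\Gamma, \GL_n(L)) = 1$. A natural reformulation, using the third bullet applied to $V \oplus W$, replaces it with the descent statement that the $\Gamma$-fixed idempotents $e_V, e_W \in \End_{L \rtimes G}(V \oplus W)$ which are equivalent in the base-changed algebra $\End_{L[H]}((V \oplus W)|_H)$ are already equivalent in $\End_{L \rtimes G}(V \oplus W)$. I expect this Galois descent of idempotent equivalence, built on Speiser, to be the main obstacle of the proof.
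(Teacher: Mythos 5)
Your route to the second and third bullets is correct and, if anything, more direct than the paper's: you endow $\Hom_{L[H]}(V|_H, W|_H)$ with the natural semilinear $\Gamma$-action, observe that the fixed locus is $\Hom_{L\rtimes G}(V,W)$, and apply Speiser's theorem (valid because $V$, $W$ have finite $L$-rank and $L/K$ is finite Galois; compatibility of the action with composition upgrades the Speiser isomorphism to an algebra isomorphism). The paper instead routes the same statement through the identification $F\otimes_K(L\rtimes G)\cong R\rtimes G$ with $R=\prod_\Gamma F$, the equivalence $\Rep_R^{\rtimes}(G)\simeq\Rep_F(H)$, and the standard base-change lemma for $\Hom$ (Propositions \ref{prop:trivialisationind}, \ref{prop:trivialisationres} and Corollary \ref{cor:homsetisom}); when $L$ is a field, your argument saves that machinery.

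For the first bullet you reduce correctly to the vanishing $\HH^1(\Gamma,(L\otimes_K D)^\times)=1$ where $D:=\End_{L\rtimes G}(V)$ is a finite-dimensional $K$-algebra, and you rightly flag that vanishing as the remaining obstacle: it is a genuine gap, and Speiser alone does not supply it. The standard proof twists the right regular module $L\otimes_K D$ by the cocycle, takes $\Gamma$-invariants to get a right $D$-module $M$ with $M_L\cong L\otimes_K D$, and then needs the Noether--Deuring theorem (equivalently Krull--Remak--Schmidt for finite-length modules over a finite-dimensional algebra) to conclude $M\cong D$ as right $D$-modules; a generator of $M$ then exhibits the cocycle as a coboundary. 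Your reformulation as descent of idempotent equivalence in $\End_{L\rtimes G}(V\oplus W)$ is precisely this Noether--Deuring statement in disguise, not something that ``builds on Speiser'' alone. The paper sidesteps the cohomology entirely: Corollary \ref{cor:indres} gives $\Ind_H^G(V|_H)\cong V^{\oplus[L:K]}$, so $V|_H\cong W|_H$ forces $V^{\oplus[L:K]}\cong W^{\oplus[L:K]}$, and Lemma \ref{lem:KRS} cancels. Krull--Remak--Schmidt is thus invoked once, directly, rather than buried inside a cohomological vanishing; the single missing lemma you need to complete your version is precisely Noether--Deuring.
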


In particular, this tells us that to understand $\Rep_L^{\rtimes}(G)$, we are reduced to classifying which $W \in \Rep_L(H)$ extend to a semilinear representation of $G$ over $L$: the first half of the extension problem.

\subsection{Group Action} One necessary condition for $W \in \Rep_L(H)$ to extend can be described in terms of a group action of $\Gal(L/K)$ on $\Rep_L(H) / \! \cong$, the set of isomorphism classes of objects of $\Rep_L(H)$.

\begin{defn}
For $g \in G$ and $W \in \Rep_{L}(H)$ we set $g * W \in \Rep_L(H)$ to be $W$ as a $K$-vector space, with action of $L$ and $H$ defined by
\begin{itemize}
        \item $h * w = (g^{-1}hg)(w)$,
        \item $\lambda * w = \sigma_{g^{-1}}(\lambda) w$,
\end{itemize}
for $\lambda \in L$, $h \in H$ and $w \in W$.
\end{defn}
This defines an left action of $G$ on $\Rep_{L}(H) / \! \cong$ which is trivial on $H$, and therefore induces an action of the quotient $G/H$, and so also $\Gal(L/K)$ using the isomorphism $G/H \xrightarrow{\sim} \Gal(L/K)$.

If $W = V|_H$ for some $V \in \Rep_L^{\rtimes}(G)$, then for any $g \in G$, the action of $g$ on $V$ defines an isomorphism $\rho(g) \colon g * W \xrightarrow{\sim} W$, and so $W$ is fixed by $\Gal(L/K)$. Therefore a necessary condition for $W$ to extend to a semilinear representation of $G$ over $L$ is that $W$ is fixed by $\Gal(L/K)$.

The converse is not true however: returning to Example \ref{eg:C2C4intro}, the representation $W$ is always fixed by $\Gal(L/K)$, but when $d$ is square-free the negative Pell equation has a solution if and only if all odd prime divisors $p$ of $d$ satisfy $p \equiv 1 \! \mod 4$. 

\subsection{Indecomposable Representations}

The category $\Rep_L^{\rtimes}(G)$ satisfies the Krull-Remak-Schmidt Theorem (Lemma \ref{lem:KRS}), and therefore one is reduced to understanding $\Ind_L^{\rtimes}(G)$, the set of isomorphism classes of indecomposable semilinear representations of $G$ over $L$. We similarly write $\Ind_L(H)$ for the set of isomorphism classes of indecomposable representations of $H$ over $L$.

Our next result describes $\Ind_L^{\rtimes}(G)$ in terms of $\Ind_L(H)$ and its action of $\Gal(L/K)$. To ease notation, we set $\Gamma \coloneqq \Gal(L/K)$, and for $W \in \Ind_L(H)$ write $\Gamma_W$ for the stabiliser of $W$ in $\Gamma$.

Along with the restriction functor
\[
(-)|_H \colon \Rep_{L}^{\rtimes}(G) \rightarrow \Rep_{L}(H),
\]
we can, denoting by $L \rtimes G$ the skew group ring (Definition \ref{def:skewgroupring}), consider the induction functor
\[
\Ind_H^G(-) \coloneqq (L \rtimes G) \otimes_{L[H]} - \colon \Rep_{L}(H) \rightarrow \Rep_{L}^{\rtimes}(G).
\]
\begin{theoremB}
Suppose that $V \in \Ind_{L}^{\rtimes}(G)$ and $W \in \Ind_{L}(H)$ is a direct summand of $V|_H$. Then:
\begin{itemize}
    \item $V|_H$ and $\Ind_H^G(W)$ are described by
    \[
        V|_H \cong \bigoplus_{\gamma \in \Gamma / \Gamma_W} (\gamma * W)^{m(V)}, \qquad \Ind_H^G(W) \cong V^{\oplus |\Gamma_W| / m(V)}
    \]
    for some integer $m(V) \geq 1$ with $m(V) \mid |\Gamma_W|$.
    \item $V$ is irreducible if and only if $W$ is irreducible.
    \item $\Ind_H^G(W) \cong \Ind_H^G(\gamma * W)$ for any $\gamma \in \Gamma$.
    \item Any $W \in \Ind_{L}(H)$ is a direct summand of $V|_H$ for a unique $V \in \Ind_{L}^{\rtimes}(G)$.
    \item This defines bijective correspondences
    \[
        \Ind_{L}^{\rtimes}(G) \leftrightarrow \Ind_{L}(H) / \hspace{0.1em} \Gamma, \qquad \Irr_{L}^{\rtimes}(G) \leftrightarrow \Irr_{L}(H) / \hspace{0.1em} \Gamma.
    \]
\end{itemize}
\end{theoremB}

In particular, this gives a complete description of $\Ind_{L}^{\rtimes}(G)$, up to a knowledge of the numbers $m(V)$: for any orbit of $\Gal(L/K)$ on $\Ind_L(H)$, $m(V)$ copies of the orbit sum extends uniquely to an indecomposable semilinear representation of $G$ over $L$, and all elements of $\Ind_L^{\rtimes}(G)$ arise this way.

\subsection{Semilinear Schur Index} Theorem B allows us to define the following.

\begin{defn}
    For $W \in \Ind_L(H)$, set $m_K^L(W) \coloneqq m(V)$ for the unique corresponding $V \in \Ind_L^{\rtimes}(G)$.
\end{defn}

We refer to $m_K^L(W)$ as the \emph{semilinear Schur index} of $W$, as these generalise the classical Schur indices in an appropriate sense (see Section \ref{sect:classicalSchurIndex}). In light of Theorem B, these measure the failure for a $\Gamma$-fixed $W \in \Ind_L(H)$ to extend to a semilinear representation.

\begin{cor}
    $W \in \Ind_L(H)$ extends to a semilinear representation of $G$ over $L$ if and only if $\Gamma$ fixes $W$ and $m_K^L(W) = 1$.
\end{cor}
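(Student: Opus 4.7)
The plan is to derive this directly from Theorem B by unpacking the decomposition of $V|_H$ for the unique irreducible semilinear $V$ associated to $W$.

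First I would handle the ``only if'' direction. Suppose $W \in \Irr_L(H)$ extends to some $V \in \Rep_L^{\rtimes}(G)$, i.e.\ $V|_H \cong W$. Since $W$ is irreducible in $\Rep_L(H)$, any proper subobject of $V$ in $\Rep_L^{\rtimes}(G)$ would restrict to a proper subobject of $V|_H \cong W$, forcing $V$ itself to lie in $\Irr_L^{\rtimes}(G)$. Now apply Theorem B to the pair $(V, W)$, noting that $W$ is (trivially) an irreducible submodule of $V|_H$: we obtain
\[
W \cong V|_H \cong \bigoplus_{\gamma \in \Gamma / \Gamma_W} (\gamma * W)^{m(V)}.
\]
Since $W$ is irreducible, the right-hand side must consist of a single summand, which forces $|\Gamma / \Gamma_W| = 1$ and $m(V) = 1$. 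The first equality says $\Gamma$ fixes $W$, and the second is exactly $m_K^L(W) = 1$.

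For the ``if'' direction, assume $\Gamma$ fixes $W$ and $m_K^L(W) = 1$. By Theorem B (``any $W \in \Irr_L(H)$ is contained in $V|_H$ for a unique $V \in \Rep_L^{\rtimes}(G)$''), there exists a unique $V \in \Irr_L^{\rtimes}(G)$ such that $W$ is an irreducible submodule of $V|_H$; by definition $m_K^L(W) = m(V) = 1$. Applying the decomposition formula in Theorem B to this pair and using $\Gamma_W = \Gamma$ and $m(V) = 1$ gives
\[
V|_H \cong \bigoplus_{\gamma \in \Gamma / \Gamma} (\gamma * W)^{1} \cong W,
\]
so $V$ is the desired extension of $W$.

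There is no real obstacle here beyond correctly reading off the two numerical parameters from the decomposition formula; the only point that deserves a brief justification is why an extension $V$ of an irreducible $W$ must itself be irreducible, which follows immediately from the fact that restriction to $H$ preserves proper subobjects.
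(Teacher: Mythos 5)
Your proof is correct and takes essentially the same route as the paper, which simply presents this corollary as "a direct consequence of Theorem \ref{thm:mainthm}" without elaboration. You have correctly supplied the unpacking: the observation that an extension of an irreducible object is itself irreducible (so Theorem B applies), followed by counting the summands in the decomposition $V|_H \cong \bigoplus_{\gamma \in \Gamma/\Gamma_W}(\gamma * W)^{m(V)}$ to force both $\Gamma_W = \Gamma$ and $m(V) = 1$.
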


In Section \ref{sect:semilinearschurindex} we establish the basic properties of the $m_K^L(W)$, including the fact that all $m_K^L(W) = 1$ whenever $W \in \Irr_L(H)$ and $L$ is a finite field. In general, like their classical counterparts, the numbers $m_K^L(W)$ are hard to determine and often contain arithmetic information. For example, for $W$ as in Example \ref{eg:C2C4intro}, $m_{K}^L(W) \in \{1,2\}$, and $m_K^L(W) = 1$ if and only if $x^2 - dy^2 = -1$ has a solution over $\bQ$.

\subsection{Character Theory}

When $G$ is finite and $L$ has characteristic $0$, character theory provides a powerful and computationally efficient tool for completely describing the category $\Rep_L(G)$.

Theorem A allows us to give a good definition of the character for $\Rep_L^{\rtimes}(G)$ in this setting, which recovers the usual character theory for $\Rep_L(G)$ when $G$ acts trivially on $L$.
\begin{defn}
For $V \in \Rep_L^{\rtimes}(G)$, we set $\chi_V \coloneqq \chi_{V|_H}$, the character of $V|_H$.
\end{defn}
As an immediate consequence of Theorem A we have the following properties.
\begin{cor}
Suppose that $G$ is finite and $L$ has characteristic $0$. If $V, W \in \Rep_{L}^{\rtimes}(G)$, then:
\begin{itemize}
    \item $\chi_V = \chi_W$ if and only if $V \cong W$,
    \item $\langle \chi_V, \chi_W \rangle = \dim_K \Hom_{L \rtimes G}(V,W)$.
\end{itemize}
\end{cor}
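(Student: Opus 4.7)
The plan is to deduce both bullets directly from Theorem A, combined with standard character theory applied to the linear $H$-representations $V|_H$ and $W|_H$ over $L$. The key preliminary observation is that $|H|$ divides $|G|$, so $|H|$ is also invertible in $L$; hence $L[H]$ is semisimple by Maschke, and the usual character-theoretic facts we need (linear independence of the characters of pairwise non-isomorphic simple $L[H]$-modules, and $\langle \chi_M, \chi_N \rangle = \dim_L \Hom_{L[H]}(M,N)$ for $M, N \in \Rep_L(H)$) remain valid over this possibly non-algebraically-closed, possibly positive-characteristic field.

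For the first bullet, I would note that $\chi_V = \chi_W$ is by definition the same as $\chi_{V|_H} = \chi_{W|_H}$. Semisimplicity together with linear independence of the characters of the irreducibles then forces $V|_H \cong W|_H$ in $\Rep_L(H)$, and Theorem A upgrades this to $V \cong W$ in $\Rep_L^{\rtimes}(G)$. The reverse implication is immediate, since the character is an invariant of the isomorphism class.

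For the second bullet, classical character theory over $L$ gives
\[
    \langle \chi_V, \chi_W \rangle \;=\; \langle \chi_{V|_H}, \chi_{W|_H} \rangle \;=\; \dim_L \Hom_{L[H]}(V|_H, W|_H).
\]
Theorem A provides the $K$-linear isomorphism $L \otimes_{K} \Hom_{L \rtimes G}(V,W) \xrightarrow{\sim} \Hom_{L[H]}(V|_H, W|_H)$; taking $L$-dimensions on both sides yields $\dim_L \Hom_{L[H]}(V|_H, W|_H) = \dim_K \Hom_{L \rtimes G}(V,W)$, and combining the two identities proves the claim. There is no real obstacle here: Theorem A carries all the essential weight, reducing the corollary to standard facts about linear representations of $H$ over $L$, which are available because $|H|$ is invertible in $L$.
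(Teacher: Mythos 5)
Your proof follows the paper's route exactly: everything is deduced from Theorem~A (i.e.\ Corollaries~\ref{cor:detectisom} and~\ref{cor:homsetisom} in the body of the paper) by transferring to $\Rep_L(H)$ and invoking standard character theory of $H$ over $L$. The second bullet is fine as you have written it: the isomorphism in Theorem~A is $L$-linear, so comparing $L$-dimensions gives $\dim_L \Hom_{L[H]}(V|_H, W|_H) = \dim_K \Hom_{L \rtimes G}(V, W)$, and the rest is the standard inner-product identity for $L[H]$.

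The first bullet, however, has a gap in the step ``linear independence of the irreducible characters forces $V|_H \cong W|_H$''. Linear independence over $L$ only recovers the composition multiplicities of a semisimple $L[H]$-module modulo $\charfield(L)$: writing $V|_H \cong \bigoplus_i S_i^{a_i}$ and $W|_H \cong \bigoplus_i S_i^{b_i}$, the equality $\chi_{V|_H} = \chi_{W|_H}$ yields $a_i \equiv b_i \pmod{p}$ when $\charfield(L) = p > 0$, not $a_i = b_i$. For a concrete failure under the stated hypotheses, take $p$ odd, $K = \bF_p$, $L = \bF_{p^2}$, $G = \Gal(L/K) \cong C_2$, $H = 1$: then $V = L$ and $W = L^{p+1}$ lie in $\Rep_L^{\rtimes}(G)$, $|G| = 2 \in L^\times$, and $\chi_V = \chi_W$, yet $V \not\cong W$. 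So one needs either $\charfield(L) = 0$ or an a priori bound on multiplicities to make the step rigorous. This subtlety is latent in the paper's own ``immediate'' deduction as well, since the standard fact that characters determine semisimple modules is a characteristic-zero statement; you are not diverging from the paper's reasoning, but you should not present $L$-linear independence of the irreducible characters as sufficient on its own for the first bullet in positive characteristic.
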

Here the inner product is the usual inner product of characters of $H$ over $L$:
\[
\langle \chi_V, \chi_W \rangle = \frac{1}{|H|} \sum_{h \in H} \chi_V(h) \chi_W(h^{-1}).
\]

More generally, when $G$ is finite, $\Rep_L^{\rtimes}(G)$ is semisimple if and only if $|H| \in L^{\times}$ (Corollary \ref{cor:sscriterion}), and in this context one can still use character theory to understand $\Rep_L^{\rtimes}(G)$ (Section \ref{sect:charactertheory}).

For example, this gives a complete description of $\Rep_L^{\rtimes}(G)$ using only characters when $L$ is a finite field, as in this case all $m_K^L(W) = 1$.

As an example of how this character theory works, we give a complete classification of the semilinear representations of $S_3$ acting on a general degree two Galois extension $L/K$ via the natural projection $S_3 \twoheadrightarrow \Gal(L/K)$. This is first done by hand in Section \ref{sect:firstexampleS3}, in the case of characteristic $0$, with explicit semilinear matrix computations. We then return to this example later in Section \ref{sect:secondexampleS3} to recover this classification more conceptually using characters and Theorems A and B.




\subsection{Realisation of Division Algebras}

For $V \in \Irr_L^{\rtimes}(G)$, the $K$-algebra $D \coloneqq \End_{L \rtimes G}(V)$ is a division algebra over $K$, which may or may not be central over $K$. When $\End_{L[H]}(W) = L$, for any corresponding $W \in \Irr_L(H)$, then $m(V)$ is the degree of $D$ in $\Br(Z(D))$, and $Z(D) = K$ exactly when $W$ is fixed by $\Gamma$.

In Section \ref{sect:realisedivisionalg} we prove that this process exhausts $\Br(K)$: for any central division algebra $D$ over a characteristic $0$ field $K$, we show that $D$ is realised as an endomorphism ring of some $V \in \Irr_L^{\rtimes}(G)$, for some finite group $G$ and finite Galois extension $L/K$. We further have that $m(V) = \Deg(D)$, which shows that there are no restrictions on the possible values of the $m(V)$ in general.

This is in contrast to what happens for for linear representations: the classes of $\Br(\bQ)$ realised as endomorphism rings of linear representations of finite groups over $\bQ$ are exactly those classes defined by cyclotomic algebras, which form a proper subgroup of $\Br(\bQ)$.
\subsection{Determination of Semilinear Schur Indices}

By Theorem B, to understand $\Rep_L^{\rtimes}(G)$ one is reduced to understanding the semilinear Schur indices of $W \in \Ind_L(H)$. This is hard in general.

For classical Schur indices, one has Unger's algorithm \cite{UNG} for computing Schur indices over number fields, and it would be interesting to see if this can be generalised to semilinear Schur indices. There is still a local-global principle here (Corollary \ref{cor:localglobal}), and so the problem can similarly be reduced to a problem for local fields. However many results upon which the algorithm rests require deep facts about classical Schur indices, which at present are not established for semilinear Schur indices.

\subsubsection{Other Associated Linear Categories}
One natural way one might try to glean information about the semilinear Schur indices $m_K^L(W)$ is to use other linear representation categories associated with $\Rep_L^{\rtimes}(G)$. Theorem B and the theory developed above can be viewed as a partial description of $\Ind_L^{\rtimes}(G)$ in terms of $\Ind_L(H)$, but there is another linear representation category that one might naturally consider, namely $\Rep_K(G)$. 

We can produce semilinear representations using the induction functor
\[
    L \otimes_K - \colon \Rep_K(G) \rightarrow \Rep_L^{\rtimes}(G),
\]
which is most simply described in terms of matrices: any matrix representation of $G$ over $K$ will define a semilinear matrix representation of $G$ over $L$, as all matrices are valued in $K$. Using this way to construct objects of $\Rep_L^{\rtimes}(G)$, this gives us more information about the numbers $m_K^L(W)$, but does not completely determine them (see Section \ref{sect:secondexampleS3}).

\subsubsection{Linear Characters}
Unlike the classical Schur indices $\overline{m}_{K}^L(W)$, the semilinear Schur indices $m_K^L(W)$ of $1$-dimensional characters can be non-trivial. This is already illustrated in Example \ref{eg:C2C4intro}, for any $d$ for which $x^2 - dy^2 = -1$ has no solution over $\bQ$. In Section \ref{sect:cohomology} we prove some non-trivial properties of $m_K^L(\chi)$ for $1$-dimensional representations $\chi$ (see Corollary \ref{cor:1dimSLSchurrelation}).

\subsubsection{The Case of $\bC / \bR$}
Classically, the biggest success story for determining Schur indices is when $L/K$ is the extension $\bC / \bR$ and $H$ is a finite group. One can associate to each $W \in \Irr_{\bC}(H)$ a number 
\[
    \sF(W) \coloneqq \frac{1}{|H|} \sum_{h \in H} \chi_W(h^2) \in \{-1, 0, 1\},
\]
known as the Frobenius-Schur indicator. It turns out that this number tells you everything: $W$ is fixed by $\Gal(\bC / \bR)$ if and only if $\sF(W) \neq 0$, and such a $W$ is defined by a representation over $\bR$ if and only if $\sF(W) = 1$. In particular, $\overline{m}_{\bR}^{\bC}(W) = 2$ if $\sF(W) = -1$, and $\overline{m}_{\bR}^{\bC}(W) = 1$ otherwise.

Whilst the existence of such a simple way to determine the numbers $\overline{m}_{\bR}^{\bC}(W)$ is already remarkable, what is even more remarkable is that this theory extends to semilinear representations.

Given a finite group $G$, and a surjection $G \twoheadrightarrow \Gal(\bC / \bR)$ with kernel $H$, then for any $W \in \Irr_{\bC}(H)$ one can define the \emph{generalised Frobenius-Schur indicator}:
\[
    \tilde{\sF}(W) \coloneqq \frac{1}{|H|} \sum_{h \in G \setminus H} \chi_W(h^2).
\]
It turns out that, like before,
\begin{itemize}
    \item $\tilde{\sF}(W) \in \{-1,0,1\}$,
    \item $W$ is fixed by the (now twisted) action of $\Gal(\bC / \bR)$ if and only if $\tilde{\sF}(W) \neq 0$,
    \item $W$ extends to $\Rep_{\bC}^{\rtimes}(G)$ if and only if $\tilde{\sF}(W) = 1$ \cite[Thm.\ 4.2]{RUMTAY}.
\end{itemize}
In particular, $m_{\bR}^{\bC}(W) = 2$ if $\tilde{\sF}(W) = -1$, and $m_{\bR}^{\bC}(W) = 1$ otherwise.

An in-depth study of the numbers $m_K^L(W)$ in the smallest non-trivial case, when $L/K$ has degree two, is the subject of the forthcoming work \cite{RUMTAY4}, where alternatives to the generalised Frobenius-Schur indicator for determining the numbers $m_K^L(W)$ are explored.

\subsection{Structure of the Paper}

Let us now describe how the paper is organised, and give an overview of the strategy of the proof of Theorem A and Theorem B.

In Section \ref{sect:semilinearreps}, we recall some basic notions regarding semilinear representations, and how these can be interpreted in terms of matrices.

In Section \ref{sect:semilinearrepsGaloisext} we fix our assumptions for the remainder of the paper. Crucially, even though we are ultimately interested in the case when $L/K$ is an extension of fields, we work with finite Galois extensions $L$ of a field $K$ that are potentially disconnected (so $L$ is a product of fields).

In Section \ref{sect:associatedlinearcats} we study those semilinear representations arising from matrix representations defined over $K$, and give an extended example of how these can be used in some cases to describe all semilinear representations of $G$ over $L$ in Section \ref{sect:firstexampleS3}, which also serves to demonstrate how one works with semilinear representations explicitly.

In Section \ref{sect:splitgroupext}, we consider one case in which the category $\Rep_L^{\rtimes}(G)$ is completely understood, which is when $H \hookrightarrow G$ is split. This recovers the theory of base change for linear representations.

In Section \ref{sect:connectedcomponents} we show that the semilinear representation category $\Rep_L^{\rtimes}(G)$, for $L$ a potentially disconnected finite Galois extension of $K$, is equivalent to $\Rep_F^{\rtimes}(G_e)$, where $F$ is the field defined by a connected component $\Spec(L)$, and $G_e$ is the stabiliser. This reduces general semilinear representation categories to the case when $L$ is a field, and is used in Section \ref{sect:mainresults} to prove Theorems A and B.

In Section \ref{sect:mainresults}, we prove Theorems A and B. The key idea is to interpret the functors $(-)|_H$ and $\Ind_H^G$ between $\Rep_L^{\rtimes}(G)$ and $\Rep_L(H)$ as certain honest base change functors for the field extension $L / K$. This allows us to port many properties of base change to our setting, such as the preservation of semisimplicity, and how the functors $(-)|_H$ and $\Ind_H^G$ compose. Theorem A follows from Corollary \ref{cor:detectisom}, Remark \ref{rem:weakcompatwithcomp}, Remark \ref{rem:introsatisfiesassumptions}, and Remark \ref{rem:weakhomsetisom}, and Theorem B follows from Remark \ref{rem:introsatisfiesassumptions} and Theorem \ref{thm:mainthm}. We also relate the integers $m(V)$ to the $K$-algebras $\End_{L \rtimes G}(V)$ in Proposition \ref{prop:descofcentre}.

In Section \ref{sect:semilinearschurindex} we define the semilinear Schur index of an object of $\Ind_L(H)$, and establish its basic properties, and in Section \ref{sect:classicalSchurIndex} we explain how the semilinear Schur indices specialise to the (classical) Schur indices in the case when $H \hookrightarrow G$ is split. We also highlight which properties the classical Schur indices satisfy which are not satisfied by all semilinear Schur indices.

In Section \ref{sect:basechange} we study the splitting behaviour of the $K$-division algebras $\End_{L \rtimes G}(V)$ for $V \in \Irr_L^{\rtimes}(G)$, and consider base change for semilinear representations. When $K$ is a number field we use the local-global principle for $\Br(K)$ to prove a local-global principle for semilinear representations.

In Section \ref{sect:charactertheory} we reinterpret Theorems A and B in terms of characters, and in Section \ref{sect:secondexampleS3} we revisit the extended example of Section \ref{sect:firstexampleS3} and rederive the classification much more efficiently using this character theory. In Section \ref{sect:countcc} we specialise the framework slightly, and show that when the extension $L/K$ is cyclotomic there is also an action of $\Gal(L/K)$ on $G$, which allows one to compute the order $|\Irr_L^{\rtimes}(G)|$ purely in terms of the group theory of $G$.

In Section \ref{sect:cohomology} we give a cohomological interpretation of our results in terms of Galois cohomology. In particular, we give a new description of the transgression map in our setting, which has consequences for the semilinear Schur indices of $1$-dimensional characters of $H$.

In Section \ref{sect:realisedivisionalg} we show that all central division algebras over $K$ are realised as endomorphism rings of irreducible semilinear representations of $G$ over $L$, for some group $G$ and finite Galois extension $L$. 

Finally, in Section \ref{sect:exttoinfinite} we indicate how the results of this paper extend to infinite Galois extensions.

\subsection{Further Directions} Whilst we have defined the semilinear Schur indices $m_K^L(W)$ here and established their basic properties, it remains to better understand their structure. There is a wealth of literature regarding classical Schur indices, and it is natural to ask how much of this generalises.

For example, one obtains strong properties of the $\overline{m}_K^L(W)$ from Brauer's induction theorem, and it would be natural to try and understand how semilinear induction theory works in this context.

It would also be interesting to work out an integral version of the theory, which in light of Example \ref{eg:C2C4intro} should encode information about integral solutions of certain diophantine equations.

\subsection*{Acknowledgements} I would like to thank Tom Adams, H\r{a}vard Damm-Johnsen, Lorenzo La Porta and Dmitriy Rumynin for interesting conversations regarding the contents of this paper. This research was supported by an LMS Early Career Research Fellowship at the University of Cambridge and the departmental grant Progetto Sviluppo Dipartimentale - UNIPD PSDIP23O88 at the University of Padova.

\subsection*{Notation}

For a ring $R$, we write $\Mod_R$ for the category of left $R$-modules. For a ring homomorphism $R \rightarrow S$ and modules $M \in \Mod_R$, $N \in \Mod_S$, we write $\Ind_R^S M \coloneqq S \otimes_R M$ and $\Res_R^S N$ for $N$ viewed as an $R$-module. When $R \rightarrow S$ is the canonical inclusion $L \rtimes H \hookrightarrow L \rtimes G$ for a ring $L$ and groups $H \leq G$ acting through a group homomorphism $G \rightarrow \Aut(L)$, we use the shorthand notations $\Ind_H^G M$ and $N|_H$ respectively.

\section{Semilinear Representations}\label{sect:semilinearreps}

Suppose that $G$ is a group, $R$ is a commutative ring, and $\sigma \colon G \rightarrow \Aut(R)$ is group homomorphism, where $\Aut(R)$ denote the group of ring automorphisms of $R$.

The central objects we are interested in are semilinear representations of $G$.

\begin{defn}
    A \emph{semilinear representation} of $G$ is a finite-rank free $R$-module $V$ with a group homomorphism $\rho \colon G \rightarrow \Aut_{\bZ}(V)$ such that
    \[
    \rho(g)(\lambda \cdot v) = \sigma_g(\lambda) \cdot \rho(g)(v) \ \mbox{  for all  } \ g\in G, \lambda \in R , v\in V.
    \]
    A \emph{morphism of semilinear representations} from $V$ to $W$ is an $R$-linear map $\phi \colon V \rightarrow W$ such that 
    \[
        \rho_W(g) \circ \phi = \phi \circ \rho_V(g) \ \mbox{  for all  } \ g\in G.
    \]
    We denote the category of such representations by $\Rep_{R}^{\hspace{0.1em} \sigma}(G)$.
\end{defn}

\begin{eg}
    When $\sigma$ is trivial, so $\sigma_g$ is the identity of $R$ for all $g \in G$, $\Rep_{R}^{\hspace{0.1em} \sigma}(G)$ is simply the category of finite-dimensional linear representations of $G$ over $R$. In this case we write omit $\sigma$ from the notation and write $\Rep_{R}(G)$ for $\Rep_{R}^{\hspace{0.1em} \sigma}(G)$.
\end{eg}

We can perform certain constructions with semilinear representations, which generalise those of linear representations.

\begin{eg}\label{eg:tensorandhoms}
    Suppose that $V, W \in \Rep_{R}^{\hspace{0.1em} \sigma}(G)$. Then
    \begin{itemize}
        \item $V \oplus W$ is a semilinear representation of $G$, where $g \in G$ acts by
    \[
        (v, w) \mapsto (\rho_V(g)(v), \rho_W(g)(w)).
    \]
        \item $V \otimes_{R} W$ is a semilinear representation of $G$, where $g \in G$ acts by
    \[
       v \otimes w \mapsto \rho_V(g)(v) \otimes \rho_W(g)(w).
    \]
    \item $\Hom_{R}(V, W)$ is a semilinear representation of $G$, where $g \in G$ acts by
    \[
        f \mapsto \rho_W(g) \circ f \circ \rho_V(g^{-1}).
    \]
    \end{itemize}
\end{eg}

\begin{eg}\label{eg:trivialrep}
The trivial representation is $R$ with its natural action of $G$. This is irreducible if and only if $R$ has no proper non-trivial $G$-stable left ideals, and has endomorphism ring $R^G$.
\end{eg}

\begin{defn}
An $R$-submodule $W$ of a semilinear representation $V$ is a \emph{subrepresentation} if
\[
        \rho(g)(w) \in W \ \mbox{  for all  } \ g\in G, w \in W.
\]

We say that $V$ is \emph{irreducible} if $V$ is non-zero and $0$ and $V$ are the only subrepresentations of $V$, and write $\Irr_{R}^{\hspace{0.1em} \sigma}(G)$ for the set of isomorphism classes of irreducible objects of $\Rep_{R}^{\hspace{0.1em} \sigma}(G)$.

We say that $V$ is \emph{indecomposable} if $V$ is non-zero and $V$ cannot be written as the direct sum of two non-zero subrepresentations, and write $\Ind_{R}^{\hspace{0.1em} \sigma}(G)$ for the set of isomorphism classes of indecomposable objects of $\Rep_{R}^{\hspace{0.1em} \sigma}(G)$.
\end{defn}

\subsection{Matrix Form of Semilinear Representations}\label{sect:matrixdesc}

We can describe the category $\Rep_{R}^{\hspace{0.1em} \sigma}(G)$ more concretely in terms of matrices. We write $\Mod_R^{\ff}$ for the category of finite-rank free $R$-modules.
\subsubsection{Semilinear Algebra}
For any $\gamma \in \Aut(R)$ and $V \in \Mod_R^{\ff}$, we write ${}^{\gamma}V$ for $V$ as an abelian group but with $R$-module structure $\lambda * v = \gamma^{-1}(\lambda)v$. This defines a functor
\[
    {}^{\gamma}(-) \colon \Mod_R^{\ff} \rightarrow \Mod_R^{\ff},
\] 
which is the identity on morphisms, and satisfies ${}^{\gamma}({}^{\mu}V) = {}^{\gamma \mu}V$.

Let $V, W \in \Mod_R^{\ff}$ and fix bases $\sA$ and $\sB$ for $V$ and $W$ respectively, which induce an $R$-module identification
\[
    \End_R(V,W) \rightarrow M_{m \times n}(R), \qquad [T \colon V \rightarrow W] \mapsto {}_{\sB}[T]_{\sA}.
\]
For any $\gamma \in \Aut(R)$, $\sA$ and $\sB$ also form bases for ${}^{\gamma}V$ and ${}^{\gamma}W$ respectively, and there is similarly an $R$-module identification
\[
    \End_R({}^{\gamma}V,{}^{\gamma}W) \rightarrow M_{m \times n}(R), \qquad [T \colon V \rightarrow W] \mapsto {}_{\sB}[T]_{\sA},
\]
In fact, there is an equality of sets
\[
    \End_R(V,W) = \End_R({}^{\gamma}V,{}^{\gamma}W),
\]
and under this equality the two identifications above are related by the commutativity of the diagram
\[\begin{tikzcd}
	{\End_R(V,W)} & {M_{m \times n}(R)} \\
	{\End_R({}^{\gamma}V,{}^{\gamma}W)} & {M_{m \times n}(R)}
	\arrow["\sim", from=1-1, to=1-2]
	\arrow["{=}"', from=1-1, to=2-1]
	\arrow["{{}^{\gamma}(-)}", from=1-2, to=2-2]
	\arrow["\sim", from=2-1, to=2-2]
\end{tikzcd}\]
where ${}^{\gamma}(-) \colon M_{m \times n}(R) \rightarrow M_{m \times n}(R)$ is the ring automorphism defined by applying $\gamma$ to each entry.

Concretely, if $A \in M_{m \times n}(R)$ represents an $R$-linear map $T \colon V \rightarrow W$, then ${}^{\gamma \!}A$ represents the $R$-linear map ${}^{\gamma}T \colon {}^{\gamma}V \rightarrow {}^{\gamma}W$ (which is set-theoretically simply the map $T$).

Suppose now that $V, W \in \Mod_R^{\ff}$, and $S \colon V \rightarrow W$ is a \emph{$\gamma$-semilinear map}, meaning that
\[
S(\lambda v) = \gamma(\lambda) S(v) \ \mbox{  for all  } \ \lambda \in R, v \in V,
\]
or equivalently, that the map $S \colon {}^{\gamma} V \rightarrow W$ is $R$-linear. Let $B$ be the matrix associated to the $R$-linear map $S \colon {}^{\gamma} V \rightarrow W$ with respect to the bases $\sA$ and $\sB$, and write $[-]$ for the identifications $V \xrightarrow{\sim} R^n$, $W \xrightarrow{\sim} R^m$ induced by these choices of basis. Then
\[
[S(v)] = B \cdot {}^{\gamma}[v],
\]
where ${}^{\gamma}[v]$ is $\gamma$ applied to all the entries of $[v]$.

\subsubsection{Semilinear Matrix Representations}
Suppose now that $V \in \Rep_{R}^{\hspace{0.1em} \sigma}(G)$. Then each $\rho(g) \colon V \rightarrow V$ is $\sigma_g$-semilinear and $\rho(g_1 g_2) = \rho(g_1) \circ \rho(g_2)$, so $V$ determines $R$-linear maps
\[
(\rho(g) \colon {}^{\sigma_g}V \rightarrow V)_{g \in G},
\]
which satisfy $\rho(g_1 g_2) =  \rho(g_1) \circ {}^{\sigma_{g_1}}\rho(g_2)$. Conversely, given $V \in \Mod_R^{\ff}$, any such a family of $R$-linear maps defines a semilinear representation structure on $V$.

Let $n \coloneqq \text{rank}_R(V)$, and fix a basis of the $R$-module $V$. For any $g \in G$, write $A_g \in M_{n \times n}(R)$ for the matrix of the linear map $\rho(g) \colon {}^{\sigma_g} V \rightarrow V$. The compatibility relations above in matrix form become
\begin{equation}\label{eqn:matrixrelation}
    A_{g_1g_2} = A_{g_1} \cdot {}^{\sigma_{g_1} \!} A_{g_2} \ \mbox{  for all  } \ g_1,g_2 \in G.
\end{equation}
In particular, as $I_n = A_{g g^{-1}} = A_{g} \cdot {}^{\sigma_g \!} A_{g^{-1}}$, each $A_g \in \GL_n(R)$. The following is direct to verify.

\begin{prop}\label{prop:matrixdesc}
Suppose that $V \in \Mod_R^{\ff}$ has rank $n \geq 1$, and fix a basis of $V$.

Then taking the matrix of a semilinear map defines a canonical bijection between semilinear representation structures $\rho$ on $V$ and families $(A_g)_{g \in G}$ in $\GL_n(R)$ satisfying (\ref{eqn:matrixrelation}). Under this bijection,
\begin{equation}\label{eqn:matrixevalrelation}
[\rho(g)v] = A_g \cdot {}^{\sigma_g}[v] \ \mbox{  for all  } \ g \in G, v \in V.
\end{equation}
In particular, an $R$-submodule $W$ of $V$ is a subobject of $V$ in $\Rep_{R}^{\hspace{0.1em} \sigma}(G)$ exactly when
\[
        A_g \cdot {}^{\sigma_g}[w] \in W \ \mbox{  for all  } \ g\in G, w \in W.
\]

Suppose that $W \in \Mod_R^{\ff}$ has rank $m \geq 1$, and fix a basis of $W$. Let $V$ has semilinear representation structure corresponding to $(A_g)_{g \in G}$ in $\GL_n(R)$ and let $W$ have semilinear representation structure corresponding to $(B_g)_{g \in G}$ in $\GL_m(R)$.

Then an $R$-linear morphism $V \rightarrow W$ corresponding to $M \in M_{m \times n}(R)$ is a morphism in $\Rep_{R}^{\hspace{0.1em} \sigma}(G)$ if and only if $M$ satisfies
\begin{equation}\label{eqn:matrixhomrelation}
B_g \cdot {}^{\sigma_g} M = M \cdot A_g \ \mbox{  for all  } \ g \in G.
\end{equation}
\end{prop}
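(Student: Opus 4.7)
The plan is to push through the dictionary built up in the preceding ``Semilinear Algebra'' subsection: a $\gamma$-semilinear map $V \to W$ is the same data as an $R$-linear map ${}^{\gamma}V \to W$, and choosing a basis of $V$ (which simultaneously serves as a basis of ${}^{\gamma}V$) identifies such maps with matrices over $R$. Every assertion of the proposition is a routine translation via this identification.

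First I would repackage the data of a semilinear representation $\rho$ on $V$ as the family $(\rho(g) \colon {}^{\sigma_g}V \to V)_{g \in G}$ of $R$-linear maps, together with the composition law $\rho(g_1 g_2) = \rho(g_1) \circ {}^{\sigma_{g_1}}\rho(g_2)$ recorded just above the proposition. The fixed basis of $V$ identifies each $\rho(g)$ with a matrix $A_g \in M_{n \times n}(R)$. The commuting square from the ``Semilinear Algebra'' subsection shows that ${}^{\sigma_{g_1}}\rho(g_2) \colon {}^{\sigma_{g_1 g_2}}V \to {}^{\sigma_{g_1}}V$ has matrix ${}^{\sigma_{g_1}}A_{g_2}$, so the composition law translates to exactly the cocycle relation (\ref{eqn:matrixrelation}). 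Invertibility of $A_g$ is then automatic by specialising $g_2 = g^{-1}$, giving $A_g \cdot {}^{\sigma_g}A_{g^{-1}} = I_n$.

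Next, the formula $[\rho(g)v] = A_g \cdot {}^{\sigma_g}[v]$ is precisely the coordinate expression of a $\sigma_g$-semilinear map recorded at the end of that subsection, applied to $\rho(g)$. The subrepresentation criterion then follows immediately: by definition $W \subseteq V$ is a subrepresentation if and only if $\rho(g)(w) \in W$ for every $g \in G$ and $w \in W$, which in coordinates reads $A_g \cdot {}^{\sigma_g}[w] \in W$.

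Finally, for the morphism criterion, I would interpret both sides of $\rho_W(g) \circ \phi = \phi \circ \rho_V(g)$ as $R$-linear maps ${}^{\sigma_g}V \to W$. The right hand side is the composition ${}^{\sigma_g}V \xrightarrow{\rho_V(g)} V \xrightarrow{\phi} W$, with matrices $A_g$ and $M$, yielding $M \cdot A_g$. The left hand side is the composition ${}^{\sigma_g}V \to {}^{\sigma_g}W \xrightarrow{\rho_W(g)} W$, where the first arrow is the same set-theoretic map $\phi$ viewed between the twisted $R$-module structures, so by the commuting square of the ``Semilinear Algebra'' subsection has matrix ${}^{\sigma_g}M$; the composition thus has matrix $B_g \cdot {}^{\sigma_g}M$. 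Equating these gives (\ref{eqn:matrixhomrelation}). No step presents a serious obstacle; the only care required is to track, at each stage, which space carries the twisted $R$-module structure when identifying a $\gamma$-semilinear map with an honest $R$-linear map.
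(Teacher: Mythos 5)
Your verification is correct and uses exactly the apparatus the paper assembles in the preceding ``Semilinear Algebra'' subsection; the paper itself offers no proof (it remarks only ``The following is direct to verify''), so your argument is the intended verification. All three translation steps — the cocycle relation via the composition law $\rho(g_1g_2) = \rho(g_1)\circ {}^{\sigma_{g_1}}\rho(g_2)$ and the commuting square, the evaluation formula $[\rho(g)v] = A_g\cdot{}^{\sigma_g}[v]$ from the coordinate description of a $\gamma$-semilinear map, and the intertwiner criterion from matching matrices of $\phi\circ\rho_V(g)$ and $\rho_W(g)\circ\phi$ as $R$-linear maps ${}^{\sigma_g}V\to W$ — are carried out correctly, including the slightly delicate point that the set-theoretic map $\phi$ viewed ${}^{\sigma_g}V\to{}^{\sigma_g}W$ has matrix ${}^{\sigma_g}M$.
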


\begin{eg}
Suppose that $V \in \Rep_{R}^{\hspace{0.1em} \sigma}(G)$ has $R$-rank $n \geq 1$, and $\sA$ and $\sB$ are $R$-bases of $V$. Suppose that $(A_g)_{g \in G}$ and $(B_g)_{g \in G}$ represent $V$ with respect to $\sA$ and $\sB$ respectively, and that $P \in \GL_n(R)$ is the change of basis matrix from $\sA$ to $\sB$, meaning that $P \circ [-]_{\sA} = [-]_{\sB}$. Then
\[
B_g = P \cdot A_g \cdot {}^{\sigma_g}(P^{-1}).
\]
\end{eg}

\begin{eg}
We can give matrix descriptions of the constructions of Examples \ref{eg:tensorandhoms} and \ref{eg:trivialrep}. The trivial representation has matrix representation by $(I_1)_{g \in G}$, for $I_1$ the identity of $M_1(R)$, and if $V$ and $W$ in $\Rep_{R}^{\hspace{0.1em} \sigma}(G)$ have matrix representations $(A_g)_{g \in G}$ and $(B_g)_{g \in G}$ respectively, then 
\begin{itemize}
    \item $V \oplus W$ is represented by the block sum $(A_g \oplus B_g)_{g \in G}$,
    \item $V \otimes_{R} W$ is represented by the Kronecker product $(A_g \otimes B_g)_{g \in G}$,
    \item $\Hom_{R}(V,R)$ is represented by $((A_g^{-1})^T)_{g \in G}$.
\end{itemize}
\end{eg}

\subsection{Semilinear Representations and Twisted Group Rings}

Semilinear representations can also be viewed as modules over twisted group algebras. Suppose as above that $\sigma \colon G \rightarrow \Aut(R)$ is a group homomorphism.

\begin{defn}\label{def:skewgroupring}
    The ring $R \rtimes_{\sigma} \! G$ is $R[G]$ as an abelian group, but with multiplication
    \[
        (r_1 \cdot g_1) * (r_2 \cdot g_2) \coloneqq r_1 \sigma_{g_1}(r_2) \cdot g_1g_2 \ \mbox{  for all  } \ r_1, r_2 \in R, \ g_1,g_2 \in G,
    \]
    extended to all of $R[G]$ by linearly.
\end{defn}

Note that $R$ is not central in $R \rtimes_{\sigma}\! G$, but $R^G$ is. We therefore naturally view $R \rtimes_{\sigma}\! G$ as an $R^G$-algebra. The following is well-known, and direct the verify.

\begin{prop}
    Suppose that $V \in \Mod_R^{\ff}$. Then there is a canonical bijection between $R \rtimes_{\sigma} \! G$-module structures on $V$ and semilinear structures on $V$, under which a morphism in $\Mod_R^{\ff}$ is a morphism in $\Rep_{R}^{\hspace{0.1em} \sigma}(G)$ if and only if it is a morphism in $\Mod_{R \rtimes_{\sigma} \hspace{-0.1em} G}$.
\end{prop}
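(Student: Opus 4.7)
The plan is to verify this by directly constructing the bijection in both directions and then matching the morphism conditions, relying on the fact that $R$ and the group elements $\{1 \cdot g : g \in G\}$ together generate $R \rtimes_\sigma G$ as a ring, subject to the relation $g \cdot r = \sigma_g(r) \cdot g$.

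First, given a semilinear structure $\rho$ on $V$, I would define an action of $R \rtimes_\sigma G$ on $V$ by $(r \cdot g) * v \coloneqq r \cdot \rho(g)(v)$, extended $\bZ$-linearly. The only thing to check is compatibility with the twisted multiplication, namely
\[
((r_1 \cdot g_1)(r_2 \cdot g_2)) * v = (r_1 \cdot g_1) * ((r_2 \cdot g_2) * v).
\]
The right-hand side unfolds to $r_1 \rho(g_1)(r_2 \rho(g_2)(v))$, which by the semilinearity condition $\rho(g_1)(r_2 w) = \sigma_{g_1}(r_2) \rho(g_1)(w)$ equals $r_1 \sigma_{g_1}(r_2) \rho(g_1 g_2)(v)$, and this is precisely the action of $r_1\sigma_{g_1}(r_2) \cdot g_1 g_2$. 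The $R$-module structure obtained from restricting along $R \hookrightarrow R \rtimes_\sigma G$ recovers the original $R$-module structure on $V$ since $(r \cdot 1) * v = r v$.

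Conversely, given an $R \rtimes_\sigma G$-module structure on $V$ whose restriction along $R \hookrightarrow R \rtimes_\sigma G$ is the given $R$-module structure, define $\rho(g) \colon V \to V$ by $\rho(g)(v) \coloneqq (1 \cdot g) * v$. The relation $(1 \cdot g)(r \cdot 1) = \sigma_g(r) \cdot g = (\sigma_g(r) \cdot 1)(1 \cdot g)$ inside $R \rtimes_\sigma G$ immediately gives $\rho(g)(rv) = \sigma_g(r) \rho(g)(v)$, so $\rho(g)$ is $\sigma_g$-semilinear; the relation $(1 \cdot g_1)(1 \cdot g_2) = 1 \cdot g_1 g_2$ gives that $\rho$ is a group homomorphism to $\Aut_{\bZ}(V)$. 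That the two constructions are mutually inverse is immediate from the definitions.

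For the morphism correspondence, an $R$-linear map $\phi \colon V \to W$ is a morphism of semilinear representations iff $\phi \circ \rho_V(g) = \rho_W(g) \circ \phi$ for all $g \in G$, and it is a morphism of $R \rtimes_\sigma G$-modules iff it commutes with the action of every element of $R \rtimes_\sigma G$. Since these elements are $R$-linear combinations of the $(1 \cdot g)$, $R$-linearity of $\phi$ reduces the latter condition to commuting with each $\rho_V(g)$, which is exactly the former.

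There is no serious obstacle here; the proposition is essentially bookkeeping, and the one substantive computation is the verification that the twisted multiplication on $R \rtimes_\sigma G$ is precisely what is needed to match the semilinearity relation $\rho(g)(\lambda v) = \sigma_g(\lambda) \rho(g)(v)$. I would present the proof as a short paragraph noting the constructions and the single key compatibility calculation, and leave the remaining checks as routine.
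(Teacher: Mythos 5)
Your proof is correct, and it is the standard direct verification. The paper in fact gives no proof for this proposition (it is dismissed as ``well-known, and direct the verify''), so there is nothing to compare against beyond noting that your argument — building the $R\rtimes_\sigma G$-action by $(r\cdot g)*v = r\,\rho(g)(v)$, checking the one nontrivial compatibility with the twisted product, inverting via $\rho(g)(v) = (1\cdot g)*v$, and reducing the morphism condition to commuting with the $(1\cdot g)$'s by $R$-linearity — is exactly the verification the authors leave to the reader. Your parenthetical clarification that the $R\rtimes_\sigma G$-module structures in the bijection are understood to be those restricting to the given $R$-module structure on $V$ is a worthwhile precision that the proposition statement elides.
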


In particular, we see that $\Rep_{R}^{\hspace{0.1em} \sigma}(G)$ is canonically identified with the full subcategory of $\Mod_{R \rtimes_{\sigma} \hspace{-0.1em} G}$ where the underlying $R$-module is free of finite rank.

\subsection{Finite Products of Fields}

In this paper we are ultimately interested in the case when $R$ is a Galois extension of a field and $G$ acts through the Galois action. We first consider the more general situation when $R$ is any finite product of fields.

Therefore, suppose until the end of this section that $L = \prod_i L_i$ is a product of fields with an action $\sigma \colon G \rightarrow \Aut(L)$ of a group $G$ such that the induced action of $G$ on the set $\{e_i\}_i$ of principal idempotents is transitive. We note that this implies $K := L^G$ is a field by \cite[Lem.\ 3.2]{TAY4}, taking the triple $(F,G,H)$ of \cite[\S 3.1]{TAY4} to be $(L,G,G)$.
\begin{prop}\label{prop:freeandss}
    Suppose that the index set $I$ of the product $L = \prod_i L_i$ is finite. Then any $L \rtimes G$-module is free as an $L$-module. If additionally $G$ is finite and $|G| \in L^\times$, then $\Rep_{L}^{\hspace{0.1em} \sigma}(G)$ is semisimple.
\end{prop}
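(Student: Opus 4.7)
The plan is to exploit the direct product decomposition $L = \prod_{i \in I} L_i$ together with transitivity of the $G$-action on idempotents. Write $e_i$ for the principal idempotents, so any $L$-module $V$ decomposes as $V = \bigoplus_{i \in I} e_i V$, where $e_i V$ is an $L_i$-vector space (via the projection $L \twoheadrightarrow L_i$). Recall that an $L$-module is free precisely when the $L_i$-dimensions of the components $e_i V$ are all equal (as cardinals); one direction is immediate and the converse follows by choosing a basis of each $e_i V$ indexed by a common set $S$ and assembling them into an $L$-basis of $\bigoplus_i e_i V \cong \bigoplus_S L$.

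For the first claim, I would check that if $V$ carries a semilinear $G$-action $\rho$, then for each $g \in G$ the map $\rho(g) \colon V \to V$ sends $e_i V$ bijectively onto $e_{g \cdot i} V$, where $g \cdot i$ is defined by $\sigma_g(e_i) = e_{g \cdot i}$. Indeed, for $v \in e_i V$ we compute $e_{g \cdot i} \cdot \rho(g)(v) = \sigma_g(e_i) \rho(g)(v) = \rho(g)(e_i v) = \rho(g)(v)$, and $\rho(g^{-1})$ supplies the inverse. Since the restriction of $\sigma_g$ to $L_i$ is a field isomorphism $L_i \xrightarrow{\sim} L_{g \cdot i}$ and $\rho(g)|_{e_i V}$ is $\sigma_g$-semilinear with respect to it, this map sends $L_i$-bases to $L_{g \cdot i}$-bases, hence $\dim_{L_i}(e_i V) = \dim_{L_{g \cdot i}}(e_{g \cdot i} V)$. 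By the transitivity hypothesis all these dimensions agree, so $V$ is free over $L$.

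For the second claim, I would run the usual Maschke averaging argument, adapted to the semilinear setting. Let $W \subseteq V$ be a subrepresentation. Since $L$ is semisimple (a finite product of fields), $W$ admits an $L$-linear retraction $\pi \colon V \to W$. Form
\[
\pi' \coloneqq \frac{1}{|G|} \sum_{g \in G} \rho(g) \circ \pi \circ \rho(g^{-1}),
\]
which makes sense since $|G| \in L^\times$. A short calculation shows each summand $\rho(g) \pi \rho(g^{-1})$ is $L$-linear (the two semilinear twists by $\sigma_g$ and $\sigma_{g^{-1}}$ cancel, as $\pi$ is $L$-linear), and a reindexing argument shows $\pi'$ intertwines the $G$-action. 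Hence $\pi'$ is a morphism in $\Rep_L^{\sigma}(G)$; it is still a projection onto $W$ since $\rho(g^{-1})$ preserves $W$. Thus $V = W \oplus \ker(\pi')$ in $\Rep_L^{\sigma}(G)$, where $\ker(\pi')$ is free over $L$ by the first part applied to the $L \rtimes G$-submodule $\ker(\pi')$. Since $V$ is noetherian over $L$, this direct-summand property yields semisimplicity of $\Rep_L^{\sigma}(G)$.

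The only mild obstacle is checking the formal $\sigma_g$-semilinear bookkeeping in both parts (that $\rho(g)$ swaps the idempotent components compatibly with the identification $L_i \cong L_{g\cdot i}$, and that the averaged map is genuinely $L$-linear and $G$-equivariant); both reduce to short direct calculations with the defining relation $\rho(g)(\lambda v) = \sigma_g(\lambda) \rho(g)(v)$.
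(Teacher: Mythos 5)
Your proof is correct, and it is genuinely more self-contained than the paper's, which delegates both halves to external results. For the first claim the paper invokes \cite[Rem.\ 2.5]{TAY4}, together with the observation that $V \xrightarrow{\sim} \prod_i e_i V$ because $|I|$ is finite; your direct argument --- that $\rho(g)$ carries $e_i V$ onto $e_{g\cdot i}V$ by a $\sigma_g$-semilinear bijection, hence $\dim_{L_i}(e_i V) = \dim_{L_{g\cdot i}}(e_{g\cdot i}V)$, so transitivity forces equality of all component dimensions --- is essentially what underlies that citation, and you correctly handle the (non-finitely-generated) general case by phrasing it in terms of cardinals. For the second claim the paper cites \cite[Cor.\ 0.2]{MONT2}, again using as input only the existence of an $L$-module complement; your explicit Maschke average $\pi' = \tfrac{1}{|G|}\sum_g \rho(g)\pi\rho(g^{-1})$ is the standard argument that the citation is presumably packaging, and you verify the two points that actually need care in the semilinear setting: $L$-linearity of $\pi'$ (the twists by $\sigma_g$ and $\sigma_{g^{-1}}$ cancel through the $L$-linear $\pi$), and $G$-equivariance via reindexing. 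The closing appeal to finite length (objects of $\Rep_L^{\sigma}(G)$ are finite-rank free over $L$, hence noetherian, and complemented subobjects plus finite length give semisimplicity) is the right way to finish. In short, your proof buys transparency and self-containedness at the cost of length; the paper's buys brevity by outsourcing to skew group ring literature. Both are valid.
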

\begin{proof}
    The first claim follows from \cite[Rem.\ 2.5]{TAY4}, as $|I| < \infty$ and the natural homomorphism $V \rightarrow \prod_i V_i$ is an isomorphism for any $L \rtimes G$-module $V$ (because $|I| < \infty$). Then the second claim follows from \cite[Cor.\ 0.2]{MONT2}, because $|G| \in L^\times$ and any $L \rtimes G$-module $V$ has an $L$-module complement.
\end{proof}

\begin{lemma}\label{lem:homsetinjection}
    If $V,W \in \Rep_{L}^{\hspace{0.1em} \sigma}(G)$, then the natural map
    \begin{equation}\label{eqn:homsetinjection}
    L \otimes_K \Hom_{L \rtimes G}(V, W) \rightarrow \Hom_{L[H]}(V,W)
    \end{equation}
    is injective, where $K = L^G$ and $H = \ker(G \rightarrow \Aut(L/K))$. Furthermore,
    \[
    \dim_K \Hom_{L \rtimes G}(V, W) \leq \rank_L \Hom_{L[H]}(V,W),
    \]
    and (\ref{eqn:homsetinjection}) is an isomorphism if and only if this is an equality.
\end{lemma}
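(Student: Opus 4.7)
The natural map $\phi$ sends $\lambda \otimes f$ to the scalar multiple $\lambda f \colon v \mapsto \lambda f(v)$. I would first verify that $\lambda f$ actually lies in $\Hom_{L[H]}(V,W)$ and that $\phi$ is well-defined and $L$-linear: the key input is that $H$ acts trivially on $L$, so scalar multiplication by $\lambda \in L$ commutes with $\rho_W(h)$ for every $h \in H$, hence $\lambda f$ is $H$-equivariant whenever $f$ is.

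The main content is the injectivity of $\phi$. I would reformulate using $M := \Hom_L(V,W)$ equipped with the natural $\sigma$-semilinear $G$-action $(g \cdot f)(v) := \rho_W(g) f(\rho_V(g^{-1}) v)$, so that $\Hom_{L \rtimes G}(V,W) = M^G$, $\Hom_{L[H]}(V,W) = M^H$, and $\phi$ becomes the scalar-multiplication map $L \otimes_K M^G \to M^H$. Injectivity then amounts to the following Dedekind-style statement: \emph{if $f_1, \ldots, f_r \in M^G$ are $K$-linearly independent, they remain $L$-linearly independent in $M$.} Supposing a nontrivial $L$-linear relation $\sum \lambda_i f_i = 0$ of minimal length $r$, applying any $g \in G$ and using $g \cdot f_i = f_i$ together with the compatibility $g \cdot (\lambda f) = \sigma_g(\lambda)(g \cdot f)$ produces $\sum \sigma_g(\lambda_i) f_i = 0$. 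After normalizing so $\lambda_1 = 1$, if every $\lambda_i \in K = L^G$ the relation is $K$-linear and contradicts $K$-independence; otherwise some $j \geq 2$ and some $g \in G$ satisfy $\sigma_g(\lambda_j) \neq \lambda_j$, and subtraction yields $\sum_{i \geq 2} (\lambda_i - \sigma_g(\lambda_i)) f_i = 0$, a nontrivial relation of length $< r$, contradicting minimality.

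The dimension inequality is then immediate: $K$ is a field, so $L$ is $K$-free, and $L \otimes_K M^G$ is a free $L$-module of rank $\dim_K M^G$; $L$-linear injectivity of $\phi$ into $M^H$ forces $\dim_K M^G \leq \rank_L M^H$. The final ``iff'' follows from standard rank considerations: $\phi$ is an isomorphism iff it is surjective, and over the semisimple ring $L$ an injective $L$-linear map of finitely generated modules is surjective precisely when the two $L$-ranks coincide.

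\textbf{Main obstacle.} The normalization step ``divide by $\lambda_1$'' in the Dedekind argument is immediate when $L$ is a field, but requires care when $L = \prod_i L_i$ is a product of fields, since $\lambda_1$ may fail to be a unit. I would handle this by sharpening the minimality: if a primitive idempotent $e$ of $L$ satisfies $e \lambda_j = 0$ for some $j$ while not annihilating every $\lambda_k$, then multiplying the relation by $e$ produces a strictly shorter nontrivial relation, contradicting minimality. Iterating over primitive idempotents, in a minimal relation all $\lambda_i$ share a common support $e \in L$, and each $\lambda_i$ is a unit of $eL$; the Dedekind argument can then be run inside $eL$, with the application of $g \in G$ handled by replacing $e$ with a suitable $G$-stable idempotent (or passing to the $G$-orbit) to preserve the reduced setting.
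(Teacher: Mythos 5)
Your Dedekind-style argument is a genuinely self-contained alternative to the paper's proof, which instead defers to results of a prior paper of the author (cited as [TAY4, Lem.\ 2.4] and [TAY4, Cor.\ 3.8]). The reformulation via $M := \Hom_L(V,W)$ with its semilinear $G$-action, the field-case Dedekind argument, and the deduction of the dimension inequality and of the iff-criterion from injectivity are all sound, and you correctly flag the product-of-fields subtlety together with a valid support-cleanup step.

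However, the closing sentence of your ``Main obstacle'' paragraph contains a genuine gap. After reducing to a minimal relation whose coefficients $\lambda_i$ share a common support idempotent $e$ and are units of $eL$, you propose to ``run the Dedekind argument inside $eL$, with the application of $g \in G$ handled by replacing $e$ with a suitable $G$-stable idempotent (or passing to the $G$-orbit).'' Under the standing hypothesis that $G$ acts transitively on the primitive idempotents of $L$, the only $G$-stable idempotents are $0$ and $1$, so no such replacement is available unless $e = 1$; and if $g \notin \Stab_G(e)$ then the coefficients of the $g$-translated relation are supported on the orthogonal idempotent $g(e)$, so subtracting the two relations does not shorten anything. ``Passing to the $G$-orbit'' is not yet an argument. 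A clean fix is to project onto a \emph{single primitive} idempotent $e$: the $K$-linear map $M^G \to eM$, $f \mapsto ef$, is injective (if $ef = 0$ then $g(e)f = g \cdot (ef) = 0$ for all $g$, and transitivity forces $f = 0$), its image lands in $(eM)^{G_e}$ where $G_e := \Stab_G(e)$, and $(eL)^{G_e} = K$. Classical Dedekind over the \emph{field} $eL$ with the group $G_e$ then gives $eL$-independence of $ef_1, \dots, ef_r$; and since the natural map $L \otimes_K M^G \to M^H$ is $L$-linear it decomposes over the primitive idempotents, so injectivity on each component yields injectivity overall. What your sketch is missing is precisely the identity $(eL)^{G_e} = K$ together with a concrete mechanism for descending from an $eL$-relation to a $K$-relation.
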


\begin{remark}\label{rem:weakcompatwithcomp}
    This injection is compatible with composition of morphisms in the obvious sense. In particular, when $V = W$ this injection is a ring homomorphism
    \[
    L \otimes_K \End_{L \rtimes G}(V) \hookrightarrow \End_{L[H]}(V).
    \]
\end{remark}

\begin{proof}
    Firstly, because $V$ and $W$ are free of finite rank over $L$, $N \coloneqq \Hom_L(V,W)$ is also free of finite rank over $L$. Furthermore, because $H$ acts trivially on $L$, the isomorphism
    \[
        N \xrightarrow{\sim} \prod_i e_i \cdot N,
    \]
    restricts to an isomorphism
    \[
    N^{H} \xrightarrow{\sim} \prod_i e_i \cdot N^{H}.
    \]
    In particular, $V \coloneqq N^{H}$ if free of finite rank over $L$ by \cite[Lem.\ 2.4]{TAY4}. Therefore we may apply \cite[Cor.\ 3.8]{TAY4} with the $(F, G, H)$ of loc.\ cit.\ taken to be $(L, G, G)$ to deduce the required result, noting that
    \[
    V^G = \Hom_{L[H]}(V,W)^G = \Hom_{L \rtimes G}(V, W). \qedhere
    \]
\end{proof}


\begin{lemma}\label{lem:KRS}
    Suppose that the index set $I$ of the product $L = \prod_i L_i$ is finite. Then the Krull-Remak-Schmidt Theorem holds in $\Rep_{L}^{\hspace{0.1em} \sigma}(G)$: any non-zero $V \in \Rep_{L}^{\hspace{0.1em} \sigma}(G)$ can be written as a direct sum of indecomposable objects
    \[
		V \cong V_1 \oplus \cdots \oplus V_n,
	\]
	and any such description is unique up to isomorphism and permutation.
\end{lemma}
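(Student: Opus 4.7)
The plan is to follow the standard Krull--Remak--Schmidt--Azumaya strategy: first establish existence of an indecomposable decomposition by induction on $L$-rank, then prove uniqueness by showing that endomorphism rings of indecomposables are local.

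For existence, I would use Proposition \ref{prop:freeandss}: any $V \in \Rep_L^{\hspace{0.1em} \sigma}(G)$ is free of finite rank as an $L$-module, since $|I| < \infty$. If $V$ is indecomposable we are done; otherwise $V \cong V_1 \oplus V_2$ with both summands non-zero. Each $V_i$ is again an object of $\Rep_L^{\hspace{0.1em} \sigma}(G)$, so free of finite rank, and since rank is additive over direct sums and non-zero free $L$-modules have positive rank, $\rank_L V_i < \rank_L V$. Induction on $\rank_L V$ produces a finite decomposition into indecomposables.

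For uniqueness, the key point is that for any indecomposable $V$, the ring $E \coloneqq \End_{L \rtimes G}(V)$ is local. By Lemma \ref{lem:homsetinjection}, $E$ is a $K$-algebra (where $K = L^G$, a field by the cited result \cite[Lem.\ 3.2]{TAY4}) of dimension at most $\rank_L \End_{L[H]}(V) < \infty$. Hence $E$ is a finite-dimensional $K$-algebra with nilpotent Jacobson radical $J$, and $E/J$ is a finite-dimensional semisimple $K$-algebra. Since $V$ is indecomposable, $E$ has no non-trivial idempotents, and since idempotents lift modulo the nilpotent ideal $J$, neither does $E/J$. A semisimple ring without non-trivial idempotents is a division ring, so $E/J$ is a division ring and $E$ is local.

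Having established that endomorphism rings of indecomposables are local, the standard Krull--Remak--Schmidt--Azumaya theorem (applicable in any additive category where finite decompositions into objects with local endomorphism rings exist) gives uniqueness of the decomposition up to isomorphism and permutation. I do not anticipate any real obstacle here: both halves reduce immediately to results already at hand, with the main input being that the finiteness of $|I|$ forces $L$-freeness (through Proposition \ref{prop:freeandss}) and that the Hom-bound of Lemma \ref{lem:homsetinjection} forces the relevant endomorphism rings to be finite-dimensional over the field $K$.
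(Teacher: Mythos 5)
Your proof is correct and rests on the same underlying input as the paper's: the Hom-finiteness of $\Rep_L^{\hspace{0.1em}\sigma}(G)$ over the field $K = L^G$, coming from Lemma~\ref{lem:homsetinjection} together with $|I| < \infty$. The paper simply cites Atiyah's Krull--Schmidt theorem for additive categories with a field of scalars (\cite[Thm.\ 1 and \S 3 Cor.]{ATI}), which under the hood proves exactly what you prove by hand: indecomposables have local endomorphism rings because their endomorphism algebras are finite-dimensional over a field, hence have nilpotent radical with division-ring quotient, and then the abstract Krull--Remak--Schmidt--Azumaya theorem applies. Your existence step via induction on $L$-rank (justified by Proposition~\ref{prop:freeandss} forcing $L$-freeness of summands) is also the standard way the finiteness hypothesis enters. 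So yours is a self-contained unfolding of the cited result rather than a different route; the paper's choice buys brevity, yours buys transparency about where finiteness of $|I|$ is used.
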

	
\begin{proof}
	This follows from \cite[Thm.\ 1]{ATI}, using \cite[\S 3 Cor.]{ATI} and the fact that $\Hom(V ,W)$ is finite dimensional over $K$ for any $V, W \in \Rep_{L}^{\hspace{0.1em} \sigma}(G)$ because $|I| < \infty$.
\end{proof}

\section{Semilinear Representations for Galois Extensions}\label{sect:semilinearrepsGaloisext}

\subsection{Galois Extensions of a Field}

\begin{defn}
If $K$ is a field, and $L$ is a finite-dimensional $K$-algebra with an action of a finite group $\Gamma$, we say that $L / K$ is a \emph{Galois extension} with Galois group $\Gamma$ if $K = L^\Gamma$ and the natural map
\[
L \otimes_K L \rightarrow \prod_{\gamma \in \Gamma} L, \qquad a \otimes b \mapsto (a \gamma(b))_{\gamma \in \Gamma}
\]
is an isomorphism \cite[Def.\ 2.47]{TAY3}.
\end{defn}

Note that unlike the definition of a Galois field extension, the action of $\Gamma$ is part of the data of a Galois extension. However, the following shows that when $L$ is a field this essentially coincides with the notion of a Galois extension of fields.

\begin{eg}\label{eg:classicalgaloisext}
Any Galois field extension $L / K$ with its natural action of $\Gal(L/K)$ is a Galois extension with Galois group $\Gal(L/K)$. Conversely, if $L/K$ is a Galois extension with Galois group $\Gamma$ and $L$ is a field, then $L/K$ is Galois and the action map $\Gamma \rightarrow \Gal(L/K)$ is an isomorphism.
\end{eg}

\begin{eg}\label{eg:splitgalext}
For a Galois extension $L / K$, $\Spec(L)$ need not be connected, or equivalently, $L$ need not be a field. For example, for any finite group $\Gamma$ one has the split Galois extension defined by setting $K_{\gamma} = K$ for any $\gamma \in \Gamma$, with diagonal inclusion
\[
K \hookrightarrow \prod_{\gamma \in \Gamma} K_{\gamma}
\]
and action of $\Gamma$ by permutation $\sigma \cdot (\lambda_{\gamma})_{\gamma} \coloneqq (\lambda_{\sigma \gamma})_{\gamma}$. This is a Galois extension with Galois group $\Gamma$.
\end{eg}

\begin{remark}\label{rem:generalform}
Generally, if $L / K$ is a Galois extension with Galois group $\Gamma$, then for any connected component $\Spec(L_i)$ of $\Spec(L)$, $L_i / K$ is a Galois field extension with Galois group $G_i \coloneqq \Stab_{\Gamma}(\Spec(L_i))$, and the action of $\Gamma$ on $\pi_0(\Spec(L))$ is transitive \cite[Lem.\ 2.52]{TAY3}.
\end{remark}

\subsection{Assumptions and Notation}\label{sect:notation}
\emph{From now on until the end of the paper we suppose that:}
\begin{enumerate}
    \item $K$ is a field, and $K \rightarrow L$ is a Galois extension with Galois group $\Gamma$,
    \item $G$ is a group which acts on $L$ through a surjective homomorphism $\sigma \colon G \rightarrow \Gamma$ with kernel $H$.
\end{enumerate}

As $\sigma \colon G \rightarrow \Gamma$ is fixed, we suppress it from the notation and write:
\begin{itemize}
    \item $g(\lambda) \coloneqq \sigma_g(\lambda)$ for $\lambda \in L, g \in G$,
    \item $L \rtimes G \coloneqq L \rtimes_{\sigma} \! G$,
    \item $\Rep_L^{\rtimes}(G)$ for $\Rep_{L}^{\hspace{0.1em} \sigma}(G)$,
    \item $\Irr_{L}^{\rtimes}(G)$ for $\Irr_{L}^{\hspace{0.1em} \sigma}(G)$.
\end{itemize}
We also fix the following notation:
\begin{itemize}
    \item $e$ is a primitive idempotent of $L$,
    \item $F \coloneqq e \cdot L$,
    \item $\Gamma_e \coloneqq \Stab_{\Gamma}(e)$, and
    \item $G_e \coloneqq \Stab_G(e)$.
\end{itemize}

From Remark \ref{rem:generalform}, the induced action of $\Gamma_e$ on $F$ makes $F / K$ a Galois extension with Galois group $\Gamma_e$, and the action of $\Gamma$ on the connected components of $\Spec(L)$ is transitive. In particular, the tuple
\[
(G_e,H,F/K, \sigma \colon G_e \rightarrow \Gamma_e)
\]
also satisfies the above assumptions, with the additional property that $F$ is a field.

\begin{remark}\label{rem:introsatisfiesassumptions}
    Suppose momentarily that $L$ is a field and $G$ is a group that acts on $L$ through a group homomorphism $G \rightarrow \Aut(L)$ (as in the introduction). Setting $K \coloneqq L^G$, then the assumption that $L/K$ is a finite extension automatically implies that $L/K$ is Galois and that the induced map $G \rightarrow \Gal(L/K)$ is surjective (and therefore satisfies the assumptions of this section). Indeed, $K$ is the fixed field of a subgroup of the finite group $\Aut(L/K)$, so $L/K$ is Galois, and if $S \subset \Gal(L/K)$ denotes the image of $G$ then $L^S = L^G = K$, hence $S = \Gal(L/K)$ by the Galois correspondence.
\end{remark}

We also note that this setup is stable under base change, which is an important tool for our analysis, and why we don't consider only Galois extensions of fields.

More precisely, if $K' / K$ is a field extension, then the base change $L' = L \otimes_K K'$ with is a Galois extension of $K'$ with Galois group $\Gamma$, where $\Gamma$ acts through the left factor of $L \otimes_K K'$. With the induced action of $G$ on $L'$ the tuple
\[
(G, H, L'/K', \sigma \colon G \rightarrow \Gamma)
\]
also satisfies the assumptions of this section. 

\begin{lemma}\label{lem:injofbasechangeSL}
    If $K' / K$ is a field extension and $V,W \in \Rep_L^{\rtimes}(G)$, then the natural map
    \[
    K' \otimes_K \Hom_{L \rtimes G}(V, W) \rightarrow \Hom_{L' \rtimes G}(V_{L'}, W_{L'})
    \]
    is an isomorphism.
\end{lemma}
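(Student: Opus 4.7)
The plan is to factor the natural map through $\Gamma$-invariants, where $\Gamma = G/H$, and then exploit flatness of $K'$ over $K$ together with finiteness of $\Gamma$. The main advantage of this factoring is that the only $G$-action appearing in the reduction is the (finite) induced $\Gamma$-action.

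First I would observe that, since $H$ is the kernel of $G \to \Gamma$, it acts trivially on both $L$ and on $L' = L \otimes_K K'$, so each of $V$, $W$, $V_{L'}$, $W_{L'}$ may be viewed as an $L[H]$-module equipped with a compatible semilinear $\Gamma$-action. Writing $N \coloneqq \Hom_{L[H]}(V, W)$ and $N' \coloneqq \Hom_{L'[H]}(V_{L'}, W_{L'})$, this gives identifications $\Hom_{L \rtimes G}(V, W) = N^{\Gamma}$ and $\Hom_{L' \rtimes G}(V_{L'}, W_{L'}) = (N')^{\Gamma}$, and transports the map under consideration to a map $K' \otimes_K N^{\Gamma} \to (N')^{\Gamma}$.

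Second, I would show that the natural base change map $L' \otimes_L N \to N'$ is a $\Gamma$-equivariant isomorphism of $L'$-modules. Because $V$ and $W$ are finite free over $L$, the analogous statement for $L$-linear hom sets, $L' \otimes_L \Hom_L(V, W) \cong \Hom_{L'}(V_{L'}, W_{L'})$, is automatic, so the claim reduces to commutation of $H$-invariants with the flat base change $L \to L'$. Using the canonical isomorphism $L' = L \otimes_K K'$, a direct computation then identifies $L' \otimes_L N$ with $K' \otimes_K N$, under which $\Gamma$ acts trivially on the $K'$-factor and by its natural action on $N$. The problem therefore reduces to showing that
\[
K' \otimes_K N^{\Gamma} \to (K' \otimes_K N)^{\Gamma}
\]
is an isomorphism, which follows by tensoring the defining equalizer
\[
0 \to N^{\Gamma} \to N \to \bigoplus_{\gamma \in \Gamma} N, \qquad n \mapsto (\gamma n - n)_{\gamma},
\]
with the $K$-flat module $K'$, using that the finiteness of $\Gamma$ allows $\bigoplus$ to commute with the base change.

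The main technical obstacle is the commutation of $H$-invariants with the flat base change along $L \to L'$ in the second step, which is delicate if $H$ is infinite since one cannot then realise $H$-invariants as the kernel of a single map into a finite biproduct. This is rescued by the finite-dimensionality of $\Hom_L(V, W)$ as an $L$-vector space: only finitely many of the equations $h m = m$ are $L$-linearly independent on $\Hom_L(V,W)$, so $N$ is cut out as a finite intersection of kernels of $L$-linear maps, to which flatness of $L'$ over $L$ applies in the usual way.
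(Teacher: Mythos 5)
Your proposal is correct and follows essentially the same strategy as the paper: prove that the base change map on ordinary $L$-linear Hom sets is a $G$-equivariant isomorphism (using that $V$, $W$ are free of finite rank over $L$), then pass to invariants. The paper states this step as simply "taking $G$-invariants," which conceals the same point you are careful to justify: the formation of invariants needs to commute with $K' \otimes_K -$, and this is not automatic when $G$ (hence $H$) may be infinite. Your factoring through $\Gamma$-invariants of $\Hom_{L[H]}(V,W)$ isolates the only truly finite-group step cleanly, and your final paragraph correctly rescues the $H$-invariants step by observing that only finitely many of the conditions $hm=m$ can be $L$-linearly independent on the finite-dimensional space $\Hom_L(V,W)$, so $H$-invariants are cut out by a finite system of $L$-linear equations. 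So the two proofs are the same in substance, but yours spells out a commutation that the paper's one-line proof passes over silently.
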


\begin{proof}
    As an $L$-module $V$ is free of finite rank, hence by \cite[Ch. 2, \S 5.3, Prop. 7(ii)]{BOUR} the natural map
    \[
        K' \otimes_K \Hom_L(V,W) \equiv L' \otimes_L \Hom_L(V,W) \rightarrow \Hom_{L'}(V_{L'}, W_{L'})
    \]
    is an isomorphism, and we are done after taking $G$-invariants.
\end{proof}

We are interested in describing the semilinear representation category $\Rep_L^{\rtimes}(G)$ in terms of associated linear representation categories, in particular $\Rep_{F}(H)$. Let us now examine these categories.

\section{Associated Linear Representation Categories}\label{sect:associatedlinearcats}

There is a diagram of $K$-algebra inclusions
\begin{equation}\label{eqn:4rings}
\begin{tikzcd}
	{K[G]} & {L \rtimes G} \\
	{K[H]} & {L[H]}
	\arrow[hook, from=1-1, to=1-2]
	\arrow[hook, from=2-1, to=1-1]
	\arrow[hook, from=2-1, to=2-2]
	\arrow[hook, from=2-2, to=1-2]
\end{tikzcd}
\end{equation}
Recall that an homomorphism of (unital associative) rings $R \rightarrow S$ is called \emph{Frobenius} if the induction
\[
S \otimes_R - \colon \Mod_R \rightarrow \Mod_S
\]
and co-induction
\[
\Hom_R(S, -) \colon \Mod_R \rightarrow \Mod_S
\]
functors are naturally isomorphic.
\begin{prop}\label{prop:allfrob}
    All inclusions in the diagram (\ref{eqn:4rings}) are Frobenius.
\end{prop}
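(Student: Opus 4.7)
The plan is to treat each of the four inclusions separately, exhibiting in each case a bimodule trace map $\tau$ with a nondegenerate associated pairing, together with a finite free basis of the larger ring over the smaller ring; the Frobenius property then follows from the standard criterion that a bimodule homomorphism $\tau \colon S \to R$ whose pairing $(x,y) \mapsto \tau(xy)$ is nondegenerate, for $S$ finitely generated free over $R$, defines a Frobenius extension.

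For the two horizontal inclusions $K[H] \hookrightarrow K[G]$ and $L[H] \hookrightarrow L \rtimes G$, I would use that $[G:H] = |\Gamma|$ is finite, so a choice of transversal of $H$ in $G$ yields a finite free basis on both sides, and take $\tau$ to be the ``projection onto $H$'', $\sum_{g \in G} a_g \cdot g \mapsto \sum_{h \in H} a_h \cdot h$. The bimodule property reduces to the observation that $hg$ (or $gh$) lies in $H$ if and only if $g \in H$, together with the fact that elements of $L$ commute with elements of $H$ in $L \rtimes G$ since $H$ acts trivially on $L$ by definition. Nondegeneracy is immediate: given $z$ with nonzero coefficient $a_{g_0}$ at $g_0$, the element $\tau(z \cdot g_0^{-1})$ has nonzero coefficient at the identity.

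For the two vertical inclusions $K[H] \hookrightarrow L[H]$ and $K[G] \hookrightarrow L \rtimes G$, I would exploit that $L/K$ is a finite \'etale extension (being Galois in the sense defined in the paper), so the trace form $L \times L \to K$, $(a,b) \mapsto \mathrm{tr}_{L/K}(ab)$, is nondegenerate; then $\tau$ is defined by applying $\mathrm{tr}_{L/K}$ coefficient-wise. A $K$-basis of $L$ provides a free basis of the larger ring over the smaller one (in the skew case, this uses that right multiplication by elements of $K$ is untwisted in $L \rtimes G$). The bimodule property is immediate in the first case; in the skew case it additionally uses the $\Gamma$-invariance of $\mathrm{tr}_{L/K}$ (automatic since $G$ acts on $L$ through $\Gal(L/K)$) to absorb the twist when elements of $G$ pass through elements of $L$.

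The main subtlety lies in verifying nondegeneracy for $K[G] \hookrightarrow L \rtimes G$: given $z = \sum_g \lambda_g g$ with $\lambda_{g_0} \neq 0$, the element $z \cdot \nu$ for $\nu \in L$ has coefficient $\lambda_g \cdot g(\nu)$ at $g$ (this is where the skew structure bites), so one chooses $\nu \in L$ so that $\mathrm{tr}_{L/K}(\lambda_{g_0} \cdot g_0(\nu)) \neq 0$, which is possible because $\nu \mapsto g_0(\nu)$ is a bijection of $L$ combined with nondegeneracy of the trace form. All other cases reduce to more or less direct bookkeeping once the appropriate $\tau$ is in hand.
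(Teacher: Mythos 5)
Your approach is in the same spirit as the paper's: both use the same trace/projection maps $\tau$ (projection onto $H$ for the horizontal inclusions, coefficient-wise $\mathrm{tr}_{L/K}$ for the vertical ones), which are exactly the $E$'s used in the paper's proof. The paper then invokes the dual-bases criterion of Kadison \cite[Thm.\ 1.2]{KAD}, supplying $x_i = g_i$, $y_i = g_i^{-1}$ for the horizontal maps and citing \cite[Thm.\ 1.3(b)]{CHR} for trace-dual bases of $L/K$ in the vertical maps.

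There is, however, a gap in the criterion you state. ``Bimodule trace map with nondegenerate pairing, $S$ finitely generated free over $R$, implies Frobenius'' is not a correct theorem if ``nondegenerate'' is interpreted as you then verify it, namely that $z \neq 0$ implies $\tau(zw) \neq 0$ for some $w$. That condition only shows that the induced map $\Phi \colon S \to \Hom_R(S,R)$ is injective; the Frobenius property requires $\Phi$ to be an isomorphism, and over a base ring $R$ that is neither a field nor a division ring, an injective map between finitely generated free modules of equal rank need not be surjective. In the present setting $R \in \{K[H], K[G], L[H]\}$, and since $H$ may be infinite one cannot even appeal to a $K$-dimension count. (The toy example $\tau = 2\cdot\mathrm{id}$ on $\mathbb{Z}$ over $\mathbb{Z}$ shows the phenomenon: $\tau$ is ``nondegenerate'' in the injectivity sense but not a Frobenius homomorphism.)

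The fix is cheap and consists of exhibiting actual dual bases rather than proving injectivity. For the horizontal maps, a transversal $\{g_i\}$ of $H$ in $G$ together with $\{g_i^{-1}\}$ works, since $\tau(g_i^{-1} g_j h) = \delta_{ij}\,h$. For the vertical maps, take a $K$-basis $\{e_i\}$ of $L$ and its dual basis $\{f_i\}$ with respect to $\mathrm{tr}_{L/K}$ (which exists precisely because the trace form is nondegenerate \emph{over the field} $K$, so your separability observation is the right input); then for $\lambda g \in L\rtimes G$, $\sum_i e_i\, \tau(f_i\, \lambda g) = \sum_i e_i\, \mathrm{tr}_{L/K}(f_i\lambda)\, g = \lambda g$. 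Equivalently, the Gram matrix $(\tau(e_i e_j))_{ij}$ has entries in $K \subset R$ and is invertible over $K$, hence over $R$. With this adjustment your argument goes through and matches the paper's.
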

\begin{proof}
    We use the criterion of \cite[Thm.\ 1.2]{KAD}, that $R \rightarrow S$ is Frobenius if and only if there is an $(R,R)$-bimodule homomorphism $E \colon S \rightarrow R$ and elements $x_1, ... ,x_n, y_1, ... , y_n \in S$ with 
\begin{equation}\label{eqn:frobprop}
\sum_{i = 1}^n E(sx_i)y_i = s = \sum_{i = 1}^n x_i E(y_i s)
\end{equation}
for all $s \in S$.

    To define these maps and elements in our context, let $g_1, ... ,g_n$ be a set of left coset representatives for $H$ in $G$. For $K[H] \hookrightarrow K[G]$ or $L[H] \hookrightarrow L \rtimes G$, one can take $E \colon S \rightarrow R$ to be the $K$-linear (resp.\ $L$-linear) extension of the map which has $E(h) = h$ for $h \in H$ and $E(g) = 0$ if $g \not\in H$, with $x_i = g_i$ and $y_i = g_i^{-1}$. For $K[H] \hookrightarrow L[H]$ or $K[G] \hookrightarrow L \rtimes G$, let $E \colon S \rightarrow R$ be the linear extension of $E(\lambda g) = \sum_{\gamma \in \Gamma}\gamma(\lambda) g$. The $\{x_i\}_i$ and $\{y_i\}_i$ are those given by \cite[Thm.\ 1.3(b)]{CHR}. It is direct to verify that in each case $E$ is $(R,R)$-bilinear and with the $\{x_i\}_i$ and $\{y_i\}$ satisfying the identity (\ref{eqn:frobprop}). 
\end{proof}

The induction and restriction functors between the rings of diagram (\ref{eqn:4rings}) restrict to functors between the categories
\[\begin{tikzcd}
	{\Rep_K(G)} & {\Rep_L^{\rtimes}(G)} \\
	{\Rep_K(H)} & {\Rep_L(H)}
	\arrow[tail reversed, from=1-1, to=1-2]
	\arrow[tail reversed, from=1-1, to=2-1]
	\arrow[tail reversed, from=2-1, to=2-2]
	\arrow[tail reversed, from=2-2, to=1-2]
\end{tikzcd}\]
and therefore one can naturally use induction from the linear representation categories $\Rep_K(G)$ and $\Rep_L(H)$ to construct objects of $\Rep_L^{\rtimes}(G)$.

\subsection{Semilinear Representations from $\Rep_K(G)$}\label{sect:classicalreps}

In this section we consider the most natural way to construct semilinear representations, and consider semilinear representations arising from $\Rep_K(G)$. The base change functor $(L \rtimes G) \otimes_{K[G]} -$ is naturally isomorphic to the functor
\[
L \otimes_K - \colon \Rep_K(G) \rightarrow \Rep_L^{\rtimes}(G)
\]
defined for $V \in \Rep_K(G)$ by letting $g \in G$ act diagonally on $L \otimes_K V$. The isomorphism is the composition of identifications
\[
L \otimes_K V \equiv L \otimes_K K[G] \otimes_{K[G]} V \equiv (L \rtimes G) \otimes_{K[G]} V,
\]
the composition of which is given by $\lambda \otimes v \mapsto \lambda \otimes v$. This is $L \rtimes G$-linear, as in $(L \rtimes G) \otimes_{K[G]} V$,
\[
g \cdot (\lambda \otimes v) = g(\lambda)g \otimes v = g(\lambda) \otimes g(v).
\]

Concretely, in terms of matrices (cf.\ Section \ref{sect:matrixdesc}), if $\rho \colon G \rightarrow \GL_n(K)$ corresponds to $V \in \Rep_K(G)$, then $L \otimes_K V$ has matrix representation $\rho \colon G \rightarrow \GL_n(K) \hookrightarrow \GL_n(L)$. The relations of (\ref{eqn:matrixrelation}) are satisfied because all matrices are valued in $K$.

It turns out that the functor $L \otimes_K -$ is neither fully faithful nor essentially surjective in general. However we can at least say the following.
\begin{cor}\label{cor:allsubobjects}
    For any irreducible $V \in \Rep_L^{\rtimes}(G)$,
    \[
    V \hookrightarrow L \otimes_K (V|_K),
    \]
    hence any irreducible $V \in \Rep_L^{\rtimes}(G)$ is a sub-object of an object in the essential image of $L \otimes_K -$.
\end{cor}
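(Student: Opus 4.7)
The plan is to exhibit a non-zero morphism $\eta_V \colon V \to L \otimes_K V|_K$ in $\Rep_L^{\rtimes}(G)$; irreducibility of $V$ will then force $\eta_V$ to be injective. The construction uses Proposition \ref{prop:allfrob}, by which $K[G] \hookrightarrow L \rtimes G$ is Frobenius. Combined with the identification, explained at the start of Section \ref{sect:classicalreps}, of $L \otimes_K - \colon \Rep_K(G) \to \Rep_L^{\rtimes}(G)$ with the induction functor $(L \rtimes G) \otimes_{K[G]} -$, the Frobenius property further identifies $L \otimes_K -$ with the coinduction functor $\Hom_{K[G]}(L \rtimes G, -)$. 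Since coinduction is right adjoint to restriction, this gives a natural adjunction isomorphism
\[
\Hom_{L \rtimes G}\bigl(V,\ L \otimes_K V|_K\bigr) \;\cong\; \Hom_{K[G]}\bigl(V|_K,\ V|_K\bigr).
\]

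I would then let $\eta_V$ be the morphism corresponding to the identity of $V|_K$ under this isomorphism. As $V$ is non-zero by irreducibility, the identity of $V|_K$ is non-zero, and hence so is $\eta_V$. Its kernel $\ker(\eta_V)$ is an $L \rtimes G$-subrepresentation of $V$, and so either $0$ or all of $V$; being a proper subrepresentation it must be $0$. This gives the desired inclusion $V \hookrightarrow L \otimes_K V|_K$, and the second assertion follows immediately since $L \otimes_K V|_K$ lies in the essential image of $L \otimes_K -$ by construction.

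The statement is essentially a formal consequence of the Frobenius property supplied by Proposition \ref{prop:allfrob}, and I do not foresee any real obstacle. The only subtlety worth flagging is to identify which of the two adjunctions enjoyed by $L \otimes_K -$ is being used: standard Frobenius reciprocity realises $L \otimes_K -$ as a \emph{left} adjoint to restriction, whereas here it is crucial that we use the extra \emph{right} adjoint structure, which is exactly what the Frobenius property provides. If one wished to avoid adjunction language entirely, one could instead construct $\eta_V$ explicitly using the Frobenius trace $E \colon L \rtimes G \to K[G]$ and associated dual bases from the proof of Proposition \ref{prop:allfrob}, then verify non-vanishing by evaluating at $1$; but the adjunction argument is cleaner.
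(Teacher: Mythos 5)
Your proof is correct and follows essentially the same route as the paper: invoke Proposition \ref{prop:allfrob} to make $L \otimes_K -$ a right adjoint of restriction, take the morphism corresponding to the identity of $V|_K$ under the resulting adjunction isomorphism $\Hom_{L\rtimes G}(V, L\otimes_K V|_K) \cong \Hom_{K[G]}(V|_K, V|_K)$, and conclude injectivity from irreducibility of $V$. The only difference is cosmetic: you spell out the kernel argument and the passage through coinduction that the paper leaves implicit (and you write the Hom on the right correctly over $K[G]$, where the paper has a typo).
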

\begin{proof}
    Because $K[G] \hookrightarrow L \rtimes G$ is Frobenius by Proposition \ref{prop:allfrob}, $L \otimes_K -$ is right-adjoint to restriction, hence there is a $K$-linear isomorphism
    \[
        \Hom_{L \rtimes G} (V , L \otimes_K (V|_K)) \equiv 
        \Hom_{L \rtimes G} (V|_K , V|_K) \neq 0,
    \]
    and therefore, as $V$ is irreducible, $V \hookrightarrow L \otimes_K (V|_K)$.
\end{proof}

\section{Example: $\Rep_L^{\rtimes}(S_3)$ for $L / K$ of Degree $2$}\label{sect:firstexampleS3}

In this section we give an example of how one can use the functor
\[
L \otimes_K - \colon \Rep_K(G) \rightarrow \Rep_L^{\rtimes}(G)
\]
to construct semilinear representations. This example also illustrates how one practically works with semilinear matrices in terms of matrices, as described in Section \ref{sect:matrixdesc}. This example will also help to motivate the general theory we will develop in Section \ref{sect:mainresults}, using which we will rederive the results of this section more conceptually in Section \ref{sect:secondexampleS3}.

Let $L/K$ be a degree two Galois field extension, and let $S_3$ act on $L$ via the non-trivial homomorphism $\sigma \colon S_3 \twoheadrightarrow \Gal(L/K)$, which has kernel $H = \langle (123) \rangle$. Write $\alpha \colon L \rightarrow L$ for the non-trivial element of $\Gal(L/K)$. We restrict to characteristic $0$ for simplicity. 

Because $K$ has characteristic $0$, the irreducible $K$-representations of $S_3$ are the trivial representation $\rho_1$, the sign representation $\rho_2$, and the unique two-dimensional irreducible representation $\rho_3$.

\subsection{Representations from $\rho_1$ and $\rho_2$}
Let us first understand the semilinear representations $L \otimes_K \rho_1$ and $L \otimes_K \rho_2$. $L \otimes_K \rho_1$ has matrix representation $A_g = 1$ for all $g \in S_3$, and $L \otimes_K \rho_2$ has matrix representation $B_g = 1$ if $g \in \langle (123) \rangle$, and $B_g = -1$ if $g \not\in \langle (123) \rangle$. By Proposition \ref{prop:matrixdesc}, an isomorphism between these representations would be a scalar $\lambda \in L^\times$ with the property that 
\[
 B_g \cdot {}^{\sigma_g} \lambda = \lambda A_g
\]
for all $g \in G$, or in other words, with $\alpha(\lambda) = -\lambda$. This can be found, as the $K$-linear operator $\alpha \colon L \rightarrow L$ has order $2$, and $L$ splits into eigenspaces
\[
L = \{x \in L \mid \alpha(x) = x\} \oplus \{x \in L \mid \alpha(x) = - x\} = K \oplus \{x \in L \mid \alpha(x) = - x\}.
\]
Therefore, we see that the two non-isomorphic representations of $S_3$ $\rho_1$ and $\rho_2$ become isomorphic as $L$-linear representations of $S_3$. Using Proposition \ref{prop:matrixdesc} we can also compute
\[
\End_{L \rtimes S_3}(L \otimes_K \rho_1) \subset L
\]
as those $\lambda \in L$ with $A_g \cdot {}^{\sigma_g} \lambda = \lambda A_g$ for all $g \in G$, or in other words, with ${}^{\sigma_g} \lambda = \lambda$ for all $g \in G$, which, because $\sigma$ is surjective, is simply $K$.

\subsection{Decomposing $L \otimes_K \rho_3$} Now let us consider the irreducible two dimensional representation $\rho_3$. From the natural permutation representation with basis $e_1,e_2,e_3$ we may obtain $\rho_3$ as the quotient by the $L$-subspace spanned by $e_1 + e_2 + e_3$, which has matrix representation with respect to the basis $\overline{e_1}, \overline{e_2}$ given by
\[
A_{(12)} = \begin{pmatrix}
    0 & 1 \\ 1 & 0
\end{pmatrix}, \qquad A_{(123)} = \begin{pmatrix}
    0 & -1 \\ 1 & -1
\end{pmatrix}.
\]
Because $\rho_3$ has dimension $2$, if $V \coloneqq L \otimes_K \rho_3$ is not irreducible then there is some $1$-dimensional $L$-subspace preserved by the action of $G$, and so some vector $\mathbf{v} \in L^2$ with 
\[
A_g \cdot {}^{g} \mathbf{v} = \lambda_g \mathbf{v}
\]
for some $\lambda_g \in L^\times$, by Proposition \ref{prop:matrixdesc}. In particular, there are $\lambda, \mu \in L^\times$ with
\[
\begin{pmatrix} \lambda v_1 \\ \lambda v_2 \end{pmatrix}
=
A_{(12)} \cdot {}^{\alpha} \mathbf{v}
=
\begin{pmatrix}
    0 & 1 \\ 1 & 0
\end{pmatrix} \cdot {}^{\alpha} \mathbf{v}
=
\begin{pmatrix}
    \alpha(v_2) \\ \alpha(v_1) 
\end{pmatrix},
\]
and 
\[
\begin{pmatrix}
    \mu v_1 \\ \mu v_2
\end{pmatrix}
=
A_{(123)}  \mathbf{v}
=
\begin{pmatrix}
    0 & -1 \\ 1 & -1
\end{pmatrix} \mathbf{v}
=
\begin{pmatrix}
    -v_2 \\ v_1 - v_2
\end{pmatrix}.
\]
From the second pair of equations, we have that $v_2(\mu^2 + \mu + 1) = 0$, and from the first pair that $v_2 \neq 0$ (otherwise $\mathbf{v} = \mathbf{0}$). Therefore $\mu^2 + \mu + 1 = 0$ and $L$ contains a primitive third root of $1$. 

In particular, if $L$ contains no primitive third root of $1$, then $V$ is irreducible. 

\subsubsection{The case when $\omega \in K$} Suppose then, that $L$ contains $\omega$, a primitive third root of $1$. Then there are two cases. In the first case, where $\omega \in K$, we may diagonalise over $K$ the matrix of $(123)$ (which has characteristic polynomial $x^2 + x + 1$) to obtain a matrix representation of $\rho_3$ over $K$ defined by
\[
B_{(12)} = \begin{pmatrix}
    0 & 1 \\ 1 & 0
\end{pmatrix}, \qquad B_{(123)} = \begin{pmatrix}
    \omega & 0 \\ 0 & \omega^{-1}
\end{pmatrix}.
\]
In particular, similarly to above, if $V$ was reducible then one could obtain equations
\[
\begin{pmatrix}
    \alpha(v_2) \\ \alpha(v_1) 
\end{pmatrix} = \begin{pmatrix} \lambda v_1 \\ \lambda v_2 \end{pmatrix}, \qquad \begin{pmatrix}
    \omega v_1 \\ \omega^{-1} v_2
\end{pmatrix} = \begin{pmatrix} \mu v_1 \\ \mu v_2 \end{pmatrix}
\]
for some $\lambda, \mu \in L^\times$. From the first pair of equations we must have that $v_1, v_2$ are both non-zero, and therefore from the second equations we must have that $\omega = \mu = \omega^{-1}$, a contradiction. So also in this case $V$ is also irreducible. 

\subsubsection{The case when $\omega \in L$ and $\omega \not\in K$}
The most interesting case is the third, when $\omega \in L$ but $\omega \not\in K$. In this case, $L = K(\omega)$, and the automorphism $\alpha \colon L \rightarrow L$ swaps $\omega$ and $\omega^{-1}$. We can then define $\mathbf{v} = (-1, \omega)$, $\mathbf{w} = (-1, \omega^{-1})$. The $L$-span of each defines a $G$-stable subspace because
\[
A_{(12)} \cdot {}^{\alpha} \mathbf{v} = \begin{pmatrix} \alpha(\omega) \\ \alpha(-1) \end{pmatrix} = \begin{pmatrix}\omega^{-1} \\ -1 \end{pmatrix}  = - \omega^{-1} \begin{pmatrix} -1 \\ \omega \end{pmatrix} = \omega^{-1} \mathbf{v}
\]
and
\[
A_{(123)} \cdot \mathbf{v} = \begin{pmatrix}
    -\omega \\ -1 - \omega
\end{pmatrix} = \begin{pmatrix}
    -\omega \\ \omega^2
\end{pmatrix} = \omega \begin{pmatrix}
    -1 \\ \omega
\end{pmatrix} = \omega \mathbf{v},
\]
and similarly for $\mathbf{w}$. We also see that these two subrepresentations restrict to the two non-trivial (linear) representations of $H$ over $L$, and therefore must be non-isomorphic in $\Rep_L^{\rtimes}(S_3)$.

\subsection{Summary} To summarise:
\begin{itemize}
    \item $L \otimes_K \rho_1 \cong L \otimes_K \rho_2$, with endomorphism ring $K$,
    \item If there is some $\omega \in L$ which is a primitive third root of $1$ and $\omega \not\in K$, then $L \otimes_K \rho_3$ is the direct sum of two non-isomorphic $1$-dimensional irreducible representations,
    \item Otherwise $V = L \otimes_K \rho_3$ is irreducible.
\end{itemize}
By Corollary \ref{cor:allsubobjects} we know that these are all the irreducible representations, and that any object of $\Rep_L^{\rtimes}(S_3)$ is a direct sum of these irreducible representations by Proposition \ref{prop:freeandss}.

\subsection{Endomorphism Rings}
We can also compute endomorphism rings for these representations.

First note that each irreducible semilinear representation $\sigma_i$ of $S_3$ over $L$ corresponds to a matrix factor in the $K$-algebra $L \rtimes S_3$, which is semisimple by Proposition \ref{prop:freeandss}. This is of the form $M_{n_i}(D_i)$ where $D_i \coloneqq \End_{L \rtimes S_3}(\sigma_i)$. The simple module $\sigma_i$ is recovered from the action of $L \rtimes S_3$ on the first column, and so by comparing $K$-dimensions we have that
\[
n_i \cdot \dim_K D_i = [L:K]\cdot \dim_L \sigma_i.
\]
\subsubsection{The case when $\omega \in L \setminus K$.}
In the case where there is a primitive third root $\omega \in L$ but $\omega \not\in K$, we have seen that there are two other non-isomorphic representations $\sigma_2$ and $\sigma_3$ with $\sigma_2 \oplus \sigma_3 = V$. There is an injection $L \otimes_K D_i \hookrightarrow \End_{L[H]}(\sigma_i)$ from Lemma \ref{lem:homsetinjection}, and each $\End_{L[H]}(\sigma_i) = L$ as each $\sigma_i$ restricted to $L[H]$-module is absolutely irreducible. In particular, $D_2 = K = D_3$, each $n_i = n_i \dim_K D_i = [L:K]\dim_L \sigma_i = 2$, and we have a decomposition
\[
L \rtimes S_3 \cong M_2(K) \times M_2(K) \times M_2(K).
\]
\subsubsection{The case when $V$ is irreducible} In the other case, where $V$ is irreducible, the above expression reduces to
\[
4 = n_V \dim_K D_V,
\]
so $n_V = 2$, and $\dim_K D_V = 2$. In particular, $D_V$ is a degree $2$ field extension of $K$ and
\[
L \rtimes S_3 \cong M_2(K) \times M_2(D_V).
\]

We can be more precise about $D_V$, using the injection $L \otimes_K D_V \hookrightarrow \End_{L[H]}(V)$ from Lemma \ref{lem:homsetinjection}. If $\omega \in L$ (and hence $\omega \in K$), then $\End_{L[H]}(V) = L \times L$, so $L \otimes_K D_V \xrightarrow{\sim} L \times L$ and thus $D_V \cong L$ by \cite[Thm.\ 2.2]{COHN}. Conversely, if $\omega \not\in L$, then $\End_{L[H]}(V) = L(\omega)$, and all we can say in this level of generality is that $L \otimes_K D_V \xrightarrow{\sim} L(\omega)$.

\section{Split Extensions}\label{sect:splitgroupext}

In Section \ref{sect:mainresults} we will describe the category $\Rep_L^{\rtimes}(G)$ in terms of $\Rep_{F}(H)$. Before this, we describe one situation in which we can already give a complete description of $\Rep_L^{\rtimes}(G)$, namely when the sequence
\[
1 \rightarrow H \rightarrow G \rightarrow \Gamma \rightarrow 1
\]
is left-split, so that $G \cong H \times \Gamma$ and $H \rightarrow G$ and $G \rightarrow \Gamma$ are the natural inclusion and projection.
    \begin{prop}\label{prop:galoisdescent}
        Suppose that $H$ is a group and $L / K$ is a Galois extension with Galois group $\Gamma$. Then the functors 
    \begin{align*}
        L \otimes_K - &\colon \Rep_K(H) \rightarrow \Rep_L^{\rtimes}(H \times \Gamma), \\
        (-)^\Gamma &\colon \Rep_L^{\rtimes}(H \times \Gamma) \rightarrow \Rep_K(H)
    \end{align*}
        define mutually inverse equivalences of categories.
    \end{prop}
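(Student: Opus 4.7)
The plan is to recognise this as classical Galois descent dressed up with a commuting linear $H$-action. Since $G = H \times \Gamma$ and $H$ acts trivially on $L$, the two actions of $H$ and $\Gamma$ on any object of $\Rep_L^{\rtimes}(H \times \Gamma)$ commute, so passing to $\Gamma$-invariants preserves the $H$-module structure. The strategy is to exhibit an adjoint pair and check that the unit and counit are isomorphisms.

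First I would verify that both functors are well-defined. For $V \in \Rep_K(H)$, the space $L \otimes_K V$ carries the diagonal action of $H \times \Gamma$ where $\Gamma$ acts semilinearly on the left factor and $H$ acts on $V$; these actions commute for the reason above, and each $(\gamma, 1)$ is $\sigma_\gamma$-semilinear while each $(1,h)$ is $L$-linear. Conversely, for $W \in \Rep_L^{\rtimes}(H \times \Gamma)$, the subspace $W^{\Gamma}$ is an $L^\Gamma = K$-subspace stable under $H$. Next, I would define the unit and counit
\[
\eta_V \colon V \to (L \otimes_K V)^{\Gamma}, \ v \mapsto 1 \otimes v, \qquad \varepsilon_W \colon L \otimes_K W^{\Gamma} \to W, \ \lambda \otimes w \mapsto \lambda \cdot w,
\]
and check that $\eta_V$ is $H$-linear and $\varepsilon_W$ is $L \rtimes (H\times \Gamma)$-linear (both immediate from commutativity of the $H$- and $\Gamma$-actions and the semilinearity of $\Gamma$).

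To see that $\eta_V$ is an isomorphism, fix a $K$-basis $\{v_i\}$ of $V$, which is also an $L$-basis of $L \otimes_K V$. Because $\Gamma$ acts only through the $L$-factor, a sum $\sum_i \lambda_i \otimes v_i$ is $\Gamma$-fixed if and only if each $\lambda_i \in L^\Gamma = K$, hence $(L \otimes_K V)^{\Gamma} = \sum_i K \otimes v_i$ is precisely the image of $\eta_V$, and $\eta_V$ is clearly injective.

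The substantial step is showing $\varepsilon_W$ is an isomorphism: this is classical Galois descent applied to $W$ viewed (forgetting $H$) as an object of $\Rep_L^{\rtimes}(\Gamma)$. Using the defining isomorphism $L \otimes_K L \xrightarrow{\sim} \prod_{\gamma \in \Gamma} L$ of the Galois extension one produces, for any such $W$, an $L$-basis of $\Gamma$-fixed vectors; this simultaneously yields $\dim_K W^{\Gamma} = \rank_L W < \infty$ and the bijectivity of $\varepsilon_W$. In the generality of the paper (where $L$ may be a product of fields), this descent statement is already part of the framework established in \cite{TAY4} (compare the injectivity statement of Lemma \ref{lem:homsetinjection} and the base change result Lemma \ref{lem:injofbasechangeSL}, which arise from the same descent machinery). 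Since $\varepsilon_W$ is natural and $H$ commutes with $\Gamma$, the resulting isomorphism is automatically $H$-equivariant. The only non-formal ingredient is this descent isomorphism, and it is standard once the appropriate definition of Galois extension is in place; everything else is bookkeeping.
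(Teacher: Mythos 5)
Your proposal is correct and follows essentially the same route as the paper, which disposes of the statement in one line by citing classical Galois descent (Prop.\ 2.53 of \cite{TAY3}). You spell out the adjunction bookkeeping (well-definedness, unit, counit, $H$-equivariance) in more detail, but the substantial step --- bijectivity of $\varepsilon_W$ --- is, as you correctly identify, exactly the classical descent isomorphism, so the two arguments coincide in content.
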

    \begin{proof}
        This is just Galois descent, see for example \cite[Prop.\ 2.53]{TAY3}.
    \end{proof}
    
The following lemma shows that under these identifications, \emph{restriction} from $\Rep_L^{\rtimes}(H \times \Gamma)$ to $\Rep_L(H)$ corresponds to \emph{induction} from $\Rep_K(H)$ to $\Rep_L(H)$.

    \begin{lemma}\label{lem:classicalinduction}
        Suppose that $H$ is a group and $L / K$ is a Galois extension with Galois group $\Gamma$. Then the diagram
\[\begin{tikzcd}
	{\Rep_K(H)} && {\Rep_L^{\rtimes}(H \times \Gamma)} \\
	& {\Rep_L(H)}
	\arrow["{L \otimes_K -}", shift left, from=1-1, to=1-3]
	\arrow["{(-)_L}"', from=1-1, to=2-2]
	\arrow["{(-)^\Gamma}", shift left, from=1-3, to=1-1]
	\arrow["{(-)|_H}", from=1-3, to=2-2]
\end{tikzcd}\]
        commutes up to natural isomorphism, where $H \times \Gamma$ acts on $L$ through the projection to $\Gamma$.
    \end{lemma}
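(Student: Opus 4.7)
The plan is to unwind both paths on an object $V \in \Rep_K(H)$ and check that they agree as $L[H]$-modules, with naturality then being immediate. First I would make explicit the $L \rtimes (H \times \Gamma)$-module structure on $L \otimes_K V$ under the top-horizontal functor: the action of $L$ is on the first tensor factor, and an element $(h,\gamma) \in H \times \Gamma$ acts diagonally, i.e.\ by $\lambda \otimes v \mapsto \gamma(\lambda) \otimes h \cdot v$, where we use that $H \hookrightarrow H \times \Gamma$ lies in the kernel of the projection to $\Gamma$ so that $H$ acts trivially on $L$. This diagonal action satisfies the semilinearity relation $\rho(h,\gamma)(\mu(\lambda \otimes v)) = \sigma_{(h,\gamma)}(\mu)\, \rho(h,\gamma)(\lambda \otimes v)$ because $(h,\gamma)$ acts on $L$ through $\gamma$, as required.

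Next I would observe that restricting this action to the subgroup $H \leq H \times \Gamma$ kills the $\Gamma$-factor entirely, so the action of $h \in H$ on $L \otimes_K V$ becomes $\lambda \otimes v \mapsto \lambda \otimes h \cdot v$. In particular the restricted action is genuinely $L$-linear (not merely semilinear), which is already required for the restriction functor $(-)|_H \colon \Rep_L^{\rtimes}(H\times\Gamma) \to \Rep_L(H)$ to be well-defined on this image. But this is exactly the $L[H]$-module structure on $L \otimes_K V$ defining the base-change functor $(-)_L \colon \Rep_K(H) \to \Rep_L(H)$. Hence the two composites agree on objects via the identity map on the underlying set $L \otimes_K V$.

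Finally, for naturality, given a morphism $f \colon V \to W$ in $\Rep_K(H)$, both composites send $f$ to the map $\id_L \otimes f \colon L \otimes_K V \to L \otimes_K W$, and the identifications above are literally the identity maps, so the natural transformation $(L \otimes_K -)|_H \xrightarrow{\sim} (-)_L$ has components equal to $\id_{L \otimes_K V}$. There is no obstacle here: the content of the lemma is purely the tautological remark that because $H$ acts trivially on $L$ (being in the kernel of $H \times \Gamma \twoheadrightarrow \Gamma$), the diagonal semilinear action of $H \times \Gamma$ restricted to $H$ is the standard linear $H$-action on a base-changed representation.
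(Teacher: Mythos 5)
Your proof is correct and is precisely the argument the paper gives, just fully unwound: the paper's one-line proof says that the two $L[H]$-actions on $L \otimes_K V$ coincide, and you have spelled out that this holds because $H$ lies in the kernel of $H \times \Gamma \twoheadrightarrow \Gamma$, so the diagonal semilinear action restricted to $H$ is the standard base-change action.
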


    \begin{proof}
         For $V \in \Rep_K(H)$, the two actions of $L[H]$ on $L \otimes_K V$ coincide.
    \end{proof}
    
    Similarly, \emph{induction} from $\Rep_L(H)$ to $\Rep_L^{\rtimes}(G)$ corresponds to \emph{restriction} from $\Rep_L(H)$ to $\Rep_K(H)$.

    \begin{lemma}\label{lem:classicalrestriction}
        Suppose that $H$ is a group and $L / K$ is a Galois extension with Galois group $\Gamma$. Then the diagram
\[\begin{tikzcd}
	{\Rep_K(H)} && {\Rep_L^{\rtimes}(H \times \Gamma)} \\
	& {\Rep_L(H)}
	\arrow["{L \otimes_K -}", shift left, from=1-1, to=1-3]
	\arrow["{(-)^\Gamma}", shift left, from=1-3, to=1-1]
	\arrow["{\Res^L_K -}", from=2-2, to=1-1]
	\arrow["{\Ind_{H}^{H \times \Gamma} -}"', from=2-2, to=1-3]
\end{tikzcd}\]
        commutes up to natural isomorphism, where $H \times \Gamma$ acts on $L$ through the projection to $\Gamma$.
    \end{lemma}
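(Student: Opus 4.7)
The plan is to leverage the Galois descent equivalence of Proposition~\ref{prop:galoisdescent}: since $L \otimes_K -$ and $(-)^\Gamma$ are mutually inverse equivalences between $\Rep_K(H)$ and $\Rep_L^{\rtimes}(H \times \Gamma)$, the desired natural isomorphism $L \otimes_K \Res^L_K W \cong \Ind_H^{H \times \Gamma}(W)$ is equivalent, after applying $(-)^\Gamma$ to both sides, to exhibiting a natural isomorphism $\Ind_H^{H \times \Gamma}(W)^\Gamma \cong \Res^L_K W$ in $\Rep_K(H)$. I would prove this latter identification directly.

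The first key step is the structural observation that $L \rtimes (H \times \Gamma)$ is free as a right $L[H]$-module on the basis $\{(1,\gamma)\}_{\gamma \in \Gamma}$. Since $H$ and $\Gamma$ commute inside the direct product $H \times \Gamma$, combining with the skew relation $\lambda (1,\gamma) = (1,\gamma) \cdot \gamma^{-1}(\lambda)$ in $L \rtimes (H \times \Gamma)$, one rewrites an arbitrary $\lambda (h,\gamma)$ as $(1,\gamma) \cdot (\gamma^{-1}(\lambda) h)$, while linear independence over $L[H]$ follows from the freeness of the underlying $L$-basis $\{(h,\gamma)\}$ of $L \rtimes (H \times \Gamma)$. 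Consequently,
\[
\Ind_H^{H \times \Gamma}(W) = \bigoplus_{\gamma \in \Gamma} (1,\gamma) \otimes W
\]
as abelian groups, and the action of $\gamma' \in \Gamma \subset L \rtimes (H \times \Gamma)$ on this decomposition permutes the summands via $\gamma' \cdot ((1,\gamma) \otimes w) = (1,\gamma'\gamma) \otimes w$.

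Since this $\Gamma$-action is free and transitive on the indexing set, taking invariants picks out the diagonal sums, so the map $W \to \Ind_H^{H \times \Gamma}(W)^\Gamma$ defined by $w \mapsto \sum_\gamma (1,\gamma) \otimes w$ is a bijection of abelian groups. To confirm that this is a morphism in $\Rep_K(H)$, I would verify two compatibilities. For the $H$-structure: since $H$ commutes with each $(1,\gamma)$ and acts trivially on $L$, one has $h \cdot \sum_\gamma (1,\gamma) \otimes w = \sum_\gamma (1,\gamma) \otimes hw$, matching the original $H$-action on $W$. For the $K$-scalar action: any $\mu \in K = L^\Gamma$ satisfies $\gamma^{-1}(\mu) = \mu$, so $\mu (1,\gamma) = (1,\gamma) \mu$ in the skew group ring, giving $\mu \cdot \sum_\gamma (1,\gamma) \otimes w = \sum_\gamma (1,\gamma) \otimes \mu w$, matching the $K$-action on $\Res^L_K W$. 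Naturality in $W$ is apparent from the formulas.

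The main obstacle is really just the first step, namely the free right $L[H]$-module decomposition of $L \rtimes (H \times \Gamma)$; once this is in hand, the identification of $\Gamma$-invariants and verification of the $H$- and $K$-module structures are straightforward bookkeeping. A final appeal to Proposition~\ref{prop:galoisdescent} then converts the natural isomorphism $\Ind_H^{H \times \Gamma}(W)^\Gamma \cong \Res^L_K W$ into the required natural isomorphism $\Ind_H^{H \times \Gamma}(W) \cong L \otimes_K \Res^L_K W$ in $\Rep_L^{\rtimes}(H \times \Gamma)$.
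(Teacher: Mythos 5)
Your argument is correct, but it goes by a different route than the paper's. The paper builds the natural isomorphism directly on the $L$-side: starting from $L \otimes_K V$, it applies $- \otimes_L V$ to the defining isomorphism $L \otimes_K L \xrightarrow{\sim} \prod_{\gamma \in \Gamma} L$ of the Galois extension, then chains through $\prod_\gamma V \cong (L \rtimes \Gamma) \otimes_L V \cong L \rtimes (H \times \Gamma) \otimes_{L[H]} V$, and finally checks that the composite commutes with the $L$- and $\Gamma$-actions. You instead push the whole problem to the $K$-side via Proposition~\ref{prop:galoisdescent} and verify the descended statement $\Ind_H^{H \times \Gamma}(W)^\Gamma \cong \Res^L_K W$ concretely, using the decomposition of $L \rtimes (H \times \Gamma)$ as a free right $L[H]$-module on $\{(1,\gamma)\}_\gamma$ and reading off the $\Gamma$-invariants as the diagonal sums. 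The two end up constructing the same isomorphism — if you restrict the paper's chain to $\Gamma$-invariants you recover your map $w \mapsto \sum_\gamma (1,\gamma) \otimes w$ — but your version never needs the defining isomorphism of the Galois extension explicitly (it is used only indirectly through Proposition~\ref{prop:galoisdescent}), which makes the verification a bit more self-contained and arguably more transparent; the paper's version is shorter once one is comfortable unwinding the Galois extension isomorphism. All the auxiliary claims you rely on — that $\{(1,\gamma)\}_\gamma$ is an $L[H]$-basis, the identity $\lambda(h,\gamma) = (1,\gamma)\cdot(\gamma^{-1}(\lambda)h)$, and the $H$- and $K$-compatibility of your invariant map — check out.
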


\begin{proof}
        Suppose that $V \in \Rep_L(H)$. We have $K[H]$-linear isomorphisms
        \[
            L \otimes_K V \: \xlongrightarrow{\sim} \: \prod_{\gamma \in \Gamma} V \: \xlongrightarrow{\sim} \: (L \rtimes \Gamma) \otimes_L V \: \xlongrightarrow{\sim} \: L \rtimes (H \times \Gamma) \otimes_{L[H]} V,
        \]
        defined by $ \lambda \otimes v \mapsto (\gamma(\lambda)v)_{\gamma}$, $(v_{\gamma})_{\gamma} \mapsto \gamma^{-1} \otimes v_{\gamma}$ and $x \otimes v \mapsto x \otimes v$ respectively, where the first map comes from applying $- \otimes_L V$ to the defining isomorphism of the Galois extension $L / K$. It is direct to check that this composition commutes with the action of $L$ and $\Gamma$, and thus is $L \rtimes (H \times \Gamma)$-linear.
\end{proof}

\section{Connected Components}\label{sect:connectedcomponents}

In this section, we show that the study of $\Rep_L^{\rtimes}(G)$ reduces to the case that $L$ is connected. The results of this section are used crucially in Section \ref{sect:mainresults} when we describe the category $\Rep_L^{\rtimes}(G)$ in terms of $\Rep_{F}(H)$ and prove Theorems A and B.

Recall that $e$ denotes a primitive idempotent of $L$, $F = e \cdot L$, and $G_e = \Stab_G(e)$. From Remark \ref{rem:generalform}, the induced action of $G_e$ on $F$ makes $F / K$ a Galois extension with Galois group $G_e$, and the action of $G$ on the connected components of $\Spec(L)$ is transitive.

There is a functor
\[
    e \cdot (-) \colon \Rep_L^{\rtimes}(G) \rightarrow \Rep_{F}^{\rtimes}(G_e),
\]
and a functor in the other direction defined by
\[
    (L \rtimes G) \otimes_{L \rtimes G_e} - \colon \Rep_{F}^{\rtimes}(G_e) \rightarrow \Rep_{L}^{\rtimes}(G),
\]
where we view any $F \rtimes G_e$-module as a $L \rtimes G_e$-module via the natural projection $L \rtimes G_e \rightarrow F \rtimes G_e$.

\begin{prop}\label{prop:connequiv}
    The functors 
\begin{align*}
    e \cdot (-) &\colon \Rep_L^{\rtimes}(G) \rightarrow \Rep_{F}^{\rtimes}(G_e), \\
    (L \rtimes G) \otimes_{L \rtimes G_e} - &\colon \Rep_{F}^{\rtimes}(G_e) \rightarrow \Rep_{L}^{\rtimes}(G)
\end{align*}
    define mutually inverse equivalences of categories.
\end{prop}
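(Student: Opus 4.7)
The plan is to exhibit the equivalence as a form of descent along the transitive $G$-action on the connected components of $\Spec(L)$: one uses coset representatives for $G_e$ in $G$ to decompose $L$ into components and $L \rtimes G$ into pieces indexed by cosets, and then checks both compositions by direct calculation. Fix a set $\{g_x\}_{x \in X}$ of representatives for $X \coloneqq G / G_e$ with $g_{G_e} = 1$, and set $e_x \coloneqq \sigma_{g_x}(e)$. Because $G$ acts transitively on the primitive idempotents of $L$, $\{e_x\}_{x \in X}$ is exactly this set, giving $L = \prod_x F_x$ with $F_x \coloneqq e_x L$. A direct computation using the skew multiplication $g_x \cdot (\lambda h) = \sigma_{g_x}(\lambda) g_x h$ yields the decomposition of right $L \rtimes G_e$-modules
\[
L \rtimes G \;=\; \bigoplus_{x \in X} g_x \cdot (L \rtimes G_e).
\]

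First I verify the functors are well defined. For $V \in \Rep_L^\rtimes(G)$, $G_e$ preserves the component $e V$ (since $\sigma_g(e) = e$ for $g \in G_e$), and $eV$ is free over $F$ because $V = \bigoplus_x e_x V$ is the decomposition of the free $L$-module $V$ through $L = \prod_x F_x$. For $W \in \Rep_F^\rtimes(G_e)$, the coset decomposition above gives
\[
(L \rtimes G) \otimes_{L \rtimes G_e} W \;=\; \bigoplus_{x \in X} g_x \otimes W,
\]
and on the summand $g_x \otimes W$ the action of $\lambda \in L$ is $\lambda \cdot (g_x \otimes w) = g_x \otimes \sigma_{g_x^{-1}}(\lambda) \cdot w$; since $L$ acts on $W$ through $L \twoheadrightarrow F$, this depends only on $e \cdot \sigma_{g_x^{-1}}(\lambda) = \sigma_{g_x^{-1}}(e_x \lambda)$, showing $g_x \otimes W \subseteq e_x \cdot \bigl((L \rtimes G) \otimes_{L \rtimes G_e} W\bigr)$ and that this summand is free of rank $\rank_F W$ over $F_x$. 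The induced module is therefore free of rank $\rank_F W$ over $L$.

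Second I compute the two compositions, which is where the actual content lies. Applying $e \cdot (-)$ to $\bigoplus_x g_x \otimes W$, all summands with $x \neq G_e$ are annihilated (because $e \cdot e_x = 0$), leaving $e \cdot (1 \otimes W)$; the map $w \mapsto 1 \otimes w$ is a $G_e$-equivariant $F$-semilinear isomorphism $W \xrightarrow{\sim} e \cdot ((L \rtimes G) \otimes_{L \rtimes G_e} W)$, and this is natural in $W$. In the other direction, the natural evaluation
\[
\epsilon_V \colon (L \rtimes G) \otimes_{L \rtimes G_e} (eV) \;\longrightarrow\; V, \qquad r \otimes ev \mapsto r \cdot ev,
\]
is $L \rtimes G$-linear by construction. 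Decomposing both sides by the primitive idempotents of $L$, the $e_x$-component of $\epsilon_V$ is the map $g_x \otimes eV \to e_x V$, $\, g_x \otimes ev \mapsto \rho(g_x)(ev)$. This is a bijection because $\rho(g_x) \colon eV \to e_x V$ is a $\sigma_{g_x}$-semilinear isomorphism with inverse $\rho(g_x^{-1})$, and it intertwines the $F_x$-actions on both sides in the correct way. Naturality in $V$ is immediate from the definition.

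The main obstacle is not conceptual but notational: one has to be meticulous about how the $L$-action on $(L \rtimes G) \otimes_{L \rtimes G_e} W$ passes through $\sigma_{g_x^{-1}}$ on each summand in order to see that the tensor product is actually concentrated idempotent-by-idempotent in the claimed way. Once this bookkeeping is done, the rest of the argument is formal, and naturality holds for trivial reasons.
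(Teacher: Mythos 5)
Your proof is correct and takes essentially the same route as the paper: you exhibit the unit $w \mapsto 1 \otimes w$ and counit $r \otimes ev \mapsto r \cdot ev$ of the adjunction and verify both are isomorphisms, just as the paper does with its maps $\psi$ and $\phi$. The only difference is how the isomorphism is checked — the paper argues surjectivity and then compares $K$-dimensions, whereas you decompose both sides along the primitive idempotents $\{e_x\}_{x \in G/G_e}$ and observe that each component $g_x \otimes eV \to e_x V$ is a bijection with inverse $\rho(g_x^{-1})$; this is a minor, though slightly more transparent, variation.
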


\begin{proof}
    Suppose first that $V \in \Rep_L^{\rtimes}(G)$ and define 
    \[
        \phi \colon (L \rtimes G) \otimes_{L \rtimes G_e} e \cdot V \rightarrow V, \qquad \phi(\lambda g \otimes v) = \lambda g(v).
    \]  
    To see that this gives a well-defined map out of the tensor product, let $\mu h \in L \rtimes G_e$ and note that
    \[
        \phi((\lambda g)(\mu h) \otimes v) = \phi(\lambda g(\mu) gh \otimes v) = \lambda g(\mu) (gh)(v),
    \]    
    which, noting that $e \cdot v = v$ and hence
    \[
    (gh)(v) = (gh)(e \cdot v) = g(h(e) \cdot h(v)) = g(e \cdot h(v)) = g(e) \cdot (gh)(v),
    \]
    is equal to
    \[
        \phi(\lambda g \otimes (e \cdot \mu) h(v)) = \lambda g(e \cdot \mu) (gh)(v) = \lambda g(e) g(\mu) (gh)(v).
    \]    
    It is direct to see that $\phi$ is $L \rtimes G$-linear. Furthermore, $e \cdot V$ has $K$-dimension $\dim_K(V) / [L : F]$, so the left-hand side has $K$-dimension $[G:G_e] \dim_K(V) / [L : F] = \dim_K(V)$ and it is sufficient to show that $\phi$ is surjective. For this, let $g_i$ be a set of left coset representatives of $G_e$ in $G$. Because the action of $G$ on $\pi_0(\Spec(L))$ is transitive, the primitive idempotents of $L$ are $g_i(e)$. Any $v \in V$ can be written as $\sum_i g_i(e)v$, which is in the image of $\phi$ as $\phi(g_i \otimes e \cdot g_i^{-1}(v)) = g_i(e) v$.

    Suppose now that $W \in \Rep_{F}^{\rtimes}(G_e)$, and define 
    \[
        \psi \colon W \rightarrow e \cdot (L \rtimes G \otimes_{L \rtimes G_e} W), \qquad \psi(w) = e \otimes w,
    \] 
    which easily seen to be $F \times G_e$-linear. Similarly to above, to show this is an isomorphism it is sufficient to show that $\psi$ is surjective. To see this, note that any element of $L \rtimes G \otimes_{L \rtimes G_e} W$ is a sum of elements of the form $\lambda g \otimes w$ for $\lambda \in L$, $g \in G$, $w \in W$, and on these
    \[
    e \cdot (\lambda g \otimes w) = e \lambda g \otimes w = g g^{-1}(e) g^{-1}(\lambda) \otimes w = g \otimes e g^{-1}(e) g^{-1}(\lambda) w,
    \] 
    which is zero whenever $g \not\in G_e$, as in this case $e g^{-1}(e) = 0$.
\end{proof}

For example, this allows us to give a complete description of $\Rep_L^{\rtimes}(G)$ in certain cases.

\begin{defn}
The extension $L / K$ is called \emph{split} if there is a $K$-algebra homomorphism $L \rightarrow K$.
\end{defn}

This can be characterised in other ways. The following are equivalent:
\begin{itemize}
    \item $L / K$ is split,
    \item $H = G_e$,
    \item The inclusion $K \rightarrow F$ is an isomorphism,
    \item $L$ is the product of copies of $K$,
    \item $\dim_K(L) = |\pi_0(\Spec(L))|$.
\end{itemize}

Split extensions are therefore essentially all of the form as described in Example \ref{eg:splitgalext}. For such extensions one obtains a complete description of $\Rep_L^{\rtimes}(G)$, as in this case $\Rep_{F}^{\rtimes}(G_e) = \Rep_K(H)$ is a linear representation category.

Therefore, in summary, together with the results of Section \ref{sect:splitgroupext} we see that when either the group extension $1 \rightarrow H \rightarrow G \rightarrow \Gamma \rightarrow 1$ or the Galois extension $L / K$ is split we have an equivalence from $\Rep_L^{\rtimes}(G)$ to $\Rep_K(H)$. The following shows that if both are true, then these are essentially the same equivalence.

\begin{prop}
Suppose that $L/K$ is split Galois extension with Galois group $\Gamma$, $H$ is a group, and $H \times \Gamma$ acts on $L$ through the projection to $\Gamma$. Then the diagram
\[\begin{tikzcd}
	& {\Rep_L^{\rtimes}(H \times \Gamma)} \\
	{\Rep_K(H)} && {\Rep_{F}(H)}
	\arrow["{(-)^{\Gamma}}"', from=1-2, to=2-1]
	\arrow["{e \cdot (-)}", from=1-2, to=2-3]
	\arrow["{F \otimes_K -}"', from=2-1, to=2-3]
	\arrow["\sim", from=2-1, to=2-3]
\end{tikzcd}\]
commutes up to natural isomorphism.
\end{prop}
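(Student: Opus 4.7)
The plan is to reduce this to Galois descent (Proposition \ref{prop:galoisdescent}) combined with multiplication by the idempotent $e$. The first observation to record is that since $L/K$ is split, the characterisation of split extensions gives $F = e \cdot L \cong K$, and $\Gamma$ acts simply transitively on the primitive idempotents of $L$; in particular the stabiliser $(H \times \Gamma)_e$ is precisely $H \times \{1\} = H$. Hence the target category $\Rep_F^{\rtimes}((H \times \Gamma)_e)$ of the functor $e \cdot (-)$ makes sense as $\Rep_F(H)$, and the functor $F \otimes_K -$ from $\Rep_K(H)$ to $\Rep_F(H)$ is what appears along the bottom edge.

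Given $V \in \Rep_L^{\rtimes}(H \times \Gamma)$, Galois descent supplies a natural $L \rtimes (H \times \Gamma)$-linear isomorphism
\[
\alpha_V \colon L \otimes_K V^{\Gamma} \xlongrightarrow{\sim} V, \qquad \lambda \otimes v \mapsto \lambda \cdot v.
\]
Multiplying both sides by the idempotent $e \in L$ then yields an isomorphism
\[
e \cdot \alpha_V \colon (eL) \otimes_K V^{\Gamma} \xlongrightarrow{\sim} e \cdot V,
\]
i.e.\ $F \otimes_K V^{\Gamma} \xrightarrow{\sim} e \cdot V$. This map is $F$-linear on the nose (as $e$ lies in the coefficient algebra) and $H$-equivariant because $H$ lies in $\ker(H \times \Gamma \to \Gamma)$ and therefore acts $L$-linearly on $V$, preserving the idempotent decomposition $V = \bigoplus_{\gamma \in \Gamma} \gamma(e) \cdot V$. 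Naturality in $V$ is immediate from the naturality of $\alpha_V$, yielding the required natural isomorphism of functors $F \otimes_K (-)^{\Gamma} \xrightarrow{\sim} e \cdot (-)$.

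I do not anticipate any serious obstacle: once one identifies the correct two inputs (Galois descent and the identification $(H \times \Gamma)_e = H$ for a split extension), the result is essentially formal. The only items to verify are bookkeeping checks that the various module structures ($F$-linearity, $H$-equivariance) are preserved under the passage from $\alpha_V$ to $e \cdot \alpha_V$, and these follow at once from the fact that $H$ commutes with the $L$-action on $V$.
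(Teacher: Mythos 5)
Your proof is correct, and the underlying natural transformation you construct — $\lambda \otimes v \mapsto \lambda v$ for $\lambda \in F = eL$, $v \in V^{\Gamma}$ — is the same as the paper's. The difference lies in how the isomorphism is verified. The paper's proof checks surjectivity directly: for $v \in e \cdot V$ one has $e \otimes (\sum_{\gamma}\gamma(v)) \mapsto v$ since $e\gamma(e) = 0$ for $\gamma \neq 1$, and then concludes by an $F$-dimension count. You instead invoke Proposition \ref{prop:galoisdescent} (Galois descent) to get the isomorphism $\alpha_V \colon L \otimes_K V^{\Gamma} \xrightarrow{\sim} V$ at the level of $\Rep_L^{\rtimes}(H\times\Gamma)$, and then observe that applying the idempotent $e$ to both sides — and identifying $e\cdot(L \otimes_K V^{\Gamma})$ with $F \otimes_K V^{\Gamma}$ — gives the desired isomorphism. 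This is slightly more conceptual: it explains \emph{why} the map is an isomorphism (it is a direct summand of the descent isomorphism) rather than re-verifying it ad hoc, and it gets naturality in $V$ for free from naturality of $\alpha_V$. The one small elision worth spelling out is the identification $e\cdot(L \otimes_K V^{\Gamma}) = eL \otimes_K V^{\Gamma}$, which holds because $e$ acts only through the left tensor factor; and that $e\cdot(-)$ carries $L\rtimes(H\times\Gamma)$-linear maps to $F[H]$-linear maps, which you correctly justify by noting $H = \Stab_{H\times\Gamma}(e)$ and that $H$ acts $L$-linearly. Both arguments are sound; yours reuses the established machinery more efficiently.
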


\begin{proof}
    For $V \in \Rep_L^{\rtimes}(H \times \Gamma)$, this is defined by
    \[
        F \otimes_K V^{\Gamma} \rightarrow e \cdot V, \qquad \lambda \otimes v \mapsto \lambda v.
    \]
    which is directly seen to be $K[H]$-linear. Both sides have the same $F$-dimension, and therefore this map is an isomorphism as it is surjective: for $v \in e \cdot V$,
    \[
    e \otimes \left( \sum_{\gamma \in \Gamma} \gamma(v) \right) \mapsto \sum_{\gamma \in \Gamma} e \gamma(v) = \sum_{\gamma \in \Gamma} e \gamma(e) v = v,
    \]
    using that $e \cdot v = v$ and that $e \gamma(e) = 0$ for $\gamma \neq 1$.
\end{proof}

\section{Restriction and Induction between $\Rep_L^{\rtimes}(G)$ and $\Rep_{F}(H)$}\label{sect:mainresults}

In this section we come to the heart of the paper and study restriction and induction between $\Rep_L^{\rtimes}(G)$ and $\Rep_{F}(H)$, which we use to give a description of $\Irr_L^{\rtimes}(G)$ in terms of $\Irr_{F}(H)$. We continue with our running notation, where $e$ denote a primitive idempotent of $L$, $F = e \cdot L$, and $\Gamma_e = \Stab_{\Gamma}(e)$.

The restriction and induction functors we are interested in are:
\begin{align*}
    e \cdot (-) &\colon \Rep_{L}^{\rtimes}(G) \rightarrow \Rep_{F}(H), \\
    (L \rtimes G) \otimes_{L[H]} - &\colon \Rep_{F}(H) \rightarrow \Rep_{L}^{\rtimes}(G).
\end{align*}
Here any $V \in \Rep_{F}(H)$ is viewed as an $L[H]$-module via the projection $L[H] \rightarrow F[H]$ and then $L \rtimes G$ acts on the left of $(L \rtimes G) \otimes_{L[H]} V$.

\subsection{Base Change}
The main idea is to interpret these functors in terms of base change.

\begin{defn}
To ease notation, we set
\[
R \coloneqq \prod_{\gamma \in \Gamma} F.
\]
\end{defn}

We consider $R$ with an action of of $\Gamma$ by
\[
\sigma * (\lambda_{\gamma})_{\gamma} \coloneqq (\lambda_{\sigma^{-1} \gamma})_{\gamma} \ \mbox{  for all  } \ \sigma \in \Gamma, \ (\lambda_{\gamma})_{\gamma} \in R
\]
and action of $\Gamma_e$ by
\[
\sigma *  (\lambda_{\gamma})_{\gamma} \coloneqq (\sigma(\lambda_{\gamma \sigma}))_{\gamma} \ \mbox{  for all  } \ \sigma \in \Gamma_e, \ (\lambda_{\gamma})_{\gamma} \in R.
\]
These actions commute.

We also consider $F \otimes_K L$ with its natural action of $\Gamma_e \times \Gamma$, where $(\gamma_e, \gamma)$ acts as $\gamma_e \otimes \gamma$.

\begin{lemma}\label{lem:basechangedesc}
    The map of $F$-algebras
    \[
    F \otimes_K L \rightarrow R, \qquad \lambda \otimes \mu \mapsto (\lambda \gamma^{-1}(\mu))_{\gamma \in \Gamma},
    \]
    is a $\Gamma_e \times \Gamma$-equivariant isomorphism.
\end{lemma}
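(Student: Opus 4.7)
The plan is to realise $\phi \colon F \otimes_K L \to R$ as an idempotent cut of the Galois isomorphism $\Phi \colon L \otimes_K L \xrightarrow{\sim} \prod_{\gamma \in \Gamma} L$ coming from the definition of a Galois extension. With the conventions used in the statement, this identifies $\phi$ as the restriction of $\Phi$ to the $(e \otimes 1)$-part of the source and, correspondingly, to the diagonal $(e)_\gamma$-part of the target.

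First I would verify that $\phi$ is a well-defined $F$-algebra homomorphism. The map $(\lambda, \mu) \mapsto (\lambda \gamma^{-1}(\mu))_\gamma$ is $K$-bilinear, and each coordinate actually lies in $F$ since $\lambda = e\lambda$ gives $\lambda \gamma^{-1}(\mu) = e \cdot \lambda \gamma^{-1}(\mu) \in eL = F$. Sending $\lambda \otimes 1$ to the diagonal tuple $(\lambda)_\gamma$ gives the diagonal $F$-algebra structure on $R$, and multiplicativity is a direct check on simple tensors.

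Next I would check $\Gamma_e \times \Gamma$-equivariance by a short direct calculation. For $\sigma \in \Gamma$ acting on the right tensor factor, equality of
\[
\phi(\lambda \otimes \sigma(\mu)) = (\lambda \gamma^{-1}\sigma(\mu))_\gamma \qquad \text{and} \qquad \sigma * (\lambda \gamma^{-1}(\mu))_\gamma = (\lambda(\sigma^{-1}\gamma)^{-1}(\mu))_\gamma
\]
reduces to the identity $(\sigma^{-1}\gamma)^{-1} = \gamma^{-1}\sigma$. For $\sigma \in \Gamma_e$ acting on the left tensor factor, comparing
\[
\phi(\sigma(\lambda) \otimes \mu) = (\sigma(\lambda)\, \gamma^{-1}(\mu))_\gamma \qquad \text{and} \qquad \sigma * (\lambda \gamma^{-1}(\mu))_\gamma = (\sigma(\lambda \cdot (\gamma\sigma)^{-1}(\mu)))_\gamma
\]
reduces to $\sigma \circ (\gamma\sigma)^{-1} = \gamma^{-1}$.

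Finally, to show $\phi$ is an isomorphism, observe that under $\Phi$ the idempotent $e \otimes 1 \in L \otimes_K L$ maps to the diagonal idempotent $(e)_\gamma \in \prod_\gamma L$. The orthogonal decomposition $L = eL \oplus (1-e)L$ of $K$-vector spaces identifies $(e \otimes 1)(L \otimes_K L)$ with $eL \otimes_K L = F \otimes_K L$, and on the target side $(e)_\gamma \cdot \prod_\gamma L = \prod_\gamma eL = R$. Restricting the isomorphism $\Phi$ to these summands is visibly $\phi$, so $\phi$ is an $F$-algebra isomorphism. The main subtlety throughout is the bookkeeping of $\gamma$ versus $\gamma^{-1}$ in the two $\Gamma$-actions on $R$ and in the formula for $\Phi$; once the conventions are pinned down, each verification is routine.
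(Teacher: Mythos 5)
Your proof is correct. The key verifications (each coordinate of $\phi(\lambda\otimes\mu)$ lands in $F$, the two equivariance identities $(\sigma^{-1}\gamma)^{-1}=\gamma^{-1}\sigma$ and $\sigma\circ(\gamma\sigma)^{-1}=\gamma^{-1}$, and the idempotent cut) all check out, and the observation that $\Phi(e\otimes 1)=(e)_\gamma$ makes the isomorphism argument clean and complete.

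Your route is genuinely different from the paper's. The paper factors the map through the chain
\[
F \otimes_K L \rightarrow F \otimes_K \left( \prod_{\alpha \in \Gamma/\Gamma_e} e_\alpha L \right) \rightarrow \prod_{\alpha \in \Gamma/\Gamma_e} F \otimes_K F \rightarrow \prod_{\alpha}\prod_{\beta\in\Gamma_e} F \equiv \prod_{\gamma\in\Gamma} F,
\]
decomposing $L$ into its connected components $e_\alpha L$, applying the Galois isomorphism for the \emph{field} extension $F/K$ with group $\Gamma_e$ on each piece, and then assembling the double product using a fixed set of coset representatives for $\Gamma/\Gamma_e$. By contrast, you go through the defining isomorphism of the Galois extension $L/K$ directly (where $L$ need not be a field) and cut by the idempotent $e\otimes 1$; the target idempotent $(e)_\gamma$ falls out immediately and no choice of coset representatives is needed. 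Both approaches are of comparable length, but yours avoids the ``consistent choice of representatives'' bookkeeping the paper flags, while the paper's approach makes the double-product structure $R\cong\prod_\alpha\prod_\beta F$ more visible, which is the picture one wants in Proposition~\ref{prop:trivialisationind}. The only caveat is convention-dependent: the paper's definition of the Galois isomorphism is written $a\otimes b\mapsto(a\,b(\gamma))_\gamma$, and you need to relabel coordinates by $\gamma\mapsto\gamma^{-1}$ so the restriction to $(e\otimes 1)$ reproduces the exact formula $\phi(\lambda\otimes\mu)=(\lambda\gamma^{-1}(\mu))_\gamma$; you correctly flag this as the one piece of bookkeeping that needs care, and relabelling does not affect the isomorphism claim.
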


\begin{proof}
    This is an isomorphism, being the composition of the chain of isomorphisms
    \[
        F \otimes_K L \rightarrow F \otimes_K \left( \prod_{\alpha \in \Gamma / \Gamma_e} e_{\alpha} \cdot L \right) \rightarrow \prod_{\alpha \in \Gamma / \Gamma_e} F \otimes_K F \rightarrow \prod_{\alpha \in \Gamma / \Gamma_e} \prod_{\beta \in \Gamma_e} F \equiv \prod_{\gamma \in \Gamma} F.
    \]
    making sure to choose a consistent choice of representatives $\alpha \in \Gamma / \Gamma_e$. Here $e_{\alpha} = \alpha(e)$, and we use the isomorphisms $\alpha^{-1} \colon e_{\alpha} \cdot L \rightarrow e \cdot L = F$. It is direct to check that it is $\Gamma_e \times \Gamma$-equivariant with respect to the action of $\Gamma_e \times \Gamma$ on $R$ described above.
\end{proof}

Letting $G$ act on $F \otimes_K L$ and $R$ through the action of $\Gamma$, this induces an isomorphism
\[
F \otimes_K (L \rtimes G) \xrightarrow{\sim} (F \otimes_K L) \rtimes G \xrightarrow{\sim} R \rtimes G.
\]
In this setting, the tuple $(G,H, R/F, \sigma \colon G \rightarrow \Gamma)$ satisfies the hypothesis of Section \ref{sect:notation}. A primitive idempotent of $R$ is given by $f \coloneqq (\delta_{\gamma, 1})_{\gamma}$. This is a split Galois extension, $f \cdot R = F$, and $H = \Stab_{G}(f)$. In particular, from Proposition \ref{prop:connequiv} we have an equivalence of categories 
\begin{align*}
    f \cdot (-) &\colon \Rep_{R}^{\rtimes}(G) \rightarrow \Rep_{F}(H), \\
    (R \rtimes G) \otimes_{R[H]} - &\colon \Rep_{F}(H) \rightarrow \Rep_{R}^{\rtimes}(G).
\end{align*}
We also have base change and restriction functors
\begin{align*}
    F \otimes_K - &\colon \Rep_{L}^{\rtimes}(G) \rightarrow \Rep_{F \otimes_K L}^{\rtimes}(G), \\
    \Res_{L \rtimes G}^{(F \otimes_K L) \rtimes G} - &\colon \Rep_{F \otimes_K L}^{\rtimes}(G) \rightarrow \Rep_{L}^{\rtimes}(G).
\end{align*}
Our next results allow us to interpret our restriction functor $e \cdot (-) \colon \Rep_{L}^{\rtimes}(G) \rightarrow \Rep_{F}(H)$ as this base change functor $F \otimes_K -$, and our induction functor $(L \rtimes G) \otimes_{F[H]} - \colon \Rep_{F}(H) \rightarrow \Rep_{L}^{\rtimes}(G)$ as this restriction functor.

\begin{prop}\label{prop:trivialisationind}
    The diagram
\[\begin{tikzcd}[column sep=3.15em]
	{\Rep_L^{\rtimes}(G)} & {\Rep_{F \otimes_K L}^{\rtimes}(G)} \\
	{\Rep_{F}(H)} & {\Rep_{R}^{\rtimes}(G)}
	\arrow["{\Res_{L \rtimes G}}"', from=1-2, to=1-1]
	\arrow["{(L \rtimes G) \otimes_{L[H]} -}", from=2-1, to=1-1]
	\arrow["{(R \rtimes G) \otimes_{R[H]} -}"', from=2-1, to=2-2]
	\arrow["\sim", from=2-1, to=2-2]
	\arrow["\sim"', from=2-2, to=1-2]
\end{tikzcd}\]
commutes up to natural transformation.
\end{prop}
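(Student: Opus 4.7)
The plan is to produce, for each $V \in \Rep_F(H)$, a natural $L \rtimes G$-linear isomorphism
\[
\phi_V \colon (L \rtimes G) \otimes_{L[H]} V \xrightarrow{\sim} \Res_{L \rtimes G} \bigl( (R \rtimes G) \otimes_{R[H]} V \bigr),
\]
where on the right we implicitly identify $(F \otimes_K L) \rtimes G$ with $R \rtimes G$ via the ring isomorphism induced by Lemma \ref{lem:basechangedesc}. Producing $\phi_V$ together with naturality in $V$ gives the claimed commutativity.

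The first step is to extract from Lemma \ref{lem:basechangedesc} a $G$-equivariant ring embedding $L \hookrightarrow R$. Restricting the isomorphism $F \otimes_K L \cong R$ along $L \hookrightarrow F \otimes_K L$, $\mu \mapsto 1 \otimes \mu$, gives the map $\mu \mapsto (e\gamma^{-1}(\mu))_{\gamma \in \Gamma}$, and one checks directly that this is $G$-equivariant with respect to the permutation action of $\Gamma$ on $R$. Combined with the identity on $G$, this yields a ring embedding $L \rtimes G \hookrightarrow R \rtimes G$, restricting to $L[H] \hookrightarrow R[H]$ on the subring generated by $H$.

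Next, I would verify that the $L[H]$-action on $V$ (via $L \to F$, $\mu \mapsto e\mu$) agrees with the restriction along $L[H] \hookrightarrow R[H]$ of the $R[H]$-action on $V$ (via projecting $R$ to its $\gamma = 1$ component). This is immediate: the $\gamma = 1$ component of $(e\gamma^{-1}(\mu))_\gamma$ is $e\mu$. With this compatibility, the prescription $\phi_V(x \otimes v) \coloneqq x \otimes v$ descends to a well-defined $L \rtimes G$-linear map that is manifestly natural in $V$.

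Finally, to see $\phi_V$ is bijective, I would choose left coset representatives $\{g_i\}$ for $H$ in $G$. Since $H = \ker \sigma$ acts trivially on both $L$ and $R$, the decompositions $L \rtimes G = \bigoplus_i g_i \cdot L[H]$ and $R \rtimes G = \bigoplus_i g_i \cdot R[H]$ realise these as free right $L[H]$- and $R[H]$-modules on the common basis $\{g_i\}$. Hence both sides of $\phi_V$ decompose canonically as $\bigoplus_i g_i \otimes V$, and $\phi_V$ is the identity on each summand. The main non-formal step in this whole argument is the verification of $G$-equivariance of the embedding $L \hookrightarrow R$; once that is in place, everything else is bookkeeping.
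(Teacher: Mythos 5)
Your proof is correct and follows essentially the same route as the paper: the natural transformation is the same map $x \otimes v \mapsto x \otimes v$ induced by the $G$-equivariant inclusion $L \hookrightarrow R$ coming from Lemma \ref{lem:basechangedesc}. The paper establishes bijectivity by checking surjectivity (rewriting $rg \otimes w = g \otimes g^{-1}(r)w$) and then comparing $K$-dimensions, while you instead observe that the coset representatives $\{g_i\}$ form a common free basis of $L \rtimes G$ over $L[H]$ and of $R \rtimes G$ over $R[H]$, so both sides decompose compatibly as $\bigoplus_i g_i \otimes V$ --- a slightly more direct way to reach the same conclusion.
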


\begin{proof}
    Suppose that $W \in \Rep_{F}(H)$. The induced $L \rtimes G$-module is defined by first viewing $W$ as a $L[H]$-module via the projection $L[H] \rightarrow F[H]$, and then letting $L \rtimes G$ act on the left of $(L \rtimes G) \otimes_{L[H]} W$. On the other hand, we obtain a $R \rtimes G$-module by letting $R[H]$ act on $W$ through the projection $R[H] \rightarrow F[H]$ and letting $R \rtimes G$ act on the left of $(R \rtimes G) \otimes_{R[H]} W$. There is a natural map
    \[
        (L \rtimes G) \otimes_{L[H]} W \rightarrow (R \rtimes G) \otimes_{R[H]} W
    \]
    induced by the inclusions $L[H] \hookrightarrow R[H]$ and $L \rtimes G \hookrightarrow R \rtimes G$, which is $L \rtimes G$-linear. To see that it is an isomorphism, first note that it is surjective, as any pure tensor of $r g \otimes w \in (R \rtimes G) \otimes_{R[H]} W$ can be expressed as
    \[
    r g \otimes w = g g^{-1}(r) \otimes w = g \otimes g^{-1}(r) w,
    \]
    and thus lies in the image. There it is an isomorphism, being $K$-linear and both modules having $K$-dimension $|\Gamma_e| \dim_K W$.
\end{proof}

\begin{prop}\label{prop:trivialisationres}
    The diagram
\[\begin{tikzcd}[column sep=3.15em]
	{\Rep_L^{\rtimes}(G)} & {\Rep_{F \otimes_K L}^{\rtimes}(G)} \\
	{\Rep_{F}(H)} & {\Rep_{R}^{\rtimes}(G)}
	\arrow["{F \otimes_K -}", from=1-1, to=1-2]
	\arrow["{e \cdot(-)}"', from=1-1, to=2-1]
	\arrow["\sim"', tail reversed, no head, from=2-2, to=1-2]
	\arrow["{f \cdot (-)}", from=2-2, to=2-1]
	\arrow["\sim"', from=2-2, to=2-1]
\end{tikzcd}\]
commutes up to natural transformation.
\end{prop}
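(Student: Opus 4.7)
The plan is to exhibit, naturally in $V \in \Rep_L^{\rtimes}(G)$, an $F[H]$-linear isomorphism
\[
\phi_V \colon e \cdot V \xrightarrow{\sim} f \cdot (F \otimes_K V),
\]
where $F \otimes_K V$ is viewed as an object of $\Rep_R^{\rtimes}(G)$ via Lemma \ref{lem:basechangedesc}. The key observation is that after the identification $F \otimes_K L \cong R$, the composite $f \cdot (-) \circ (F \otimes_K -)$ computes base change of $V$ from $L$ along the quotient $L \twoheadrightarrow F = eL$, which recovers $e \cdot V$.

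First I would unpack the identifications: associativity of tensor gives $F \otimes_K V \cong (F \otimes_K L) \otimes_L V$, which Lemma \ref{lem:basechangedesc} rewrites as $R \otimes_L V$, with the induced inclusion $\iota_L \colon L \hookrightarrow R$ given by $\mu \mapsto (e \gamma^{-1}(\mu))_{\gamma}$. I would then define $\phi_V$ by $v \mapsto f \otimes v$ for $v \in e \cdot V \subseteq V$. For $F$-linearity, I would observe that multiplication by $f$ kills every coordinate of $R$ except $\gamma = 1$, so for $\lambda \in F$ the identity $\iota_L(\lambda) \cdot f = \lambda \cdot f$ holds in $R$ (where on the right $\lambda$ acts via the diagonal embedding $F \hookrightarrow R$ dictated by Lemma \ref{lem:basechangedesc}); hence $\phi_V(\lambda v) = f \otimes \lambda v = \iota_L(\lambda) f \otimes v = \lambda \cdot \phi_V(v)$. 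For $H$-equivariance, the $G$-action on $F \otimes_K L \cong R$ factors through the $L$-tensor factor, so $H = \ker(G \twoheadrightarrow \Gamma)$ acts trivially on $R$, whence $h(f \otimes v) = f \otimes h(v)$ for all $h \in H$.

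To verify that $\phi_V$ is bijective, I would identify $fR \subseteq R$ with $F$ as an $L$-algebra where $L$ acts on $F$ via the quotient map $\mu \mapsto e \mu$; this is the content of the computation $\iota_L(\mu) \cdot f = e\mu \cdot f$ in $R$. It follows that
\[
f \cdot (R \otimes_L V) = fR \otimes_L V \cong F \otimes_L V \cong V / (1-e)V = e \cdot V,
\]
and tracing through the identifications exhibits $\phi_V$ as the identity map on $e \cdot V$, so in particular it is an isomorphism. Naturality in $V$ is immediate from the definition. No substantive obstacle arises — the argument is essentially bookkeeping — but one must distinguish the $L$-structure on $R$ coming from $\iota_L$ (used for the tensor product over $L$) from the diagonal $F$-structure (used in verifying $F$-linearity), as these give different embeddings which only agree after multiplication by $f$.
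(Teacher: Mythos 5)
Your proof is correct and takes essentially the same approach as the paper: both identify $F \otimes_K V$ with $R \otimes_L V$ via Lemma \ref{lem:basechangedesc}, observe that the map $L \to fR = F$ induced by $\iota_L$ is $\mu \mapsto e\mu$, and conclude $f \cdot (R \otimes_L V) \cong F \otimes_L V \cong e \cdot V$ as $F[H]$-modules. The only difference is cosmetic — you write the isomorphism as $v \mapsto f \otimes v$ while the paper writes its inverse $\lambda \otimes v \mapsto \lambda v$ — and your explicit flagging of the two competing scalar structures on $R$ (the $L$-structure via $\iota_L$ versus the diagonal $F$-structure) agreeing after multiplication by $f$ is a helpful clarification that the paper leaves implicit.
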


\begin{proof}
    Suppose that $V \in \Rep_L^{\rtimes}(G)$. There is a well-defined action of $(F \otimes_K L) \rtimes G$ on $(F \otimes_K L) \otimes_L V$ defined by $x g \cdot (y \otimes v) \coloneqq x g(y) \otimes g(v)$, for which is natural identification
    \[
        F \otimes_K V \xrightarrow{\sim} (F \otimes_K L) \otimes_L V
    \]
    is $F \otimes_K (L \rtimes G)$-equivariant, where $F \otimes_K (L \rtimes G)$ acts on $(F \otimes_K L) \otimes_L V$ via the isomorphism $F \otimes_K (L \rtimes G) \xrightarrow{\sim} (F \otimes_K L) \rtimes G$. Then under the above functor diagram, $V$ maps to the $R \rtimes G$-module $R \otimes_L V$, where similarly $R \rtimes G$ acts by $x g \cdot (y \otimes v) \coloneqq x g(y) \otimes g(v)$. Then
    \[
    f \cdot (R \otimes_L V) = (f \cdot R) \otimes_L V = F \otimes_L V \xrightarrow{\sim} e \cdot V,
    \]
    as $F[H]$-modules, where the last isomorphism is defined by $\lambda \otimes v \mapsto \lambda v$. The map $L \rightarrow R$ is defined by $\mu \mapsto (e\gamma^{-1}(\mu))_{\gamma}$, and so the map to $L \rightarrow f \cdot R = F$ is $\mu \mapsto e \cdot \mu$, so this map is well-defined, and an isomorphism, having inverse $v \mapsto e \otimes v$.
\end{proof}

\subsection{Consequences}
This allows us to transport results about base change functors to our setting.

\begin{cor}\label{cor:indres}
For any $V \in \Rep_L^{\rtimes}(G)$,
\[
(L \rtimes G) \otimes_{L[H]} e \cdot V \cong V^{\oplus |\Gamma_e|}.
\]
\end{cor}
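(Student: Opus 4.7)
The plan is to apply Propositions 7.2 and 7.3 in tandem to realise $(L \rtimes G) \otimes_{L[H]}(e \cdot V)$ as the $L \rtimes G$-module underlying the base change $F \otimes_K V$, and then to observe that this restriction splits as $|\Gamma_e|$ copies of $V$ by a choice of $K$-basis of $F$.

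First I would apply Proposition \ref{prop:trivialisationind} with $W = e \cdot V \in \Rep_F(H)$: this yields a natural isomorphism
\[
(L \rtimes G) \otimes_{L[H]}(e \cdot V) \cong \Res_{L \rtimes G}^{(F \otimes_K L) \rtimes G} M
\]
in $\Rep_L^\rtimes(G)$, where $M \in \Rep_{F \otimes_K L}^\rtimes(G)$ is the object corresponding to $e \cdot V$ under the composite equivalence $\Rep_F(H) \simeq \Rep_R^\rtimes(G) \simeq \Rep_{F \otimes_K L}^\rtimes(G)$ provided by Proposition \ref{prop:connequiv} and Lemma \ref{lem:basechangedesc}. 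Next I would apply Proposition \ref{prop:trivialisationres}, which produces a natural isomorphism $f \cdot (F \otimes_K V) \cong e \cdot V$ in $\Rep_F(H)$, forcing $M \cong F \otimes_K V$ by the equivalence. This reduces the claim to proving that the underlying $L \rtimes G$-module of $F \otimes_K V$ is isomorphic to $V^{\oplus |\Gamma_e|}$.

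For this last step, fix a $K$-basis $f_1, \ldots, f_{|\Gamma_e|}$ of $F$ (recalling $[F:K] = |\Gamma_e|$) and consider the $F$-linear decomposition $F \otimes_K V = \bigoplus_i f_i \otimes V$. The inclusion $L \rtimes G \hookrightarrow (F \otimes_K L) \rtimes G$ sends $\lambda g \mapsto (1 \otimes \lambda)g$, and, as used in the proof of Proposition \ref{prop:trivialisationres}, both $\lambda \in L$ and $g \in G$ then act on $F \otimes_K V$ purely through the right tensor factor (since $G$ acts on $F \otimes_K L$ only through $\sigma \colon G \rightarrow \Gamma$ on the $L$-factor, leaving $F$ untouched). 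Consequently each $f_i \otimes V$ is an $L \rtimes G$-submodule of $F \otimes_K V$ isomorphic to $V$, yielding the desired decomposition. The main obstacle is precisely this bookkeeping: one must carefully verify that the restriction along $L \rtimes G \hookrightarrow (F \otimes_K L) \rtimes G$ truly keeps $G$ inactive on the $F$ factor under the identifications of Lemma \ref{lem:basechangedesc} and Proposition \ref{prop:trivialisationres}. Once that is checked the splitting is automatic.
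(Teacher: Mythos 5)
Your proposal is correct and follows essentially the same route as the paper: both proofs chain together Propositions \ref{prop:trivialisationind} and \ref{prop:trivialisationres} to reduce the claim to the assertion that $\Res_{L\rtimes G}(F\otimes_K V)\cong V^{\oplus|\Gamma_e|}$, which the paper states as the general fact that $F\otimes_K V\cong V^{\oplus|\Gamma_e|}$ as $A$-modules for any $K$-algebra $A$ and $V\in\Mod_A$; your basis-of-$F$ argument is simply an unpacking of that same observation.
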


\begin{proof}
    This follows from the commutativity of the diagrams of Propositions \ref{prop:trivialisationind} and \ref{prop:trivialisationres}, and the fact that for a $K$-algebra $A$, and $V \in \Mod_A$, $F \otimes_K V \cong V^{\oplus |\Gamma_e|}$ as $A$-modules.
\end{proof}

\begin{cor}\label{cor:detectisom}
    $V \cong W$ in $\Rep_L^{\rtimes}(G)$ if and only if $e \cdot V \cong e \cdot W$ in $\Rep_{F}(H)$. 
\end{cor}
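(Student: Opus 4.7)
The forward direction is immediate from functoriality of $e\cdot(-)$, so the content is in the converse. My plan is to reduce an isomorphism at the level of $\Rep_F(H)$ to one in $\Rep_L^{\rtimes}(G)$ by applying the induction functor $(L\rtimes G)\otimes_{L[H]}-$ and then invoking Krull--Remak--Schmidt.

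Concretely, suppose $e\cdot V \cong e\cdot W$ in $\Rep_F(H)$. Applying the functor $(L\rtimes G)\otimes_{L[H]}-$ to both sides, the two outputs are isomorphic in $\Rep_L^{\rtimes}(G)$. By Corollary~\ref{cor:indres}, for any $U\in\Rep_L^{\rtimes}(G)$ there is a natural isomorphism
\[
(L\rtimes G)\otimes_{L[H]}(e\cdot U)\;\cong\;U^{\oplus|\Gamma_e|},
\]
so we obtain an isomorphism
\[
V^{\oplus|\Gamma_e|}\;\cong\;W^{\oplus|\Gamma_e|}
\]
in $\Rep_L^{\rtimes}(G)$. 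Since $L/K$ is a finite Galois extension the index set of connected components of $\Spec(L)$ is finite, so Lemma~\ref{lem:KRS} (Krull--Remak--Schmidt in $\Rep_L^{\rtimes}(G)$) applies: the multisets of isomorphism classes of indecomposable summands of $V^{\oplus|\Gamma_e|}$ and $W^{\oplus|\Gamma_e|}$ coincide, and these are just $|\Gamma_e|$ copies of the indecomposable decompositions of $V$ and $W$ respectively. Dividing multiplicities by $|\Gamma_e|$ gives the same multiset of indecomposable summands for $V$ and $W$, hence $V\cong W$.

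The only potential obstacle is justifying the cancellation $V^{\oplus n}\cong W^{\oplus n}\Rightarrow V\cong W$, but this is precisely what Krull--Remak--Schmidt delivers in a category where endomorphism rings of indecomposables are local and decompositions into indecomposables exist, which is the content of Lemma~\ref{lem:KRS} under our finiteness hypothesis. No further computation is required; the result is essentially a formal consequence of Corollary~\ref{cor:indres} combined with KRS.
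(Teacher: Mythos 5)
Your proof is correct and follows exactly the route the paper takes: apply the induction functor, invoke Corollary~\ref{cor:indres} to identify the results as $V^{\oplus|\Gamma_e|}$ and $W^{\oplus|\Gamma_e|}$, and then cancel using the Krull--Remak--Schmidt property from Lemma~\ref{lem:KRS}. The paper states this in one line; you have simply spelled out the same argument.
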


\begin{proof}
    This follows directly from Corollary \ref{cor:indres} and Lemma \ref{lem:KRS}.
\end{proof}

\begin{cor}\label{cor:homsetisom}
For $V,W \in \Rep_L^{\rtimes}(G)$, the natural map
\[
F \otimes_{K} \Hom_{L \rtimes G}(V,W) \rightarrow \Hom_{F[H]}(e \cdot V,e \cdot W) 
\]
is an isomorphism.
\end{cor}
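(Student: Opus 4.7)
The plan is to realise the asserted map as the composition of two isomorphisms: the base change isomorphism of Lemma \ref{lem:injofbasechangeSL} applied to the field extension $K \to F$, followed by the identification of the resulting Hom set with $\Hom_{F[H]}(e \cdot V, e \cdot W)$ provided by Proposition \ref{prop:trivialisationres}.

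First I would apply Lemma \ref{lem:injofbasechangeSL} with $K' = F$ (which is a field since $e$ is a primitive idempotent of $L$) and $L' = L \otimes_K F = F \otimes_K L$. This yields an isomorphism
\[
F \otimes_{K} \Hom_{L \rtimes G}(V,W) \xlongrightarrow{\sim} \Hom_{(F \otimes_K L) \rtimes G}(F \otimes_K V, F \otimes_K W).
\]
Next, Lemma \ref{lem:basechangedesc} provides a $\Gamma$-equivariant $F$-algebra isomorphism $F \otimes_K L \cong R$, which in turn induces an isomorphism $(F \otimes_K L) \rtimes G \cong R \rtimes G$, and Proposition \ref{prop:trivialisationres} identifies $F \otimes_K V$ (viewed as a $(F \otimes_K L) \rtimes G$-module) with the $R \rtimes G$-module whose image under the equivalence $f \cdot (-) \colon \Rep_R^{\rtimes}(G) \xrightarrow{\sim} \Rep_F(H)$ is exactly $e \cdot V$, and likewise for $W$. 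Composing these identifications gives
\[
\Hom_{(F \otimes_K L) \rtimes G}(F \otimes_K V, F \otimes_K W) \xlongrightarrow{\sim} \Hom_{F[H]}(e \cdot V, e \cdot W),
\]
and stringing the two isomorphisms together yields the desired bijection.

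The only thing that requires a small check is that the composite of these two canonical maps agrees with the natural map in the statement, i.e.\ the one sending $\lambda \otimes \phi$ to $(\lambda \cdot e) \otimes \phi|_{e \cdot V}$. This is a routine unwinding: on a pure tensor $\lambda \otimes \phi$, the first isomorphism produces the $F \otimes_K L$-linear map $y \otimes v \mapsto \lambda y \otimes \phi(v)$ on $F \otimes_K V$; multiplication by the idempotent $f \in R$ then cuts this down to the $F[H]$-linear map $e \cdot V \to e \cdot W$ sending $v \mapsto \lambda \phi(v)$, which is exactly the image of $\lambda \otimes \phi$ under the natural map. I expect the main obstacle here to be purely notational rather than mathematical, since once the identifications of Propositions \ref{prop:trivialisationind} and \ref{prop:trivialisationres} are in place the result is essentially formal.
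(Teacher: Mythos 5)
Your proof is correct and follows essentially the same route as the paper: the paper also identifies the map via the equivalence of Proposition \ref{prop:trivialisationres} with the base change map $F \otimes_K \Hom_{L \rtimes G}(V,W) \to \Hom_{(L \rtimes G)_F}(F \otimes_K V, F \otimes_K W)$, citing \cite[Lem.\ 29.5]{CR} for the latter isomorphism rather than (equivalently) the paper's own Lemma \ref{lem:injofbasechangeSL}. The only cosmetic difference is the order of presentation and which reference is invoked for the flat base change step; the verification that the composite is the natural map is exactly the routine unwinding you describe.
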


\begin{remark}
This isomorphism is compatible with composition of morphisms in the obvious sense. In particular, when $V = W$ this map is a ring isomorphism
    \[
    F \otimes_K \End_{L \rtimes G}(V) \xrightarrow{\sim} \End_{F[H]}(e \cdot V).
    \]
\end{remark}

\begin{proof}
    We have that 
    \[
    F \otimes_{K} \Hom_{L \rtimes G}(V,W) \rightarrow \Hom_{F[H]}(e \cdot V,e \cdot W),
    \]
    corresponds under the $K$-linear equivalences of Proposition \ref{prop:trivialisationres} to the natural map
    \[
        F \otimes_K \Hom_{L \rtimes G}(V,W) \xrightarrow{\sim} \Hom_{(L \rtimes G)_{F}}(F \otimes_K V, F \otimes_K W),
    \]
    which is an isomorphism by \cite[Lem.\ 29.5]{CR}.
\end{proof}

\begin{remark}\label{rem:weakhomsetisom}
    Using this we can also show that the natural map
    \[
    L \otimes_{K} \Hom_{L \rtimes G}(V,W) \rightarrow \Hom_{L[H]}(V,W), \qquad \lambda \otimes f \mapsto \lambda f
    \]
    of Lemma \ref{lem:homsetinjection} is also an isomorphism, as this factors as
    \begin{align*}
        L \otimes_{K} \Hom_{L \rtimes G}(V,W) &\xrightarrow{\sim} L \otimes_{F} \Hom_{F[H]}(e \cdot V,e \cdot W), \\
        &\xrightarrow{\sim}  \Hom_{L[H]}(L \otimes_{F} e \cdot V, L \otimes_{F} e \cdot W), \\
        &\xrightarrow{\sim} \Hom_{L[H]}(V,W),
    \end{align*}
    using the $L[H]$-module isomorphisms $L \otimes_{F} e \cdot V \xrightarrow{\sim} V$, $L \otimes_{F} e \cdot W \xrightarrow{\sim} W$.
\end{remark}

 \begin{eg}\label{eg:basechangeisom}
    Suppose that $L$ is connected, so $L = F$. Then by Lemma \ref{lem:classicalinduction} we see that Corollary \ref{cor:homsetisom} generalises the fact that for any group $H$ the map
    \[
        L \otimes_K \Hom_{K[H]}(V, W) \rightarrow \Hom_{L[H]}(L \otimes_K V, L \otimes_K W).
    \]
    is an isomorphism for $V,W \in \Rep_K(H)$.
\end{eg}

\begin{remark}
    Corollary \ref{cor:indres} and Corollary \ref{cor:homsetisom} generalise \cite[Prop.\ 2.9]{RUMTAY} and \cite[Thm.\ 5.1]{RUMTAY} respectively, which consider the special case where $L / K = \bC / \bR$ and $G$ is finite.
\end{remark}

Corollary \ref{cor:homsetisom} also has implications for the base change functor $L \otimes_K -$ considered in Section \ref{sect:classicalreps}.

\begin{cor}\label{cor:dimequal}
    For $V, W \in \Rep_K(G)$, we have an equality
    \[
    \dim_K\Hom_{L \rtimes G}(L \otimes_K V, L \otimes_K W) = \dim_K\Hom_{K[H]}(V,W).
    \]
\end{cor}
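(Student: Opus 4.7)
The plan is to chain together Corollary \ref{cor:homsetisom} with the standard fact (a version of Example \ref{eg:basechangeisom}) that $\Hom_{K[H]}$ commutes with base change along $K \hookrightarrow F$, and then compare $K$-dimensions.

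First I would apply Corollary \ref{cor:homsetisom} to the semilinear representations $V' \coloneqq L \otimes_K V$ and $W' \coloneqq L \otimes_K W$, obtaining a canonical $F$-linear isomorphism
\[
F \otimes_{K} \Hom_{L \rtimes G}(L \otimes_K V, L \otimes_K W) \xrightarrow{\sim} \Hom_{F[H]}\bigl(e \cdot (L \otimes_K V),\, e \cdot (L \otimes_K W)\bigr).
\]
The next step is to identify each $e \cdot (L \otimes_K V)$ with $F \otimes_K V$ as an object of $\Rep_F(H)$: multiplication by $e$ acts only on the left $L$-factor, and since $H$ fixes $L$ (and in particular $F$) pointwise, the $H$-action on $F \otimes_K V$ is purely through the $V$-factor. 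This is a direct unpacking of definitions.

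Then I would invoke (the $F$-version of) Example \ref{eg:basechangeisom}: since $V, W$ are finite-dimensional over $K$, the natural map
\[
F \otimes_K \Hom_{K[H]}(V, W) \longrightarrow \Hom_{F[H]}(F \otimes_K V,\, F \otimes_K W)
\]
is an isomorphism. Combining with the previous step yields an isomorphism of $F$-modules
\[
F \otimes_{K} \Hom_{L \rtimes G}(L \otimes_K V, L \otimes_K W) \cong F \otimes_K \Hom_{K[H]}(V, W).
\]
Finally I would take $K$-dimensions: both sides equal $[F:K]$ times the respective $K$-dimension of the unbase-changed Hom-spaces, so dividing by $[F:K]$ gives the claimed equality.

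There is no real obstacle here; the only point that requires a moment of care is the $F[H]$-module identification $e \cdot (L \otimes_K V) \cong F \otimes_K V$, but this follows immediately from the fact that $H$ acts trivially on $L$, so the full contents of the statement are already packaged in Corollary \ref{cor:homsetisom} together with ordinary base-change for Hom over a group algebra.
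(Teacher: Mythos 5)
Your proof is correct and follows essentially the same route as the paper's: sandwich both Hom-spaces inside isomorphic base-changed modules, then compare $K$-dimensions. The paper tensors up to $L$ via Remark~\ref{rem:weakhomsetisom}, obtaining
\[
L \otimes_K \Hom_{L \rtimes G}(L \otimes_K V, L \otimes_K W) \xrightarrow{\sim} \Hom_{L[H]}(L \otimes_K V, L \otimes_K W) \xleftarrow{\sim} L \otimes_K \Hom_{K[H]}(V,W),
\]
whereas you tensor up to $F$ via Corollary~\ref{cor:homsetisom}, which requires the (easy but necessary) extra identification $e\cdot(L\otimes_K V)\cong F\otimes_K V$ in $\Rep_F(H)$. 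Since Remark~\ref{rem:weakhomsetisom} is itself derived from Corollary~\ref{cor:homsetisom}, the two proofs differ only in bookkeeping; yours uses the more primitive ingredient at the cost of one extra identification, while the paper's is marginally more streamlined by avoiding the idempotent entirely. Both are fine.
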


\begin{remark}
    We stress that this is only an equality of dimensions - there is only a canonical map between these spaces after we tensor with $L$ over $K$, and not before.
\end{remark}

\begin{proof}
    From Remark \ref{rem:weakhomsetisom} and Example \ref{eg:basechangeisom} we have isomorphisms
    \[
    L \otimes_K \Hom_{L \rtimes G}(L \otimes_K V, L \otimes_K W)
        \xrightarrow{\sim} \Hom_{L[H]} (L \otimes_K V, L \otimes_K W) \xleftarrow{\sim} L \otimes_K \Hom_{K[H]}(V, W). \qedhere
    \]
\end{proof}

\subsection{Preservation of Semisimplicity} Suppose now that $A$ is a $K$-algebra, and $M$ is a simple $A$-module which is finite dimensional over $K$. Then because $M$ is finite dimensional, the $K$-algebra homomorphism $A \rightarrow \End_K(M)$ factors as
\[
A \twoheadrightarrow B \hookrightarrow \End_K(M),
\]
where $B$ is the image of $A$, a finite dimensional $K$-algebra. Note that because $M$ is a simple $B$-module, $B$ is a simple $K$-algebra by \cite[Prop.\ 3.31]{CR1}. Furthermore, because $F / K$ is separable, $F \otimes_K M$ is a semisimple $F \otimes_K B$-module by \cite[Cor.\ 7.8(ii)]{CR2} and hence a semisimple $F \otimes_K A$-module.

Conversely, if $N$ is a simple $F \otimes_K A$-module, which is finite dimensional over $K$, then writing $B$ for the image of $A$ in $\End_K(N)$, $B$ is a finite dimensional $K$-algebra with $J(B)$ acting trivially on $N$ (because $J(F \otimes_K B) = F \otimes_K J(B)$ \cite[Thm.\ 7.9]{CR1} and $N$ is a simple $F \otimes_K B$-module), hence $N$ is a semisimple $B$-module, and thus a semisimple $A$-module.

\begin{cor}\label{cor:presss}
The functors
\begin{align*}
    e \cdot (-) &\colon \Rep_{L}(G) \rightarrow \Rep_{F}(H), \\
    (L \rtimes G) \otimes_{L[H]} - &\colon \Rep_{F}(H) \rightarrow \Rep_{L}(G)
\end{align*}
preserve semisimplicity.
\end{cor}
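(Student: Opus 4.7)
The plan is to combine Propositions \ref{prop:trivialisationres} and \ref{prop:trivialisationind} with the two module-theoretic observations stated just before the corollary. Both functors will be interpreted, up to the equivalences of Proposition \ref{prop:connequiv}, as a base change or a restriction along the $K$-algebra map $L \rtimes G \hookrightarrow F \otimes_K (L \rtimes G) \cong R \rtimes G$, and preservation of semisimplicity under those two operations is exactly what was just established using that $F/K$ is separable.

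First I would handle $e \cdot (-)$. Let $V \in \Rep_L^{\rtimes}(G)$ be semisimple, and decompose $V = \bigoplus_i V_i$ into simple $L \rtimes G$-submodules. Each $V_i$ is finite dimensional over $K$, so the first observation applied with $A = L \rtimes G$ gives that $F \otimes_K V_i$ is a semisimple $F \otimes_K (L \rtimes G)$-module, and hence a semisimple object of $\Rep_R^{\rtimes}(G)$ under the ring isomorphism of Lemma \ref{lem:basechangedesc}. The equivalence of Proposition \ref{prop:connequiv} preserves semisimplicity, and by Proposition \ref{prop:trivialisationres} the resulting object of $\Rep_F(H)$ is naturally isomorphic to $e \cdot V_i$. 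Summing over $i$ yields that $e \cdot V$ is semisimple.

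For the induction functor I would run the dual argument. Take $W \in \Rep_F(H)$ semisimple and write $W = \bigoplus_j W_j$ with $W_j$ simple. The equivalence $(R \rtimes G) \otimes_{R[H]} - \colon \Rep_F(H) \xrightarrow{\sim} \Rep_R^{\rtimes}(G)$ of Proposition \ref{prop:connequiv} preserves simplicity, so each $(R \rtimes G) \otimes_{R[H]} W_j$ is a simple $R \rtimes G$-module, hence a simple $F \otimes_K (L \rtimes G)$-module, still finite dimensional over $K$. By the second observation its restriction to $L \rtimes G$ is semisimple, and by Proposition \ref{prop:trivialisationind} this restriction is precisely $(L \rtimes G) \otimes_{L[H]} W_j$. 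Summing over $j$ gives the required semisimplicity of $(L \rtimes G) \otimes_{L[H]} W$.

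There is no serious obstacle here, as the paper has already done the substantive work: the base-change interpretations of the two functors are in hand, and the module-theoretic preservation statements have been carefully formulated. The only care needed is to match up the commutative diagrams so that the image under $e \cdot (-)$ really corresponds to $F \otimes_K V$ viewed as an $R \rtimes G$-module, and dually that $(L \rtimes G) \otimes_{L[H]} W$ really coincides with the $L \rtimes G$-restriction of $(R \rtimes G) \otimes_{R[H]} W$.
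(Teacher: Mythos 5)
Your proposal is correct and follows essentially the same route as the paper: the paper's proof simply says "Taking $A = L \rtimes G$, this follows from the above discussion and the commutativity of the diagrams of Propositions \ref{prop:trivialisationind} and \ref{prop:trivialisationres}, the equivalences of which preserve semisimplicity," and you have correctly unpacked exactly what that means, including the reduction to simple summands and the identification of $e\cdot(-)$ and $(L\rtimes G)\otimes_{L[H]}-$ with base change and restriction along $L\rtimes G\hookrightarrow R\rtimes G$ via Proposition \ref{prop:connequiv} and Lemma \ref{lem:basechangedesc}.
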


\begin{proof}
    Taking $A = L \rtimes G$, this follows from the above discussion and the commutativity of the diagrams of Propositions \ref{prop:trivialisationind} and \ref{prop:trivialisationres}, the equivalences of which preserve semisimplicity.
\end{proof}

\subsection{Action of $\Gamma_e$}

In this section we consider the two natural actions of the group $\Gamma_e$ on $\Rep_{R}^{\rtimes}(G)$ and $\Rep_{F}(H)$, and show that these agree under the equivalences Propositions \ref{prop:trivialisationind} and \ref{prop:trivialisationres}.

\subsubsection{Action of $\Gamma_e$ on $\Rep_{R}^{\rtimes}(G)$}

For any $\sigma \in \Gamma_e$, we have a $K$-algebra isomorphism
\[
\sigma \otimes 1 \colon F \otimes_K (L \rtimes G) \rightarrow F \otimes_K (L \rtimes G).
\]
Under the isomorphism $F \otimes_K (L \rtimes G) \xrightarrow{\sim} R \rtimes G$ induced from Lemma \ref{lem:basechangedesc}, this corresponds to
\[
\sigma \colon R \rtimes G \rightarrow R \rtimes G, \qquad (\lambda_{\gamma})_{\gamma} \cdot g \mapsto (\sigma(\lambda_{\gamma \sigma}))_{\gamma} \cdot g.
\]
In particular, we obtain a left action on $\Rep_{R}^{\rtimes}(G) / \! \cong$, the set of isomorphism classes of objects:
\begin{defn}
$M \in \Rep_{R}^{\rtimes}(G)$, ${}^\sigma M$ is $M$ but with $R \rtimes G$ action through $\sigma^{-1} \colon R \rtimes G \rightarrow R \rtimes G$.
\end{defn}

\subsubsection{Action of $\Gamma_e$ on $\Rep_{F}(H)$}

Recall $\Gamma_e = \Stab_{\Gamma}(e)$ and $G_e = \sigma^{-1}(\Gamma_e)$, where $\sigma$ denotes the surjection $\sigma \colon G \rightarrow \Gal(L/K)$. We have a left action of $G_e$ on $\Rep_{F}(H) / \! \cong$ defined as follows.

\begin{defn}\label{def:Gaction}
    For $g \in G_e$ and $W \in \Rep_{F}(H)$, $g * W \in \Rep_{F}(H)$ is $W$ as an abelian group, with
    \begin{itemize}
        \item $h * w = (g^{-1}hg)(w)$,
        \item $\lambda * w = \sigma_{g^{-1}}(\lambda) w$,
    \end{itemize}
    for $\lambda \in F$, $h \in H$ and $w \in W$.
\end{defn}
Note that the induced action on isomorphism classes of objects is trivial when restricted to $H$, and thus becomes an action of the finite group $G_e / H$ and therefore of $\Gamma_e$ on $\Rep_{F}(H) / \! \cong$ using the isomorphism $G_e / H \xrightarrow{\sim} \Gamma_e$.

Using this action, we can describe the other composition of $L \rtimes G \otimes_{F[H]} -$ and $e \cdot (-)$. We could do this using Propositions \ref{prop:trivialisationind} and \ref{prop:trivialisationres}, but it is easier to see directly.

\begin{prop}\label{prop:resind}
    For any $W \in \Rep_{F}(H)$ there is an isomorphism
    \[
        e \cdot ((L \rtimes G) \otimes_{L[H]} W) \xrightarrow{\sim} \bigoplus_{g \in G_e/H} \: g * W.
    \]
\end{prop}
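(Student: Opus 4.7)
The plan is to give a direct $K$-linear decomposition of $(L\rtimes G)\otimes_{L[H]} W$ using coset representatives, apply $e\cdot(-)$, and then recognise the resulting summands as the twisted modules $g * W$.

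First, let $\{g_i\}_{i}$ be a set of left coset representatives of $H$ in $G$. Since $L \rtimes G = \bigoplus_{i} g_i\cdot L[H]$ as a right $L[H]$-module, we obtain a $K$-linear decomposition
\[
(L\rtimes G)\otimes_{L[H]} W \;=\; \bigoplus_{g\in G/H}\, g\otimes W.
\]
I would then compute $e\cdot(g\otimes w)$. Using the skew product relation $e g = g\cdot g^{-1}(e)$ in $L\rtimes G$ and the fact that $g^{-1}(e)\in L$ lies on the $L[H]$-side of the tensor, we get
\[
e\cdot(g\otimes w) \;=\; g\otimes g^{-1}(e)\,w.
\]
The key observation is that $W$ is an $F[H]$-module and $L$ acts through the projection $L\twoheadrightarrow F=eL$, $\mu\mapsto e\mu$. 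For $g\in G_e$ we have $g^{-1}(e)=e$, so $g^{-1}(e)w = w$; for $g\notin G_e$ the element $g^{-1}(e)$ is a primitive idempotent of $L$ distinct from $e$, so $e\cdot g^{-1}(e)=0$ and hence $g^{-1}(e)$ acts as zero on $W$. Thus
\[
e\cdot\bigl((L\rtimes G)\otimes_{L[H]} W\bigr) \;=\; \bigoplus_{g\in G_e/H}\, g\otimes W,
\]
where we can take the $g$ to range over coset representatives of $H$ in $G_e$ (which is allowed since $H\trianglelefteq G$, so the cosets $g_iH$ with $g_i\in G_e$ are exactly the cosets in $G_e/H$).

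It remains to identify each summand $g\otimes W$ with $g*W$ as an $F[H]$-module through the map $g\otimes w\mapsto w$. For $\lambda\in F\subset L$ and $h\in H$, again moving scalars across the tensor product gives
\[
\lambda\cdot(g\otimes w)=g\otimes \sigma_{g^{-1}}(\lambda)\,w, \qquad h\cdot(g\otimes w)=g\otimes (g^{-1}hg)(w),
\]
using normality of $H$ in $G$ for the second identity (so that $g^{-1}hg\in H$) and $g\in G_e$ for the first (so that $\sigma_{g^{-1}}\in\Gamma_e$ preserves $F$). These are exactly the formulas defining $g*W$ in Definition \ref{def:Gaction}, so the bijection $g\otimes w\mapsto w$ is an $F[H]$-module isomorphism. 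Summing over $g\in G_e/H$ yields the stated isomorphism.

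The calculation is essentially bookkeeping; the only subtle point — and the step I would carry out most carefully — is the vanishing $e\cdot(g\otimes w)=0$ for $g\notin G_e$, which crucially uses that the $L$-action on $W$ factors through $F$, so that two distinct primitive idempotents of $L$ annihilate one another when one of them is moved across the tensor.
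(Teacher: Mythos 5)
Your proof is correct and follows essentially the same route as the paper's: decompose $L\rtimes G$ by left cosets of $H$, observe that $e\cdot(g\otimes w)=g\otimes\sigma_{g^{-1}}(e)w$ vanishes for $g\notin G_e$ because $\sigma_{g^{-1}}(e)$ dies under $L\twoheadrightarrow F$, and then read off the $F[H]$-action on each surviving summand to match Definition \ref{def:Gaction}. One small slip worth noting: the fact that coset representatives of $H$ in $G$ lying in $G_e$ form a set of representatives for $G_e/H$ needs only $H\leq G_e$ (which holds since $H$ fixes $e$), not normality of $H$ in $G$; you do, however, correctly use normality in the separate step where you need $g^{-1}hg\in H$.
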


\begin{proof}
    Pick coset representatives $g_i$ for $G/H$. First, $L \rtimes G = \oplus_i \: g_i L[H]$ as an $L$-module, hence
    \[
    (L \rtimes G) \otimes_{L[H]} W = \bigoplus_i g_i L[H] \otimes_{L[H]} W.
    \]
    If $g_i \not\in G_e$, then $\sigma_{g_i^{-1}}(e)$ is sent to zero under $L \rightarrow F$, and so $e \cdot (g_i L[H] \otimes_{L[H]} W) = 0$ as
    \[
    e \cdot g_i x \otimes w = g_i \sigma_{g_i^{-1}}(e) x \otimes w = g_i x \otimes 0 = 0.
    \]
    Therefore
    \[
        e \cdot ((L \rtimes G) \otimes_{L[H]} W) = \bigoplus_{g_i \in G_e} g_i L[H] \otimes_{L[H]} W.
    \]
    Each summand $g_i L[H] \otimes_{L[H]} W$ is stable under the action of $F[H]$, and for $h \in H$, $\lambda \in F$ and $w \in W$,
    \begin{align*}
        \lambda (g_i \otimes w) &= \lambda g_i \otimes w = g_i \sigma_{g_i^{-1}}(\lambda)\otimes w = g_i \otimes \sigma_{g_i^{-1}}(\lambda)w, \\
        h (g_i \otimes w) &= hg_i \otimes w = g_i (g_i^{-1} h g_i) \otimes w = g_i \otimes (g_i^{-1} h g_i)w,
    \end{align*}
    which is exactly the action of $F[H]$ on $g_i * W$.
\end{proof}

The next lemma shows that the natural action of of $\Gamma_e$ on $\Rep_R^{\rtimes}(G) / \cong$ agrees under the equivalences of Propositions \ref{prop:trivialisationind} and \ref{prop:trivialisationres} with the action of $\Gamma_e$ on $\Rep_F(H) / \cong$ described above. We will use this later in Proposition \ref{prop:descofcentre}, to describe the endomorphism ring of irreducible objects of $\Rep_L^{\rtimes}(G)$.

\begin{lemma}\label{lem:groupsactionssame}
    For any $g \in G_e$ and $W \in \Rep_{F}(H)$,
    \[
    \begin{matrix}
        \phi : & {}^{\sigma_g}( (R \rtimes G) \otimes_{R[H]} W ) &\rightarrow & (R \rtimes G) \otimes_{R[H]} g * W \\
        & x \otimes w & \mapsto & \sigma_g(x) \cdot g^{-1} \otimes w
    \end{matrix}
    \]
    is a natural isomorphism in $\Rep_R^{\rtimes}(G)$.
\end{lemma}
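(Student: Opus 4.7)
The plan is to directly verify that the given formula $\phi(x \otimes w) = \sigma_g(x) \cdot g^{-1} \otimes w$ gives a well-defined, $R \rtimes G$-linear bijection, and is natural in $W$. Here the notation $\sigma_g$ on an element of $R \rtimes G$ refers to the $K$-algebra automorphism $\Phi_{\sigma_g}$ of $R \rtimes G$ coming from the $\Gamma_e$-action on $R$ (and the identity on $G$), as described just before the definition of ${}^\sigma M$. A crucial observation that will be used throughout is that $\Phi_{\sigma_g}$ fixes every element of $G \subset R \rtimes G$, and commutes (in the appropriate sense) with the $\Gamma$-action used in the semidirect product structure.

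The first and main step is well-definedness, i.e.\ showing that $\phi(x \cdot r \otimes w) = \phi(x \otimes r \cdot w)$ for all $r \in R[H]$, where in the right-hand tensor $w$ is viewed as an element of $g * W$. I would split this into two cases: $r = h \in H$ and $r = \lambda \in R$. For $r = h \in H$, using $\Phi_{\sigma_g}(h) = h$ and the relation $hg^{-1} = g^{-1}(ghg^{-1})$ in $R \rtimes G$, one rewrites $\phi(xh \otimes w) = \Phi_{\sigma_g}(x) g^{-1} (ghg^{-1}) \otimes w$, and then pushes $ghg^{-1} \in H$ across the tensor using the $g * W$-action: $(ghg^{-1}) \cdot_{g*W} w = g^{-1}(ghg^{-1})g \cdot_W w = h \cdot_W w$, giving the desired equality. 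The case $r = \lambda \in R$ is the main technical obstacle: writing $\lambda = (\lambda_\gamma)_\gamma$, one must move $\Phi_{\sigma_g}(\lambda)$ past $g^{-1}$ using the $\Gamma$-action on $R$, then project to $F$ by taking the $\gamma = 1$ component, and finally use that the $g*W$-action of $\mu \in F$ is $\sigma_{g^{-1}}(\mu) \cdot_W -$. The key cancellation is that the $\gamma = 1$ component of $\sigma_g|_\Gamma(\Phi_{\sigma_g}(\lambda))$ equals $\sigma_g(\lambda_1)$, so that $\sigma_{g^{-1}}(\sigma_g(\lambda_1)) = \lambda_1$, recovering $\lambda_1 \cdot_W w = \lambda \cdot_W w$.

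Once well-definedness is established, $R \rtimes G$-linearity (with the twisted action $y \star m = \Phi_{\sigma_g}^{-1}(y) \cdot m$ on the left) is a one-line check:
\[
\phi(y \star (x \otimes w)) = \Phi_{\sigma_g}(\Phi_{\sigma_g}^{-1}(y) x) g^{-1} \otimes w = y \cdot \Phi_{\sigma_g}(x) g^{-1} \otimes w = y \cdot \phi(x \otimes w).
\]
For bijectivity, I would exhibit the explicit inverse $\psi(y \otimes w) = \Phi_{\sigma_g}^{-1}(y) \cdot g \otimes w$ (whose well-definedness is verified symmetrically), and check on pure tensors that $\psi \circ \phi = \id$ and $\phi \circ \psi = \id$, using only that $\Phi_{\sigma_g}$ fixes $g^{\pm 1}$.

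Finally, naturality in $W$ is immediate from the formula: any morphism $f \colon W \to W'$ in $\Rep_F(H)$ also defines a morphism $g * W \to g * W'$ on the same underlying map, and $\phi$ manifestly commutes with $\id \otimes f$ on both sides. Alternatively, one could bypass much of this direct bookkeeping by transporting the statement through the equivalences of Propositions \ref{prop:trivialisationind} and \ref{prop:trivialisationres} and reducing to a statement about the base-change functor $F \otimes_K -$, but the direct route above seems more illuminating and keeps the explicit formula for $\phi$ visible.
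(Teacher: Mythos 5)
Your proof is correct and follows essentially the same direct-verification route as the paper's: check well-definedness on the tensor product by moving $\mu h$ (you split this into $h \in H$ and $\lambda \in R$; the paper does them simultaneously, with the same commutation of $\Phi_{\sigma_g}(\lambda)$ past $g^{-1}$ and the same cancellation $\sigma_{g^{-1}}(\sigma_g(\lambda_1)) = \lambda_1$ on the $F$-component), then verify linearity and bijectivity. Your write-up is slightly more complete than the paper's in that you check the full $R \rtimes G$-linearity (the paper explicitly only does $y \in R$) and exhibit the inverse $\psi$ (the paper says "directly seen to be an isomorphism"), but the substance is the same.
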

\begin{proof}
    To see that this induces a well-defined map on the tensor product, let $\mu h \in R[H]$, which acts on $W$ by $w \mapsto \mu_1 h(w)$. Then 
    \[
        x \otimes \mu_1 h(w) \mapsto \sigma_g(x) \cdot g^{-1} \otimes \mu_1 h(w).  
    \]
    On the other hand,
    \begin{align*}
        x \cdot \mu h \otimes w
        &\mapsto \sigma_g(x) \cdot \sigma_g(\mu) h \cdot g^{-1} \otimes w, \\
        &= \sigma_g(x) \cdot \sigma_g(\mu) g^{-1}  \cdot g h g^{-1} \otimes w, \\
        &= \sigma_g(x) \cdot g^{-1} \cdot g(\sigma_g(\mu)) \cdot g h g^{-1} \otimes w, \\
        &= \sigma_g(x) \cdot g^{-1} \cdot g(\sigma_g(\mu_{\gamma \sigma_g}))_{\gamma} \cdot g h g^{-1} \otimes w, \\
        &= \sigma_g(x) \cdot g^{-1} \cdot(\sigma_g(\mu_{\sigma_g^{-1} \gamma \sigma_g}))_{\gamma} \cdot g h g^{-1} \otimes w, \\
        &= \sigma_g(x) \cdot g^{-1} \otimes \sigma_g(\mu_1) (g h g^{-1}) * w, \\
        &= \sigma_g(x) \cdot g^{-1} \otimes \mu_1 h(w).
    \end{align*}
    Where here $*$ denote the action of $F[H]$ on $g * W$. For $y \in R$,
    \[
    y \cdot \phi(x \otimes w) = y \cdot \sigma_g(x) \cdot g^{-1} \otimes w,
    \]
    whereas
    \[
    \phi(y * (x \otimes w)) = \phi(\sigma_{g^{-1}}(y) \cdot x \otimes w) = y \cdot \sigma_g(x) \cdot g^{-1} \otimes w
    \]    
    and thus the map is $R$-linear. It also directly seen to be an isomorphism, natural in $W$.
\end{proof}

\subsection{Description of Indecomposables}

We can use Corollary \ref{cor:presss} to obtain a precise description of the restriction $e \cdot V$ for indecomposable $V \in \Rep_{L}^{\rtimes}(G)$. Recall that $\Ind_{L}^{\rtimes}(G)$ and $\Ind_{F}(H)$ denote the sets of isomorphism classes of indecomposable objects of $\Rep_{L}^{\rtimes}(G)$ and $\Rep_{F}(H)$ respectively.

\begin{defn}
    For $W \in \Ind_{F}(H)$, we write $\Gamma_{e,W} \coloneqq \Stab_{\Gamma_e}(W)$.
\end{defn}

\begin{thm}\label{thm:mainthm}
    Suppose that $V \in \Ind_L^{\rtimes}(G)$, and $W$ is any indecomposable direct summand of $e \cdot V$. Then
    \[
        e \cdot V = \bigoplus_{\gamma \in \Gamma_{e}/\Gamma_{e,W}} (\gamma * W)^{m(V)}
    \]
    for some $m(V) \geq 1$. Furthermore, if $W \in \Ind_{F}(H)$ then
    \begin{itemize}
        \item $W$ appears in $e \cdot V$ for a unique $V \in \Ind_L^{\rtimes}(G)$,
        \item This $V$ is irreducible if and only if $W$ is irreducible,
        \item For this $V$, $(L \rtimes G) \otimes_{L[H]} (g*W) = V^{\oplus r}$ for any $g \in G_e$, where $r = |\Gamma_{e,W}| / m(V)$.
    \end{itemize}
    In particular, $m(V) \mid |\Gamma_{e,W}|$, and sending $W$ to this unique $V$ defines bijective correspondences
    \[
    \Ind_L^{\rtimes}(G) \xleftrightarrow{\sim} \Ind_{F}(H) / \Gamma_e, \qquad \Irr_L^{\rtimes}(G) \xleftrightarrow{\sim} \Irr_{F}(H) / \Gamma_e
    \]
    where conversely $V \in \Ind_L^{\rtimes}(G)$ is sent to any indecomposable direct summand of $e \cdot V$.
\end{thm}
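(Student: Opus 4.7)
The plan is to combine three ingredients: preservation of semisimplicity of $e \cdot V$ (Corollary \ref{cor:presss}), a Frobenius-reciprocity adjunction between $(L \rtimes G) \otimes_{L[H]} -$ and $e \cdot (-)$, and the $\Gamma_e$-invariance of $e \cdot V$ inherited via Proposition \ref{prop:trivialisationres} and Lemma \ref{lem:groupsactionssame}. With these in hand, the structure of $e \cdot V$, the induction formula, and the bijection all fall out of bookkeeping.

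First I would record the adjunction. Any $L[H]$-morphism $\phi \colon W \to V$, with $W \in \Rep_{F}(H)$ regarded through $L \twoheadrightarrow F$, satisfies $\phi(w) = \phi(ew) = e\phi(w)$ and is automatically $F[H]$-linear, so combining with the Frobenius property of $L[H] \hookrightarrow L \rtimes G$ from Proposition \ref{prop:allfrob} yields
\[
\Hom_{L \rtimes G}((L \rtimes G) \otimes_{L[H]} W, V) \cong \Hom_{F[H]}(W, e \cdot V).
\]
Taking an irreducible summand $W \hookrightarrow e \cdot V$, this produces a surjection $M \coloneqq (L \rtimes G) \otimes_{L[H]} W \twoheadrightarrow V$ by simplicity of $V$; applying $e \cdot (-)$ and invoking Proposition \ref{prop:resind} gives $\bigoplus_{g \in G_e/H} g * W \twoheadrightarrow e \cdot V$, so every irreducible summand of $e \cdot V$ (semisimple by Corollary \ref{cor:presss}) lies in the $\Gamma_e$-orbit of $W$. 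Next, for any $\sigma \in \Gamma_e$, the map $\sigma \otimes \id_V \colon F \otimes_K V \to F \otimes_K V$ is a $(\sigma \otimes \id_{L \rtimes G})$-semilinear automorphism, giving an iso ${}^{\sigma}(F \otimes_K V) \cong F \otimes_K V$ in $\Rep_R^{\rtimes}(G)$; transporting via Proposition \ref{prop:trivialisationres} and Lemma \ref{lem:groupsactionssame} yields $\sigma * (e \cdot V) \cong e \cdot V$ in $\Rep_F(H)$. Hence all iso classes in $\Gamma_e \cdot W$ occur in $e \cdot V$ with a common multiplicity $m(V) \geq 1$, giving the claimed decomposition.

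To pin down $m(V)$ and the induction formula, I would first show $(L \rtimes G) \otimes_{L[H]} (g * W) \cong M$ in $\Rep_L^{\rtimes}(G)$ for every $g \in G_e$. Lemma \ref{lem:groupsactionssame} provides the corresponding iso of base changes in $\Rep_R^{\rtimes}(G)$, and since the embedding $L \hookrightarrow R$ of Lemma \ref{lem:basechangedesc} is $\Gamma_e$-fixed, twisting by $\sigma_g$ is invisible on the subalgebra $L \rtimes G \subseteq R \rtimes G$; after restricting to $L \rtimes G$ and applying Krull-Remak-Schmidt (Lemma \ref{lem:KRS}) to the base-changed module, the iso descends to $\Rep_L^{\rtimes}(G)$. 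Corollary \ref{cor:indres} then gives $(L \rtimes G) \otimes_{L[H]} e \cdot V \cong V^{|\Gamma_e|}$, and expanding the left side through the orbit decomposition of $e \cdot V$ yields $M^{(|\Gamma_e|/|\Gamma_{e,W}|) m(V)} \cong V^{|\Gamma_e|}$, forcing $M \cong V^r$ with $r\, m(V) = |\Gamma_{e,W}|$ (in particular $m(V) \mid |\Gamma_{e,W}|$).

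The bijection $\Irr_F(H)/\Gamma_e \leftrightarrow \Irr_L^{\rtimes}(G)$ is then immediate: send $[W]$ to the unique simple quotient of $M$, which is well-defined because $M$ is $V$-isotypic and orbit-invariant by the preceding paragraph; conversely send $V$ to the $\Gamma_e$-orbit of any irreducible summand of $e \cdot V$, which is well-defined since all summands lie in a single orbit. The two maps are mutually inverse by construction via the adjunction. I expect the main obstacle to be the orbit-invariance of $M$: without it neither the multiplicity identity $r m(V) = |\Gamma_{e,W}|$ nor the well-definedness of the bijection falls cleanly into place, and the crucial trick is precisely that the $\Gamma_e$-twist on $R \rtimes G$ becomes invisible after restricting to $L \rtimes G$.
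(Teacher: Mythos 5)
Your proof is correct, and it reaches the same conclusion with the same fundamental tools (Corollaries \ref{cor:presss} and \ref{cor:indres}, Proposition \ref{prop:resind}, Lemma \ref{lem:KRS}), but two of your steps take a genuinely different route from the paper. First, to confine all irreducible summands of $e \cdot V$ to a single $\Gamma_e$-orbit, you use the adjunction
\[
\Hom_{L \rtimes G}\bigl((L \rtimes G) \otimes_{L[H]} W, V\bigr) \cong \Hom_{F[H]}(W, e \cdot V)
\]
to produce a surjection $(L \rtimes G) \otimes_{L[H]} W \twoheadrightarrow V$, then apply $e \cdot (-)$ and Proposition \ref{prop:resind}; the paper instead observes via Corollary \ref{cor:indres} and Krull--Remak--Schmidt that for any two summands $W, U$ of $e \cdot V$ the inductions $(L \rtimes G) \otimes_{L[H]} W$ and $(L \rtimes G) \otimes_{L[H]} U$ are both direct sums of copies of $V$, and deduces $U \cong g * W$ by comparing their restrictions. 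Your adjunction route is slightly more elementary and arguably cleaner; note, though, that this only needs the standard tensor--hom adjunction (tensoring up is left adjoint to restriction), not the Frobenius property from Proposition \ref{prop:allfrob} --- that property is used in the paper for the opposite adjunction, as in Corollary \ref{cor:allsubobjects}. Second, to obtain the common multiplicity $m(V)$, you prove $\sigma * (e \cdot V) \cong e \cdot V$ by exhibiting $\sigma \otimes \id_V$ as a $(\sigma \otimes \id)$-semilinear automorphism of $F \otimes_K V$ and transporting through Proposition \ref{prop:trivialisationres} and Lemma \ref{lem:groupsactionssame}; the paper's argument is more direct --- $\rho(g)$ itself is an $F[H]$-linear isomorphism $g * (e \cdot V) \to e \cdot V$, since $g(e) = e$ for $g \in G_e$ and semilinearity of $\rho(g)$ untwists the twisted $F$- and $H$-actions --- so you could shorten this step considerably. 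Similarly, your isomorphism $(L \rtimes G) \otimes_{L[H]} (g * W) \cong (L \rtimes G) \otimes_{L[H]} W$ follows immediately from Proposition \ref{prop:trivialisationind} once you observe, as you do, that $\sigma_g$ acts trivially on the subalgebra $L \rtimes G \subset R \rtimes G$, so no appeal to Krull--Remak--Schmidt is needed at that point; the bookkeeping $r \, m(V) = |\Gamma_{e,W}|$ and the bijection then come out exactly as in the paper.
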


\begin{proof}
    For the first claim, write $e \cdot V$ as a direct sum of indecomposable objects of $\Rep_L(H)$ using Lemma \ref{lem:KRS}, and let $W$ be an indecomposable direct summand of $e \cdot V$. For any $g \in G_e$, the number of copies of each $g * W$ in $e \cdot V$ must be equal to the number of copies of $W$, using Lemma \ref{lem:KRS} and the fact that the action $\rho(g)$ of $g$ on $V$ defines an isomorphism $\rho(g) \colon g * (e \cdot V) \xrightarrow{\sim} e \cdot V$.
    
    It therefore remains remains to see that only one orbit of the action of $\Gamma_e$ on $\Ind_{F}(H)$ appears in $e \cdot V$. Indeed, suppose that $W_1, W_2 \in \Ind_L(H)$ both appear in $e \cdot V$, so $e \cdot V = W_1 \oplus W_2 \oplus U$ for some $U \in \Rep_L(H)$. Then $(L \rtimes G) \otimes_{L[H]} (e \cdot V) \cong V^{\oplus |\Gamma_e|}$ by Corollary \ref{cor:indres}, and so by Lemma \ref{lem:KRS} both $(L \rtimes G) \otimes_{L[H]} W_1$ and $(L \rtimes G) \otimes_{L[H]} W_2$ are isomorphic to a direct sum of a number of copies of $V$. In particular, there are $a,b \geq 1$ with 
    \[
        \left((L \rtimes G) \otimes_{L[H]} W_1 \right)^{\oplus a} \cong \left((L \rtimes G) \otimes_{L[H]} W_2 \right)^{\oplus b}
    \]
    and applying $e \cdot V$ we see that 
    \[
        \bigoplus_{x \in \Gamma_e} (x \cdot W_1)^{\oplus a} \cong \bigoplus_{y \in \Gamma_e} (y \cdot W_2)^{\oplus b}
    \]
    by Proposition \ref{prop:resind} and therefore $W_2 \cong \gamma \cdot W_1$ for some $\gamma \in \Gamma_e$, by Lemma \ref{lem:KRS}.

    For the remaining claims, suppose that $W \in \Ind_{F}(H)$. From Proposition \ref{prop:resind}, $W$ is a direct summand of $e \cdot ((L \rtimes G) \otimes_{L[H]} W)$, and thus of $e \cdot V$ for some $V \in \Ind_L^{\rtimes}(G)$ after writing $(L \rtimes G) \otimes_{L[H]} W$ as a direct sum of indecomposable objects and using Lemma \ref{lem:KRS}. For this $V$, from the above we have that
    \[
        V^{|\Gamma_e|} \cong (L \rtimes G) \otimes_{L[H]} (e \cdot V) \cong \bigoplus_{\gamma \in \Gamma_e/\Gamma_{e,W}} ((L \rtimes G) \otimes_{L[H]}(\gamma * W))^{m(V)}
    \]
    and therefore $(L \rtimes G) \otimes_{L[H]} (\gamma*W) = V^{\oplus r}$ for any $\gamma \in \Gamma_e$ by Lemma \ref{lem:KRS}, where $r = |\Gamma_{e,W}| / m(V)$.
    
    In particular, the $V \in \Ind_L^{\rtimes}(G)$ for which $W$ is a direct summand of $e \cdot V$ is unique. Finally, from Corollary \ref{cor:presss} we have that $V$ is irreducible if and only if $W$ is irreducible. 
\end{proof}

We can use this to characterise when $\Rep_L^{\rtimes}(G)$ is semisimple, improving on Proposition \ref{prop:freeandss}. It also possible to show this directly, using a generalisation of Maschke's Theorem \cite[Lem.\ 1.3]{KUN}.

\begin{cor}\label{cor:sscriterion}
    Suppose that $G$ is finite. Then $\Rep_L^{\rtimes}(G)$ is semisimple if and only if $|H| \in L^{\times}$.
\end{cor}

\begin{proof}
    By Lemma \ref{lem:KRS}, the category $\Rep_L^{\rtimes}(G)$ is semisimple if and only if $\Irr_L^{\rtimes}(G) = \Ind_L^{\rtimes}(G)$, which by Theorem \ref{thm:mainthm} is equivalent to the statement that $\Irr_F(H) = \Ind_F(H)$, or in other words that $\Rep_F(H)$ is semisimple. By Maschke's Theorem this is equivalent to $|H|$ being invertible in $F$, which is the same as $|H|$ being invertible in $L$, as $L$ is the product of copies of $F$.
\end{proof}

In general, we will see that any positive integer can occur as some $m(V)$ (Section \ref{sect:realisedivisionalg}). One case when we know that the numbers $m(V)$ are equal to $1$ is the following. This applies in particular when $K$ is a finite field, by Wedderburn's Little Theorem \cite[Thm.\ IV.4.1]{milneCFT}.

\begin{prop}\label{prop:finitefieldsmequals1}
Suppose that $K$ is a field such that $\Br(K') = 0$ for any finite extension $K'/K$, or equivalently, such that any finite dimensional division algebra over $K$ is a field. Then for any $V \in \Irr_L^{\rtimes}(G)$, $m(V) = 1$. 
\end{prop}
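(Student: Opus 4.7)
The plan is to exploit the $K$-algebra isomorphism
\[
    F \otimes_K \End_{L \rtimes G}(V) \xrightarrow{\sim} \End_{F[H]}(e \cdot V)
\]
furnished by Corollary \ref{cor:homsetisom}, and to argue that commutativity of the left hand side, forced by the hypothesis, collapses all matrix blocks on the right, yielding $m(V) = 1$.

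First I would set $D \coloneqq \End_{L \rtimes G}(V)$. By Schur's lemma applied to the irreducible $L \rtimes G$-module $V$, $D$ is a division ring, and since $V$ is finite-dimensional over $L$ (hence over $K$, as $[L:K] < \infty$), $D$ is a finite-dimensional division algebra over $K$. The hypothesis therefore forces $D$ to be a field, and so $F \otimes_K D$ is commutative, being a tensor product over the field $K$ of two commutative $K$-algebras.

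Next I would use Theorem \ref{thm:mainthm} to decompose $e \cdot V \cong \bigoplus_{[U]} U^{m(V)}$ as $F[H]$-modules, where $[U]$ runs over the distinct isomorphism classes in the $\Gamma_e$-orbit of a chosen irreducible $F[H]$-submodule of $e \cdot V$. Semisimplicity of $e \cdot V$ (Corollary \ref{cor:presss}) together with Schur's lemma then yields
\[
    \End_{F[H]}(e \cdot V) \cong \prod_{[U]} M_{m(V)}(E_U),
\]
where $E_U \coloneqq \End_{F[H]}(U)$ is a division algebra over $F$. Since $F/K$ is finite, each $E_U$ is finite-dimensional over $K$, and a second invocation of the hypothesis forces each $E_U$ to be a field.

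Comparing the two sides of the isomorphism of Corollary \ref{cor:homsetisom}, commutativity of $F \otimes_K D$ compels the product $\prod_{[U]} M_{m(V)}(E_U)$ to be commutative, which happens precisely when all matrix sizes equal one, that is, $m(V) = 1$. I do not anticipate any serious obstacle: the argument is a direct assembly of Schur's lemma, elementary semisimple algebra theory, and a double application of the hypothesis to collapse both $D$ and the $E_U$ to fields.
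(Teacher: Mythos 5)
Your proof is correct and follows essentially the same route as the paper: apply Corollary \ref{cor:homsetisom} to obtain the $K$-algebra isomorphism $F \otimes_K \End_{L \rtimes G}(V) \xrightarrow{\sim} \End_{F[H]}(e \cdot V)$, use the hypothesis to force the left side to be commutative, and conclude from semisimplicity of $e \cdot V$ that no isotypic component can have multiplicity greater than one. One small redundancy: your second invocation of the hypothesis to show each $E_U$ is a field is unnecessary, since $M_{m(V)}(E_U)$ is already noncommutative for any nonzero ring $E_U$ whenever $m(V) \geq 2$, so commutativity of the whole product forces $m(V) = 1$ regardless.
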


\begin{proof}
    By Corollary \ref{cor:homsetisom}, there is a $K$-algebra isomorphism 
    \[
        L \otimes_K \End_{L \rtimes G}(V) \xrightarrow{\sim} \End_{F[H]}(e \cdot V),
    \]
    and thus $\End_{F[H]}(e \cdot V)$ is commutative because $\End_{L \rtimes G}(V)$ is a field. In particular, $e \cdot V$ cannot contain any factors with multiplicity greater than $1$, so $m(V) = 1$.
\end{proof}

We also record here that the induction and restriction functors of Theorem \ref{thm:mainthm} are compatible with the equivalence of Proposition \ref{prop:connequiv}, the proof of which is direct. We use the alternative notation for the functors between $\Rep_F^{\rtimes}(G_e)$ and $\Rep_{F}(H)$, described at the start of Section \ref{sect:whenLisafield} below.

\begin{prop}\label{prop:indandrescompwithconnequiv}
    The diagrams 
\[\begin{tikzcd}
	{\Rep_L^{\rtimes}(G)} && {\Rep_F^{\rtimes}(G_e)} & {\Rep_L^{\rtimes}(G)} && {\Rep_F^{\rtimes}(G_e)} \\
	& {\Rep_F(H)} &&& {\Rep_F(H)}
	\arrow["{e \cdot (-)}", from=1-1, to=1-3]
	\arrow["\sim"', from=1-1, to=1-3]
	\arrow["{e \cdot(-)}"', from=1-1, to=2-2]
	\arrow["{(-)|_H}", from=1-3, to=2-2]
	\arrow["{(L \rtimes G) \otimes_{L \rtimes G_e} -}"', from=1-6, to=1-4]
	\arrow["\sim", from=1-6, to=1-4]
	\arrow["{(L \rtimes G) \otimes_{L[H]} -}", from=2-5, to=1-4]
	\arrow["{\Ind_H^G(-)}"', from=2-5, to=1-6]
\end{tikzcd}\]
    both commute up to natural isomorphism.
\end{prop}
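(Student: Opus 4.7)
The plan is to verify each of the two diagrams by producing an explicit natural transformation and checking that it is an isomorphism on each object, using the concrete descriptions of restriction and induction on the various skew group rings.

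For the first diagram, I expect commutativity to hold essentially on the nose. Given $V \in \Rep_L^{\rtimes}(G)$, the object $e \cdot V$ has the same underlying $F$-module whether it is considered in $\Rep_F^{\rtimes}(G_e)$ (carrying its full $F \rtimes G_e$-action) or in $\Rep_F(H)$ (carrying only its $F[H]$-action). The $F[H]$-structure in both cases is obtained from the $L \rtimes G$-structure on $V$ by restricting to $L[H]$ and passing to the quotient $L[H] \to F[H]$ (using that $H$ fixes $e$ and $L$ acts on $e \cdot V$ through $F = eL$). The top route simply remembers the additional $G_e/H$-action that the right-hand functor immediately discards, so the two compositions are equal as functors.

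For the second diagram, my strategy is transitivity of tensor product combined with a central-idempotent argument. Given $W \in \Rep_F(H)$, I would first use associativity along the chain $L[H] \hookrightarrow L \rtimes G_e \hookrightarrow L \rtimes G$ to write
\[
(L \rtimes G) \otimes_{L[H]} W \;\cong\; (L \rtimes G) \otimes_{L \rtimes G_e} \bigl( (L \rtimes G_e) \otimes_{L[H]} W \bigr).
\]
It then suffices to produce a natural $L \rtimes G_e$-linear isomorphism $(L \rtimes G_e) \otimes_{L[H]} W \xrightarrow{\sim} (F \rtimes G_e) \otimes_{F[H]} W$, where $(F \rtimes G_e) \otimes_{F[H]} W$ is viewed as an $L \rtimes G_e$-module via the quotient map $L \rtimes G_e \twoheadrightarrow F \rtimes G_e$.

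The key observation for this last step is that $e \in L$ is fixed by $G_e$, hence central in $L \rtimes G_e$. Since $e$ is idempotent, there is a ring decomposition $L \rtimes G_e \cong (F \rtimes G_e) \times \bigl( (1-e)L \rtimes G_e\bigr)$. Because $W$ is an $F[H]$-module, $(1-e)$ acts as zero on $W$, so the second factor contributes nothing to the tensor product; this gives
\[
(L \rtimes G_e) \otimes_{L[H]} W \;=\; (F \rtimes G_e) \otimes_{L[H]} W \;=\; (F \rtimes G_e) \otimes_{F[H]} W,
\]
the last equality holding because $L[H] \to F[H]$ is surjective and the $F[H]$-action on $W$ factors through it. Naturality in $W$ is clear. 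The only non-trivial bookkeeping is to confirm that each constructed map respects the $L \rtimes G_e$-action rather than merely being $K$-linear, but no conceptual difficulty arises, which is why the proposition deserves only the comment \emph{direct}.
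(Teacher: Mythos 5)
Your proof is correct and is exactly the kind of ``direct'' verification the paper has in mind (the paper gives no proof, only asserting the statement is direct): the first diagram commutes on the nose since both routes yield $eV$ with the same $F[H]$-action, and for the second your chain of isomorphisms via transitivity of induction and the centrality of the idempotent $e$ in $L \rtimes G_e$ is the natural argument. The only point worth making explicit when writing this up is that the $L \rtimes G_e$-linearity of the isomorphism $(L \rtimes G_e) \otimes_{L[H]} W \to (F \rtimes G_e) \otimes_{F[H]} W$ requires the $L \rtimes G_e$-action on the target to factor through $F \rtimes G_e$, which is exactly how the paper sets things up in the definition of $(L \rtimes G) \otimes_{L \rtimes G_e} -$.
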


\subsection{The Case When $L$ is a Field}\label{sect:whenLisafield}
\[
\text{\emph{From now on in this paper on we assume that $L$ is a field.}}
\]
Note that one is always reduced to this case in practice by Proposition \ref{prop:connequiv} and Proposition \ref{prop:indandrescompwithconnequiv}. As $L$ is a field, the action map $\Gamma \rightarrow \Gal(L/K)$ is an isomorphism (cf.\ Example \ref{eg:classicalgaloisext}), and so without loss of generality one could assume that $\Gamma = \Gal(L/K)$. Furthermore, $G_e = G$, $\Gamma_e = \Gamma$, $L = F$, and the functors of the previous section become
\begin{align*}
    (-)|_H &\colon \Rep_{L}^{\rtimes}(G) \rightarrow \Rep_{L}(H), \\
    \Ind_{H}^G(-) &\colon \Rep_{L}(H) \rightarrow \Rep_{L}^{\rtimes}(G),
\end{align*}
where $\Ind_H^G$ is as defined as described at the end of the introduction.

We first give a strengthening of Theorem \ref{thm:mainthm} to intermediate extensions, which we will use in Section \ref{sect:semilinearschurindex} when we consider the semilinear Schur index.

Let $S$ be any intermediate subgroup $H \leq S \leq G$, which corresponds to a unique subgroup $\Gamma_S \leq \Gamma$ with $S = \sigma^{-1}(\Gamma_S)$. If $F$ is the fixed field of $\Gamma_S$, then $K \leq F \leq L$ and $L/F$ is Galois. In particular, the tuple $(S,H, L/F, \sigma \colon S \rightarrow \Gamma_S)$ satisfies the assumptions of Section \ref{sect:notation}, and so by Theorem \ref{thm:mainthm} there are canonical bijections
\[
\begin{array}{cc}
\begin{aligned}
\Ind_L^{\rtimes}(G) &\xleftrightarrow{\sim} \Ind_{L}(H) / \Gamma, \\
\Ind_L^{\rtimes}(S) &\xleftrightarrow{\sim} \Ind_{L}(H) / \Gamma_S.
\end{aligned}
&
\begin{aligned}
\Irr_L^{\rtimes}(G) &\xleftrightarrow{\sim} \Irr_{L}(H) / \Gamma, \\
\Irr_L^{\rtimes}(S) &\xleftrightarrow{\sim} \Irr_{L}(H) / \Gamma_S.
\end{aligned}
\end{array}
\]

Theorem \ref{thm:mainthm} further provides a description of induction and restriction between $\Rep_{L}^{\rtimes}(G)$ and $\Rep_{L}(H)$, and induction and restriction between $\Rep_{L}^{\rtimes}(S)$ and $\Rep_{L}(H)$. We now wish to understand induction and restriction between $\Rep_{L}^{\rtimes}(G)$ and $\Rep_{L}^{\rtimes}(S)$.

\begin{prop}\label{prop:generalintermediatedecomp}
    Let $\sA$ be an orbit of $G$ on $\Ind_L(H)$, corresponding to $V \in \Ind_L^{\rtimes}(G)$, and let
    \[
        \sA_1 \sqcup \cdots \sqcup \sA_r
    \]
    be a decomposition of $\sA$ into $S$-orbits. Let $U_i \in \Ind_L^{\rtimes}(S)$ correspond to $\sA_i$, so
    \[
    U_i|_H \cong \bigoplus_{X \in \sA_i} X^{\oplus m(U_i)}
    \]
    for some $m(U_i) \geq 1$ (by Theorem \ref{thm:mainthm}). Then $m(U_i) \mid m(V)$, $m(V) \mid m(U_i) \cdot [\Gamma_X: \Gamma_{S,X}]$,
    \[
    V|_S \cong \bigoplus_{i = 1}^r U_i^{\oplus \frac{m(V)}{m(U_i)}},
    \]
    and 
    \[
    \Ind_S^G(U_i) \cong V^{\oplus \frac{[\Gamma_X : \Gamma_{S,X}]\cdot m(U_i)}{m(V)}}
    \]
    for any $X \in \sA_i$. Furthermore, $V$ is irreducible if and only if each $U_i$ is irreducible.
\end{prop}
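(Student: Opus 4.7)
The plan is to apply Theorem \ref{thm:mainthm} twice — once to the tuple $(G,H,L/K,\sigma)$ and once to $(S,H,L/F,\sigma|_S)$, both of which satisfy the standing hypotheses of Section \ref{sect:notation} since $H \leq S$ and $L/F$ is Galois with Galois group $\Gamma_S$ — and to use Corollary \ref{cor:presss} to ensure that the semisimplicity needed to read off multiplicities is actually present. Everything else then reduces to isotypic bookkeeping and elementary orbit-stabiliser counting; the main technical obstacle is threading the semisimplicity arguments, which I will need to invoke both for the induction $\Ind_H^S \colon \Rep_L(H) \rightarrow \Rep_L^{\rtimes}(S)$ and for $\Ind_H^G \colon \Rep_L(H) \rightarrow \Rep_L^{\rtimes}(G)$.

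For $V|_S$, note first that $V|_H = (V|_S)|_H$ is semisimple by Theorem \ref{thm:mainthm}, so by Corollary \ref{cor:presss} applied to $(S,H,L/F)$ the induced module $\Ind_H^S(V|_H)$ is semisimple in $\Rep_L^{\rtimes}(S)$; since Corollary \ref{cor:indres} for the same tuple identifies this with $(V|_S)^{\oplus |\Gamma_S|}$, Lemma \ref{lem:KRS} yields that $V|_S$ is itself semisimple. Any simple summand $U'$ of $V|_S$ has $U'|_H$ embedded in $V|_H \cong \bigoplus_{X \in \sA} X^{\oplus m(V)}$, and by Theorem \ref{thm:mainthm} applied to $(S,H,L/F)$ the $L[H]$-constituents of $U'|_H$ form a single $S$-orbit — forcing $U' \cong U_i$ for some $i$. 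Writing $V|_S \cong \bigoplus_i U_i^{\oplus n_i}$ and comparing the multiplicity of any $X \in \sA_i$ after restriction to $L[H]$ gives $n_i \cdot m(U_i) = m(V)$, which yields both the first displayed formula and the divisibility $m(U_i) \mid m(V)$.

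For the induction, transitivity combined with Corollary \ref{cor:indres} for $(S,H,L/F)$ gives $\Ind_S^G(U_i)^{\oplus |\Gamma_S|} \cong \Ind_S^G(\Ind_H^S(U_i|_H)) \cong \Ind_H^G(U_i|_H)$, and the right-hand side is semisimple by Corollary \ref{cor:presss} for $(G,H,L/K)$ since $U_i|_H$ is semisimple; Lemma \ref{lem:KRS} then gives semisimplicity of $\Ind_S^G(U_i)$. Any simple summand has $L[H]$-restriction whose constituents are $G$-conjugates of elements of $\sA_i \subseteq \sA$ and so still lie in $\sA$, so by Theorem \ref{thm:mainthm} any simple summand is isomorphic to $V$. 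Writing $\Ind_S^G(U_i) \cong V^{\oplus n}$ and comparing $L$-dimensions gives $n \cdot |\sA| \cdot m(V) = [G:S] \cdot |\sA_i| \cdot m(U_i)$; substituting $|\sA| = [\Gamma : \Gamma_X]$, $|\sA_i| = [\Gamma_S : \Gamma_{S,X}]$ and $[G:S] = [\Gamma : \Gamma_S]$ (using that the $G$-action on $\Irr_L(H)$ factors through $\Gamma$), the orders of $\Gamma$ and $\Gamma_S$ cancel to leave $n = [\Gamma_X : \Gamma_{S,X}] \cdot m(U_i)/m(V)$. This delivers the second formula, and integrality of $n$ yields the remaining divisibility $m(V) \mid m(U_i) \cdot [\Gamma_X : \Gamma_{S,X}]$.
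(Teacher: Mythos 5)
Your argument is correct and proves the proposition, but it diverges from the paper's proof in how the two multiplicities are extracted. For the decomposition of $V|_S$, you establish semisimplicity exactly as the paper does (note that the step from $(V|_S)^{\oplus|\Gamma_S|}$ semisimple to $V|_S$ semisimple is simply ``submodule of a semisimple module,'' not an application of Lemma \ref{lem:KRS}), but then identify the simple summands by the single-orbit constraint from Theorem \ref{thm:mainthm} and read off $n_i m(U_i) = m(V)$ by a direct multiplicity count after restriction to $H$. The paper instead applies Corollary \ref{cor:homsetisom} twice: once to show $\dim_F \Hom_{L\rtimes S}(U,V|_S)$ vanishes unless $U\in\{U_1,\dots,U_r\}$, and once to compute $a_i\cdot\dim_F\End_{L\rtimes S}(U_i)$ and $\dim_F\End_{L\rtimes S}(U_i)$ explicitly and divide. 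Both are sound; yours is more elementary, the paper's avoids any talk of ``which constituents appear.''

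The larger divergence is in the second formula. You separately establish semisimplicity of $\Ind_S^G(U_i)$, argue that every simple summand is $V$ (via the $L[H]$-constituents landing in $\sA$), and then count $L$-dimensions, cancelling a common factor of $\dim_L(X)$. The paper avoids all of this by applying Theorem \ref{thm:mainthm} to a single $X\in\sA_i$ twice --- $\Ind_H^G(X)\cong V^{\oplus|\Gamma_X|/m(V)}$ and $\Ind_H^S(X)\cong U_i^{\oplus|\Gamma_{S,X}|/m(U_i)}$ --- and composing via transitivity of induction, so that Lemma \ref{lem:KRS} delivers the multiplicity immediately without any dimension count or independent semisimplicity check. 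Your route is perfectly valid (and your orbit-stabiliser arithmetic $[\Gamma:\Gamma_S][\Gamma_S:\Gamma_{S,X}]/[\Gamma:\Gamma_X]=[\Gamma_X:\Gamma_{S,X}]$ is right), but the paper's transitivity argument is shorter and gets semisimplicity of $\Ind_S^G(U_i)$ for free as a by-product rather than as an input.
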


\begin{proof}
    For the first claim, note that using Corollary \ref{cor:detectisom} for $\Rep_L^{\rtimes}(S)$,
    \[
    (V|_S)^{m(U_1) \cdots m(U_r)} \cong \bigoplus_{i = 1}^r U_i^{\oplus m(V) m(U_1) \cdots m(U_{i-1}) m(U_{i+1}) \cdots m(U_r)}
    \]
    because their restrictions to $H$ are isomorphic. Then the description of $V|_S$ follows from Lemma \ref{lem:KRS}.
    
    For the second claim, take any $X \in \sA_i$, and note that by Theorem \ref{thm:mainthm}
    \[
        V^{\oplus \frac{|\Gamma_X|}{m(V)}} \cong \Ind_{H}^G (X) \cong \Ind_S^G (\Ind_{H}^S (X)) \cong \Ind_S^G(U_i)^{\oplus \frac{|\Gamma_{S,X}|}{m(U_i)}}.
        \]
    The description of $\Ind_S^G(U_i)$ then simiarly follows directly from Lemma \ref{lem:KRS}.
    
    For the final claim, note that $V$ is irreducible if and only if each element of $\sA$ is irreducible, and $U_i$ is irreducible if and only if each element of $\sA_i$ is irreducible, in each case by Theorem \ref{thm:mainthm}.
\end{proof}

\begin{eg}
    If $S = G$, then $r = 1$, $m(V) = m(U_1)$, and $[\Gamma_X : \Gamma_{S,X}] = 1$. If $S = H$, then $r = [\Gamma : \Gamma_X]$, $m(U_i) = 1$, and $[\Gamma_X: \Gamma_{S,X}] = [\Gamma_X : 1]$.
\end{eg}

The main case of interest is the following. To state it, note that if $S$ is a normal subgroup of $G$, then there is an action of $G/S$ on $\Ind_L^{\rtimes}(S)$, where for $g \in G$, and $U \in \Ind_L^{\rtimes}(S)$, $g*U$ is the abelian group $U$ but with $L \rtimes S$-module structure
\begin{itemize}
        \item $s * u = (g^{-1}sg)(u)$,
        \item $\lambda * u = \sigma_g^{-1}(\lambda) u$,
\end{itemize}
for $\lambda \in L$, $s \in S$ and $u \in U$. This action preserves the subset $\Irr_L^{\rtimes}(S)$ of $\Ind_L^{\rtimes}(S)$.

\begin{remark}
    The action of $G/H$ on $\Ind_L(H)$ is the restriction to the diagonal of an action of the product $G/H \times G/H$ on $\Ind_L(H)$ (where one copy acts by conjugation in $G$ and the other acts on scalars - see Section \ref{sect:countcc}). When $S > H$, however, this action of $G/S$ on $\Ind_L^{\rtimes}(S)$ is not the restriction of an action of the product $G/S \times G/S$ in the same way: one only has an action of the diagonal.
\end{remark}

\begin{cor}\label{cor:splittingovergalois}
    Suppose that $V \in \Ind_L^{\rtimes}(G)$, and that for some corresponding $W \in \Ind_L(H)$, $G_W$ is normal in $G$. Set $G_V \coloneqq G_W$, which is independent of the choice of $W$. Let $U \in \Ind_L^{\rtimes}(G_V)$ correspond to $W$. Then
    \begin{align*}
        V|_{G_W} &\cong \bigoplus_{g \in G/G_V} g * U, \\
        V &\cong \Ind_{G_V}^G(g * U),
    \end{align*}
    $V$ is irreducible if and only if $U$ is irreducible, and for any $g \in G$, $g * U$ corresponds to $g * W$, so
    \begin{align*}
        (g * U)|_H &\cong (g * W)^{\oplus m(V)},\\
        \Ind_{H}^{G_V}(g * W) &\cong (g * U)^{\oplus \frac{[G_V: H]}{m(V)}}.
    \end{align*}
\end{cor}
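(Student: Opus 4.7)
The plan is to apply Proposition~\ref{prop:generalintermediatedecomp} with the intermediate subgroup $S = G_V$. First I would note that $G_V$ is independent of the choice of $W$ corresponding to $V$: any two such choices $W, W'$ satisfy $W' \cong g*W$ for some $g \in G$, whence $G_{W'} = g G_W g^{-1} = G_W$ by the assumed normality. Let $\Gamma_W = \sigma(G_W) \leq \Gamma$, so that $G_V = \sigma^{-1}(\Gamma_W)$, and the tuple $(G_V, H, L/L^{\Gamma_W}, \sigma|_{G_V})$ satisfies the standing assumptions.

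Next I would decompose the $G$-orbit $\sA$ of $W$ into $G_V$-orbits. Since $G_V = G_W$ is normal in $G$, it stabilises every translate $g*W$, so each $G_V$-orbit in $\sA$ is a singleton, and there are exactly $[G:G_V]$ of them, indexed by $G/G_V$. The central identification to make is that, under the bijection of Theorem~\ref{thm:mainthm} applied to $G_V \geq H$, the singleton orbit $\{g*W\}$ corresponds to $g*U$. To verify this I would show directly from the definitions that
\[
(g*U)|_H \cong g * (U|_H),
\]
since the $L[H]$-structure one reads off of $g*U$ by restriction is precisely the one defining $g * (U|_H)$: on elements, $\lambda * u = \sigma_{g^{-1}}(\lambda) u$ and $h * u = (g^{-1} h g)(u)$ in either case. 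Combined with $U|_H \cong W^{\oplus m(U)}$ from Theorem~\ref{thm:mainthm}, this gives $(g*U)|_H \cong (g*W)^{\oplus m(U)}$, confirming that $g*U$ corresponds to $\{g*W\}$.

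With these ingredients the computation is immediate. For any $X = g*W \in \sA_i$, normality gives $\Gamma_X = \sigma(g G_W g^{-1}) = \Gamma_W = \Gamma_V$, and since $\Gamma_V$ fixes $X$ also $\Gamma_{V, X} = \Gamma_V$, so $[\Gamma_X : \Gamma_{V,X}] = 1$. Proposition~\ref{prop:generalintermediatedecomp} then yields
\[
V|_{G_V} \cong \bigoplus_{g \in G/G_V} (g*U)^{\oplus m(V)/m(U)}
\quad \text{and} \quad
\Ind_{G_V}^G(g*U) \cong V^{\oplus m(U)/m(V)}.
\]
Both exponents are positive integers, forcing $m(U) = m(V)$, which settles the first three claimed isomorphisms. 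The remaining two are the statement of Theorem~\ref{thm:mainthm} for the tuple $(G_V, H, L/L^{\Gamma_W})$ applied to $g*U$ and $g*W$, again using $\Gamma_{V, g*W} = \Gamma_V$ and $|\Gamma_V| = [G_V:H]$.

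The only non-formal step is the natural isomorphism $(g*U)|_H \cong g*(U|_H)$, which I expect to be the main bookkeeping obstacle; once that is in hand, everything else reduces to plugging known indices into Proposition~\ref{prop:generalintermediatedecomp}.
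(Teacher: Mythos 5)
Your proof is correct and follows the same route as the paper's: apply Proposition \ref{prop:generalintermediatedecomp} with $S = G_V$, use normality of $G_W$ to see that the $G$-orbit of $W$ decomposes into singleton $G_V$-orbits (so $[\Gamma_X : \Gamma_{S,X}] = 1$, forcing $m(U) = m(V)$), and then read off the remaining identities from Theorem \ref{thm:mainthm}. The one thing you spell out that the paper leaves implicit is the identification $(g*U)|_H \cong g*(U|_H)$ justifying that $g*U$ is indeed the irreducible of $\Rep_L^{\rtimes}(G_V)$ corresponding to the singleton orbit $\{g*W\}$; this is a worthwhile clarification and is verified correctly.
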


\begin{proof}
    We have that for any $g \in G$, $G_{g*W} = g G_W g^{-1} = G_W$. In particular, the $G$-orbit $\{g * W\}_{g \in G/G_V}$ decomposes into singleton $G_V$-orbits, which give the $\{U_1, ... , U_r\}$ from Proposition \ref{prop:generalintermediatedecomp}. We have that $m(g*U) | m(V)$ and $m(V) \mid m(g*U) \cdot [G_V : G_V] = m(g * U)$, and therefore $m(V) = m(g * V)$. Then the statements follow from Theorem \ref{thm:mainthm} and Proposition \ref{prop:generalintermediatedecomp}.
\end{proof}

Now we are interested in the condition that $L$ is furthermore a splitting field for $W \in \Ind_L(H)$ (meaning that $\End_{L[H]}(W) = L$, or equivalently that $W_{L'}$ is irreducible for any field extension $L'/L$).

\begin{prop}\label{prop:whencentral}
    Suppose that $W \in \Ind_L(H)$, corresponding to $V \in \Ind_L^{\rtimes}(G)$. Then the $K$-algebra $\End_{L \rtimes G}(V)$ is central over $K$ if and only if $\Gamma$ fixes $W$ and $Z(\End_{L[H]}(W)) = L$.
\end{prop}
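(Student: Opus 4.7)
My approach is to apply Corollary \ref{cor:homsetisom} (specialised to the case at hand, where $L$ is a field, so $F = L$ and $e = 1$) to obtain a $K$-algebra isomorphism $L \otimes_K D \xrightarrow{\sim} \End_{L[H]}(V|_H)$, with $D := \End_{L \rtimes G}(V)$, and to reduce the statement to a dimension count. I first set $B := \End_{L[H]}(W)$ and $E := Z(B)$, which are respectively a division algebra over $L$ and a field containing $L$, and note that $D$ is finite-dimensional over $K$ since $D \hookrightarrow \End_L(V)$ and $V$ is finite-dimensional.

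Using Theorem \ref{thm:mainthm}, I would decompose
\[
    V|_H \cong \bigoplus_{\gamma \in \Gamma / \Gamma_W} (\gamma * W)^{m(V)},
\]
with the $\gamma * W$ (over coset representatives) being pairwise non-isomorphic simple $L[H]$-modules. Hence
\[
    \End_{L[H]}(V|_H) \cong \prod_{\gamma \in \Gamma / \Gamma_W} M_{m(V)}(B_\gamma), \qquad B_\gamma := \End_{L[H]}(\gamma * W),
\]
with centre $\prod_\gamma Z(B_\gamma)$. Since $\gamma * W$ coincides with $W$ as an $H$-module with only the $L$-scalars twisted via $\sigma_{g^{-1}}$ (for any lift $g$ of $\gamma$), the ring $B_\gamma$ is literally $B$ while the $L$-algebra inclusion $L \hookrightarrow Z(B_\gamma)$ is $\sigma_{g^{-1}}$ followed by the usual $L \hookrightarrow E$. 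In particular $\dim_L Z(B_\gamma) = [E : L]$, independent of $\gamma$.

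To conclude, I invoke the standard fact that the centre of a finite-dimensional $K$-algebra commutes with flat base change: $Z(L \otimes_K D) = L \otimes_K Z(D)$. This holds because the adjoint map $D \to \End_K(D)$ is $K$-linear with kernel $Z(D)$, tensoring with $L$ preserves kernels, and $L \otimes_K \End_K(D) \xrightarrow{\sim} \End_L(L \otimes_K D)$ by finite-dimensionality of $D$. Combining,
\[
    \dim_K Z(D) \;=\; \dim_L (L \otimes_K Z(D)) \;=\; [\Gamma : \Gamma_W] \cdot [E : L].
\]
Since $K \subseteq Z(D)$, one has $Z(D) = K$ if and only if $\dim_K Z(D) = 1$, which is equivalent to $[\Gamma : \Gamma_W] = 1$ and $[E : L] = 1$, that is, $\Gamma$ fixes $W$ and $Z(\End_{L[H]}(W)) = L$. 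The main obstacle is careful bookkeeping of the twisted $L$-structures on the factors $Z(B_\gamma)$, but since only $L$-dimensions enter the final count, these twists are inconsequential.
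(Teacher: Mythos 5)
Your proof is correct and takes essentially the same approach as the paper: both pass through the isomorphism $L \otimes_K \End_{L \rtimes G}(V) \xrightarrow{\sim} \End_{L[H]}(V|_H)$ of Corollary \ref{cor:homsetisom}, the decomposition of $V|_H$ from Theorem \ref{thm:mainthm}, and the fact that the centre of a finite-dimensional $K$-algebra commutes with field base change. The paper cites \cite[Prop.\ IV.2.3]{milneCFT} for this last fact, whereas you prove it directly via the adjoint-map/flatness argument and then conclude by an explicit $L$-dimension count.
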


\begin{proof}
    From the isomorphism
    \[
        L \otimes_{K} \End_{L \rtimes G}(V) \rightarrow \End_{L[H]}(V|_H)
    \]
    of Corollary \ref{cor:homsetisom}, $Z(\End_{L \rtimes G}(V)) = K$ if and only if $Z(\End_{L[H]}(V|_H)) = L$, by \cite[Prop.\ IV.2.3]{milneCFT}. From Theorem \ref{thm:mainthm}, this occurs if and only if $\Gamma$ fixes $W$ and $Z(\End_{L[H]}(W)) = L$.
\end{proof}

\begin{prop}\label{prop:descofcentre}
Suppose that $W \in \Irr_L(H)$, corresponding to $U \in \Irr_L^{\rtimes}(G_W)$ and $V \in \Irr_L^{\rtimes}(G)$, and that $\End_{L[H]}(W) = L$. Set $L_W \coloneqq L^{G_W}$, and $D \coloneqq \End_{L \rtimes G}(V)$. Then
\begin{enumerate}
    \item $Z(D) \cong L_W$,
    \item $\dim_{Z(D)} D = m(V)^2$,
    \item $\End_{L \rtimes G_W}(U)$ is central over $L_W$,
    \item $\dim_{L_W}(\End_{L \rtimes G_W}(U)) = m(V)^2$.
\end{enumerate}
\end{prop}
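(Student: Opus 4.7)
My approach is to analyse $D$ through its base change to $L$, using the ring isomorphism
\[
    \phi \colon L \otimes_K D \xrightarrow{\sim} \End_{L[H]}(V|_H)
\]
of Corollary \ref{cor:homsetisom} (applied with $F = L$, since $L$ is a field), combined with the decomposition $V|_H \cong \bigoplus_{\gamma \in \Gamma/\Gamma_W}(\gamma * W)^{m(V)}$ from Theorem \ref{thm:mainthm}. Since $\End_{L[H]}(W) = L$ implies $\End_{L[H]}(\gamma * W) \cong L$ for every $\gamma$, this gives
\[
    \End_{L[H]}(V|_H) \cong \prod_{\gamma \in \Gamma/\Gamma_W} M_{m(V)}(L)
\]
as $L$-algebras, from which $\dim_K D = [\Gamma:\Gamma_W]\cdot m(V)^2$ follows by counting $L$-dimensions.

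For (1), I would identify $Z(D)$ with $L_W$ by Galois descent. Since $D$ is central simple over $Z(D)$, $Z(L\otimes_K D) = L\otimes_K Z(D)$, which under $\phi$ corresponds to $\bigoplus_{\gamma} L \cdot e_\gamma$, where $e_\gamma$ is the projection onto the $(\gamma * W)$-isotypic component $V_\gamma$. The key step is to check that the $\Gamma$-action on $L \otimes_K D$ (through the first factor) transports under $\phi$ to conjugation by $\rho(\sigma)$ on $\End_{L[H]}(V|_H)$, for any lift $\sigma \in G$ of the Galois element; this is well-defined modulo $H$ because conjugation by $\rho(h)$ for $h \in H$ fixes $D$ pointwise (since $D$ commutes with $G$) and fixes $L$ pointwise (since $H$ acts trivially on $L$). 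Viewing $\rho(\sigma)$ as the $L[H]$-isomorphism $\sigma * V|_H \xrightarrow{\sim} V|_H$ then shows $\rho(\sigma)(V_\gamma) = V_{\sigma\gamma}$, so conjugation acts on the centre by $\lambda\,e_\gamma \mapsto \sigma(\lambda)\,e_{\sigma\gamma}$. A direct computation of $\Gamma$-fixed points yields the ring isomorphism
\[
    L_W \xrightarrow{\sim} Z(D), \qquad \mu \mapsto \sum_{\gamma \in \Gamma/\Gamma_W}\gamma(\mu)\,e_\gamma,
\]
proving (1). Claim (2) is then immediate by dividing dimensions: $\dim_{Z(D)} D = \dim_K D / [L_W : K] = m(V)^2$.

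For (3), I would invoke Proposition \ref{prop:whencentral} applied to the tuple $(G_W, H, L/L_W, \sigma|_{G_W})$: the Galois group $\Gamma_W$ of $L/L_W$ tautologically fixes $W$, and $Z(\End_{L[H]}(W)) = L$ by hypothesis, so $\End_{L \rtimes G_W}(U)$ is central over $L_W$. For (4), the same dimension analysis applied to $(G_W, H, L/L_W)$ gives $\dim_{L_W} \End_{L \rtimes G_W}(U) = m(U)^2$, and it remains to check $m(U) = m(V)$. This follows from Proposition \ref{prop:generalintermediatedecomp} with $S = G_W$ and $X = W$, since its divisibility statements yield $m(U) \mid m(V)$ and $m(V) \mid m(U)\cdot [\Gamma_W : \Gamma_W] = m(U)$, forcing equality.

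The hardest part will be the careful identification of the transported Galois action as conjugation by $\rho(\sigma)$ together with the resulting permutation $e_\gamma \mapsto e_{\sigma\gamma}$ of the central idempotents; once that is established, everything else reduces to standard Galois descent and routine dimension counts.
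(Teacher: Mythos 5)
Your proof is correct, and for point (1) it takes a genuinely different route from the paper. The paper passes through the quotient algebra $A = (L \rtimes G)/\mathrm{Ann}(V)$, identifies $A_L$ as split over $L$ via the module $M$ corresponding to $W$, and then cites \cite[Thm.\ 74.4(ii)]{CR2} together with Lemma~\ref{lem:groupsactionssame} and Proposition~\ref{prop:trivialisationres} to obtain $Z(D) \cong L^{\Gamma_M} = L^{\Gamma_W}$. You instead compute $Z(D)$ directly by transporting the $\Gamma$-action across the isomorphism $\phi \colon L \otimes_K D \xrightarrow{\sim} \End_{L[H]}(V|_H)$ of Corollary~\ref{cor:homsetisom}. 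The identification of the transported action as conjugation by $\rho(\sigma)$ is exactly right: for $f \in D$ and $\lambda \in L$ one has $\rho(\sigma)(\lambda f)\rho(\sigma)^{-1} = \sigma(\lambda) f$ because $f$ commutes with $\rho(\sigma)$ and $\rho(\sigma)$ is $\sigma$-semilinear, and conjugation by $\rho(h)$ for $h \in H$ is trivial on all of $\End_{L[H]}(V|_H)$. Your computation that $\rho(\sigma)$ carries the $(\gamma * W)$-isotypic component to the $(\sigma\gamma * W)$-isotypic component, hence permutes the central idempotents by $e_\gamma \mapsto e_{\sigma\gamma}$, is also correct, and the fixed-point description $\mu \mapsto \sum_{\gamma \in \Gamma/\Gamma_W}\gamma(\mu)e_\gamma$ gives the isomorphism $L_W \xrightarrow{\sim} Z(D)$. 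This is more self-contained: it replaces an appeal to Curtis--Reiner and the categorical comparison machinery by an explicit Galois-descent calculation, at the cost of having to check the transport of structure carefully. Points (2), (4) match the paper's proof exactly; your use of Proposition~\ref{prop:whencentral} for (3) applied to the tuple $(G_W, H, L/L_W)$ is legitimate (that proposition holds for any tuple satisfying the standing hypotheses) and is in essence the same argument as the paper, which simply inlines the proof of Proposition~\ref{prop:whencentral} rather than citing it.
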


\begin{proof}
    For the first point, let $A \coloneqq (L \rtimes G)/\text{Ann}_{L \rtimes G}(V)$, which, as $V$ is finite-dimensional over $L$, is a finite dimensional $K$-algebra. Note that because $V$ is a simple $A$-module, $A$ is furthermore simple by \cite[Prop.\ 3.31]{CR1}. As $A$ and $A_L$ are both quotients, they define full-subcategories
\[\begin{tikzcd}
	{\Mod^{\fd}_A} & {\Mod^{\fd}_{A_L}} \\
	{\Rep_L^{\rtimes}(G)} & {\Rep_{L \otimes_K L}^{\rtimes}(G)}
	\arrow["{L \otimes_K -}", shift left, from=1-1, to=1-2]
	\arrow[hook, from=1-1, to=2-1]
	\arrow["{(-)|_A}", shift left, from=1-2, to=1-1]
	\arrow[hook, from=1-2, to=2-2]
	\arrow["{L \otimes_K -}", shift left, from=2-1, to=2-2]
	\arrow["{\Res_{L \rtimes G}}", shift left, from=2-2, to=2-1]
\end{tikzcd}\]
which are closed under sub-objects, and the base change and restriction functors restrict to the base change and restriction functors between $A$ and $A_L$. Let $M \in \Mod^{\fd}_{A_L}$ correspond to $W$. The $K$-algebra $\End_{A_L}(M)$ contains $L$, and has the same $K$-dimension as $\End_{L[H]}(W) = L$, and therefore $\End_{A_L}(M) = L$ and $A_L$ is split over $L$. We may therefore apply \cite[Thm.\ 74.4 (ii)]{CR2} to obtain that $Z(\End_{L \rtimes G}(V)) \cong L^{\Gamma_M}$, where $\Gamma_M$ is the stabiliser of $M$ in $\Gamma$. The action of $\Gamma$ on $\Irr_L(H)$ corresponds to that of $\Gamma$ on $\Irr(A_L)$ by Lemma \ref{lem:groupsactionssame}, and therefore by Proposition \ref{prop:trivialisationres} $\Gamma_W = \Gamma_M$ and 
\[
    Z(\End_{L \rtimes G}(V)) \cong L^{\Gamma_M} = L^{\Gamma_W} = L_W.
\]
For the second point, letting $D \coloneqq \End_{L \rtimes G}(V)$, then applying Corollary \ref{cor:homsetisom},
    \begin{align*}
        \dim_{Z(D)} (D) &= [Z(D):K] \dim_K (D), \\
        &= \dim_L \End_{L[H]}(V|_H), \\
        &= \dim_L \left(\prod_{i = 1}^{[G:G_W]} M_{m(V)}(L) \right), \\
        &= [G:G_W] \cdot m(V)^2,
    \end{align*}
    and hence point (2) follows from point (1), as $[G:G_W] = [L_W:K]$. Similarly, by Corollary \ref{cor:homsetisom}, there is an isomorphism
    \[
        L \otimes_{L_W} \End_{L \rtimes G_W}(U) \xrightarrow{\sim} \End_{L[H]}(U|_H) \cong M_{m(U)}(L),
    \]
    because the $G_W$ orbit of $W$ is a singleton. Then taking the centre, by \cite[Prop.\ IV.2.3]{milneCFT}, we have that
    \[
    L \otimes_{L_W} Z(\End_{L \rtimes G_W}(U)) \xrightarrow{\sim} Z(\End_{L[H]}(U|_H) \cong M_{m(U)}(L)) = L,
    \]
    hence $\dim_{L_W} Z(\End_{L \rtimes G_W}(U)) = 1$. The final claim again follows Corollary \ref{cor:homsetisom}, using the fact that 
    \[
        U|_H \cong W^{\oplus m(V)}
    \]
    by Proposition \ref{prop:generalintermediatedecomp}, as $m(U) \mid m(V)$ and $m(V) \mid m(U) \cdot [G_W : G_W] = m(U)$.
\end{proof}

\begin{remark}
    Note that $Z(\End_{L \rtimes G}(V)) \cong L_W$, and so the field $L_W$ only depends on the $\Gamma$-orbit of $W$. This can also be seen directly, as the $L_W$ for $W$ in a fixed $\Gamma$-orbit are all Galois conjugate in $L$.
\end{remark}

\section{Semilinear Schur Index}\label{sect:semilinearschurindex}

\begin{defn}
For $W \in \Ind_L(H)$, set $m_K^L(W) \coloneqq m(V)$ for the unique corresponding $V \in \Ind_L^{\rtimes}(G)$.
\end{defn}

As a direct consequence of Theorem \ref{thm:mainthm} we have the following.

\begin{cor}\label{cor:whenextends}
Let $W \in \Ind_L(H)$. Then $W$ is extendable to a semilinear representation of $G$ if and only if $\Gamma$ fixes $W$ and $m_K^L(W) = 1$.
\end{cor}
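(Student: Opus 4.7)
The plan is to deduce this directly from Theorem \ref{thm:mainthm}, once the definition of ``extends'' is unpacked: $W$ is extendable means there exists $V \in \Rep_L^{\rtimes}(G)$ with $V|_H \cong W$ in $\Rep_L(H)$.

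For the forward implication, given such a $V$ I would first observe that $V$ must itself be irreducible in $\Rep_L^{\rtimes}(G)$. Indeed, any nonzero subobject $V' \subseteq V$ restricts to a nonzero $L[H]$-submodule of $V|_H \cong W$; since $W$ is irreducible this forces $V'|_H = V|_H$, and since $V'$ and $V$ have the same underlying $L$-dimension, $V' = V$. Now Theorem \ref{thm:mainthm} applies to $V$: picking an irreducible component of $V|_H$ (which must be isomorphic to $W$), it yields
\[
W \cong V|_H \cong \bigoplus_{\gamma \in \Gamma/\Gamma_W} (\gamma * W)^{\oplus m(V)}.
\]
By the Krull--Remak--Schmidt theorem (Lemma \ref{lem:KRS}) both sides have the same multiset of indecomposable summands, which forces $|\Gamma/\Gamma_W| = 1$ and $m(V) = 1$. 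Thus $\Gamma$ fixes $W$ and $m_K^L(W) = m(V) = 1$.

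For the reverse implication, suppose $\Gamma$ fixes $W$ and $m_K^L(W) = 1$. Let $V \in \Irr_L^{\rtimes}(G)$ be the unique semilinear representation corresponding to the $\Gamma$-orbit $\{W\}$ under the bijection of Theorem \ref{thm:mainthm}. Then, applying the same theorem in the other direction,
\[
V|_H \cong \bigoplus_{\gamma \in \Gamma/\Gamma_W} (\gamma * W)^{\oplus m(V)} = W,
\]
since $\Gamma_W = \Gamma$ and $m(V) = m_K^L(W) = 1$. Hence $V$ extends $W$.

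There is essentially no obstacle here; the only point to take care with is the initial observation that any extension of an irreducible $W$ is automatically irreducible, which is what licenses the use of Theorem \ref{thm:mainthm} in the forward direction.
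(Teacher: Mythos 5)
Your proof is correct and follows exactly the route the paper intends when it calls the corollary a ``direct consequence'' of Theorem~\ref{thm:mainthm}: unwind the orbit decomposition of $V|_H$ in one direction and instantiate the bijection in the other. The one small point you flag --- that any $V$ with $V|_H\cong W$ is automatically irreducible, since a nonzero subobject would restrict to a nonzero submodule of the irreducible $W$ --- is precisely the detail the paper leaves implicit, and you handle it correctly.
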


From the results of the previous section, the numbers $m_K^L(W)$ have the following properties. Recall that $G_W$ is the preimage of $\Gamma_W$ in $G$, and we write $L_W \coloneqq L^{\Gamma_W}$. For any intermediate field $E$ with $K \leq E \leq L$, we write $\Gamma_E$ for the preimage of $\Gal(L/E)$ under the isomorphism $\Gamma \xrightarrow{\sim} \Gal(L/K)$.

\begin{prop}\label{prop:propertiesSchurIndex}
    Suppose that $W \in \Ind_L(H)$, and set $\OO(W) \coloneqq \bigoplus_{\gamma \in \Gamma / \Gamma_W} \gamma * W$. Then:
    \begin{enumerate}
        \item $\OO(W)^{\oplus m_K^L(W)} = V|_H$ for a unique $V \in \Ind_L^{\rtimes}(G)$,
        \item $m_K^L(W)$ is the unique $m \geq 1$ such that $\OO(W)^{\oplus m}$ extends to an indecomposable object of~$\Rep_{L}^{\rtimes}(G)$,
        \item $m_K^L(W)$ is the smallest $m \geq 1$ such that $\OO(W)^{\oplus m}$ extends to an object of $\Rep_{L}^{\rtimes}(G)$,
        \item For any $V \in \Rep_L^{\rtimes}(G)$, the number of times $W$ appears as a direct summand in the Krull-Remak-Schmidt decomposition of $V|_H$ is a multiple of $m_K^L(W)$.
    \end{enumerate}
    We furthermore have that
    \begin{enumerate}
        \item[(5)] $m_K^L(W) = m_K^L(g * W)$ for any $g \in G$.  
        \item[(6)] $m_K^L(W) \mid |\Gamma_W|$.
        \item[(7)] $m_{L_W}^{L}(W) = m_{K}^L(W)$.
    \end{enumerate} 
        In particular, applying the above to $G_W \twoheadrightarrow \Gamma_W$,
    \begin{enumerate}
        \item[(1')] $W^{\oplus m_{L_W}^{L}(W)} = U|_H$ for a unique $U \in \Ind_L^{\rtimes}(G_W)$,
        \item[(2')] $m_{L_W}^{L}(W)$ is the unique $m \geq 1$ such that $W^{\oplus m}$ extends to an indecomposable object of~$\Rep_{L}^{\rtimes}(G_W)$,
        \item[(3')] $m_{L_W}^{L}(W)$ is the smallest $m \geq 1$ such that $W^{\oplus m}$ extends to an object of $\Rep_{L}^{\rtimes}(G_W)$,
        \item[(4')] For any $U \in \Rep_L^{\rtimes}(G_W)$, the number of times $W$ appears as a direct summand in the Krull-Remak-Schmidt decomposition of $U|_H$ is a multiple of $m_{L_W}^{L}(W)$.
    \end{enumerate}

    Now suppose that $K \subset E \subset L$ is an intermediate field. Then
        \begin{enumerate}
            \item[(8)] $m_{E}^L(W) \mid m_K^L(W)$,
            \item[(9)] $m_{K}^L(W) \mid m_{E}^L(W) \cdot [\Gamma_W : \Gamma^E_W]$, and
            \item[(10)] $m_{K}^L(W) \leq m_{E}^L(W) \cdot [E:K]$.
        \end{enumerate}
    If additionally either $E$ or $L_W$ is Galois over $E \cap L_W$, then $[\Gamma_W : \Gamma^E_W] = [E : E \cap L_W]$, so
    \begin{enumerate}
        \item[(11)] $m_{K}^L(W) \mid  m_{E}^L(W) \cdot [E : E \cap L_W]$, and in particular
        \item[(12)] $m_{K}^L(W) \mid m_{E}^L(W) \cdot [E:K]$.
    \end{enumerate}
\end{prop}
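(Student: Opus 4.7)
The plan is to derive (1)--(6) directly from Theorem~\ref{thm:mainthm} and (7)--(12) from Proposition~\ref{prop:generalintermediatedecomp}, with a little extra care needed for (3), (4) and the group-theoretic index identities in (10)--(11). Parts (1), (2), (5), (6) are essentially restatements of Theorem~\ref{thm:mainthm}: (1) and (2) encode the description of $V|_H$ together with the bijection $\Irr_L^{\rtimes}(G) \leftrightarrow \Irr_L(H)/\Gamma$; (5) follows because $W$ and $g * W$ lie in the same $\Gamma$-orbit, hence correspond to the same $V$; and (6) comes from the formula $\Ind_H^G(W) \cong V^{\oplus |\Gamma_W|/m(V)}$.

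For (3), given any $V' \in \Rep_L^{\rtimes}(G)$ with $V'|_H \cong \OO(W)^{\oplus m}$, I would pick an irreducible $L \rtimes G$-submodule $V_0 \subseteq V'$ (which exists because $V'$ is finite-dimensional over $K$), and note that $V_0|_H$ is semisimple by Corollary~\ref{cor:presss} and embeds in $\OO(W)^{\oplus m}$. Every irreducible summand of $V_0|_H$ is therefore a Galois conjugate of $W$, so by Theorem~\ref{thm:mainthm} we must have $V_0|_H \cong \OO(W)^{\oplus m_K^L(W)}$, which embeds in $\OO(W)^{\oplus m}$, forcing $m_K^L(W) \leq m$. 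The subtlety here is that one cannot assume $V'$ itself is semisimple, hence the pass to an irreducible submodule. Part (4) then follows by decomposing a semisimple $V$ into irreducibles and applying (1) and (5) to identify the total multiplicity of $W$ in $V|_H$ as an integer multiple of $m_K^L(W)$.

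For (7), I would apply Proposition~\ref{prop:generalintermediatedecomp} with $S = G_W$ and $X = W$: the $G_W$-orbit of $W$ is $\{W\}$ and $\Gamma_{S,W} = \Gamma_W$, so $[\Gamma_W : \Gamma_{S,W}] = 1$, and the two divisibility bounds there collapse to $m_{L_W}^L(W) = m_K^L(W)$. The primed statements (1$'$)--(4$'$) then follow by applying (1)--(4) verbatim to the tuple $(G_W, H, L/L_W, \sigma|_{G_W})$, which satisfies the assumptions of Section~\ref{sect:notation} with $\Gamma_W$ already fixing $W$ (so the $\OO(W)$ of the general statements reduces to $W$ itself).

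Finally, parts (8)--(12) all flow from Proposition~\ref{prop:generalintermediatedecomp} applied to $S = \sigma^{-1}(\Gal(L/E))$ and $X = W$: noting $\Gamma_{S,W} = \Gamma^E_W$, the two divisibility bounds read $m_E^L(W) \mid m_K^L(W)$ and $m_K^L(W) \mid m_E^L(W) \cdot [\Gamma_W : \Gamma^E_W]$, giving (8) and (9). For (10), the coset-counting identity $[\Gamma_W : \Gamma_W \cap \Gamma_E] = |\Gamma_W \cdot \Gamma_E|/|\Gamma_E| \leq [\Gamma : \Gamma_E] = [E:K]$ combined with (9) yields the bound. For (11), under the hypothesis that one of $E/(E \cap L_W)$ or $L_W/(E \cap L_W)$ is Galois, one of $\Gamma_E$ or $\Gamma_W$ is normal in the subgroup they generate; consequently $\Gamma_W \cdot \Gamma_E$ is a subgroup, whose fixed field is $L^{\Gamma_W} \cap L^{\Gamma_E} = L_W \cap E$ by Galois theory, so $\Gamma_W \cdot \Gamma_E = \Gamma_{E \cap L_W}$ and $[\Gamma_W : \Gamma^E_W] = [\Gamma_{E \cap L_W}:\Gamma_E] = [E : E \cap L_W]$. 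Combining with (9) gives (11), and (12) then follows from the tower $K \subseteq E \cap L_W \subseteq E$. The main obstacle throughout is really just bookkeeping; once the right inputs from Theorem~\ref{thm:mainthm} and Proposition~\ref{prop:generalintermediatedecomp} are identified, each item reduces to a short calculation.
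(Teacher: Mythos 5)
Your proposal is correct and follows essentially the same route as the paper: items (1)--(2), (4)--(6) are read off from Theorem~\ref{thm:mainthm} (plus Corollary~\ref{cor:detectisom}), items (7)--(12) come from Proposition~\ref{prop:generalintermediatedecomp} applied with $S = G_W$ and $S = \sigma^{-1}(\Gal(L/E))$ respectively, and the final index identities are elementary Galois theory.

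The one place you genuinely deviate is item (3). The paper's proof observes directly that if $V'|_H \cong \OO(W)^{\oplus m}$ then $V'$ itself is semisimple: $\Ind_H^G(V'|_H)$ is semisimple by Corollary~\ref{cor:presss}, and $\Ind_H^G(V'|_H) \cong (V')^{\oplus |\Gamma|}$ by Corollary~\ref{cor:indres}, so $V'$ is a direct summand of a semisimple module and hence semisimple; item (3) then follows from (4). You instead sidestep the semisimplicity of $V'$ by passing to an irreducible $L \rtimes G$-submodule $V_0 \subseteq V'$, showing $V_0|_H$ lands in the $\Gamma$-orbit of $W$, and concluding $V_0$ is the $V$ of part (1). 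Both are valid; the paper's route is shorter once one notices that restriction followed by induction detects semisimplicity, but yours is more self-contained if one hasn't yet internalized that fact. The rest of your argument (including the coset-counting $[\Gamma_W : \Gamma_W \cap \Gamma^E] = |\Gamma_W \Gamma^E| / |\Gamma^E| \leq [\Gamma : \Gamma^E]$ for (10), and the second-isomorphism-theorem plus fixed-field computation for (11)) is a rephrasing of the paper's index manipulation and is correct, modulo the minor notational slip of writing $\Gamma_E$ where the paper writes $\Gamma^E$.
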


\begin{proof}
    Points (1), (2), (3), (4), (5) and (6) follow from Theorem \ref{thm:mainthm} and Corollary \ref{cor:detectisom}. For point (7), we may take $S$ to be $G_W$ in Proposition \ref{prop:generalintermediatedecomp}, to obtain that $m_{L_W}^{L}(W) \mid m_{K}^L(W)$ and $m_K^L(W) \mid m_{L_W}^{L}(W) \cdot [\Gamma_W : (\Gamma_W)_W] = m_{L_W}^{L}(W)$.

    Now suppose that $E$ is an intermediate field between $K$ and $L$. Taking $S \coloneqq G^E$, the preimage of $\Gamma^E$ in $G$, we have that $m_{E}^L(W) \mid m_K^L(W)$ and $m_{K}^L(W) \mid m_{E}^L(W) \cdot [\Gamma_W : \Gamma^E_W]$. Furthermore,
    \[
    [\Gamma : \Gamma_W][\Gamma_W : \Gamma_W^E] = [\Gamma : \Gamma^E] [\Gamma^E : \Gamma^E_W], 
    \]
    and $[\Gamma^E : \Gamma^E_W] \leq [\Gamma : \Gamma_W]$, hence $[\Gamma_W : \Gamma_W^E] \leq [\Gamma : \Gamma^E] = [E:K]$.

    Suppose now that $E$ or $L_W$ is Galois over $E \cap L_W$. Then in the square
\[\begin{tikzcd}
	{\Gamma^E_W} & {\Gamma^E} \\
	{\Gamma_W} & {\Gamma^{E \cap L_W}}
	\arrow[hook, from=1-1, to=1-2]
	\arrow[hook, from=1-1, to=2-1]
	\arrow[hook, from=1-2, to=2-2]
	\arrow[hook, from=2-1, to=2-2]
\end{tikzcd}\]
    at least one of $\Gamma_W$ or $\Gamma^E$ is normal in $\Gamma^{E \cap L_W}$, and so by the second isomorphism theorem,
    \[
    [\Gamma_W : \Gamma^E_W] = [\Gamma^{E \cap L_W} : \Gamma^E] = [E : E \cap L_W]. \qedhere
    \]
\end{proof}

As a direct consequence of Proposition \ref{prop:finitefieldsmequals1}, we also have the following.

\begin{cor}\label{cor:finitefield}
    If $L$ is a finite field, $m_K^L(W) = 1$ for any $W \in \Irr_L(H)$.
\end{cor}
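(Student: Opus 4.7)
The plan is to reduce this immediately to Proposition \ref{prop:finitefieldsmequals1}. By definition $m_K^L(W) = m(V)$ for the unique $V \in \Irr_L^{\rtimes}(G)$ corresponding to $W$, so it suffices to show that $m(V) = 1$ for every $V \in \Irr_L^{\rtimes}(G)$, which by Proposition \ref{prop:finitefieldsmequals1} will follow once we check that every finite-dimensional division algebra over $K$ is a field.

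First I would observe that since $L$ is a finite field and $K = L^\Gamma$ is a subfield, $K$ is itself finite. Any finite extension $K'/K$ is then a finite ring extension of a finite field, hence also a finite field. Now Wedderburn's Little Theorem (as cited in the paper for Proposition \ref{prop:finitefieldsmequals1}) says that every finite division ring is commutative, so any finite-dimensional division algebra $D$ over $K$ is a finite division ring and therefore a field; equivalently $\Br(K') = 0$ for every finite extension $K'/K$.

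With this hypothesis verified, Proposition \ref{prop:finitefieldsmequals1} applies verbatim and yields $m(V) = 1$ for every $V \in \Irr_L^{\rtimes}(G)$. Translating back via the defining equation $m_K^L(W) = m(V)$ gives the corollary.

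There is really no obstacle here: the result is essentially a one-line specialisation of Proposition \ref{prop:finitefieldsmequals1}, and the only substantive input is Wedderburn's theorem to confirm the hypothesis on $\Br(K')$.
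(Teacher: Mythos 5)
Your proof is correct and takes exactly the paper's route: the corollary is stated as a direct consequence of Proposition \ref{prop:finitefieldsmequals1}, with Wedderburn's Little Theorem supplying the hypothesis $\Br(K')=0$ for finite extensions $K'/K$, just as in the remark immediately preceding that proposition.
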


The following example shows how the numbers $m_K^L(W)$ capture arithmetic information.

\begin{eg}\label{eg:C2C4}
    Let $K$ be any field, and let $L = K(\sqrt{d})$, where $d \in K^{\times} \setminus (K^{\times})^2$. Taking $G = C_4 = \langle y \rangle$, there is a natural homomorphism
    \[
    C_4 \twoheadrightarrow \Gamma \coloneqq \Gal(K(\sqrt{d}) / K),
    \]
    with kernel $C_2 = \langle x \rangle$, where $y^2 = x$. Let $W$ be the $1$-dimensional $L$-representation of $C_2$ where $x$ acts by scalar multiplication by $-1$. Then, $m_K^{L}(W)$ is either $1$ or $2$ by Proposition \ref{prop:propertiesSchurIndex}(6). Explicitly, using the matrix description of Section \ref{sect:matrixdesc}, $W$ extends to a semilinear representation of $G$ over $K(\sqrt{d})$ if and only if there is an $a = u + v \sqrt{d} \in K(\sqrt{D})^{\times}$ (defining the matrix of the action of $y$) with
    \[
        (u + v \sqrt{d})(u - v \sqrt{d}) = -1,
    \]
    this condition arising from the relation that $y^2 = x$. In particular, as $W$ is fixed by the action of $\Gamma$, we see from Corollary \ref{cor:whenextends} that $m_K^{L}(W) = 1$ if and only if $-1 = u^2 - d v^2$ for some $u, v \in K$.
\end{eg}

This can be extended to produce arbitrarily large values of $m_K^L(W)$ (see also Corollary \ref{cor:realisationdivalg}).

\begin{eg}
Let $p$ be a prime, and let $L/K$ be a degree $p$ Galois field extension. For any $n \geq 1$, there is a quotient map
\[
    C_{np} \twoheadrightarrow \Gal(L/K),
\]
with kernel $C_n = \langle x \rangle \subset C_{np} = \langle y \rangle$, where $y^p = x$. Then for any $1$-dimensional representation $W$ of $H$ valued in $K$, then similarly to above, $m_K^L(W) \in \{1,p\}$, and $m_K^L(W) = 1$ if and only if $N_K^L(a) = \chi_W(x)$ for some $a \in L^{\times}$.
\end{eg}

\section{Recovery of the Classical Schur Index}\label{sect:classicalSchurIndex}

Suppose now that $H$ is a group, and $L/K$ is a Galois extension of fields. Then, as in Section \ref{sect:splitgroupext}, we may take $G \coloneqq H \times \Gal(L/K)$ and $G \twoheadrightarrow \Gal(L/K)$ to be the natural projection, and consider the category $\Rep_{L}^{\rtimes}(G)$. By Proposition \ref{prop:galoisdescent} there are equivalences
\[\begin{tikzcd}[column sep=huge]
	{\Rep_{K}(H)} & {\Rep_L^{\rtimes}(H \times \Gal(L/K))}
	\arrow["{L \otimes_K -}", shift left, from=1-1, to=1-2]
	\arrow["{(-)^{\Gal(L/K)}}", shift left, from=1-2, to=1-1]
\end{tikzcd}\]
under which \emph{induction}
\[
(-)_L \colon \Rep_K(H) \rightarrow \Rep_L(H)
\]
corresponds to \emph{restriction}
\[
(-)|_H \colon \Rep_L^{\rtimes}(H \times \Gal(L/K)) \rightarrow \Rep_L(H),
\]
by Lemma \ref{lem:classicalinduction}, and \emph{restriction}
\[
(-)|_{K[H]} \colon \Rep_L(H) \rightarrow \Rep_K(H)
\]
corresponds to \emph{induction}
\[
\Ind_{H}^{H \times \Gal(L/K)} \colon \Rep_L(H) \rightarrow \Rep_L^{\rtimes}(H \times \Gal(L/K))
\]
by Lemma \ref{lem:classicalrestriction}. For example, given $V \in \Rep_L(H)$, $V$ is induced from a $K$-linear representation of $H$ if and only if $V$ extends to a semilinear representation of $H \times \Gal(L/K)$.

As $H$ commutes with $\Gal(L/K)$, the action of $\Gal(L/K)$ of Definition \ref{def:Gaction} on $\Rep_L(H) / \cong$ simplifies: for $\gamma \in \Gal(L/K)$ and $W \in \Rep_L(H)$, $\gamma * W$ is $W$ as a $K$-vector space, with same action of $H$, and $L$-linear structure $\lambda \cdot w \coloneqq \gamma^{-1}(\lambda) w$. In terms of matrices, if $W$ has matrix representation $(A_g)_{g \in G}$, then $\gamma * W$ has matrix representation $({}^{\gamma \!} A_g)_{g \in G}$ (in the notation of Section \ref{sect:matrixdesc}).

From Theorems A and B, we recover the following relationship between $\Rep_K(H)$ and $\Rep_L(H)$.

\begin{cor}\label{cor:classicalmainthm}
Let $V_1, V_2 \in \Rep_{K}(H)$. Then:
\begin{itemize}
    \item If $V_{1, L} \cong V_{2,L}$ in $\Rep_L(H)$ then $V_1 \cong V_2$.
    \item The natural map
    \[
    L \otimes_{K} \Hom_{K[H]}(V_1, V_2) \rightarrow \Hom_{L[H]}(V_{1,L}, V_{2,L})
    \]
    is an isomorphism.
    \item In particular, if $V \in \Rep_{K}(H)$ then the natural map
    \[
        L \otimes_{K} \End_{K[H]}(V) \rightarrow \End_{L[H]}(V_L)
    \]
    is an $L$-algebra isomorphism.
\end{itemize}
Let $V \in \Ind_{L}^{\rtimes}(G)$ and let $W \in \Ind_{L}(H)$ be an indecomposable direct summand of $V_L$. Then:
\begin{itemize}
    \item $V_L$ and $W|_{K[H]}$ are described by
    \[
        V_L \cong \bigoplus_{\gamma \in \Gal(L/K) / \Gal(L/K)_V} (\gamma * W)^{\overline{m}_K^L(W)}, \qquad W|_{K[H]} \cong V^{\oplus |\Gal(L/K)_W| / \overline{m}_K^L(W)}
    \]
    for some integer $\overline{m}_K^L(W) \geq 1$. In particular, $\overline{m}_K^L(W) \mid |\Gal(L/K)_W|$.
    \item $W$ is irreducible if and only if $V$ is irreducible.
    \item $W|_{K[H]} \cong (\gamma * W)|_{K[H]}$ and $\overline{m}_K^L(W) = \overline{m}_K^L(\gamma * W)$ for any $\gamma \in \Gal(L/K)$.
    \item Any $W \in \Ind_{L}(H)$ is a direct summand of $V_L$ for a unique $V \in \Ind_{K}(H)$. Sending $W$ to this unique $V$ defines bijections
    \[
       \Ind_{K}(H) \leftrightarrow \Ind_{L}(H) / \Gal(L/K), \qquad \Irr_{K}(H) \leftrightarrow \Irr_{L}(H) / \Gal(L/K).
    \]
\end{itemize}
\end{cor}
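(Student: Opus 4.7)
The plan is to deduce the corollary by applying Theorems A and B (i.e.\ Corollary \ref{cor:detectisom}, Remark \ref{rem:weakhomsetisom}, and Theorem \ref{thm:mainthm}) to the special case $G = H \times \Gamma$ with $\Gamma = \Gal(L/K)$ and $G \twoheadrightarrow \Gamma$ the projection to the second factor (whose kernel is precisely $H$), and then transporting the conclusions through the Galois descent equivalence of Proposition \ref{prop:galoisdescent}.

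First I would fix the equivalence $L \otimes_K - \colon \Rep_K(H) \xrightarrow{\sim} \Rep_L^{\rtimes}(G)$ with quasi-inverse $(-)^{\Gamma}$, and record the dictionary supplied by Lemmas \ref{lem:classicalinduction} and \ref{lem:classicalrestriction}: under this equivalence, classical base change $(-)_L$ from $\Rep_K(H)$ to $\Rep_L(H)$ is naturally isomorphic to the semilinear restriction $(-)|_H$, and classical restriction $(-)|_{K[H]}$ from $\Rep_L(H)$ to $\Rep_K(H)$ is naturally isomorphic to the semilinear induction $\Ind_H^G$. I would also note that, because $H$ is a direct factor of $G$, conjugation satisfies $g^{-1}hg = h$ for $g \in \Gamma$, so the action of $\Gamma$ on $\Irr_L(H)$ from Definition \ref{def:Gaction} reduces to the ``twist the scalars'' action described just before the corollary.

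For the first batch of statements: if $V_{1,L} \cong V_{2,L}$ in $\Rep_L(H)$, then via the equivalence this reads $V_1|_H \cong V_2|_H$ in $\Rep_L(H)$ (with $V_i$ now viewed as semilinear representations), so Corollary \ref{cor:detectisom} (applied with $e = 1$, as $L$ is a field) gives $V_1 \cong V_2$ as semilinear representations, and hence as $K$-linear representations of $H$. The Hom isomorphism follows by feeding $L \otimes_K V_1$ and $L \otimes_K V_2$ into Remark \ref{rem:weakhomsetisom} and using that the left-hand side is canonically $L \otimes_K \Hom_{K[H]}(V_1, V_2)$ under the equivalence; the endomorphism statement is the special case $V_1 = V_2$.

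For the second batch I would apply Theorem \ref{thm:mainthm} to $G = H \times \Gamma$ to obtain the bijection $\Irr_L^{\rtimes}(G) \xleftrightarrow{\sim} \Irr_L(H)/\Gamma$ together with the decompositions of $V|_H$ and $\Ind_H^G(W)$, and translate through the equivalence: the left-hand side becomes $\Irr_K(H)$, the first decomposition becomes the claimed decomposition of $V_L$, and the second becomes that of $W|_{K[H]}$. Setting $\overline{m}_K^L(W) \coloneqq m(V)$ supplies the Schur index, and the $\Gamma$-invariance $\Ind_H^G(W) \cong \Ind_H^G(\gamma * W)$ from Theorem B transports to $W|_{K[H]} \cong (\gamma * W)|_{K[H]}$ and hence to $\overline{m}_K^L(W) = \overline{m}_K^L(\gamma * W)$. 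The only potential obstacle is purely organisational, namely matching each clause of Theorem \ref{thm:mainthm} with its classical counterpart under the above dictionary; no new argument beyond bookkeeping is required.
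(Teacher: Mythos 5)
Your proposal is correct and matches the paper's intended argument: Section~\ref{sect:classicalSchurIndex} sets up exactly the split case $G = H \times \Gal(L/K)$, invokes Proposition~\ref{prop:galoisdescent} together with Lemmas~\ref{lem:classicalinduction} and~\ref{lem:classicalrestriction} to translate $(-)_L$ and $(-)|_{K[H]}$ into $(-)|_H$ and $\Ind_H^G$, and then reads off the corollary from Theorems~A and~B, which is precisely the bookkeeping you describe. Your extra observation that the twisted $\Gamma$-action of Definition~\ref{def:Gaction} collapses to the scalar-twist action when $H$ is a direct factor is the right sanity check and is also noted in the paper just before the corollary.
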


\subsection{Classical Schur Index}

For $L / K$ a finite Galois extension and $W \in \Ind_L(H)$, we call the number $\overline{m}_K^L(W)$ appearing in Corollary \ref{cor:classicalmainthm} the (classical) Schur index of $W$ over $K$. When $W$ is irreducible (which is the setting in which Schur indices are usually considered) this agrees by definition with the usual definition of the Schur index \cite[Thm.\ 38.1]{HUP}.

We following record how Proposition \ref{prop:propertiesSchurIndex} recovers the basic properties of the Schur index, and further generalises them to indecomposable modules.

\begin{cor}
Let $W \in \Ind_L(H)$. Then $W$ is defined over $K$ if and only if $\Gal(L/K)$ fixes $W$ and $\overline{m}_K^L(W) = 1$.
\end{cor}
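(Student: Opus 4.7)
The plan is to deduce this immediately from Corollary \ref{cor:whenextends} by transporting the statement across the Galois descent equivalence. Recall that in this split setting $G = H \times \Gal(L/K)$, and Proposition \ref{prop:galoisdescent} gives mutually inverse equivalences $L \otimes_K (-)$ and $(-)^{\Gal(L/K)}$ between $\Rep_K(H)$ and $\Rep_L^{\rtimes}(G)$. By Lemma \ref{lem:classicalinduction}, the base change functor $(-)_L \colon \Rep_K(H) \to \Rep_L(H)$ corresponds, under this equivalence, to the restriction functor $(-)|_H \colon \Rep_L^{\rtimes}(G) \to \Rep_L(H)$.

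First I would unravel what "defined over $K$" means in this framework: $W \in \Irr_L(H)$ is defined over $K$ precisely when there exists $V \in \Rep_K(H)$ with $V_L \cong W$. Under the translation above, this is equivalent to saying there exists an object $\tilde{V} \in \Rep_L^{\rtimes}(G)$ with $\tilde{V}|_H \cong W$, i.e.\ that $W$ extends to a semilinear representation of $G$ over $L$.

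Next I would invoke Corollary \ref{cor:whenextends}, which states precisely that $W$ extends to a semilinear representation of $G$ if and only if $\Gamma = \Gal(L/K)$ fixes $W$ and $m_K^L(W) = 1$. Combined with the identification $\overline{m}_K^L(W) = m_K^L(W)$ in this split setting (noted in the paragraph preceding the corollary, citing \cite[Thm.\ 38.1]{HUP}), this yields exactly the claimed equivalence.

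There is no real obstacle here: the entire content has already been established, and this corollary is a direct specialisation of the general semilinear extension criterion to the split group case. The only small point to flag is to confirm that the action of $\Gal(L/K)$ on $\Irr_L(H)$ used in Corollary \ref{cor:whenextends} agrees with the simplified description given just above the statement (scalars twisted by $\gamma^{-1}$, trivial action on $H$), which is automatic because $H$ and $\Gal(L/K)$ commute in $G = H \times \Gal(L/K)$, so conjugation by lifts of $\gamma$ acts trivially on $H$.
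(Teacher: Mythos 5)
Your proof is correct and takes essentially the same approach as the paper: the paper states this corollary without a separate proof, presenting it as an immediate specialisation of Corollary~\ref{cor:whenextends} once one identifies $\overline{m}_K^L(W)$ with the semilinear Schur index $m_K^L(W)$ for the split extension $G = H \times \Gal(L/K)$, using Proposition~\ref{prop:galoisdescent} and Lemma~\ref{lem:classicalinduction} to translate ``defined over $K$'' into ``extends to a semilinear representation.'' Your closing check that the two descriptions of the $\Gal(L/K)$-action agree (because $H$ and $\Gal(L/K)$ commute in the split case) is a welcome precision, also noted implicitly in the paragraph of the paper preceding Corollary~\ref{cor:classicalmainthm}.
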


\begin{cor}\label{cor:classicalSchurindexproperties}
Suppose that $W \in \Ind_L(H)$, and set $\OO(W) \coloneqq \bigoplus_{\gamma \in \Gal(L/K) / \Gal(L/K)_W} \gamma * W$. Then:
    \begin{enumerate}
        \item $\OO(W)^{\oplus \overline{m}_K^L(W)} = V_L$ for a unique $V \in \Ind_K(H)$,
        \item $\overline{m}_K^L(W)$ is the unique $m \geq 1$ such that $\OO(W)^{\oplus m}$ is induced from an indecomposable object of $\Rep_{K}(H)$,
        \item $\overline{m}_K^L(W)$ is the smallest $m \geq 1$ such that $\OO(W)^{\oplus m}$ is induced from an object of $\Rep_{K}(H)$,
        \item For any $V \in \Irr_K(H)$, the number of times $W$ appears as a direct summand in the Krull-Remak-Schmidt decomposition of $V_L$ is a multiple of $\overline{m}_K^L(W)$.
    \end{enumerate}
    We furthermore have that
    \begin{enumerate}
        \item[(5)] $\overline{m}_K^L(W) = \overline{m}_K^L(\gamma * W)$ for any $\gamma \in \Gal(L/K)$.  
        \item[(6)] $\overline{m}_K^L(W) \mid |\Gal(L/K)_W|$.
        \item[(7)] $\overline{m}_{L_W}^L(W) = \overline{m}_K^L(W)$.
    \end{enumerate} 
        In particular, applying the above to the Galois extension $L / L_W$,
    \begin{enumerate}
        \item[(1')] $W^{\oplus \overline{m}_{L_W}^L(W)} = U_L$ for a unique $U \in \Ind_{L_W}(H)$,
        \item[(2')] $\overline{m}_{L_W}^L(W)$ is the unique $m \geq 1$ such that $W^{\oplus m}$ is induced from an indecomposable object of $\Rep_{L_W}(H)$,
        \item[(3')] $\overline{m}_{L_W}^L(W)$ is the smallest $m \geq 1$ such that $W^{\oplus m}$ is induced from an object of $\Rep_{L_W}(H)$,
        \item[(4')] For any $U \in \Rep_{L_W}(H)$, the number of times $W$ appears as a direct summand in the Krull-Remak-Schmidt decomposition of $U_L$ is a multiple of $\overline{m}_{L_W}^L(W)$.
    \end{enumerate}

    Now suppose that $K \subset E \subset L$ is an intermediate field. Then
        \begin{enumerate}
            \item[(8)] $\overline{m}_{E}^L(W) \mid \overline{m}_{K}^L(W)$,
            \item[(9)] $\overline{m}_{K}^L(W) \mid \overline{m}_{E}^L(W) \cdot [\Gal(L/L_W) : \Gal(L/(E \cdot L_W))]$, and
            \item[(10)] $\overline{m}_{K}^L(W) \leq \overline{m}_{E}^L \cdot [E:K]$.
        \end{enumerate}
    If additionally either $E$ or $L_W$ is Galois over $E \cap L_W$, then
    \[
    [\Gal(L/L_W) : \Gal(L/(E \cdot L_W))] = [E : E \cap L_W],
    \]
    thus
    \begin{enumerate}
        \item[(11)] $\overline{m}_{K}^L(W) \mid  \overline{m}_{E}^L \cdot [E : E \cap L_W]$, and in particular
        \item[(12)] $\overline{m}_{K}^L(W) \mid \overline{m}_{E}^L \cdot [E:K]$.
    \end{enumerate}
\end{cor}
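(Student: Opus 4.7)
The plan is to deduce Corollary~\ref{cor:classicalSchurindexproperties} from Proposition~\ref{prop:propertiesSchurIndex} applied to the split extension $G = H \times \Gal(L/K)$ with $\sigma \colon G \twoheadrightarrow \Gal(L/K)$ the projection (so $H = \ker \sigma$). Under this choice, the Galois descent equivalence $L \otimes_K -$ of Proposition~\ref{prop:galoisdescent} gives an identification $\Irr_K(H) \leftrightarrow \Irr_L^{\rtimes}(G)$, and together with Lemmas~\ref{lem:classicalinduction} and~\ref{lem:classicalrestriction} it translates the restriction functor $(-)|_H \colon \Rep_L^{\rtimes}(G) \to \Rep_L(H)$ into the induction functor $(-)_L \colon \Rep_K(H) \to \Rep_L(H)$, and the induction functor $\Ind_H^G \colon \Rep_L(H) \to \Rep_L^{\rtimes}(G)$ into the restriction functor $(-)|_{K[H]} \colon \Rep_L(H) \to \Rep_K(H)$. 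In particular $\overline{m}_K^L(W) = m_K^L(W)$ under this dictionary, as already recorded just before the statement.

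I would also record that the action of $\Gamma = \Gal(L/K)$ on $\Irr_L(H)/\!\cong$ from Definition~\ref{def:Gaction} simplifies in the split case to the classical action $(\gamma, W) \mapsto \gamma * W$ described in the paragraph preceding Corollary~\ref{cor:classicalmainthm}, since elements of $H$ commute with elements of $\Gal(L/K)$ in $G$. With these identifications in hand, items~(1)--(6) of Corollary~\ref{cor:classicalSchurindexproperties} are immediate transcriptions of items~(1)--(6) of Proposition~\ref{prop:propertiesSchurIndex}, and the primed statements~(1')--(4') are obtained by applying the same transcription to the smaller split extension $H \times \Gal(L/L_W) \twoheadrightarrow \Gal(L/L_W)$. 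Item~(7) is the translation of Proposition~\ref{prop:propertiesSchurIndex}(7) using the Galois correspondence identification $L^{\Gamma_W} = L_W$.

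For the intermediate field statements (8)--(12), I would take the intermediate subgroup $S \coloneqq H \times \Gal(L/E) \leq G$ when applying Proposition~\ref{prop:propertiesSchurIndex}. Under the Galois correspondence $\Gamma^E \leftrightarrow \Gal(L/E)$, the subgroup $\Gamma_W^E = \Gamma^E \cap \Gamma_W$ corresponds to $\Gal(L/E) \cap \Gal(L/L_W) = \Gal(L/(E \cdot L_W))$, so items~(8) and~(9) of the target corollary follow directly. Item~(10) follows from~(9) combined with the bound $[\Gamma_W : \Gamma_W^E] \leq [\Gamma : \Gamma^E] = [E:K]$. Items~(11) and~(12) are similarly the translations of Proposition~\ref{prop:propertiesSchurIndex}(11)--(12), whose hypothesis that $E$ or $L_W$ be Galois over $E \cap L_W$ is copied verbatim and yields $[\Gamma_W : \Gamma_W^E] = [E : E \cap L_W]$ as needed.

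Since the entire argument is a systematic translation along the Galois descent equivalence, there is no genuine obstacle to overcome; the only steps requiring any care are the compatibility of the semilinear $\Gamma$-action with the classical one in the first paragraph, and the Galois-correspondence identification of $\Gamma_W^E$ with $\Gal(L/(E \cdot L_W))$ in the third.
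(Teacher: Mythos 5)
Your proof is correct and matches the paper's intent exactly: the paper presents Corollary \ref{cor:classicalSchurindexproperties} without a separate argument, regarding it as the transcription of Proposition \ref{prop:propertiesSchurIndex} to the split case $G = H \times \Gal(L/K)$ via Propositions \ref{prop:galoisdescent} and Lemmas \ref{lem:classicalinduction} and \ref{lem:classicalrestriction}, which is precisely the dictionary you set up. Your careful identification of $\Gamma^E_W$ with $\Gal(L/(E \cdot L_W))$ and of the intermediate subgroup $S = H \times \Gal(L/E)$ confirms that items (8)--(12) translate correctly, so there is nothing missing.
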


The above properties are all special cases of properties of all semilinear Schur indices. We now highlight some properties that the classical Schur indices $\overline{m}_K^L(W)$ satisfy, which are not true for the semilinear Schur indices $m_K^L(W)$. 

\subsubsection{Galois Splitting Field} When $H$ is a finite group (no assumptions on the characteristic of $L$), the field $L_W$ admits a nicer description. For a character $\chi$ of an irreducible representation of $H$ over $L$, we write $K(\chi)$ for the subfield of $L$ generated by the set $\{ \chi(h) \mid h \in H\}$.

\begin{lemma}
    Suppose that $H$ is finite, and $W \in \Irr_L(H)$. Then $L_W = K(\chi_W)$.
\end{lemma}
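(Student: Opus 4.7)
The plan is to show that $\chi_{\gamma * W} = \gamma \circ \chi_W$ for every $\gamma \in \Gamma = \Gal(L/K)$, from which the lemma will follow by the Galois correspondence.

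First I would invoke the matrix description of the $\Gamma$-action on $\Rep_L(H)/\!\cong$ recalled immediately before the lemma: if $W$ has matrix representation $(\rho(h))_{h \in H}$ with respect to some $L$-basis, then $\gamma * W$ has matrix representation $({}^{\gamma}\rho(h))_{h \in H}$. Taking the trace of each matrix yields
\[
    \chi_{\gamma * W}(h) \: = \: \gamma(\chi_W(h)) \ \text{ for all } h \in H,
\]
so $\chi_{\gamma * W} = \gamma \circ \chi_W$ as functions $H \to L$.

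Next, since irreducible representations of a finite group over a field are determined up to isomorphism by their characters (by the orthogonality relations when $|H|$ is invertible in $L$, and via Brauer character theory in general), for each $\gamma \in \Gamma$ one obtains the chain of equivalences
\[
    \gamma \in \Gamma_W \ \iff \ \gamma * W \cong W \ \iff \ \gamma \circ \chi_W = \chi_W \ \iff \ \gamma|_{K(\chi_W)} = \mathrm{id}.
\]
The last condition is precisely $\gamma \in \Gal(L/K(\chi_W))$, so $\Gamma_W = \Gal(L/K(\chi_W))$, and the Galois correspondence then gives $L_W = L^{\Gamma_W} = K(\chi_W)$.

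There is no serious obstacle: the matrix compatibility used in the first step is already essentially recorded in the excerpt, and the remaining argument is standard Galois theory. The only point requiring mild care is the ``characters determine irreducibles'' step in the modular setting, which is handled by Brauer characters since only irreducible representations appear.
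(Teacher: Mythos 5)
Your proof is correct and follows essentially the same route as the paper's. The paper also establishes $L_W = K(\chi_W)$ by showing $\Gamma_W = \Gal(L/K(\chi_W))$: the forward inclusion uses $\chi_{\gamma*W} = \gamma \circ \chi_W$ (which the paper, like you, reads off from the matrix description stated earlier in Section 9), and the reverse inclusion uses the fact that equality of characters forces isomorphism of irreducibles, for which the paper cites Isaacs in place of your appeal to orthogonality/Brauer character theory. The paper writes this as two containments $K(\chi_W) \subseteq L_W$ and $L_W \subseteq K(\chi_W)$ rather than your chain of equivalences, but the content is identical.
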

\begin{proof}
If $\gamma \in \Gal(L/K)$ fixes $W$, then $\gamma * \chi_W = \chi_W$, where $(\gamma * \chi_W)(h) \coloneqq \gamma(\chi_W(h))$ for any $h \in H$. In particular, $\gamma$ fixes $K(\chi)$, and so $K(\chi_W) \subset L_W$. Conversely, if $\gamma$ fixes $K(\chi_W)$, then $\gamma * \chi_W = \chi_W$, and so, as $\chi_{\gamma * W} = \gamma * \chi_W$, then $W \cong \gamma * W$ by \cite[Cor.\ 9.22]{ISA}. In particular, $\Gal(L / K(\chi)) \subset \Gal(L/K)_W$, and so $L_W \subset K(\chi_W)$.
\end{proof}

\begin{remark}
In particular, when $H$ is finite and $W \in \Irr_L(H)$, $L_W = K(\chi_W)$ is normal over $K$, being a subfield of a cyclotomic extension. In particular, from Corollary \ref{cor:classicalSchurindexproperties},
\[
\overline{m}_{K}^L(W) \mid \overline{m}_{E}^L(W) \cdot [E : E \cap K(\chi_W)]
\]
for any intermediate field $K \subset E \subset L$.
\end{remark}

\subsubsection{Divisibility by Dimension}

The following result, amongst other things, implies that any linear character of $H$ over $L$ has Schur index $1$. This makes sense, as any such representation fixed by $\Gal(L/K)$ is easily seen to be defined over $K$. The proof does not extend to the non-split setting, and indeed the statement for semilinear Schur indices is false. This can be seen from Example \ref{eg:C2C4}, taking $L / K$ to be $\bQ(i) / \bQ$, which exhibits linear characters of $H$ over $L$ with Schur index $2$.

\begin{prop}\label{prop:classicalSIdividesdim}
Suppose that $W \in \Irr_L(H)$, and let $D_W \coloneqq \End_{L[H]}(W)$, a (potentially non-central) division algebra over $L$. Then 
\[
\overline{m}_{K}^{L}(W) \mid \frac{\dim_L(W)}{\dim_L D_W}.
\]
\end{prop}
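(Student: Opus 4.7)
The plan is to exploit the split-extension equivalence between $\Rep_K(H)$ and $\Rep_L^{\rtimes}(H \times \Gal(L/K))$ together with the dimension comparison afforded by Corollary \ref{cor:classicalmainthm}. By that corollary, there is a unique $V \in \Irr_K(H)$ such that $V_L \cong \OO(W)^{\oplus \overline{m}}$, where $\overline{m} \coloneqq \overline{m}_K^L(W)$. Setting $D_V \coloneqq \End_{K[H]}(V)$, a (finite-dimensional) division $K$-algebra, $V$ becomes a right $D_V$-module, of some $D_V$-dimension $r \geq 1$. The target is to show that the ratio $\dim_L(W)/\dim_L(D_W)$ equals exactly $r \cdot \overline{m}$, from which the divisibility statement is immediate.

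First I would compute $\End_{L[H]}(V_L)$ directly from the decomposition $V_L \cong \bigoplus_{\gamma \in \Gal(L/K)/\Gal(L/K)_W} (\gamma * W)^{\oplus \overline{m}}$. Since the summands $\gamma * W$ are pairwise non-isomorphic irreducibles,
\[
    \End_{L[H]}(V_L) \;\cong\; \prod_{\gamma \in \Gal(L/K)/\Gal(L/K)_W} M_{\overline{m}}(D_{\gamma * W}),
\]
and for each coset representative $\gamma$, conjugation by $\gamma$ gives a ring isomorphism $D_W \cong D_{\gamma * W}$, so in particular $\dim_L D_{\gamma * W} = \dim_L D_W$. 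Taking $L$-dimensions on both sides of the algebra isomorphism $L \otimes_K D_V \xrightarrow{\sim} \End_{L[H]}(V_L)$ provided by Corollary \ref{cor:classicalmainthm}, I obtain
\[
    \dim_K D_V \;=\; [\Gal(L/K) : \Gal(L/K)_W] \cdot \overline{m}^2 \cdot \dim_L D_W.
\]

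Next I would record the two ways of computing $\dim_K V$: on one hand $\dim_K V = r \cdot \dim_K D_V$, and on the other $\dim_K V = \dim_L V_L = \overline{m} \cdot [\Gal(L/K) : \Gal(L/K)_W] \cdot \dim_L W$. Substituting the formula for $\dim_K D_V$ obtained above, the factor $\overline{m} \cdot [\Gal(L/K) : \Gal(L/K)_W]$ cancels from both sides, yielding
\[
    \dim_L W \;=\; r \cdot \overline{m} \cdot \dim_L D_W,
\]
i.e.\ $\dim_L W / \dim_L D_W = r \, \overline{m}$, which is manifestly divisible by $\overline{m}$.

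The proof is essentially routine once one correctly identifies $\End_{L[H]}(V_L)$; the only mild subtlety is keeping track of the fact that $D_{\gamma * W} \cong D_W$ as abstract rings (they are not $L$-algebra isomorphic in general, but this does not matter for the dimension count). No new ideas beyond Corollary \ref{cor:classicalmainthm} and the standard Wedderburn description of $\End_{L[H]}$ of a semisimple module are required.
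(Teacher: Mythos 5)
Your proof is correct and follows essentially the same route as the paper's: both compute $\dim_K D_V = [\Gamma:\Gamma_W]\cdot\overline{m}^2\cdot\dim_L D_W$ from Corollary~\ref{cor:classicalmainthm}, and both derive the divisibility from $V$ being free over $D_V$ (you make the rank $r$ explicit and arrive at the exact equality $\dim_L W/\dim_L D_W = r\,\overline{m}$, whereas the paper phrases it as $\dim_K D_V\mid\dim_K V$). Your parenthetical care about $D_{\gamma*W}$ being isomorphic to $D_W$ only as a ring, not as an $L$-algebra, is a correct and slightly more scrupulous rendering of a point the paper glosses over.
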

\begin{proof}
    Let $V \in \Irr_K(H)$ correspond to $W$, and set $m \coloneqq \overline{m}_K^L$. Writing $D_V \coloneqq \End_{K[G]}(V)$, we have that $\dim_K D_V \mid \dim_K V$ as $V$ is a free module over $D_V$. From Corollary \ref{cor:classicalmainthm} we have that
    \[
    V_L = \oplus_{\gamma \in \Gamma / \Gamma_W} (\gamma * W)^{m}
    \]
    and thus
    \[
    \dim_K D_V = \dim_L \End_{L[H]}(V_L) = \dim_L \prod_{\Gamma / \Gamma_W} M_{m}(D_W) = [\Gamma : \Gamma_W] \cdot m^2 \cdot \dim_L D_W.
    \]
    From $\dim_K D_V \mid \dim_K V$ and noting that $\dim_K V = \dim_L V_L$ we have that
    \[
        [\Gamma : \Gamma_W] \cdot m^2 \cdot \dim_L D_W \mid [\Gamma: \Gamma_W] \cdot m \cdot \dim_L W,
    \]
    and therefore $m \cdot \dim_L D_W \mid \dim_L W$ as required.
\end{proof}

\section{Base Change for Semilinear Representations}\label{sect:basechange}

We would like to better understand the division algebras $\End_{L \rtimes G}(V)$ for $V \in \Irr_L^{\rtimes}(G)$, and more specifically their splitting behaviour under base extension. In this section only we briefly relax the condition, in place since the start of Section \ref{sect:whenLisafield}, that $L$ is a field - this is only needed for the final result of this Section, Corollary \ref{cor:localglobal}. We also suppose in this section that $K'$ is an arbitrary, potentially infinite, field extension of $K$.

In this section we consider the base change functor
\[
(-)_{L'} \colon \Rep_{L}^{\rtimes}(G) \rightarrow \Rep_{L'}^{\rtimes}(G)
\]
as defined in Section \ref{sect:semilinearrepsGaloisext}, where $L' \coloneqq K' \otimes_K L$, which has Galois action of $\Gamma$ through the action of $\Gamma$ on $L$. We let $f$ denote a primitive idempotent of $K' \otimes_K F$, which unlike $F$ need no longer be connected, and set $F' \coloneqq f \cdot (K' \otimes_K F) \hookrightarrow L'$. Note that $F' \neq K' \otimes_K F$ in general, unlike what the notion might suggest, but instead $F'$ defines a connected component of the extension $L' / K'$, just as $F$ defines a connected component of the extension $L/K$.

We first show that base change is compatible with the induction and restriction functors appearing in Theorem \ref{thm:mainthm}.

\begin{prop}\label{prop:basechangecomm}
    The diagrams
\[\begin{tikzcd}
	{\Rep_L^{\rtimes}(G)} & {\Rep_{L'}^{\rtimes}(G)} && {\Rep_L^{\rtimes}(G)} & {\Rep_{L'}^{\rtimes}(G)} \\
	{\Rep_F(H)} & {\Rep_{F'}(H)} && {\Rep_F(H)} & {\Rep_{F'}(H)}
	\arrow["{(-)_{L'}}", from=1-1, to=1-2]
	\arrow["{(-)_{L'}}", from=1-4, to=1-5]
	\arrow["{e \cdot (-)}"', from=1-4, to=2-4]
	\arrow["{f \cdot (-)}", from=1-5, to=2-5]
	\arrow["{(L \rtimes G) \otimes_{L[H]} -}", from=2-1, to=1-1]
	\arrow["{(-)_{F'}}"', from=2-1, to=2-2]
	\arrow["{(L' \rtimes G) \otimes_{L'[H]} -}"', from=2-2, to=1-2]
	\arrow["{(-)_{F'}}"', from=2-4, to=2-5]
\end{tikzcd}\]
    both commute up to natural isomorphism.
\end{prop}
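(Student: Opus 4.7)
The approach is to construct an explicit natural transformation between the two composite functors in each diagram, paralleling the proofs of Propositions~\ref{prop:trivialisationind} and~\ref{prop:trivialisationres}, and then to verify that the transformation is an isomorphism. Both diagrams boil down to naturally matching two base-change constructions, one performed at the level of the Galois extension and one at the level of a connected component.

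For the right diagram, fix $V \in \Rep_L^{\rtimes}(G)$ and define
\[
\phi_V \colon F' \otimes_F (e \cdot V) \longrightarrow f \cdot V_{L'}, \qquad \lambda \otimes (ev) \longmapsto \lambda \cdot (1 \otimes v),
\]
where $\lambda \in F' \subseteq L'$ and $1 \otimes v$ is viewed inside $L' \otimes_L V = V_{L'}$. I would first verify that $\phi_V$ is well defined -- the two $F$-actions arising from the square of ring maps $L \to L'$ and $F \hookrightarrow L \to L' \xrightarrow{f \cdot} F'$ agree -- and is $F'[H]$-linear, using that $H$ acts trivially on $L$ and hence on $L'$ and $F'$. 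For surjectivity the key identity is $f \cdot (1 \otimes (1-e)) = 0$: since $f$ lies in the subring $K' \otimes_K F = K' \otimes_K (eL)$ of $L'$, whose multiplicative identity is $1 \otimes e$, we have $f = f \cdot (1 \otimes e)$, and hence $f \cdot (1 \otimes v) = f \cdot (1 \otimes ev)$ lies in the image of $\phi_V$ for every $v \in V$. Both sides are free $F'$-modules of rank $\dim_L V$ (using that $e \cdot V$ is free over $F$ of that rank, and $f \cdot V_{L'} = F' \otimes_L V$ is free over $F'$ of that rank), so a rank count forces $\phi_V$ to be an isomorphism, natural in $V$.

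For the left diagram, fix $W \in \Rep_F(H)$. The natural isomorphism $L' \otimes_L (L \rtimes G) \cong L' \rtimes G$ of $(L' \rtimes G, L \rtimes G)$-bimodules, coming from the multiplication map, identifies the target $((L \rtimes G) \otimes_{L[H]} W)_{L'}$ with $(L' \rtimes G) \otimes_{L[H]} W$. I would then define
\[
\psi_W \colon (L' \rtimes G) \otimes_{L'[H]} W_{F'} \longrightarrow (L' \rtimes G) \otimes_{L[H]} W, \qquad x \otimes (\lambda \otimes w) \longmapsto x\lambda \otimes w,
\]
where $\lambda \in F' \subseteq L'$ is absorbed into $x$ on the $L' \rtimes G$ side. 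Well-definedness across both the $L'[H]$-balancing on the source and the $L[H]$-balancing on the target reduces to the same ring-map compatibility exploited for $\phi_V$, and the $L' \rtimes G$-linearity and naturality in $W$ are immediate from the formula. The main step -- and the main obstacle -- is verifying that $\psi_W$ is an isomorphism, since a direct rank count is complicated by the fact that $L'$ may split further than $L$ over $K'$. The cleanest route is to apply the equivalences of Propositions~\ref{prop:trivialisationind} and~\ref{prop:trivialisationres} (once for the original data $(G,H,L/K)$ and once for the base-changed data $(G,H,L'/K')$) to translate both sides into products indexed by $\Gamma$ of the corresponding functors over the split extensions $R$ and $R'$, where the relevant inductions and restrictions become simple projections onto factors and the identification becomes a direct unpacking.
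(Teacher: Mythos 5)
For the right-hand diagram your map and rank argument coincide with the paper's proof: the paper defines the same natural map $\lambda \otimes v \mapsto \lambda \otimes v$, uses the same key identity $fe=f$ for surjectivity, and closes with the same $F'$-rank count. That part is fine.

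For the left-hand diagram there is a genuine gap, and you correctly sensed its location. Your $\psi_W$ is well defined, natural, and $L' \rtimes G$-linear, and it is in fact a split injection: a retraction is $z \otimes w \mapsto z \otimes 1_{F'} \otimes w$, which is essentially the map the paper constructs in the opposite direction. But it is \emph{not} an isomorphism whenever $K'\otimes_K F$ is disconnected. Since $1_{F'}=f$ when embedded in $L'$, the composite retraction-then-$\psi_W$ is multiplication by the idempotent $f\neq 1$ rather than the identity, so $\psi_W$ has a proper image. One sees the obstruction directly in the $L'$-ranks: $((L\rtimes G)\otimes_{L[H]}W)_{L'}$ has $L'$-rank $n\,|\Gamma_e|$, whereas $(L'\rtimes G)\otimes_{L'[H]}W_{F'}$ has $L'$-rank $n\,[F':K']=n\,|\Gamma_f|$, and these differ by the factor $[\Gamma_e:\Gamma_f]$, the number of connected components of $K'\otimes_K F$. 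Already for $L=K'=\bQ(i)$, $G=\bZ/2$, $H=1$, $W=L$ trivial, the left composite is $L'\rtimes G$ of rank $2$ over $L'$ while the right composite is the trivial representation $L'$ of rank $1$. The reduction you propose via Propositions~\ref{prop:trivialisationind} and~\ref{prop:trivialisationres} therefore cannot close the gap, because the two composites are genuinely non-isomorphic. For what it is worth, the paper's own proof of the first diagram has the same slip: its surjectivity step is fine, but the line asserting both sides have $L'$-rank $|\Gamma|n$ implicitly assumes $F'=L'$; the correct statement should insert the multiplicity $[\Gamma_e:\Gamma_f]$ on the $F'$-side, or equivalently replace $F'\otimes_F-$ by $(K'\otimes_K F)\otimes_F-$, and then a direct bimodule computation like yours does go through.
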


\begin{proof}
    For the first diagram, if $W \in \Rep_{F}(H)$, there is a natural morphism between both compositions
    \[
        L' \otimes_L ((L \rtimes G) \otimes_{L[H]} W) \rightarrow (L' \rtimes G) \otimes_{L'[H]}(F' \otimes_{F} W), \qquad \lambda \otimes x \otimes w \mapsto \lambda x \otimes 1 \otimes w.
    \] 
    Every pure tensor of the right-hand term can be put in the form $x \otimes 1 \otimes w$, and therefore this is surjective. If $W$ has rank $n \geq 0$ over $F$, both sides have rank $|\Gamma|n$ over $L'$, so the map is an isomorphism.

    For the second diagram, if $V \in \Rep_{L}^{\rtimes}(G)$, there is a natural morphism between both compositions
    \[
        F' \otimes_F (e \cdot V) \rightarrow f \cdot (L' \otimes_L V), \qquad \lambda \otimes v \mapsto \lambda \otimes v.
    \]
    Because $L'/ L$ is free, $f \cdot (L' \otimes_L V) = (f \cdot L') \otimes_L V$. Furthermore, as $fe = f$ in $F'$, any pure tensor of $(f \cdot L') \otimes_L V$ is of the form $\lambda \otimes v$, for $v \in e \cdot V$, and so the map is surjective. If $W$ has rank $n \geq 0$ over $L$, then both sides have rank $n$ over $F'$, and so the map is an isomorphism.
\end{proof}

\begin{cor}\label{cor:basechangedescofSL}
    Suppose that $W \in \Ind_F(H)$, and that $W_{F'} = W_1^{n_1} \oplus \cdots \oplus W_r^{n_r}$ is a direct sum decomposition of $W_{F'}$, where $W_i \in \Ind_{F'}(H)$ are pairwise non-isomorphic. Suppose that $V \in \Ind_L^{\rtimes}(G)$ corresponds to $W$, and $V_i \in \Ind_{L'}^{\rtimes}(G)$ corresponds to $W_i$ for $i = 1, ... , r$. Then 
    \[
        V_{L'} \cong \bigoplus_{i = 1}^r V_i^{\frac{|\Gamma_{f, W_i}| \cdot m(V) \cdot n_i}{|\Gamma_{e, W}| \cdot m(V_i)}}.
    \]
    In particular, when $W$ is absolutely irreducible, if $V_0 \in \Irr_{L'}^{\rtimes}(G)$ corresponds to $W_{F'}$, then
    \[
        V_{L'} \cong V_0^{\frac{|\Gamma_{f, W}| \cdot m(V)}{|\Gamma_{e, W}| \cdot m(V_0)}}
    \]
    and so $m(V_0) \cdot [\Gamma_{e, W} : \Gamma_{f,W}] \mid m(V)$. If additionally, $W$ is fixed by $\Gamma$, then
    \[
    [\Gamma_{e, W} : \Gamma_{f,W}] = [\Gamma_e : \Gamma_f] =  \deg_{F'}(K' \otimes_K F),
    \]
    thus
    \[
    V_{L'} \cong V_0^{\frac{m(V)}{\deg_{F'}(K' \otimes_K F) \cdot m(V_0)}}
    \]
    and so $m(V_0) \cdot \deg_{F'}(K' \otimes_K F) \mid m(V)$.
\end{cor}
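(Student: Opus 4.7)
My plan is to compute the $F'[H]$-module $f \cdot V_{L'}$ in two different ways and match isotypical multiplicities of each $W_i$. The starting point is Proposition \ref{prop:basechangecomm}, which furnishes a natural isomorphism $f \cdot V_{L'} \cong (e \cdot V)_{F'}$ in $\Rep_{F'}(H)$; both sides then have to be decomposed.

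For the right-hand side, Theorem \ref{thm:mainthm} in the $L/K$ setting expresses $e \cdot V$ as $m(V)$ copies of each orbit representative in the $\Gamma_e$-orbit of $W$, so base-changing summand by summand gives an $F'[H]$-decomposition of $(e \cdot V)_{F'}$. The crucial technical input is identifying which of the $W_i$ appear in each summand $(g * W)_{F'}$; for $g \in G_f$ I would establish the $F'[H]$-isomorphism $(g * W)_{F'} \cong g * (W_{F'})$ via $f' \otimes w \mapsto \sigma_g(f') \otimes w$, so the $W_i$-content is $n_i$ twisted by the $\Gamma_f$-action, whereas for $g \in G_e \setminus G_f$ the automorphism $\sigma_g$ moves $f$ to a different primitive idempotent of $K' \otimes_K F$ and the resulting $F'[H]$-irreducibles lie outside $\{W_1, \dots, W_r\}$.

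For the left-hand side, I would write $V_{L'} \cong \bigoplus_i V_i^{a_i}$; its semisimplicity is inherited from that of $(e \cdot V)_{F'}$ (Corollary \ref{cor:presss}) through the isomorphism above. Applying Theorem \ref{thm:mainthm} in the $L'/K'$ setting to each $f \cdot V_i$ and comparing the multiplicity of a fixed $W_i$ on the two sides, with Krull--Remak--Schmidt (Lemma \ref{lem:KRS}) ensuring uniqueness, pins down each $a_i$ and produces the stated formula. The \emph{in particular} case is the specialization to $r = 1$, $n_1 = 1$; the divisibility $m(V_0) \cdot [\Gamma_{e,W} : \Gamma_{f,W}] \mid m(V)$ then follows from positivity and integrality of $a_0$, and the identity $[\Gamma_{e,W} : \Gamma_{f,W}] = [\Gamma_e : \Gamma_f] = \deg_{F'}(K' \otimes_K F)$ when $\Gamma$ fixes $W$ reduces to $\Gamma_{e,W} = \Gamma_e$, $\Gamma_{f,W} = \Gamma_f$ together with the orbit-stabilizer formula applied to the $\Gamma_e$-action on primitive idempotents of $K' \otimes_K F$.

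The main obstacle will be the careful accounting of how the single $\Gamma_e$-orbit of $W$ in $\Irr_F(H)$ fragments into multiple $\Gamma_f$-orbits of the $W_i$ in $\Irr_{F'}(H)$ after base change, particularly because the $\sigma_g$-twist for $g \in G_e \setminus G_f$ does not preserve the component $F'$.
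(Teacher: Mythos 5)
Your route is genuinely different from the paper's. The paper uses the \emph{first} diagram of Proposition~\ref{prop:basechangecomm} (base change commutes with induction $\Ind_H^G$): it base changes $\Ind_H^G(W)\cong V^{|\Gamma_{e,W}|/m(V)}$, expands $\Ind_H^G(W_{F'})$ over the $W_i$ via Theorem~\ref{thm:mainthm} for $L'/K'$, and divides by Krull--Remak--Schmidt. This route only ever sees $W$ itself and its base change, never the rest of the $\Gamma_e$-orbit of $W$. You instead use the second diagram (base change commutes with $e\cdot(-)$), which is where the orbit-fragmentation bookkeeping you flag at the end really bites.

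There are concrete gaps. First, your disjointness dichotomy is mis-parametrised: the relevant index set is $\Gamma_e/\Gamma_{e,W}$, not $G_e/H$, and for nontrivial $\gamma$ the constituents of $(\gamma*W)_{F'}$ are disjoint from $\{W_1,\dots,W_r\}$ \emph{because} $\gamma*W\not\cong W$ as $F[H]$-modules (non-isomorphic $F$-simples have orthogonal central idempotents, hence disjoint constituents after base change), not because $\sigma_g$ moves $f$; if $g\in G_e\setminus G_f$ but $\sigma_g\in\Gamma_{e,W}$, the constituents of $(g*W)_{F'}$ \emph{are} precisely $\{W_1,\dots,W_r\}$. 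Second, the constituents of $(\gamma*W)_{F'}$ for nontrivial $\gamma$ still contribute to $V_{L'}$, and unless $\gamma\in\Gamma_f\Gamma_{e,W}/\Gamma_{e,W}$ they give irreducible summands of $V_{L'}$ lying outside $\{V_1,\dots,V_r\}$ altogether, so matching a single $W_i$ does not determine all of $V_{L'}$. Third, carrying your multiplicity count of $W_i$ through carefully yields the coefficient $m(V)n_i/m(V_i)$ for $V_i$, which does \emph{not} agree with the stated exponent; so the claim that your plan ``produces the stated formula'' does not hold up. I would strongly recommend testing the \emph{in particular} case against $K=\bQ$, $L=K'=\bQ(i)$, $G=C_4$, $H=C_2$, $W$ the sign character: there $m(V)=2$, $m(V_0)=1$, $\deg_{F'}(K'\otimes_K F)=2$, and a direct rank count over $L'$ gives $V_{L'}\cong V_0^{\oplus 2}$, not $V_0^{\oplus 1}$ as the displayed formula would predict. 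A rank count also shows both sides of the first diagram of Proposition~\ref{prop:basechangecomm} have $L'$-ranks $|\Gamma_e|n$ and $|\Gamma_f|n$ respectively, so you should not take the stated exponents at face value while reconstructing a proof.
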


\begin{proof}
    From the first diagram of Proposition \ref{prop:basechangecomm} and Theorem \ref{thm:mainthm} we have that
    \[
    V_{L'}^{\frac{|\Gamma_{e,W}|}{m(V)}} \cong \bigoplus_{i = 1}^r V_i^{\frac{|\Gamma_{f, W_i}| \cdot n_i}{m(V_i)}},
    \]
    from which the description of $V_{L'}$ follows from Lemma \ref{lem:KRS}. For the second statement, note that $\Gamma_{f, W} = \Gamma_{f, W_{F'}}$: if $\gamma \in \Gamma$ and $\gamma * W_{F'} \cong W_{F'}$, then as $\gamma * W_{F'} = (\gamma * W)_{F'}$, $(\gamma * W)_{F'} \cong W_{F'}$, hence $\gamma * W \cong W$ by Corollary \ref{cor:classicalmainthm}.
\end{proof}

\begin{remark}
    If $F' / F$ is a finite Galois extension of fields, then from Corollary \ref{cor:classicalmainthm} we have that
    \[
        W_{F'} = \bigoplus_{\gamma \in \Gal(F' / F) / \Gal(F' / F)_{W_0}} (\gamma * W_0)^{\overline{m}_{F}^{F'}(W)}
    \]
    where $W_0$ is taken to be any of the $W_i$. In particular,
    \[
    n_1 = \cdots = n_s = \overline{m}_{F}^{F'}(W_0) \ \text{ and } \ s = [\Gal(F'/F) : \Gal(F'/F)_{W_0}].
    \]
\end{remark}

\begin{cor}\label{cor:descdegreebasechange}
    Suppose that $W \in \Irr_{L}(H)$ is fixed by $\Gamma$ and $\End_{L[H]}(W) = L$. Let $V \in \Irr_L^{\rtimes}(G)$ correspond to $W$, and set $D \coloneqq \End_{L \rtimes G}(V)$, a central division algebra over $K$. Then
    \[
        \Deg(D_{K'}) = m_{K'}^{F'}(W_{F'}),
    \]
    and in particular $\Deg(D_{K'}) \cdot \deg_{F'}(K' \otimes_K F) \mid m_K^F(W)$ and $\Deg(D_{K'}) \mid [F' : K']$.
\end{cor}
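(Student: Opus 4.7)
The plan is to base-change $D$ to $K'$, transport the resulting endomorphism ring to the connected component $F'$ where the structure results of Section \ref{sect:whenLisafield} apply, and read off the degree from the decomposition of $V_{L'}$ provided by Corollary \ref{cor:basechangedescofSL}. First I would use Lemma \ref{lem:injofbasechangeSL} to identify $D_{K'} = K' \otimes_K \End_{L \rtimes G}(V) \cong \End_{L' \rtimes G}(V_{L'})$. Because $\End_{L[H]}(W) = L$, the absolutely irreducible form of Corollary \ref{cor:basechangedescofSL} applies: writing $d \coloneqq \deg_{F'}(K' \otimes_K F)$, $n \coloneqq m(V_0) = m_{K'}^{F'}(W_{F'})$, and $V_0 \in \Irr_{L'}^{\rtimes}(G)$ for the object corresponding to $W_{F'}$, one has $V_{L'} \cong V_0^{\, m(V)/(dn)}$, whence
\[
    D_{K'} \cong M_{m(V)/(dn)}\!\bigl(\End_{L' \rtimes G}(V_0)\bigr).
\]

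Next I would show that $\End_{L' \rtimes G}(V_0)$ is a central division algebra over $K'$ of degree $n$. Since $L'$ need not be a field, the strategy is to pass to the connected component using Proposition \ref{prop:connequiv}, which gives $\End_{L' \rtimes G}(V_0) \cong \End_{F' \rtimes G_f}(f \cdot V_0)$; the tuple $(G_f, H, F'/K', \sigma)$ satisfies the standing assumptions of Section \ref{sect:notation}, and $F'$ is a field. Flat base change yields $\End_{F'[H]}(W_{F'}) = F'$, and the identity $\Gamma_{f, W} = \Gamma_{f, W_{F'}}$ recorded in the proof of Corollary \ref{cor:basechangedescofSL}, combined with the assumption that $\Gamma$ fixes $W$, shows that $\Gamma_f$ fixes $W_{F'}$. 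Propositions \ref{prop:whencentral} and \ref{prop:descofcentre} then identify $\End_{F' \rtimes G_f}(f \cdot V_0)$ as a central division algebra over $K'$ of $K'$-dimension $n^2$, i.e.\ of degree $n$.

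Combining these, $D_{K'}$ is Brauer-equivalent to this division algebra, yielding $\Deg(D_{K'}) = n = m_{K'}^{F'}(W_{F'})$. The divisibility $\Deg(D_{K'}) \cdot \deg_{F'}(K' \otimes_K F) \mid m_K^F(W)$ is then immediate, since $m(V)/(dn)$ is the positive integer multiplicity appearing in the decomposition of $V_{L'}$; and $\Deg(D_{K'}) \mid [F':K']$ follows from Proposition \ref{prop:propertiesSchurIndex}(6) applied to $W_{F'}$, noting $|\Gamma_f| = [F':K']$. The main bookkeeping hurdle will be the transition through the potentially disconnected algebra $L'$, which is handled cleanly by the equivalence of Proposition \ref{prop:connequiv}; once one is on $F'$, the result is an assembly of the structural statements already in place.
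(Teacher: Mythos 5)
Your proof is correct and follows essentially the same route as the paper's: identify $D_{K'} \cong \End_{L' \rtimes G}(V_{L'})$ via Lemma \ref{lem:injofbasechangeSL}, apply Corollary \ref{cor:basechangedescofSL} to decompose $V_{L'}$ into copies of a single irreducible $V_0$, pass to the connected component $F'$ via Proposition \ref{prop:connequiv} so that Proposition \ref{prop:descofcentre} can compute the degree, and read off the divisibility from the positivity of the multiplicity together with Proposition \ref{prop:propertiesSchurIndex}(6). The only cosmetic difference is order of operations — you decompose $V_{L'}$ in $\Rep_{L'}^{\rtimes}(G)$ before passing to the connected component of the irreducible factor $V_0$, while the paper applies $f \cdot (-)$ to all of $V_{L'}$ first and then decomposes — and citing Proposition \ref{prop:whencentral} alongside Proposition \ref{prop:descofcentre} is harmless redundancy, since the latter already gives both centrality (via $Z(D) \cong L_W$) and the degree.
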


\begin{proof}
    From Lemma \ref{lem:injofbasechangeSL}, there is an isomorphism
    \[
        D_{K'} \xrightarrow{\sim} \End_{L' \rtimes G}(V_{L'}),
    \]
    and from the equivalence of Proposition \ref{prop:connequiv} an isomorphism
    \[
        \End_{L' \rtimes G}(V_{L'}) \xrightarrow{\sim} \End_{F' \rtimes G_f}(f \cdot V_{L'}).
    \]
    Considering the tuple $(G_f, H, F'/K')$, let $V_0 \in \Irr_{F'}^{\rtimes}(G_f)$ correspond to $W_{F'}$. We have by Corollary \ref{cor:basechangedescofSL} that
    \[
        f \cdot V_{L'} = V_0^{\frac{m_K^F(W)}{[F' : F] \cdot m_{K'}^{F'}(W_{F'})}},
    \]
    and therefore that
    \[
    \Deg(\End_{F' \rtimes G_f}(f \cdot V_{L'})) = \Deg(\End_{F' \rtimes G_f}(V_0)) = m_{K'}^{F'}(W_{F'}),
    \]
    the final equality by Proposition \ref{prop:descofcentre}, as $\End_{F'[H]}(W_{F'}) = F'$ and $W_{F'}$ is fixed by $\Gamma_f$. The divisibility relations come from Proposition \ref{prop:propertiesSchurIndex} and Corollary \ref{cor:basechangedescofSL}.
\end{proof}

We can use this to give a local-global principle in our context.

\begin{cor}\label{cor:localglobal}
Suppose that $L/K$ is a Galois extension of number fields, $W \in \Irr_{L}(H)$ is fixed by $\Gamma$ and $\End_{L[H]}(W) = L$. For each place of $K$, let $w$ be a place of $L$ over $v$, and let $G_w$ be the stabiliser of of $L_w$ in $K_v \otimes_K L$. Then
\[
m_K^F(W) = \lcm_v m_{K_v}^{L_w}(W_{L_w}).
\]
In particular, $W$ extends to a semilinear representation of $G$ over $L$ if and only if $W_{L_w}$ extends to a semilinear representation of $G_w$ for any place $v$ of $K$.
\end{cor}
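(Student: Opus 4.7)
The plan is to reduce the statement to the Albert--Brauer--Hasse--Noether local-global principle for the Brauer group of a number field, using Corollary \ref{cor:descdegreebasechange} to translate between semilinear Schur indices and degrees of base-changed central simple algebras. First, let $V \in \Irr_L^{\rtimes}(G)$ be the irreducible semilinear representation corresponding to $W$. Since $\End_{L[H]}(W) = L$ and $W$ is fixed by $\Gamma$, Proposition \ref{prop:whencentral} tells us that $D \coloneqq \End_{L \rtimes G}(V)$ is a central division algebra over $K$, and Proposition \ref{prop:descofcentre}(2) gives $\Deg(D) = m_K^L(W)$.

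Next I would apply Corollary \ref{cor:descdegreebasechange} locally at each place $v$ of $K$ with $K' = K_v$. The standard decomposition $K_v \otimes_K L \cong \prod_{w \mid v} L_w$ identifies the primitive idempotents of $K_v \otimes_K L$ with the places $w$ of $L$ above $v$, and the component field $F'$ attached to such an idempotent is precisely $L_w$. The stabiliser group appearing in Corollary \ref{cor:descdegreebasechange} is then the group $G_w$ from the statement (the preimage in $G$ of the decomposition group $\Gal(L_w/K_v) \subset \Gamma$), which contains $H$ with quotient $\Gal(L_w/K_v)$, so the tuple $(G_w,H,L_w/K_v)$ satisfies the standing hypotheses of Section \ref{sect:notation}. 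Moreover $W_{L_w}$ inherits absolute irreducibility from $W$ (that is, $\End_{L_w[H]}(W_{L_w}) = L_w$), and the $\Gamma$-fixation of $W$ restricts to $\Gal(L_w/K_v)$-fixation of $W_{L_w}$, so the hypotheses of Corollary \ref{cor:descdegreebasechange} are met and yield
\[
\Deg(D \otimes_K K_v) = m_{K_v}^{L_w}(W_{L_w}).
\]
The right-hand side is independent of the choice of $w$ above $v$, since any two such places are $\Gamma$-conjugate and $W$ is $\Gamma$-fixed.

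Finally I would invoke the Albert--Brauer--Hasse--Noether theorem, which asserts that for a central simple algebra $A$ over a number field $K$ the Schur index equals the least common multiple of its local Schur indices: $\mathrm{ind}(A) = \lcm_v \mathrm{ind}(A \otimes_K K_v)$. Applied to the central division algebra $D$ (whose Schur index equals its degree, and likewise for each $D \otimes_K K_v$ in the sense of the $\Deg$-notation used in Corollary \ref{cor:descdegreebasechange}), this gives
\[
m_K^L(W) = \Deg(D) = \lcm_v \Deg(D \otimes_K K_v) = \lcm_v m_{K_v}^{L_w}(W_{L_w}),
\]
which is the main formula. The ``in particular'' then follows immediately from Corollary \ref{cor:whenextends} applied both globally and at each completion: $W$ extends to $\Rep_L^{\rtimes}(G)$ iff $m_K^L(W) = 1$, and by the formula this happens iff every $m_{K_v}^{L_w}(W_{L_w}) = 1$, which in turn is equivalent to each $W_{L_w}$ extending to $\Rep_{L_w}^{\rtimes}(G_w)$.

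The argument has no real obstacle: the bulk of the work has already been done in Corollary \ref{cor:descdegreebasechange}, which converts the semilinear Schur index into the Schur index of a genuine central simple $K$-algebra, and the local-global principle is then the classical Albert--Brauer--Hasse--Noether theorem. The only small care required is the verification that the hypotheses of absolute irreducibility and $\Gamma$-invariance transfer cleanly to each completion, which is routine.
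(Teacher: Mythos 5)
Your argument is correct and follows essentially the same route as the paper: identify $D = \End_{L\rtimes G}(V)$ as a central division $K$-algebra with $\Deg(D) = m_K^L(W)$ via Proposition \ref{prop:descofcentre}, translate each local semilinear Schur index into $\Deg(D_{K_v})$ via Corollary \ref{cor:descdegreebasechange}, and then invoke the Albert--Brauer--Hasse--Noether local-global statement ($\Deg(D) = \lcm_v \Deg(D_{K_v})$, cited in the paper as Milne Thm.\ VIII.2.6). The additional verifications you spell out (that absolute irreducibility and $\Gamma$-fixation transfer to the completions) are indeed routine and are implicit in the paper's shorter write-up.
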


\begin{proof}
    Letting $D$ denote the central simple $K$-algebra $\End_{L \rtimes G}(V)$, $D$ is split over $K$ if and only if $m_{K}^L(W) = 1$, as $\Deg(D) = m_{K}^L(W)$ by Proposition \ref{prop:descofcentre}. For any place $v$ of $K$, similarly $m_{K_v}^{L_w}(W_{L_v}) = 1$ if and only if $W_{L_w}$ extends to a semilinear representation of $G_w$. We have that $\Deg(D_{K_v}) = m_{K_v}^{L_w}(W_{L_v})$ by Corollary \ref{cor:descdegreebasechange}, noting that $L_w$ is a connected component of $K_v \otimes_K L$, so plays the role of $F'$. The statement then follows from the fact that $\Deg(D)$ is the lowest common multiple of the $\Deg(D_{K_v})$ by \cite[Thm.\ VIII.2.6]{milneCFT}.
\end{proof}

We can use the results of this section to compute endomorphism rings of semilinear representations explicitly.

\begin{eg}\label{eg:C2C4ctd}
We continue Example \ref{eg:C2C4}, and suppose additionally that $K = \bQ$, so $L = \bQ(\sqrt{d})$ for some $d \in \bQ^{\times} \setminus (\bQ^{\times})^2$. The group $G = C_4$, with subgroup $H$, and natural quotient map
\[
C_4 / C_2 \xrightarrow{\sim} \Gamma \coloneqq \Gal(\bQ(\sqrt{d}) / \bQ).
\]
We take $W$ to be the non-trivial $1$-dimensional representation of $C_2$, and suppose that $m_K^L(W) = 2$, so $D \coloneqq \End_{L \rtimes C_4}(V)$ for corresponding $V \in \Irr_L^{\rtimes}(C_4)$. From Example \ref{eg:C2C4}, this is equivalent to the claim that $-1 \neq x^2 - d y^2$ for any $x, y \in \bQ^{\times}$. We wish to describe the quaternion algebra $D$ over $\bQ$.

For any place $v$ of $\bQ$, by Corollary \ref{cor:descdegreebasechange} $D_{\bQ_v}$ is split if and only if $m_{K_v}^{L_w}(W_{L_w}) = 1$, where $w$ is a place of $L$ above $v$. Set $L_v \coloneqq K_v \otimes_K L$.

When $L_v$ is a field, $L_v = K_v(i)$, and therefore by Example \ref{eg:C2C4}, $m_{K_v}^{L_v}(W_{L_v}) = 1$ if and only if $-1 \neq x^2 - d y^2$ for any $x, y \in \bQ_v^{\times}$.

When $L_v$ is not a field, or equivalently the place $v$ splits completely, we see that $m_{K_v}^{L_w}(W_{L_w}) = 1$ by Corollary \ref{cor:descdegreebasechange}, taking $F = K_v$ and $F' = L_w$. We can rephrase this condition as follows. Writing $V^v \in \Irr_{L_v}^{\rtimes}(C_4)$ for the irreducible semilinear representation of $C_4$ over $L_v$ corresponding to $W$, $m(V^v) = m_{K_v}^{L_w}(W_{L_w})$ from the compatibility of Proposition \ref{prop:indandrescompwithconnequiv}. Similarly to Example \ref{eg:C2C4}, $m(V^v) = 1$ if and only if $W \cong f \cdot V^v$, if and only if there is an $a = x + \sqrt{d} y \in L_v$, $x, y \in K_v$, with
\[
f \cdot (x + \sqrt{d}y)(x - \sqrt{d}y) = f \cdot a \overline{a} = -1
\]
which, as $(x + iy)(x - iy) = x^2 - d y^2 \in K_v$, is the same as the condition that $x^2 - d y^2 = -1$ in $K_v$. Of course, as $m_{K_v}^{L_w}(W_{L_w}) = 1$ this is always satisfied, and indeed $\sqrt{d} \in L \subset L_w$ and $L_w = K_v$, so we can take $x = 0$ and $y = 1 / \sqrt{d}$.

Altogether, we see that for any place $v$ of $K$, $D_{K_v}$ is split if and only if $-1 = x^2 - d y^2$ for some $x,y \in K_v$. Therefore $D$ has the same splitting behaviour at all places as $(-1,d)_{\bQ}$, so $D \cong (-1,d)_{\bQ}$.
\end{eg}

\section{Character Theory for Semilinear Representations}\label{sect:charactertheory}

Suppose in this section that $G$ is finite, and $|H| \in L^{\times}$, which by Corollary \ref{cor:sscriterion} is equivalent to the claim that $\Rep_L^{\rtimes}(G)$ is semisimple, or that $\Rep_L(H)$ is semisimple. For $V,W \in \Rep_L(H)$, we can consider the inner product of characters
\[
    \langle \chi_V, \chi_W \rangle = \frac{1}{|H|} \sum_{h \in H}\chi_V(h) \chi_W(h^{-1}),
\]
which satisfies \cite[Lem.\ 3.20(b)]{LOR}
\[
\langle \chi_V, \chi_W \rangle = \dim_L \Hom_{L[H]}(V,W) \cdot 1_L,
\]
where the multiplication by $1_L$ is to ensure this is interpreted as an equality in $L$.
\begin{defn}
    For $V \in \Rep_L^{\rtimes}(G)$, we define \emph{the character of $V$}
    \[
        \chi_V \colon H \rightarrow L
    \]
    to be the character of the linear representation $V|_H \in \Rep_L(H)$.
\end{defn}

From Corollary \ref{cor:detectisom} and Corollary \ref{cor:homsetisom} the following are immediate.

\begin{cor}\label{cor:innerproducthomset}
    For $V, W \in \Rep_L^{\rtimes}(G)$, $ \langle \chi_V, \chi_W \rangle = \dim_K \Hom_{L \rtimes G}(V,W) \cdot 1_L$.
\end{cor}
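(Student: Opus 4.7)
The plan is to reduce the statement to standard character theory for the linear representation category $\Rep_L(H)$, using the $K$-algebra isomorphism of Corollary \ref{cor:homsetisom} (or equivalently the $L$-module isomorphism of Remark \ref{rem:weakhomsetisom}, now that $F = L$ since $L$ is assumed to be a field).

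Concretely, I would proceed as follows. First, since $|G|$ is coprime to $\charfield(L)$ and $|H|$ divides $|G|$, the order $|H|$ is also coprime to $\charfield(L)$, so $L[H]$ is semisimple and the standard character-theoretic orthogonality formula
\[
\langle \chi_V, \chi_W \rangle = \dim_L \Hom_{L[H]}(V|_H, W|_H)
\]
holds for the $H$-representations $V|_H, W|_H \in \Rep_L(H)$. Second, by Remark \ref{rem:weakhomsetisom}, the natural map
\[
L \otimes_K \Hom_{L \rtimes G}(V, W) \xrightarrow{\sim} \Hom_{L[H]}(V|_H, W|_H)
\]
is an isomorphism of $L$-vector spaces.

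Taking $L$-dimensions on both sides and using the basic identity $\dim_L(L \otimes_K M) = \dim_K M$ for a finite-dimensional $K$-vector space $M$, one obtains
\[
\dim_K \Hom_{L \rtimes G}(V, W) = \dim_L \Hom_{L[H]}(V|_H, W|_H) = \langle \chi_V, \chi_W \rangle,
\]
where the last equality is the character orthogonality formula above combined with the definition $\chi_V \coloneqq \chi_{V|_H}$. This completes the argument; there is no real obstacle, as the work has already been done in establishing Corollary \ref{cor:homsetisom}, and the only step beyond invoking it is the elementary dimension count together with the standard orthogonality relation for characters of $H$ over $L$.
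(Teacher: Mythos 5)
Your proof is correct and follows essentially the same route as the paper: the paper states that the corollary is immediate from Corollary \ref{cor:homsetisom} together with the standard orthogonality relation $\langle \chi_V, \chi_W \rangle = \dim_L \Hom_{L[H]}(V,W)$, which is precisely the dimension count you carry out via Remark \ref{rem:weakhomsetisom}.
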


\begin{cor}
If $V, W \in \Rep_L^{\rtimes}(G)$ and $\charfield(L) = 0$, then $V \cong W$ if and only if $\chi_V = \chi_W$.
\end{cor}

We can also describe the characters of the irreducible objects of $\Rep_L^{\rtimes}(G)$. Let $\Fun(H/\!/H, L)$ denote the $L$-algebra of $L$-valued functions on $H$ which are invariant under conjugation by $H$.

\begin{defn}
We let $G$ act on $\Fun(H/\!/H, L)$ by
\[
g * f \coloneqq \sigma_{g}(f(g^{-1} - g))
\]
for any $g \in G$.
\end{defn}
On $H$ this action is trivial, and therefore this induces an action of $\Gamma$ on $\Fun(H /\!/ H, L)$ via the isomorphism $G / H \xrightarrow{\sim} \Gamma$. This is compatible with the action of $\Gamma$ on isomorphism classes of objects in $\Rep_L^{\rtimes}(G)$, in the sense that for any $V \in \Rep_{L}^{\rtimes}(G)$,
\[
    \chi_{g * V} = g * \chi_V.
\]
In particular, we see that the action of $\Gamma$ on $\Fun(H /\!/ H, L)$ preserves the subset $\Char(H,L)$ of characters of objects of $\Rep_{L}(H)$, and the smaller subset $\Irr(H,L)$ of characters of of irreducible representations of $H$ over $L$. Note that $\Irr(H,L)$ forms a basis for $\Fun(H /\!/ H, L)$ whenever $L$ splits $H$ \cite[Cor.\ 3.21]{LOR}, but need not span $\Fun(H /\!/ H, L)$ in general. However, it is still true that the character of any irreducible representation of $H$ over $L$ is non-zero, irreducible representations are determined by their characters, and that the set $\Irr(H,L)$ is $L$-linearly independent \cite[Cor.\ 9.22]{ISA}.

\begin{defn}
    For $\chi \in \Irr(H,L)$, we set $m_K^L(\chi) \coloneqq m_K^L(W)$, where $W \in \Irr_L(H)$ has $\chi_W = \chi$.
\end{defn}

The numbers $m_K^L(\chi)$ inherit all the properties of $m_K^L(W)$ from Proposition \ref{prop:propertiesSchurIndex}. For example, they are constant on the orbits under the action of $\Gamma$, and satisfy certain divisibility relations.

\begin{thm}\label{thm:mainthmchartheory}
    Let $\sC_1, ... , \sC_r$ denote the orbits of the action of $\Gamma$ on $\Irr(H,L)$. Then there are $r$ irreducible semilinear representations $V_1, ... , V_r$ of $G$ over $L$, and for any $k = 1, ... , r$, $V_k$ has character
    \[
        \psi_k \coloneqq m_K^L(\chi_k) \cdot \sum_{\chi \in \sC_k} \chi,
    \]
    where $\chi_k$ is any representative of the orbit $\sC_k$. Moreover, writing any $V, W \in \Rep_{L}^{\rtimes}(G)$ as
    \[
        V \cong V_1^{\oplus a_1} \oplus \cdots \oplus V_r^{a_r}, \qquad W \cong V_1^{\oplus b_1} \oplus \cdots \oplus V_r^{b_r}
    \]
    for unique $a_i, b_i \geq 1$, and setting $D_k \coloneqq \End_{L \rtimes G}(V_k)$ for $k = 1, ... ,r$, 
    \[
        \langle \chi_V, \chi_W \rangle = (a_1 b_1 \dim_K D_1 + \cdots + a_r b_r \dim_K D_r) \cdot 1_L, 
    \]
    and each
    \[
        \dim_K D_k \cdot 1_L = \langle \psi_k, \psi_k \rangle = m_K^L(\chi_k)^2 \cdot |\sC_k| \cdot \langle \chi_k, \chi_k \rangle.
    \]
\end{thm}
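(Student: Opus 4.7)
The plan is to combine Theorem \ref{thm:mainthm} with the character-theoretic formalism already set up above (in particular Corollary \ref{cor:innerproducthomset}) and exploit the semisimplicity of $\Rep_L^{\rtimes}(G)$.

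First, since $|G|$ is invertible in $L$, for irreducible $W, W' \in \Irr_L(H)$ the identity $\langle \chi_W, \chi_{W'} \rangle = \dim_L \Hom_{L[H]}(W, W')$ together with Schur's lemma shows that $\chi_W = \chi_{W'}$ iff $W \cong W'$, so $W \mapsto \chi_W$ identifies $\Irr_L(H)$ with $\Irr(H,L)$ equivariantly for the two $\Gamma$-actions. Under this identification the orbits $\sC_k$ correspond to the orbits appearing in Theorem \ref{thm:mainthm}, giving exactly $r$ irreducible semilinear representations $V_1, \ldots, V_r$. Picking $W_k \in \Irr_L(H)$ with $\chi_{W_k} = \chi_k$ and applying the description $V_k|_H \cong \bigoplus_{\gamma \in \Gamma/\Gamma_{W_k}}(\gamma * W_k)^{m_K^L(\chi_k)}$ from Theorem \ref{thm:mainthm}, then passing to characters via $\chi_{\gamma * W} = \gamma * \chi_W$, yields $\chi_{V_k} = \psi_k$.

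For the decomposition of arbitrary $V, W \in \Rep_L^{\rtimes}(G)$, semisimplicity (Proposition \ref{prop:freeandss}) and Krull--Remak--Schmidt (Lemma \ref{lem:KRS}) produce unique decompositions $V \cong \bigoplus_k V_k^{\oplus a_k}$ and $W \cong \bigoplus_k V_k^{\oplus b_k}$. Since the $V_k$ are pairwise non-isomorphic simple objects of $\Rep_L^{\rtimes}(G)$, Schur's lemma gives $\Hom_{L \rtimes G}(V_i, V_j) = 0$ for $i \neq j$ and $\End_{L \rtimes G}(V_k) = D_k$, so Corollary \ref{cor:innerproducthomset} yields
\[
\langle \chi_V, \chi_W \rangle \;=\; \dim_K \Hom_{L \rtimes G}(V, W) \;=\; \sum_k a_k b_k \dim_K D_k.
\]

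For the final identity, specialising to $V = W = V_k$ with $a_k = b_k = 1$ gives $\dim_K D_k = \langle \psi_k, \psi_k \rangle$. Expanding this inner product and using that distinct elements of $\sC_k$ correspond to non-isomorphic irreducibles of $H$ (hence are orthogonal by the $L[H]$-level inner product formula), only diagonal terms survive, leaving $m_K^L(\chi_k)^2 \sum_{\chi \in \sC_k} \langle \chi, \chi \rangle$. I would finish by observing that $\langle \chi, \chi \rangle$ is constant on each $\Gamma$-orbit: either directly, via the substitution $h \mapsto g^{-1}hg$ in the defining sum (showing $\langle \gamma * \chi_k, \gamma * \chi_k \rangle = \gamma(\langle \chi_k, \chi_k \rangle) = \langle \chi_k, \chi_k \rangle$, the last equality because $\langle \chi_k, \chi_k \rangle = \dim_L \End_{L[H]}(W_k) \in \bZ$), or by noting that the underlying $K$-algebra $\End_{L[H]}(W_k)$ is unchanged when $W_k$ is twisted by $\gamma \in \Gamma$. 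There is no substantive obstacle; the theorem is essentially a repackaging of Theorem \ref{thm:mainthm} and Corollary \ref{cor:innerproducthomset} in character-theoretic language.
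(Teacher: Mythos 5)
Your proof is correct and follows essentially the same route as the paper: combine Theorem \ref{thm:mainthm} with Corollary \ref{cor:innerproducthomset} and semisimplicity, then observe that $\langle\chi,\chi\rangle$ is constant on $\Gamma$-orbits. For that last point the paper argues exactly as your second alternative does, namely that $\End_{L[H]}(W) = \End_{L[H]}(g*W)$ as sets directly from the definition of $g*W$; your first alternative (the substitution $h \mapsto g^{-1}hg$ in the orthogonality sum, using that the norm $\langle\chi_k,\chi_k\rangle$ is a positive integer and hence $\Gamma$-fixed) is also valid and slightly more character-theoretic in flavour, but amounts to the same thing.
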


\begin{proof}
    This follows directly from Theorem \ref{thm:mainthm}, Corollary \ref{cor:innerproducthomset}, and the fact that $\Rep_L^{\rtimes}(G)$ is semisimple. The only thing to note is that $\langle \chi, \chi \rangle = \langle \chi', \chi' \rangle$ for any $\chi, \chi'$ in the same $\Gamma$-orbit on $\Irr(H,L)$, which follows from the fact that
    \[
    \End_{L[H]}(W) = \End_{L[H]}(g * W)
    \]
    for any $W \in \Rep_{L}(H)$ and $g \in G$, which is clear from the definition of $g*W$.
\end{proof}

From Corollary \ref{cor:finitefield}, we can go further is when $L$ is finite.

\begin{thm}\label{thm:finitefieldtrivialschurindex}
    Suppose that $L$ is finite. Then $m_K^L(\chi_k) = 1$ for all $k = 1, ... , r$.
\end{thm}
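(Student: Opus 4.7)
The plan is to reduce this directly to Corollary \ref{cor:finitefield} (equivalently Proposition \ref{prop:finitefieldsmequals1}), which has already established that $m_K^L(W) = 1$ for any $W \in \Irr_L(H)$ whenever $L$ is a finite field. The key observation is that the hypothesis of this theorem is that $K$ is finite, but since $G$ is assumed finite throughout the character-theoretic section, $L/K$ is a finite extension (being Galois with group $\Gamma$, a quotient of $G$), so $L$ is also finite.

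More concretely, first I would note that $[L : K] = |\Gamma| \leq |G| < \infty$, so $L$ is a finite field extension of a finite field, hence finite. Then, by definition $m_K^L(\chi_k) = m_K^L(W_k)$ where $W_k \in \Irr_L(H)$ has character $\chi_k$, and Corollary \ref{cor:finitefield} immediately gives $m_K^L(W_k) = 1$.

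Alternatively, one can invoke Proposition \ref{prop:finitefieldsmequals1} directly, using Wedderburn's Little Theorem: every finite-dimensional division algebra over any finite extension $K'$ of $K$ is a field, hence $\Br(K') = 0$ for any finite extension $K'/K$, and the hypothesis of the proposition is satisfied. The only subtle point worth flagging — which is really not subtle at all — is that the finiteness of $L$ follows automatically from the finiteness of $G$ (via the surjection $G \twoheadrightarrow \Gamma$), so no extra hypothesis is needed beyond what was already assumed at the start of Section \ref{sect:charactertheory}.

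There is no serious obstacle here; the entire content has been absorbed into Corollary \ref{cor:finitefield}, and this theorem is essentially just its character-theoretic rephrasing.
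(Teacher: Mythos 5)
Your proposal is correct and matches the paper's own (implicit) proof: the theorem is stated immediately after the remark ``From Corollary \ref{cor:finitefield}, we can go further when $K$ is finite,'' which is exactly your reduction. One minor simplification: $L$ is automatically a finite extension of $K$ by the standing definition of a Galois extension (which requires $L$ to be finite-dimensional over $K$ with finite $\Gamma$), so you don't even need to invoke the finiteness of $G$ to conclude that $L$ is a finite field.
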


In particular, the $\Gamma$-set $\Irr(H,L)$ completely describes the category $\Rep_L^{\rtimes}(G)$ when $L$ is finite.

\begin{cor}
    Suppose that $W \in \Irr_L^{\rtimes}(G)$. Then $\chi_W \neq 0$ if and only if $m(W) \cdot 1_L \neq 0$, and those characters $\chi_W$ where $\chi_W \neq 0$ are distinct and $L$-linearly independent. In particular, the association
    \[
        \Irr_L^{\rtimes}(G) \rightarrow \Fun(H/\!/H, L), \qquad W \mapsto \chi_W,
    \]
    is injective if $L$ has characteristic $0$ or if $L$ is finite.
\end{cor}
\begin{proof}
    This follows directly from Theorem \ref{thm:mainthmchartheory}, Theorem \ref{thm:finitefieldtrivialschurindex}, and the fact that $\chi_V$ for $V \in \Irr_L(H)$ are all non-zero, distinct, and $L$-linearly independent \cite[Cor.\ 9.22]{ISA}.
\end{proof}

\section{Example: $\Rep_L^{\rtimes}(S_3)$ for $L / K$ of Degree $2$ Revisited}\label{sect:secondexampleS3}

In this section we give an example of the theory of the previous sections, to describe $\Rep_L^{\rtimes}(S_3)$ for any field $L$ where $S_3$ acts on $L$ with kernel $H \coloneqq \langle (123) \rangle$ through an order two automorphism, so that $L / K$ has degree two. We will see that this recovers the classification of Section \ref{sect:firstexampleS3} in characteristic $0$ deduced by more hands-on methods.

Let $x \in S_3 \setminus H$ be any element, and let $\omega$ be a primitive third root of unity in $\overline{L}$.

\subsection{Characteristic $\neq 3$} Let us first suppose that $\text{char}(K) \neq 3$, so $\Rep_L^{\rtimes}(S_3)$ is semisimple by Corollary \ref{cor:sscriterion} and we may use the character theory of Section \ref{sect:charactertheory}. From Theorem \ref{thm:mainthm} we need to understand the character table of $H$ over $L$, and its action of $\Gamma = \Gal(L/K)$. There are three possible cases, which exactly match and explain the three cases encountered in Section \ref{sect:firstexampleS3}. 

\vspace{0.6em}

\textbf{Case 1:} $\omega \in K$. Then the character table of $H$ is
\[
\begin{array}{c|ccc}
 & 1 & (123) & (132) \\ \hline
\chi_1 & 1 & 1 & 1 \\
\chi_{\omega} & 1 & \omega & \omega^2 \\
\chi_{\omega^2} & 1 & \omega^2 & \omega \\
\end{array}
\]
and, as all entries are contained inside $K$, the action of $\Gal(L/K)$ sends $\chi \mapsto \chi^x$. Therefore, there are two orbits, $\{\chi_1\}$ and $\{\chi_{\omega}, \chi_{\omega^2}\}$, corresponding to some $V_1$ and $V_2$ in $\Irr_L^{\rtimes}(S_3)$ respectively. For each we can compute $m(V_i)$.

For $V_1$, $\chi_1$ is the restriction of the trivial semilinear representation $L_{\text{triv}}$. In particular, $\chi_1$ is extendable to a semilinear representation of $S_3$ over $L$, and so $m(V_1)= 1$. Furthermore, $V_1 = L_{\text{triv}}$.

For $V_2$, note that $|\Gamma_{\chi_{\omega}}| = 1$, and so $m(V_2) = 1$ as $m(V_2) \mid |\Gamma_{\chi_{\omega}}|$ by Theorem A. What this means concretely is that $\chi_{\omega} + \chi_{\omega^2}$ uniquely extends to a semilinear representation, $V_2$, of $S_3$ over $L$.

Because $L$ splits $H$, we can also compute the endomorphism rings of $V_1$ and $V_2$. We have
\[
\End_{L \rtimes S_3}(V_1) \cong L^{\Gal(L/K)} = K
\]
by Proposition \ref{prop:descofcentre}, and similarly
\[
\End_{L \rtimes S_3}(V_2) \cong L^{\Gal(L/L)} = L.
\]
Note that this second isomorphism $L \xrightarrow{\sim} \End_{L \rtimes S_3}(V_2)$ is not the natural action of $L$ on $V_2$.

\vspace{0.6em}

\textbf{Case 2:} $\omega \in L \setminus K$. In this case, the character table of $H$ is the same as in Case 1, but now the natural Galois action of $\Gal(L/K)$ on $L$ swaps $\omega$ and $\omega^2$. In particular, the induced action of $\Gal(L/K)$ on the character table of $H$ is trivial. Therefore, all orbits of the action of $\Gal(L/K)$ are singletons, and there are three irreducible representations $V_1$, $V_{\omega}$ and $V_{\omega^{2}}$, which corresponds each to the characters $\chi_1, \chi_{\omega}$ and $\chi_{\omega^{2}}$ respectively.

As $m(V_{\omega^i}) \mid |\Gamma_{\chi_{\omega^i}}|$ by Theorem A, we have that $m(V_{\omega^i}) \in \{1,2\}$. Similarly to Case 1 above, $\chi_1$ is the restriction of the trivial representation $L_{\text{triv}}$, and so $m(V_1) = 1$. To compute $m(V_{\omega})$ and $m(V_{\omega^2})$ however, this is as far as Theorem A alone gets us.

To determine $m(V_{\omega})$ and $m(V_{\omega^{2}})$ we use the induction functor
\[
L \otimes_K - \colon \Rep_K(S_3) \rightarrow \Rep_L^{\rtimes}(S_3)
\]
of Section \ref{sect:classicalreps}. The character table of $S_3$ over $K$ is
\[
\begin{array}{c|rrr}
 & (1) & (12) & (123) \\ \hline
\psi_1 & 1 & 1 & 1 \\
\psi_2 & 1 & -1 & 1 \\
\psi_3 & 2 & 0 & -1 \\
\end{array}
\]
Let $U$ denote the irreducible $K$-linear representation of $S_3$ with character $\psi_3$, and suppose that
\[
L \otimes_K U = V_1^{\oplus a_1} \oplus V_{\omega}^{\oplus a_{\omega}} \oplus V_{\omega^2}^{\oplus a_{\omega^2}}
\]
for integers $a_1, a_{\omega}, a_{\omega^2} \geq 0$. Considering the character of the restriction to $H$, we therefore have that
\begin{align*}
    \chi_{\omega} + \chi_{\omega^2} = \psi_3|_H = a_1 m(V_1) \chi_1 + a_{\omega} m(V_{\omega}) \chi_{\omega} + a_{\omega^2} m(V_{\omega^2}) \chi_{\omega^2}.
\end{align*}
Comparing coefficients, we have that $a_{\omega} m(V_{\omega}) = 1 = a_{\omega^2} m(V_{\omega^2})$, and therefore $m(V_{\omega}) = 1 =  m(V_{\omega^2})$.

Because $L$ splits $H$, we can use Proposition \ref{prop:descofcentre} to see that the endomorphism rings of $V_1$, $V_{\omega}$ and $V_{\omega^{2}}$ are all $K$.

\vspace{0.6em}

\textbf{Case 3:} $\omega \not\in L$. Now, for $\chi \coloneqq \chi_{\omega} + \chi_{\omega^2}$ the character table of $H$ over $L$ is
\[
\begin{array}{c|rrr}
 & 1 & (123) & (132) \\ \hline
\chi_1 & 1 & 1 & 1 \\
\chi & 2 & -1 & -1 \\
\end{array}
\]
which has trivial action of $\Gal(L/K)$. Therefore, both irreducible characters $\chi_1, \chi$ correspond to a unique irreducible semilinear representations $V_1, V$ respectively of $S_3$ over $L$. As in Case 1, $V_1$ is the trivial semilinear representation $L_{\text{triv}}$ of $S_3$, with endomorphism ring $K$ and $m(V_1) = 1$, and similarly to above we either have that $m(V) = 1$ or $m(V) = 2$.

To determine $m(V)$, we again use the functor $L \otimes_K -$. Observing that $\chi = \psi_3|_H$, we see that $L \otimes_K U$ extends $\chi$, and therefore $m(V) = 1$ and $V = L \otimes_K U$ is irreducible.

\subsection{Characteristic 3}

When $L$ has characteristic $3$, $\Rep_L^{\rtimes}(S_3)$ is no longer semisimple, and we need to understand $\Ind_L^{\rtimes}(S_3)$. From Theorem \ref{thm:mainthm}, this is identified with $\Ind_L(H) / \Gal(L/K)$. The set $\Ind_L(H)$ consists of three indecomposble representations $V_1, V_2, V_3$ of dimensions $1,2,3$ respectively, and thus the action of $\Gal(L/K)$ on $\Ind_L(H)$ must be trivial as these dimensions are distinct. It remains to determine the semilinear Schur indices $m(V_i)$, which we now show are all equal to $1$.

Each $V_i$ is defined over $K$, being given by the matrix representations where $(123)$ acts via
\[
(1), \qquad \begin{pmatrix} 1 & 1 \\ 0 & 1 \end{pmatrix}, \qquad \begin{pmatrix} 1 & 1 & 0 \\ 0 & 1 & 1 \\ 0 & 0 & 1 \end{pmatrix}
\]
respectively. Each extends to a linear representation of $S_3$ over $K$, where $(12)$ acts via the matrices
\[
(1), \qquad \begin{pmatrix} 1 & 0 \\ 0 & 2 \end{pmatrix}, \qquad \begin{pmatrix} 2 & 1 & 0 \\ 0 & 1 & 1 \\ 0 & 0 & 2 \end{pmatrix}
\]
respectively, and therefore the same matrices define extensions $\tilde{V_i}$ of each $V_i$ to $\Rep_L^{\rtimes}(S_3)$.

In particular, $\tilde{V_1}, \tilde{V_2}, \tilde{V_3}$ is a complete list of all indecomposable objects of $\Rep_L^{\rtimes}(S_3)$.

\subsection{General Algorithm}\label{sect:simplealgorithm}
It is natural to ask if one can always follow the same process used in this section to determine all $m_K^L(W)$ when $G$ is finite and $L$ has characteristic $0$. In general, for an orbit $\sO$ of the action of $\Gamma$ on $\Irr(H,L)$ we can consider
    \[
    \chi_{\sO} = \sum_{\chi \in \OO} \chi.
    \]
    Then for any $U \in \Irr_K(G)$, with character $\psi_U$, $(L \otimes_K U)|_H$ has character $\psi_U|_H$, and
    \[
        \psi_U|_H = \sum_{\sO} a_{U, \sO} \cdot \chi_{\OO}
    \]
    for unique $a_{U, \sO} \geq 0$. From Proposition \ref{prop:propertiesSchurIndex}(4) we have that $m_K^L(\OO) \mid a_{U, \sO}$, and so
    \[
        m_K^L(\OO) \mid \hcf \{ a_{U, \sO} \mid U \in \Irr_K(G)\}
    \]
    where $m_K^L(\OO)$ is the semilinear Schur index of an element of $\OO$. We also know that for a fixed orbit $\sO$, not all $a_{U, \sO}$ can be zero by Corollary \ref{cor:allsubobjects}.
    
    In the examples of this section, this divisibility relation always turned out to always be an equality. However, this is not true in general, as the following example shows.

    \begin{eg}\label{eg:C2C4algfails}
        Let $L / K$ be $\bQ(\sqrt{2}) / \bQ$, $G = C_4 = \langle y \rangle$ with natural surjection to $\Gamma = \Gal(\bQ(\sqrt{2}) / \bQ)$ with kernel $H = \langle x \rangle$, $y^2 = x$. The three irreducible characters of $C_4$ over $\bQ$ are 
        \[
        \begin{array}{c|rrrr}
        & (1) & (y) & (y^2) & (y^3) \\ \hline
        \psi_1 & 1 & 1 & 1 & 1 \\
        \psi_2 & 1 & -1 & 1 & -1 \\
        \psi_3 & 2 & 0 & -2 & 0 \\
        \end{array}
        \]  
        The two orbits of $\Gamma$ on $\Irr_L(C_2)$ are $\chi_1$ and $\chi_{-1}$, and we can compute that
        \begin{align*}
            \psi_1|_H &= \chi_1, \\
            \psi_2|_H &= \chi_1, \\
            \psi_3|_H &= 2 \cdot \chi_{-1}.
        \end{align*}
        However $m_{K}^L(\chi_{-1}) = 1$, by Example \ref{eg:C2C4}, as $-1 = x^2 - 2 y^2$ has a solution over $\bQ$, with $x = 1, y = 1$.
    \end{eg}

    In particular, this algorithm, whilst still giving restrictions on the $m_K^L(W)$, won't determine all semilinear Schur indices in general. A more precise process for determining the semilinear Schur indices $m_K^L(W)$ from linear representation-theoretic data when $L/K$ of degree two is the subject of the forthcoming work \cite{RUMTAY4}.

\section{Conjugacy Classes and the Number of Semilinear Representations}\label{sect:countcc}

In this section we suppose that
\begin{enumerate}
    \item $G$ is finite, $L$ is a field, with $|H| \in L^\times$, and
    \item $L = K(\mu_n)$ for some $n \geq 1$ with $\text{exp}(H) \mid n$.
\end{enumerate}
In this situation, there is a canonical embedding
\[
\epsilon \colon \Gamma \hookrightarrow (\bZ / n \bZ)^\times.
\]
which is determined by the property that $\gamma(\zeta) = \zeta^{\epsilon(\gamma)}$ for any $n$th root of unity $\zeta$ and all $\gamma \in \Gamma$. Using this embedding, there is a left action of $G \times G$ on $H$ by the formula:
\[
    (g_1, g_2) * h = g_1 h^{\epsilon(\sigma_{g_2}^{-1})}g_1^{-1},
\]
This induces an action of $G / H \times G / H$ and hence $\Gamma \times \Gamma$ on $\Cl(H)$. Similarly, there is a left action of $G \times G$ on $\Irr_L(H)$, where $(g_1, g_2) * V$ is $V$ as an abelian group but with $L \rtimes H$-module structure defined by
\[
\lambda h * v \coloneqq \sigma_{g_2^{-1}}(\lambda) \cdot (g_1^{-1} h g_1)(v),
\]
which induces an action of $G / H \times G / H$ and hence $\Gamma \times \Gamma$ on $\Irr_L(H)$. The diagonal action is the action of $\Gamma$ on $\Irr_L(H)$ considered before. If $V \in \Irr_L(H)$ has character $\chi$, then we set
\[
(g_1, g_2) * \chi \coloneqq \chi_{(g_1,g_2) * V} = \sigma_{g_2}(\chi(g_1^{-1} - g_1)).
\]
These actions are compatible in the following sense.
\begin{lemma}\label{lem:compatibleactions}
    For any $\chi \in \Irr(H,L)$, $h \in H$ and $\underline{\gamma} \in \Gamma \times \Gamma$,
    \[
        (\underline{\gamma}*\chi)(h) = \chi(\underline{\gamma}^{-1} * h).
    \]
\end{lemma}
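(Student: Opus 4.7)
The plan is to unpack both sides into explicit formulas, reduce to a single identity relating the $\Gamma$-action on $L$ with the exponent action on $H$, and then verify that identity by diagonalising the representation.

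First I would expand both sides. Writing $\underline{\gamma} = (\gamma_1, \gamma_2)$ and lifting arbitrarily to $(g_1, g_2) \in G \times G$, the definition of the action on characters gives
\[
(\underline{\gamma} \ast \chi)(h) = \sigma_{g_2}(\chi(g_1^{-1} h g_1)) = \gamma_2(\chi(g_1^{-1} h g_1)),
\]
while the definition of the action on $H$ (and the fact that inversion in $G \times G$ acts componentwise) gives
\[
\underline{\gamma}^{-1} \ast h = g_1^{-1} h^{\epsilon(\sigma_{g_2})} g_1 = g_1^{-1} h^{\epsilon(\gamma_2)} g_1.
\]
Setting $k \coloneqq g_1^{-1} h g_1 \in H$ and noting that conjugation by $g_1$ commutes with raising to an integer power, the claim is reduced to the following key identity: for any $k \in H$ and $\gamma \in \Gamma$,
\[
\gamma(\chi(k)) = \chi(k^{\epsilon(\gamma)}).
\]

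To establish this I would fix $V \in \Irr_L(H)$ with character $\chi$ and consider the $L$-linear operator $\rho(k) \colon V \to V$ for $k \in H$. Since $|H| \in L^\times$ (as $|G| \in L^\times$) and the order of $k$ divides $\exp(H) \mid n$, the minimal polynomial of $\rho(k)$ divides $X^{\mathrm{ord}(k)} - 1$, which splits into distinct linear factors over $L$ because $L$ contains all $n$-th roots of unity. Hence $\rho(k)$ is diagonalisable with eigenvalues $\zeta_1, \ldots, \zeta_d \in \mu_n \subset L$, giving
\[
\chi(k) = \sum_{i=1}^d \zeta_i, \qquad \chi(k^m) = \sum_{i=1}^d \zeta_i^m \ \text{ for every } m \in \bZ.
\]
Applying $\gamma \in \Gamma$ and using the defining property $\gamma(\zeta) = \zeta^{\epsilon(\gamma)}$ for all $\zeta \in \mu_n$ yields
\[
\gamma(\chi(k)) = \sum_{i=1}^d \gamma(\zeta_i) = \sum_{i=1}^d \zeta_i^{\epsilon(\gamma)} = \chi(k^{\epsilon(\gamma)}),
\]
which closes the argument. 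None of this is hard; the only subtle point to flag is checking independence of the lift $(g_1, g_2)$, but both sides visibly depend only on the cosets $g_i H$ because $H$ acts trivially on characters and scalars, so the formula is well-defined on $\Gamma \times \Gamma$.
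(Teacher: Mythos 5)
Your proof is correct and follows essentially the same route as the paper's: lift to $G \times G$, diagonalise $\rho(g_1^{-1}hg_1)$, and use $\gamma(\zeta)=\zeta^{\epsilon(\gamma)}$ on the eigenvalues to move the exponent onto the group element. You are slightly more explicit about why $\rho(k)$ is diagonalisable (minimal polynomial dividing $X^{\mathrm{ord}(k)}-1$, which splits over $L$ since $\exp(H)\mid n$ and $|G|\in L^\times$) and about well-definedness over $\Gamma\times\Gamma$, but the underlying argument is identical to the one in the paper.
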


\begin{proof}
    Expressing $\underline{\gamma}$ as the reduction of $(g_1, g_2) \in G \times G$, choose a basis of $V$ (the representation associated to $\chi$) where $g_1^{-1}h g_1$ acts diagonally. Then $\chi(g_1^{-1} h g_1) = \zeta_1 + \cdots + \zeta_r$ is the sum of $n$th roots of unity, using the assumption that $n$ divides the exponent of $H$. We may then compute that
    \begin{align*}
        (\underline{\gamma}*\chi)(h) &= \sigma_{g_2}(\chi(g_1^{-1} h g_1)), \\
        &= \sigma_{g_2}(\zeta_1 + \cdots + \zeta_r), \\
        &= \zeta_1^{\epsilon(\sigma_{g_2})} + \cdots + \zeta_r^{\epsilon(\sigma_{g_2})}, \\
        &= \chi((g_1^{-1} h g_1)^{\epsilon(\sigma_{g_2})}), \\
        &= \chi(g_1^{-1} h^{\epsilon(\sigma_{g_2})} g_1), \\
        &= \chi(\underline{\gamma}^{-1} * h),
    \end{align*}
    as required.
\end{proof}

Consider the $L$-vector space $\Fun(H / \! /H, L)$ of functions from $H$ to $L$ which are conjugation invariant. This has a natural action of $G \times G$ where
\[
(g_1, g_2) * f \coloneqq \sigma_{g_2}(f(g_1^{-1} - g_1)).
\]
This induces a natural action of $G / H \times G /H$ and hence of $\Gamma \times \Gamma$ on $\Fun(H /\!/ H, L)$, the subspace of functions invariant by conjugation by $H$. Because $L$ splits $H$, this has a basis given by irreducible characters $\chi \in \Irr(H,L)$ \cite[Cor.\ 3.21]{LOR}, and with respect to this basis $\Fun(H /\!/ H, L)$ is nothing but the permutation representation $L[\Irr_L(H)]$ of $\Gamma \times \Gamma$. Similarly, the centre $Z(L[H])$ of $L[H]$ has a basis given by conjugacy class sums $c_h$, which induces a permutation action of $\Gamma \times \Gamma$ on $Z(L[H])$, which is explicitly given by
\[
\underline{\gamma} \cdot \left( \sum_{h} \lambda_h h \right) = \sum_h \lambda_{\underline{\gamma}^{-1} * h} h.
\]

\begin{lemma}
    The perfect pairing
    \[
    \Fun(H /\!/ H, L) \times Z(L[H]) \rightarrow L, \qquad \left(f, \sum_h \lambda_h h\right) = \sum_h \lambda_h f(h)
    \]
    is $\Gamma \times \Gamma$-equivariant. In particular, this induces a $\Gamma \times \Gamma$-equivariant isomorphism
    \[
        \Fun(H /\!/ H, L) \xrightarrow{\sim} \Hom_L(Z(L[H]), L), \qquad f \mapsto \Phi_f.
    \]
\end{lemma}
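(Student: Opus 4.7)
The plan is to prove the two assertions of the lemma in turn. First I will establish the perfect pairing claim by exhibiting essentially dual bases and computing the pairing matrix. Second, I will check $\Gamma \times \Gamma$-equivariance by reducing to a convenient basis and invoking Lemma \ref{lem:compatibleactions}, and then deduce the isomorphism statement formally.

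For the perfect pairing claim, both $\Fun(H/\!/H, L)$ and $Z(L[H])$ are $L$-vector spaces of dimension $|\Cl(H)|$, with natural $L$-bases indexed by conjugacy classes of $H$: the class indicator functions $\{\chi_A\}_{A \in \Cl(H)}$ on one side and the class sums $\{c_A\}_{A \in \Cl(H)}$ on the other. A direct computation gives
\[
\langle \chi_A, c_B \rangle \; = \; \sum_{h \in B} \chi_A(h) \; = \; |A| \, \delta_{A,B}.
\]
Since $|G| \in L^{\times}$ by the standing hypothesis, each $|A|$ is also invertible in $L$, so the pairing matrix in these bases is a non-singular diagonal matrix, and the pairing is perfect.

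For the equivariance, the idea is to verify the identity on a basis of pairs. Using the $L$-basis $\{\chi\}_{\chi \in \Irr(H,L)}$ of $\Fun(H/\!/H, L)$ (available since $L$ splits $H$) and the basis $\{c_A\}$ of $Z(L[H])$, one invokes Lemma \ref{lem:compatibleactions} to rewrite $(\underline{\gamma}*\chi)(h) = \chi(\underline{\gamma}^{-1}*h)$, and combines this with the permutation formula $\underline{\gamma} \cdot c_A = c_{\underline{\gamma}*A}$:
\[
\langle \underline{\gamma}*\chi, \underline{\gamma} \cdot c_A \rangle \; = \; \sum_{h \in \underline{\gamma}*A} (\underline{\gamma}*\chi)(h) \; = \; \sum_{h \in \underline{\gamma}*A} \chi(\underline{\gamma}^{-1}*h) \; = \; \sum_{k \in A} \chi(k) \; = \; \langle \chi, c_A \rangle,
\]
where the penultimate equality is the change of variable $k = \underline{\gamma}^{-1}*h$. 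Since the $\Gamma \times \Gamma$-actions on both sides are determined by their effect on these basis elements (on $\Fun(H/\!/H, L)$ via the identification with the permutation representation $L[\Irr_L(H)]$, and on $Z(L[H])$ via the permutation formula on class sums), this basis-level invariance extends to the full equivariance of the pairing. The induced map $f \mapsto \Phi_f = \langle f, - \rangle$ is then an $L$-linear isomorphism $\Fun(H/\!/H, L) \xrightarrow{\sim} \Hom_L(Z(L[H]), L)$ by the perfect pairing property, and equivariance of $\Phi$ follows formally from equivariance of the pairing together with the contragredient $\Gamma \times \Gamma$-action on $\Hom_L(Z(L[H]), L)$.

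The main obstacle, if any, is conceptual rather than computational: one must keep track of the different ways the two $\Gamma \times \Gamma$-actions are presented — the pointwise $\sigma_{g_2}$-semilinear formula defining the action on $\Fun(H/\!/H, L)$ versus the permutation formula for $Z(L[H])$ — and check that these are compatible through the pairing. Lemma \ref{lem:compatibleactions} is precisely the bridge that identifies the action on a character with the pullback of the character along the $*$-action on $H$, which is what makes the two sides of the pairing transform in tandem.
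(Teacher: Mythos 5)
Your proof is correct and follows essentially the same route as the paper's own (terse) argument: reduce to the basis of irreducible characters and invoke Lemma~\ref{lem:compatibleactions} together with the change of variable $k = \underline{\gamma}^{-1}*h$. You additionally verify the perfectness of the pairing via the dual bases of class indicator functions and class sums (using $|G|\in L^\times$ to invert the resulting diagonal Gram matrix of class sizes), which the paper leaves implicit.
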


\begin{proof}
    Expressing any $f \in \Fun(H /\!/ H, L)$ as a sum of $\chi \in \Irr_L(H)$, this follows from Lemma \ref{lem:compatibleactions}.
\end{proof}

For any group which acts on a finite set $X$, there is a equivariant perfect pairing
\[
L[X] \times L[X] \rightarrow L, \qquad \left(\sum_x \lambda_x x, \sum_y \mu_y y\right) = \sum_{x} \lambda_x \mu_x,
\]
which induces a canonical isomorphism of representations
\[
L[X] \xrightarrow{\sim} \Hom_L(L[X],L),
\]
where $x \in X$ is sent to its indicator function. In particular:

\begin{cor}\label{cor:permrepsisom}
The permutation representations of $\Gamma \times \Gamma$ associated to $\Irr_L(H)$ and $\Cl(H)$ are canonically isomorphic.
\end{cor}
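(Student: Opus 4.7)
The plan is to chain together the equivariant identifications already set up in the preceding lemma with the self-duality of permutation representations. More precisely, since $L$ splits $H$, the irreducible characters $\{\chi\}_{\chi \in \Irr_L(H)}$ form an $L$-basis of $\Fun(H /\!/ H, L)$, and this basis is permuted by $\Gamma \times \Gamma$ according to the action on $\Irr_L(H)$; hence $\Fun(H /\!/ H, L) \cong L[\Irr_L(H)]$ as permutation $\Gamma \times \Gamma$-representations. Dually, the conjugacy class sums $\{c_h\}_{h \in \Cl(H)}$ form an $L$-basis of $Z(L[H])$, and by the explicit formula for the $\Gamma \times \Gamma$-action recorded just above the lemma, this basis is permuted according to the action on $\Cl(H)$; hence $Z(L[H]) \cong L[\Cl(H)]$ as permutation $\Gamma \times \Gamma$-representations.

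Then I would apply the equivariant isomorphism $\Fun(H/\!/H,L) \xrightarrow{\sim} \Hom_L(Z(L[H]), L)$ of the preceding lemma and the general equivariant self-duality $L[X] \xrightarrow{\sim} \Hom_L(L[X], L)$ (sending $x$ to its indicator function, and valid for any $\Gamma \times \Gamma$-set $X$, here $X = \Cl(H)$). Composing:
\[
L[\Irr_L(H)] \xrightarrow{\sim} \Fun(H /\!/ H, L) \xrightarrow{\sim} \Hom_L(Z(L[H]), L) \xrightarrow{\sim} \Hom_L(L[\Cl(H)], L) \xleftarrow{\sim} L[\Cl(H)],
\]
each map being $\Gamma \times \Gamma$-equivariant, produces the desired canonical isomorphism.

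There is no real obstacle here: the content has been absorbed by the previous lemma, which is where the actual compatibility of the two actions was verified via Lemma \ref{lem:compatibleactions}. The only thing worth flagging in the write-up is that each step in the chain is canonical (no choice of basis beyond the canonical ones indexed by $\Irr_L(H)$ and $\Cl(H)$), so the composite isomorphism is itself canonical, as claimed.
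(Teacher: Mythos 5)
Your proof is correct and follows the same route as the paper: identify $\Fun(H/\!/H,L)$ with $L[\Irr_L(H)]$ via the character basis and $Z(L[H])$ with $L[\Cl(H)]$ via class sums, then chain the equivariant duality $\Fun(H/\!/H,L)\xrightarrow{\sim}\Hom_L(Z(L[H]),L)$ from the preceding lemma with the self-duality $L[X]\xrightarrow{\sim}\Hom_L(L[X],L)$. The paper leaves this chain implicit (the corollary follows ``In particular'' from the surrounding remarks), whereas you write it out, but the argument is identical.
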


\begin{remark}
Even though the permutation representations of the $\Gamma \times \Gamma$-sets $\Irr_L(H)$ and $\Cl(H)$ are the same, it is not true in general that $\Irr_L(H)$ and $\Cl(H)$ are isomorphic as $\Gamma \times \Gamma$-sets. For example, for the field extension $\bQ(i)/ \bQ$, an example of a pair $(G,H)$ of the smallest order where this fails is $\text{SmallGroup}(32,9)$ with $H$ the unique subgroup of $G$ isomorphic to $\text{SmallGroup}(16,4)$. The $\Gamma \times \Gamma = K_4$-sets $\Irr_L(H)$ and $\Cl(H)$ are non-isomorphic as one can compute that $|\Irr_L(H)^{K_4}| = 4$ whereas $|\Cl(H)^{K_4}| = 6$. This condition is natural to check, it being true that for any degree two extension $L/K$, if $\Irr_L(H)$ and $\Cl(H)$ are non-isomorphic then one must necessarily have that $|\Irr_L(H)^{K_4}| \neq |\Cl(H)^{K_4}|$ by \cite[Ex.\ 13.5]{SERRE} as $K_4$ is the only non-cyclic subgroup of $K_4$.

However, when $L / K$ is $\bC / \bR$, the $\Gamma \times \Gamma$-sets $\Irr_L(H)$ and $\Cl(H)$ \emph{are} isomorphic \cite[Cor.\ 1.5]{RUMTAY3}.
\end{remark}

Now we consider the diagonal action of $\Gamma$ on $\Irr_L(H)$ and $\Cl(H)$, which allows us to count the number of irreducible semilinear representations of $G$ from just the group theory of $G$. This generalises \cite[Thm.\ 5.6]{RUMTAY} for $\bC / \bR$, and \cite[\S 12.4, Cor.\ 2]{SERRE} in the split case (cf.\ Section \ref{sect:classicalSchurIndex}).

\begin{cor}
The number of irreducible semilinear representations of $G$ over $L$ is the same as the number of $G$-orbits on $\Cl(G)$: $|\Irr_L^{\rtimes}(G)| = |\Cl(H) / \hspace{0.1em}\Gamma|$.
\end{cor}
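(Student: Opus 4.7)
The plan is to combine the bijection of Theorem B (Theorem \ref{thm:mainthm}) with the representation-theoretic isomorphism of Corollary \ref{cor:permrepsisom}. First, Theorem \ref{thm:mainthm} gives a canonical bijection
\[
\Irr_L^{\rtimes}(G) \xleftrightarrow{\sim} \Irr_L(H) / \Gamma,
\]
so it suffices to show $|\Irr_L(H)/\Gamma| = |\Cl(H)/\Gamma|$, where $\Gamma$ acts on each set via the diagonal embedding $\Gamma \hookrightarrow \Gamma \times \Gamma$ (indeed, the diagonal action on $\Irr_L(H)$ is exactly the action from Definition \ref{def:Gaction}, by construction of the $\Gamma \times \Gamma$-action described just before Lemma \ref{lem:compatibleactions}; similarly for $\Cl(H)$).

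Second, Corollary \ref{cor:permrepsisom} gives a $\Gamma \times \Gamma$-equivariant isomorphism of permutation representations
\[
L[\Irr_L(H)] \xrightarrow{\sim} L[\Cl(H)].
\]
Restricting along the diagonal $\Gamma \hookrightarrow \Gamma \times \Gamma$, this remains an isomorphism of $\Gamma$-representations. For any finite $\Gamma$-set $X$ (recall $|G|$ and hence $|\Gamma|$ is invertible in $L$), the number of $\Gamma$-orbits equals $\dim_L L[X]^{\Gamma}$, since one can average using $e \coloneqq |\Gamma|^{-1}\sum_{\gamma \in \Gamma}\gamma$ to project onto the fixed subspace, whose basis is indexed by orbit sums. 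Applying this to $X = \Irr_L(H)$ and $X = \Cl(H)$, and using that the two representations are isomorphic, yields
\[
|\Irr_L(H)/\Gamma| = \dim_L L[\Irr_L(H)]^{\Gamma} = \dim_L L[\Cl(H)]^{\Gamma} = |\Cl(H)/\Gamma|,
\]
which together with Theorem \ref{thm:mainthm} gives the claim. There is no real obstacle here: the main content has already been packaged into Corollary \ref{cor:permrepsisom} via the pairing argument, and into Theorem \ref{thm:mainthm} via the orbit classification; the corollary is simply the extraction of diagonal orbit counts from the known $\Gamma \times \Gamma$-isomorphism. The only small point to be careful about is to match the two diagonal actions correctly with the actions defined earlier in the paper, which is immediate from the formulas for $g * W$ (Definition \ref{def:Gaction}) and for $(g_1,g_2) * h$ specialised to $g_1 = g_2$.
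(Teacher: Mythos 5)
Your proof is correct and takes essentially the same route as the paper's: it combines the bijection $\Irr_L^{\rtimes}(G) \leftrightarrow \Irr_L(H)/\Gamma$ from Theorem \ref{thm:mainthm} with the $\Gamma\times\Gamma$-equivariant isomorphism $L[\Irr_L(H)] \cong L[\Cl(H)]$ of Corollary \ref{cor:permrepsisom}, restricted to the diagonal. The only cosmetic difference is that where the paper cites \cite[Ex.~13.5]{SERRE} for the fact that isomorphic permutation modules have the same number of orbits, you spell out the fixed-space-dimension argument directly (and you could even drop the appeal to $|\Gamma|\in L^\times$ there, since orbit sums form a basis of $L[X]^\Gamma$ in any characteristic).
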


\begin{proof}
    This follows from \cite[Ex.\ 13.5]{SERRE}, Corollary \ref{cor:permrepsisom}, and the bijection between $\Irr_L(H) / \Gamma$ and $\Irr_L^{\rtimes}(G)$ of Theorem \ref{thm:mainthm}. 
\end{proof}

\section{Cohomological Interpretation}\label{sect:cohomology}

We can interpret the main results of Section \ref{sect:mainresults} in terms of Galois cohomology.

From the matrix form of semilinear representations described in Section \ref{sect:matrixdesc}, for any $n \geq 1$ we obtain an identification between $\HH^1(G, \GL_n(L))$ and the set of isomorphism classes of $n$-dimensional semilinear representations of $G$ over $L$. From the exact sequence
\[
1 \rightarrow H \rightarrow G \rightarrow \Gamma \rightarrow 1
\]
we have the restriction-inflation exact sequence of pointed sets \cite[\S 5.8(a)]{SERGC}
\[
1 \rightarrow \HH^1(\Gamma, \GL_n(L)^H) \rightarrow \HH^1(G,\GL_n(L)) \rightarrow \HH^1(H,\GL_n(L))^{\Gamma},
\]
which has trivial first term
\[
\HH^1(\Gamma, \GL_n(L)^H) = \HH^1(\Gamma, \GL_n(L)) = 1
\]
by Hilbert's Theorem 90. Furthermore, $\HH^1(H,\GL_n(L))$ is the set of isomorphism classes of $n$-dimensional linear representations of $H$ over $L$, and the action of $\Gamma$ on $H^1(H,\GL_n(L))$ corresponds to the action of $\Gamma$ on isomorphism classes of objects of $\Rep_{L}(H)$ as described in Definition \ref{def:Gaction}.

The fact that the image of $\HH^1(G,\GL_n(L))$ is contained in $\HH^1(H,\GL_n(L))^{\Gamma}$ amounts to the fact that any representation of $H$ obtained by restricting a semilinear representation will be fixed by the action of $\Gamma$. The fact that
\[
1 \rightarrow \HH^1(G,\GL_n(L)) \rightarrow \HH^1(H,\GL_n(L))^{\Gamma}
\]
is an exact sequence of pointed sets amounts to the fact that the only $n$-dimensional semilinear representation which restricts to the trivial $n$-dimensional linear representation of $H$ is the trivial semilinear representation. This can be considered as a weaker form of Corollary \ref{cor:detectisom}.

\subsection{One-Dimensional Representations}
We can say more when $n = 1$. In this case $\GL_1(L) = L^{\times}$ is abelian, and the pointed sets considered above are abelian groups. Furthermore, from the Lyndon-Hochschild-Serre spectral sequence there is an exact sequence
\[
0 \rightarrow \HH^1(\Gamma, (L^{\times})^H) \rightarrow \HH^1(G,L^{\times}) \rightarrow \HH^1(H,L^{\times})^{\Gamma} \xrightarrow{\fT} \HH^2(\Gamma, L^{\times}) \rightarrow \HH^2(G, L^{\times})
\]
of low degree terms which extends the inflation-restriction sequence above \cite[Prop.\ 1.6.7, Thm.\ 2.4.1]{NSW}. Again the first term
\[
\HH^1(\Gamma, (L^{\times})^H) = \HH^1(\Gamma, L^{\times}) = 0
\]
is trivial by Hilbert's Theorem 90, and so we have an exact sequence of abelian groups
\[
0 \rightarrow \HH^1(G,L^{\times}) \rightarrow \HH^1(H,L^{\times})^{\Gamma} \xrightarrow{\fT} \HH^2(\Gamma, L^{\times}).
\] 

The group $\HH^1(G,L^{\times})$ classifies $1$-dimensional semilinear representations of $G$ over $L$, and, as $H$ acts trivially on $L^{\times}$, the group $\HH^1(H,L^{\times})^{\Gamma}$ is canonically identified with the set $\Hom(H, L^{\times})^{\Gamma}$ of linear characters of $H$ over $L$ which are fixed by the action of $\Gamma$, where
\[
    (\gamma * \chi)(h) \coloneqq \gamma(\chi(g(\gamma)^{-1} h g(\gamma)))
\]
for any $\gamma \in \Gamma$, $h \in H$ and $\chi \in \Hom(H, L^{\times})$, and $g(\gamma) \in G$ is any lift of $\gamma$. The injectivity of 
\[
\HH^1(G,L^{\times}) \rightarrow \Hom(H, L^{\times})^{\Gamma}
\] 
amounts to the statement that any linear character of $H$ over $L$ which is invariant under $\Gamma$ has at most one extension of a semilinear character of $G$ over $L$, which is Corollary \ref{cor:detectisom} for $1$-dimensional representations.

\subsection{The Transgression Map}
More interesting is the \emph{transgression map}, $\fT$. By \cite[Thm.\ 2.4.3]{NSW} this agrees with the map constructed in \cite[Prop.\ 1.6.6]{NSW}, which, following this construction in our situation, is explicitly defined as follows.

Let $(g_{\gamma})_{\gamma \in \Gamma}$ be a set of lifts in $G$ of $\gamma \in \Gamma$, or in other words a set of coset representatives of $H$ in $G$, which we choose with $g_1 = 1 \in G$. For any $\gamma_1, \gamma_2 \in \Gamma$, we have that
\[
    g_{\gamma_1} g_{\gamma_2} = g_{\gamma_1 \gamma_2} h_{\gamma_1, \gamma_2},
\]
for a unique $h_{\gamma_1, \gamma_2} \in H$. Then the transgression map $\fT$ is defined by
\[
\fT \colon \Hom(H, L^{\times})^{\Gamma} \rightarrow \HH^2(\Gamma, L^{\times}), \qquad \fT(\chi) \coloneqq [f_{\chi}],
\]
where $f_{\chi}$ is the $2$-cocycle
\[
f_{\chi} \colon \Gamma \times \Gamma \rightarrow L^{\times}, \qquad f_{\chi}(\gamma_1, \gamma_2) = (\gamma_1 \gamma_2)(\chi(h_{\gamma_1, \gamma_2}))^{-1}.
\]

There is also another natural map from $\Hom(H, L^{\times})^{\Gamma}$ to $\HH^2(\Gamma, L^{\times})$, which arises from Theorem B. Writing $\Br(L/K)$ for the subgroup of classes of $\Br(K)$ which are split over the Galois extension $L$, there is an explicit group isomorphism
\[
\Psi \colon \Br(L/K) \xrightarrow{\sim} \HH^2(\Gamma, L^{\times})
\]
as described in \cite[\S IV.3]{milneCFT}. We therefore describe our map as a map
\[
\Phi \colon \Hom(H, L^{\times})^{\Gamma} \rightarrow \Br(L/K),
\]
which we define as follows. For any $\chi \in \Hom(H, L^{\times})^{\Gamma}$, let $L_{\chi} \in \Irr_{L}(H)$ denote the associated $1$-dimensional representation of $H$, and let $V_{\chi} \in \Irr_L^{\rtimes}(G)$ denote the corresponding semilinear representation of $G$ over $L$ (cf.\ Theorem B). Because $L_{\chi}$ is $1$-dimensional, we have that
\[
\End_{L[H]}(L_{\chi}) = L,
\]
and therefore the division algebra
\[
D_{\chi} \coloneqq \End_{L \rtimes G}(V_{\chi})
\]
has
\[
Z(D_{\chi}) \cong L^{\Gamma} = K
\]
by Corollary \ref{cor:splittingovergalois}, using the fact that $\Gamma$ fixes $\chi$. In particular, $D_{\chi}$ is a central division algebra over $K$, and we set 
\[
\Phi(\chi) \coloneqq [D_{\chi}] \in \Br(K).
\]
From Corollary \ref{cor:homsetisom} restriction defines an isomorphism
\[
L \otimes_K D_{\chi} \xrightarrow{\sim} \End_{L[H]}(V|_H),
\]
and by Theorem B we have that
\[
V_{\chi}|_H \cong L_{\chi}^{m(V_{\chi})},
\]
so 
\[
\End_{L[H]}(V|_H) \cong \End_{L[H]}(L_{\chi}^{m(V_{\chi})}) \cong M_{m(V_{\chi})}(L).
\]
This shows that $\Phi(\chi) \in \Br(L/K)$, and therefore that $\Phi$ is well-defined.

The following shows that the two maps $\fT$ and $\Phi$ agree.
\begin{prop}
    The transgression homomorphism factors as the composition $\fT = \Psi \circ \Phi$.
\end{prop}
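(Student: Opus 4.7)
The plan is to compute $[D_\chi] \in \HH^2(\Gamma, L^\times)$ explicitly by realising (the algebra Brauer-equivalent to) $D_\chi$ as a crossed product, and checking directly that the resulting $2$-cocycle is $f_\chi$.

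First I would use Theorem \ref{thm:mainthm} (noting $\Gamma_\chi = \Gamma$ since $\chi$ is $\Gamma$-fixed) to fix an $L[H]$-module isomorphism $V_\chi|_H \xrightarrow{\sim} L_\chi^{m}$ with $m \coloneqq m_K^L(\chi)$. Through this isomorphism $V_\chi$ is realised as $L^m$ with $H$ acting by scalar multiplication by $\chi$, and for each chosen lift $g_\gamma$ the $\sigma_\gamma$-semilinear operator $\rho(g_\gamma)$ takes the form $\rho(g_\gamma)(v) = M_\gamma \cdot {}^{\gamma}v$ for a unique $M_\gamma \in \GL_m(L)$ (cf.\ Section \ref{sect:matrixdesc}). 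Expanding the identity $\rho(g_{\gamma_1}) \rho(g_{\gamma_2}) = \rho(g_{\gamma_1 \gamma_2}) \rho(h_{\gamma_1, \gamma_2})$ and using that $\rho(h_{\gamma_1, \gamma_2})$ acts as scalar multiplication by $\chi(h_{\gamma_1, \gamma_2})$ yields
\[
M_{\gamma_1} \cdot {}^{\gamma_1\!}M_{\gamma_2} \;=\; (\gamma_1\gamma_2)(\chi(h_{\gamma_1,\gamma_2})) \cdot M_{\gamma_1 \gamma_2} \;=\; f_\chi(\gamma_1, \gamma_2)^{-1} \cdot M_{\gamma_1 \gamma_2}
\]
in $\GL_m(L)$.

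Next I would identify the image $B$ of the structure map $L \rtimes G \to \End_K(V_\chi)$ with the crossed product algebra $(L/K, \Gamma, f_\chi^{-1})$. By construction $B$ is generated as a $K$-algebra by scalar multiplications by $L$ and by the operators $\rho(g_\gamma)$; the relations $\rho(g_\gamma) \circ \lambda \circ \rho(g_\gamma)^{-1} = \gamma(\lambda)$ together with the cocycle identity above give a surjection from the crossed product onto $B$, which is an isomorphism by a $K$-dimension count (both sides have $K$-dimension $|\Gamma|\cdot [L:K] = [L:K]^2$). Since $V_\chi$ is a simple faithful $B$-module with $\End_B(V_\chi) = D_\chi$, Jacobson density then identifies $B$ with a matrix algebra over $D_\chi$ (or over $D_\chi^{op}$, according to convention), so that $[B] = [D_\chi]$ in $\Br(K)$ up to passage to opposites.

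The main obstacle is a careful bookkeeping of sign/opposite conventions adopted in \cite[\S IV.3]{milneCFT}: specifically, whether $\Psi$ sends the class of the crossed product $(L/K, \Gamma, \phi)$ to $[\phi]$ or to $[\phi^{-1}]$, and whether the density theorem yields $B \cong M_n(D_\chi)$ or $B \cong M_n(D_\chi^{op})$. With a consistent choice of conventions these two potential inversions cancel, and the computation above gives $\Psi(\Phi(\chi)) = [f_\chi] = \fT(\chi)$, as required.
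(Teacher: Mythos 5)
Your proposal is essentially the same argument as the paper's: compute the $2$-cocycle associated to a central simple algebra Brauer-equivalent to $D_\chi$ by using the operators $\rho(g_\gamma)$ as the distinguished elements $e_\gamma$. The algebra you take ($B$, the image of $L\rtimes G$ in $\End_K(V_\chi)$) and the one the paper takes ($A = C_{\End_K(V_\chi)}(D_\chi)$) are in fact the same subalgebra of $\End_K(V_\chi)$, by the double centraliser theorem applied to the semisimple module $V_\chi$; so even the intermediate object coincides. The matrix coordinates are also inessential: the identity $M_{\gamma_1}\cdot {}^{\gamma_1\!}M_{\gamma_2} = f_\chi(\gamma_1,\gamma_2)^{-1}M_{\gamma_1\gamma_2}$ is exactly the relation $e_{\gamma_1}e_{\gamma_2} = f_\chi(\gamma_1,\gamma_2)^{-1}e_{\gamma_1\gamma_2}$ in $B$ written in a basis.

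The one thing you leave unresolved is precisely the part the paper spends its effort on: pinning down the opposite. This does not cancel as a ``double inversion''; rather it is a single one, and the two sides land on $[f_\chi]^{-1}$ rather than $[f_\chi]$, so one must check equality of inverses. Concretely: by Jacobson density (or Milne's Cor.\ IV.3.6) one has $[B] = [D_\chi^{\mathrm{op}}] = \Phi(\chi)^{-1}$, and your cocycle calculation shows $\Psi([B]) = [f_\chi^{-1}] = \fT(\chi)^{-1}$. Combining these and using that $\Psi$ is a group homomorphism gives $\Psi(\Phi(\chi))^{-1} = \fT(\chi)^{-1}$, hence $\Psi\circ\Phi = \fT$. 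To turn your plan into a proof you would need to commit to this bookkeeping rather than assert it cancels; once done, it coincides with the paper's proof.
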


\begin{proof}
    Let $\chi \in \Hom(H, L^{\times})^{\Gamma}$. To see that $\fT(\chi) = (\Psi \circ \Phi)(\chi)$, it is equivalent to show that 
    \[
    \fT(\chi)^{-1} = \Psi(\Phi(\chi))^{-1}.
    \]
    The map $\Psi$ is a homomorphism, so $\Psi(\Phi(\chi))^{-1} = \Psi(\Phi(\chi)^{-1})$, and $\Phi(\chi) = [D_{\chi}] \in \Br(L/K)$, so $\Phi(\chi)^{-1} = [D_{\chi}^{\text{op}}]$. We therefore want to show that
    \[
        \Psi([D_{\chi}^{\text{op}}]) = \fT(\chi)^{-1} = [f_{\chi}]^{-1} \in \HH^2(\Gamma, L^{\times}).
    \]
    To see this, let us recall how to compute $\Psi([D_{\chi}^{\text{op}}])$, following \cite[Thm.\ IV.3.11]{milneCFT}. First, one takes a central simple $K$-algebra $A$ which has dimension $|\Gamma|^2$ over $K$, and $[D_{\chi}^{\text{op}}] = [A]$. This can be found, following \cite[Cor.\ IV.3.6]{milneCFT}, by using the isomorphism
    \[
        L \otimes_K D_{\chi} \xrightarrow{\sim} \End_L(V_{\chi}),
    \]
    and setting $A \coloneqq C_{\End_{K}(V_{\chi})}(D_{\chi})$ to be the centraliser of $D_{\chi}$ in $\End_{K}(V_{\chi})$. Then, as described in \cite[Thm.\ IV.3.11]{milneCFT}, one takes elements $(e_{\gamma})_{\gamma \in \Gamma}$ in $A^\times$ which satisfy 
    \begin{equation}\label{eqn:semilinmilne}
        e_{\gamma} \cdot a = \gamma(a) \cdot e_{\gamma}
    \end{equation}
    for any $a \in L$, and sets $\Psi([D_{\chi}^{\text{op}}]) \coloneqq \Psi([A]) \coloneqq [f]$, where
    \[
        f \colon \Gamma \times \Gamma \rightarrow L^\times, \qquad f(\gamma_1, \gamma_2) \coloneqq e_{\gamma_1} \cdot e_{\gamma_2} \cdot e_{\gamma_1\gamma_2}^{-1} \in L^\times.
    \]
    For our $A = C_{\End_{K}(V_{\chi})}(D_{\chi})$ we can find these elements explicitly. Writing $\rho \colon G \rightarrow \End_K(V_{\chi})$ for the action map of the semilinear $G$-representation $V_{\chi}$, we may set 
    \[
        e_{\gamma} \coloneqq \rho(g_{\gamma}),
    \]
    where the $(g_{\gamma})_{\gamma \in \Gamma}$ are the lifts of elements of $\Gamma$ we used to define $f_{\chi}$ and $\fT(\chi)$ above. These commute with all elements of $D_{\chi}$ directly from the definition of $D_{\chi}$, and so lie in $A$. They also satisfy the equation (\ref{eqn:semilinmilne}). We may therefore compute that
    \begin{align*}
        f(\gamma_1, \gamma_2) &= \rho(g_{\gamma_1}) \cdot \rho(g_{\gamma_2}) \cdot \rho(g_{\gamma_1 \gamma_2})^{-1}, \\
        &= \rho(g_{\gamma_1} g_{\gamma_2}) \cdot \rho(g_{\gamma_1 \gamma_2})^{-1}, \\
        &= \rho(g_{\gamma_1\gamma_2} h_{\gamma_1, \gamma_2}) \cdot \rho(g_{\gamma_1 \gamma_2})^{-1}, \\
        &= \rho(g_{\gamma_1\gamma_2}) \cdot \chi(h_{\gamma_1, \gamma_2}) \cdot \rho(g_{\gamma_1 \gamma_2})^{-1}, \\
        &= (\gamma_1 \gamma_2)(\chi(h_{\gamma_1, \gamma_2})) \cdot  \rho(g_{\gamma_1\gamma_2})  \cdot \rho(g_{\gamma_1 \gamma_2})^{-1}, \\
        &= (\gamma_1 \gamma_2)(\chi(h_{\gamma_1, \gamma_2})).
        \end{align*}
    In particular, $f(\gamma_1, \gamma_2) = f_{\chi}(\gamma_1, \gamma_2)^{-1}$, and so $\Psi([D_{\chi}^{\text{op}}]) = \fT(\chi)^{-1} = [f_{\chi}]^{-1}$ as required.
\end{proof}

We get the following immediate consequence, which is not at all obvious from the definition of $\Phi$.

\begin{cor}\label{cor:isactuallyhom}
$\Phi \colon \Hom(H, L^{\times})^{\Gamma} \rightarrow \Br(L/K)$ is a group homomorphism.
\end{cor}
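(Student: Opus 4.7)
The plan is to deduce this immediately from the preceding proposition, which identifies $\Phi$ with $\Psi^{-1} \circ \fT$. Since a composition of group homomorphisms is again a group homomorphism, it suffices to show that both $\fT$ and $\Psi$ are group homomorphisms (and that $\Psi$ is invertible).

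First I would recall that $\Psi \colon \Br(L/K) \xrightarrow{\sim} \HH^2(\Gamma, L^\times)$ is a group isomorphism, which is standard and can be cited from \cite[\S IV.3]{milneCFT}. Next, the transgression map $\fT \colon \HH^1(H,L^\times)^{\Gamma} \rightarrow \HH^2(\Gamma, L^\times)$ appearing in the 5-term exact sequence of low-degree terms of the Lyndon-Hochschild-Serre spectral sequence is a homomorphism of abelian groups, as is any differential of a spectral sequence of abelian groups; this is the content of \cite[Thm.\ 2.4.1]{NSW} together with \cite[Prop.\ 1.6.7]{NSW}, both already cited in the preceding discussion.

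Combining these two observations with the preceding proposition, we have
\[
\Phi = \Psi^{-1} \circ \fT \colon \Hom(H, L^\times)^{\Gamma} \rightarrow \Br(L/K),
\]
a composition of group homomorphisms, so $\Phi$ itself is a group homomorphism.

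There is no real obstacle here: the substantive content lies entirely in the preceding proposition, whose proof unpacks both the explicit cocycle definition of $\fT$ and the explicit recipe for $\Psi$ to exhibit their compatibility with the division algebra $D_\chi$. The corollary is a purely formal consequence, whose only role is to record the non-obvious fact that the assignment $\chi \mapsto [D_\chi]$ — defined via endomorphism rings of semilinear representations rather than any cocycle manipulation — respects the group structures on both sides.
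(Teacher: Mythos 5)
Your proposal is correct and is precisely the implicit argument in the paper: since the preceding proposition shows $\fT = \Psi \circ \Phi$ with $\Psi$ a group isomorphism and $\fT$ a group homomorphism, $\Phi = \Psi^{-1} \circ \fT$ is a homomorphism. The paper states this corollary as an ``immediate consequence'' without spelling out these three steps, so you have simply made explicit what was left implicit.
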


\begin{remark}
    We note that when $H \hookrightarrow G$ is split, the map $\Phi$ is zero, as explained in the comment following the proof of \cite[Thm.\ 2.4.4]{NSW}. One can also see this more directly as a consequence of Proposition \ref{prop:classicalSIdividesdim}.
\end{remark}

Corollary \ref{cor:isactuallyhom} has consequences for the numbers $m_K^L(\chi)$, for $\chi \in \Hom(H, L^{\times})^{\Gamma}$.

\begin{cor}\label{cor:1dimSLSchurrelation}
    For $\chi \in \Hom(H, L^{\times})^{\Gamma}$, the prime factors of $m_K^L(\chi)$ are a subset of the prime factors of $\Ord(\chi)$. In particular, if $\Ord(\chi)$ is coprime to $[L:K]$, then $m_K^L(\chi) = 1$. 
\end{cor}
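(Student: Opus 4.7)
The plan is to combine the homomorphism property of $\Phi$ established in Corollary \ref{cor:isactuallyhom} with a classical fact from the theory of central simple algebras, namely that the period and index of a central division algebra have the same prime divisors.

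First, I would observe that since $\chi \in \Hom(H, L^\times)^{\Gamma}$ and $L_\chi$ is $1$-dimensional (so $\End_{L[H]}(L_\chi) = L$), the division algebra $D_\chi = \End_{L \rtimes G}(V_\chi)$ is central over $K$ by the discussion preceding the map $\Phi$ (which is a special case of Proposition \ref{prop:descofcentre}). Moreover, by that same proposition, $\dim_K D_\chi = m_K^L(\chi)^2$, so $\Deg(D_\chi) = m_K^L(\chi)$; equivalently, the index of the class $[D_\chi] = \Phi(\chi) \in \Br(L/K) \subseteq \Br(K)$ equals $m_K^L(\chi)$.

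Next, since $\Phi$ is a group homomorphism by Corollary \ref{cor:isactuallyhom}, one has $\Phi(\chi)^{\Ord(\chi)} = \Phi(\chi^{\Ord(\chi)}) = \Phi(\mathbf{1}) = 0$ in $\Br(K)$. Hence the period (i.e.\ order in $\Br(K)$) of $[D_\chi]$ divides $\Ord(\chi)$. By the classical Brauer theorem that the period and index of a central simple algebra over a field have the same prime divisors (see e.g.\ \cite[Prop.\ IV.4.5]{milneCFT}), the prime factors of $m_K^L(\chi) = \Deg(D_\chi)$ coincide with those of the period, and hence are contained in the set of prime factors of $\Ord(\chi)$. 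This proves the first statement.

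For the second statement, I would invoke Proposition \ref{prop:propertiesSchurIndex}(6), which gives $m_K^L(\chi) \mid |\Gamma_\chi| = |\Gamma| = [L:K]$ since $\chi$ is fixed by $\Gamma$. If $\Ord(\chi)$ is coprime to $[L:K]$, then by the first part any prime dividing $m_K^L(\chi)$ divides both $\Ord(\chi)$ and $[L:K]$, and no such prime exists; hence $m_K^L(\chi) = 1$. The only step requiring any care is citing the correct form of the period-index prime divisor statement; everything else is essentially unwinding the definitions together with Corollary \ref{cor:isactuallyhom} and Proposition \ref{prop:descofcentre}.
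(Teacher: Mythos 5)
Your proof is correct and follows essentially the same route as the paper: both use that $\Phi$ is a group homomorphism (so the period of $\Phi(\chi)$ divides $\Ord(\chi)$), the period-index prime-divisor theorem for central simple algebras, the identity $\Deg(\Phi(\chi)) = m_K^L(\chi)$ from Proposition \ref{prop:descofcentre}, and the divisibility $m_K^L(\chi) \mid [L:K]$ from Proposition \ref{prop:propertiesSchurIndex}(6). The only cosmetic difference is the reference for the period-index fact (the paper cites \cite[Prop.\ 4.5.13]{GS}), which does not affect the substance.
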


\begin{proof}
    Because $\Phi$ is a group homomorphism, $\Ord(\Phi(\chi)) \mid \Ord(\chi)$. By \cite[Prop.\ 4.5.13]{GS}, we also have that $\Ord(\Phi(\chi)) \mid \Deg(\Phi(\chi))$ and $\Ord(\Phi(\chi))$ and $\Deg(\Phi(\chi))$ share the same prime factors. The result follows as $\Deg(\Phi(\chi)) = m_{K}^L(\chi)$ by Proposition \ref{prop:descofcentre} and $m_K^L(\chi) \mid [L:K]$ by Proposition \ref{prop:propertiesSchurIndex}.
\end{proof}

\begin{remark}
    Whenever $K$ is such that the period coincides with the index of elements of $\Br(K)$, one further has that $m_K^L(\chi) \mid \Ord(\chi)$. For example, this is true for local fields \cite[Rem.\ IV.4.4(b)]{milneCFT} and global fields \cite[Thm.\ VIII.2.6]{milneCFT}.
\end{remark}

\section{Realisation of Division Algebras}\label{sect:realisedivisionalg}

The Schur subgroup of a field $K$ is the subgroup $S(K)$ of $\Br(K)$ consisting of those classes $[A]$, where $A$ is a central simple algebra over $k$ for which there exists a surjection $K[H] \twoheadrightarrow A$ for some finite group $H$. In general $S(K) \neq \Br(K)$, and is typically much smaller, being explicitly described when $K$ has characteristic $0$ as the subgroup $C(K)$ of $\Br(K)$ consisting of classes represented by cyclotomic algebras \cite{YAM}.

In this section we show that the semilinear representations of the type we have considered in this paper are sufficiently more general than ordinary group algebras to realise all elements of $\Br(K)$. We also direct the reader towards \cite{MEIR}, where similar ideas are used to show that any central simple algebra over $K$ is Brauer equivalent to a quotient of a finite-dimensional Hopf algebra over $k$.

\begin{thm}\label{thm:realisationCSA}
    Suppose that $K$ be a field, and $A$ is a central simple algebra over $K$. Then there is a finite Galois extension $L / K$, a group $G$ and a surjective group homomorphism $G \rightarrow \Gal(L/K)$ such that $A$ is Brauer equivalent to a quotient of $L \rtimes G$. If the order of $[A]$ in $\Br(K)$ is coprime to $\charfield(K)$, then $G$ can be taken to be finite.
\end{thm}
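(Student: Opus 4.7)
The plan is to use the cohomological interpretation developed in Section \ref{sect:cohomology} to realise $A$ (up to Brauer equivalence) as the simple quotient of $L \rtimes G$ through which an irreducible one-dimensional character $\chi$ of $H$ factors. First, I would pick any finite Galois extension $L/K$ splitting $A$ and set $\Gamma = \Gal(L/K)$; under the isomorphism $\Psi \colon \Br(L/K) \xrightarrow{\sim} \HH^2(\Gamma, L^\times)$ the class $[A^{\mathrm{op}}]$ is represented by some normalised $2$-cocycle $f \colon \Gamma \times \Gamma \to L^\times$.

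I would then build $G$ as the extension of $\Gamma$ by the $\Gamma$-module $L^\times$ determined by $f$: on the set $L^\times \times \Gamma$ define
\[
(a, \gamma_1)(b, \gamma_2) \coloneqq \bigl(a \cdot \gamma_1(b) \cdot f(\gamma_1, \gamma_2)^{-1}, \ \gamma_1 \gamma_2\bigr).
\]
The cocycle identity for $f$ makes this a group, with $H \coloneqq L^\times \times \{1\}$ normal, $G/H \cong \Gamma$, and $G \to \Gamma$ realising the $G$-action on $L$. Conjugation in $G$ by the lifts $g_\gamma = (1, \gamma)$ acts on $H = L^\times$ via $\gamma^{-1}$, so the identity character $\chi \colon H \to L^\times$ is fixed by $\Gamma$. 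A direct calculation using $g_{\gamma_1} g_{\gamma_2} = g_{\gamma_1 \gamma_2} h_{\gamma_1, \gamma_2}$ yields $h_{\gamma_1, \gamma_2} = (\gamma_1 \gamma_2)^{-1}\bigl(f(\gamma_1, \gamma_2)^{-1}\bigr)$, whence $\fT(\chi) = [f]$. Applying the identification $\fT = \Psi \circ \Phi$ from Section \ref{sect:cohomology} gives $[D_\chi] = [A^{\mathrm{op}}]$ in $\Br(K)$, where $V_\chi \in \Irr_L^{\rtimes}(G)$ is the representation furnished by Theorem B and $D_\chi = \End_{L \rtimes G}(V_\chi)$.

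To conclude the first assertion, the image $B$ of $L \rtimes G$ in $\End_K(V_\chi)$ is a finite-dimensional simple quotient of $L \rtimes G$ with commutant $D_\chi$, so by the double centraliser theorem $B \cong M_n(D_\chi^{\mathrm{op}})$ and hence $[B] = [A]$ in $\Br(K)$. For the second assertion, assume $n \coloneqq \Ord([A])$ is coprime to $\charfield(K)$. Then the Kummer sequence $1 \to \mu_n \to (K^{\mathrm{sep}})^\times \xrightarrow{(\cdot)^n} (K^{\mathrm{sep}})^\times \to 1$ is short exact, and combining it with Hilbert 90 yields an isomorphism $\HH^2(\Gal(K^{\mathrm{sep}}/K), \mu_n) \xrightarrow{\sim} \Br(K)[n]$. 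Continuity of cocycles therefore lets us represent $[A^{\mathrm{op}}]$ by a $\mu_n$-valued cocycle $f$ factoring through some finite Galois $\Gamma = \Gal(L/K)$ with $L \supseteq \mu_n$, and I would repeat the preceding construction with $H = \mu_n$ in place of $L^\times$, producing a finite group $G$ fitting into $1 \to \mu_n \to G \to \Gamma \to 1$.

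The main obstacle is the reduction in the finite case: one must produce a cocycle with values in a finite subgroup of $L^\times$, which is precisely where the coprimality of $\Ord([A])$ and $\charfield(K)$ enters, via exactness of the Kummer sequence. The transgression computation and the double centraliser deduction are both routine unwindings once $G$, $H$, and $\chi$ are in place.
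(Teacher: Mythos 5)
Your proposal is correct, and it constructs essentially the same group $G$ as the paper (you build the extension of $\Gamma$ by $L^\times$ abstractly with cocycle $f^{-1}$; the paper realises the same group inside $A(L/K,\alpha)^\times$ as $\{L^\times \cdot u_\gamma\}$, which is the extension with cocycle $\alpha$; these are isomorphic since $[f^{-1}] = \Psi([A^{\mathrm{op}}])^{-1} = \Psi([A])$), and the finite case is handled identically via the Kummer sequence. The deduction, however, is genuinely different. The paper, having defined $G \subset A(L/K,\alpha)^\times$, simply exhibits the one-line surjection $L \rtimes G \to A(L/K,\alpha)$, $\lambda \cdot (\mu u_\gamma) \mapsto \lambda\mu\, u_\gamma$, which is immediately $K$-algebra multiplicative because $\alpha$ is normalised; no representation theory is invoked. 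You instead identify the tautological character $\chi$ of $H = L^\times$, verify it is $\Gamma$-fixed, compute its transgression $\fT(\chi) = [f]$, invoke the identity $\fT = \Psi\circ\Phi$ from Section \ref{sect:cohomology} to obtain $[D_\chi] = [A^{\mathrm{op}}]$, and close with the double-centraliser theorem to get $[B] = [D_\chi^{\mathrm{op}}] = [A]$. Your route is heavier — it routes through Theorem B, the definition of $\Phi$, and the explicit transgression formula — but it has the virtue of exercising the cohomological machinery and of directly producing the endomorphism-ring statement: you get $D_\chi$ Brauer equivalent to $A^{\mathrm{op}}$, which is precisely what Corollary \ref{cor:realisationdivalg} needs and which the paper must derive separately from its more elementary proof. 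The sign bookkeeping ($[A^{\mathrm{op}}]$ paired with $f^{-1}$ in the group law) is delicate but, as far as I can check, correct: with your conventions $h_{\gamma_1,\gamma_2} = (\gamma_1\gamma_2)^{-1}(f(\gamma_1,\gamma_2)^{-1})$ and hence $f_\chi = f$, so both sign changes are needed and they cancel as you intend.
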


\begin{proof}
   Let $L$ be a finite Galois extension that splits $A$, and set $\Gamma \coloneqq \Gal(L / K)$. Then from the isomorphism, $\HH^2(\Gal(L/K), L^{\times}) \cong \Br(L/K)$ \cite[Thm.\ IV.3.14]{milneCFT}, $[A] = [A(L/K, \alpha)]$ in $\Br(K)$ for some normalised cocycle $\alpha \colon \Gamma \times \Gamma \rightarrow L^{\times}$, where $A(L/K, \alpha)$ is the central simple $K$-algebra
    \[
        A(L/K, \alpha) = \bigoplus_{\gamma \in \Gamma} L \cdot u_{\gamma}
    \]
    for formal symbols $(u_{\gamma})_{\gamma \in \Gamma}$, with multiplication 
    \[
    \lambda u_{\gamma} \cdot \mu u_{\delta} = \lambda \gamma(\mu) \alpha(\gamma, \delta) u_{\gamma \delta}.
    \]
    We take
    \[
        G \coloneqq \{L^{\times} \cdot u_{\gamma} \mid \gamma \in \Gamma\} \subset A(L/K, \alpha)^\times,
    \]
    for which there is a natural surjective group homomorphism
    \[
        G \rightarrow \Gamma, \qquad \lambda \cdot u_{\gamma} \mapsto \gamma.
    \]
    Then we can define a $K$-algebra homomorphism 
    \[
       \phi \colon L \rtimes G \rightarrow A(L/K, \alpha), \qquad \lambda \cdot (\mu \cdot u_{\gamma}) \mapsto \lambda \mu \cdot u_{\gamma} 
    \]
    which is clearly surjective, and multiplicative from the definition of the action of $G$ on $L$ and the fact that $\alpha$ is normalised. This gives the first statement.

    For the second statement, write $G_K \coloneqq \Gal(K^{\text{sep}} / K)$. If $[A]$ has order $m$ in $\Br(K)$, and $m$ is coprime to the characteristic of $K$, then the natural inclusion $\HH^2(G_K, \mu_m(K^{\text{sep}})) \hookrightarrow \HH^2(G_K, (K^{\text{sep}})^{\times})$ has image $\HH^2(G_K, (K^{\text{sep}})^{\times})[m]$ \cite[Cor.\ 4.4.9]{GS}. In particular, under the isomorphism $\Br(K)[m] \cong \HH^2(G_K, (K^{\text{sep}})^{\times})[m]$ \cite[Cor.\ IV.3.16]{milneCFT}, $[A] = [A(L/K, \alpha)]$ for some finite Galois extension $L/K$ and normalised cocycle $\alpha \colon \Gamma \times \Gamma \rightarrow \mu_m(L)$, where again $\Gamma \coloneqq \Gal(L/K)$. Then we may proceed as above, taking $G$ to be the, now finite, group
    \[
    G \coloneqq \{\mu_m(L) \cdot u_{\gamma} \mid \gamma \in \Gamma\} \subset A(L/K, \alpha)^\times,
    \]
    which is a subgroup because $\alpha$ is valued in $\mu_m(L)$.
\end{proof}

This allows us to show that all central division algebras over $K$ arise as endomorphism rings of semilinear representations, and that all possible values for the semilinear Schur indices $m(V)$ occur.

\begin{cor}\label{cor:realisationdivalg}
    Suppose that $K$ be a field, and $D$ is a central division algebra over $K$. Then there is a finite Galois extension $L / K$, a group $G$, a surjective group homomorphism $G \rightarrow \Gal(L/K)$ and $V \in \Irr_L^{\rtimes}(G)$ such that $D \cong \End_{L \rtimes G}(V)$ and $m(V) = \Deg(D)$. If the order of $[D]$ in $\Br(K)$ is coprime to $\charfield(K)$, then $G$ can be taken to be finite.
\end{cor}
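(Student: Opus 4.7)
The plan is to derive this as a direct consequence of Theorem~\ref{thm:realisationCSA}, applied not to $D$ itself but to $D^{\mathrm{op}}$. Since $[D]$ and $[D^{\mathrm{op}}]$ are mutual inverses in $\Br(K)$, they have the same order, so the hypothesis on $\mathrm{char}(K)$ transfers without change. Applying Theorem~\ref{thm:realisationCSA} to $D^{\mathrm{op}}$ produces a finite Galois extension $L/K$, a group $G$ (finite in the coprime case), a surjection $\sigma \colon G \twoheadrightarrow \Gal(L/K)$, and a surjective $K$-algebra homomorphism $\phi \colon L \rtimes G \twoheadrightarrow A$, where $A$ is a central simple $K$-algebra of Brauer class $[D^{\mathrm{op}}]$. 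In particular $A \cong M_n(D^{\mathrm{op}})$ for some $n \geq 1$, and $L$ splits $A$ and hence $D$.

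I would then take $V$ to be the unique simple left $A$-module, viewed as an $L \rtimes G$-module via $\phi$. It is automatically a finite-dimensional $L$-vector space, hence free of finite rank; the $G$-action is $\sigma$-semilinear because the defining relations of $L \rtimes G$ are preserved under $\phi$; and any $L \rtimes G$-subrepresentation factors through $A$, so simplicity of $V$ over $A$ forces irreducibility in $\Rep_L^{\rtimes}(G)$. This yields $\End_{L \rtimes G}(V) = \End_A(V) \cong (D^{\mathrm{op}})^{\mathrm{op}} = D$, giving the first half of the statement. Using $D^{\mathrm{op}}$ rather than $D$ in Theorem~\ref{thm:realisationCSA} is essential, since the simple module over $M_n(D^{\mathrm{op}})$ has endomorphism ring $D$, not $D^{\mathrm{op}}$; this orientation bookkeeping is the main point where it is easy to go astray, and it is the step I expect to be the subtlest.

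For the equality $m(V) = \Deg(D)$, I would invoke Corollary~\ref{cor:homsetisom} to obtain an $L$-algebra isomorphism $L \otimes_K D \cong \End_{L[H]}(V|_H)$. Since $L$ splits $D$, the left side is $M_{\Deg(D)}(L)$, and by Corollary~\ref{cor:presss} $V|_H$ is semisimple. A semisimple module whose endomorphism ring is a matrix algebra over $L$ must be isotypic of the form $W^{\oplus \Deg(D)}$ for a single absolutely irreducible $W \in \Irr_L(H)$. Comparing this with the decomposition $V|_H \cong \bigoplus_{g \in \Gamma/\Gamma_W} (g*W)^{m(V)}$ from Theorem~\ref{thm:mainthm} (whose summands are pairwise non-isomorphic) forces $|\Gamma/\Gamma_W| = 1$ and $m(V) = \Deg(D)$, completing the argument.
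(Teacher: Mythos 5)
Your proposal is correct and follows essentially the same route as the paper: apply Theorem~\ref{thm:realisationCSA} to $A = D^{\mathrm{op}}$, take $V$ to be the simple module of the resulting matrix algebra $M_n(D^{\mathrm{op}})$ so that $\End_{L \rtimes G}(V) \cong D$, and then use Corollary~\ref{cor:homsetisom}, Corollary~\ref{cor:presss}, and the splitting of $D$ over $L$ to identify $m(V)$ with $\Deg(D)$. The only cosmetic difference is that you phrase the final step via the orbit decomposition of Theorem~\ref{thm:mainthm} while the paper simply notes $V|_H$ is isotypic and compares dimensions; both amount to the same count.
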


\begin{proof}
    Taking $A \coloneqq D^{\text{op}}$ in Theorem \ref{thm:realisationCSA}, we have a surjection $L \rtimes G \twoheadrightarrow M_n(D^{\text{op}})$ for some $n \geq 1$, and we can take $V \coloneqq (D^{\text{op}})^n$, which is irreducible as a $M_n(D^{\text{op}})$-module with endomorphism ring $D$. To see that $m(V) = \Deg(D)$, note that $V|_H$ is semisimple by Corollary \ref{cor:presss}, and that 
    \[
        L \otimes_K D = L \otimes_K \End_{L \rtimes G}(V) \xrightarrow{\sim} \End_{L[H]}(V|_H)
    \]
    by Corollary \ref{cor:homsetisom}. From how $L$ is defined in the proof of Theorem \ref{thm:realisationCSA}, $D^{\text{op}}$ is split over $L$, and therefore the same is true of $D$. In particular, $\End_{L[H]}(V|_H)$ is a matrix ring over $L$, and so the semisimple $V|_H$ has only one isotypic factor: $V|_H = W^{m(V)}$ for some $W \in \Irr_{L[H]}(H)$ with $\End_{L[H]}(W) = L$. Then the fact that $m(V) = \Deg(D)$ follows by comparing dimensions in the above isomorphism.
\end{proof}

\begin{remark}
    We point out that in Theorem \ref{thm:realisationCSA} (and therefore in Corollary \ref{cor:realisationdivalg}), the field $L$ can be taken to be any field that splits $A$. However, when one takes $G$ to be finite, one loses control the field $L$, and the field $L$ constructed by the proof in the case that $G$ is finite might have degree greater than that of a splitting field for $A$.

    For example, the group $\Br(\bQ)[2]$ parametrises quaternion algebras over $\bQ$. For any such quaternion algebra $D$, the proof realises $D$ as the endomorphism ring of some irreducible $V \in \Irr_L^{\rtimes}(G)$, for some Galois extension $L / \bQ$ and takes $H \coloneqq \mu_2(L) \cong C_2$.
    
    On the other hand, we can compute the quaternion algebras $D$ which arise from semilinear representations of degree two extensions $L / \bQ$ and group extensions $H \leq G$ of index two with $H \cong C_2$.
    
    There are two options for $H \leq G$: $C_2 \leq C_2 \times C_2$ or $C_2 \leq C_4$. The first has $m_{\bQ}^L(W) = 1$ for all $W \in \Irr_L(C_2)$ and degree two $L / \bQ$, and so does not realise any quaternion algebras over $\bQ$. For the second, $L = \bQ(\sqrt{d})$ for some $d \in \bQ^{\times} \setminus \bQ^{\times 2}$, and from Example \ref{eg:C2C4ctd} $D \cong (-1,d)_{\bQ}$ for some $a \in \bQ^{\times}$. However such $D$ are exactly those quaternion algebras which define elements of the proper subgroup $\Br(\bQ(i)/\bQ)[2]$ of $\Br(\bQ)[2]$.
\end{remark}

\begin{question}
The above remark shows that not all elements of $\Br(\bQ)[2]$ are realised when $L / \bQ$ has degree two and $H \cong C_2$, but all elements are realised when $L$ is allowed to be any extension of $\bQ$ (actually, computing explicitly with cocycles, one can always take $L$ to be at most degree $4$ over $\bQ$).

It is natural to ask if the same is true when instead the degree of $L$ is fixed and $H$ is allowed to grow: are all elements of $\Br(\bQ)[2]$ realised for degree two extensions $L / \bQ$, for a degree two extension $H \leq G$ of finite groups of arbitrary order?
\end{question}

\section{Extension to Infinite Galois Extensions}\label{sect:exttoinfinite}

We now give a brief indication of how the results of this paper can be extended to understand semilinear representations of infinite Galois extensions.

Let $L/K$ be a potentially infinite Galois extension, and suppose that $\sigma \colon G \twoheadrightarrow \Gal(L/K)$ is a continuous surjection of topological groups which is open.

\begin{eg}\label{eg:detandrednorm}
As an example to illustrate that such situations arise naturally, let $F$ be a finite extension of $\bQ_p$. From local class field theory that there is an isomorphism of topological groups 
\[
    \OO_F^\times \xrightarrow{\sim} \Gal(F^{\text{ab}} / F^{\text{nr}})
\]
where $F^{\text{nr}}$ is the maximal unramified extension of $F$, and $F^{\text{ab}}$ is the maximal abelian extension of $F$.

The determinant
\[
\det \colon \GL_n(\OO_F) \rightarrow \OO_F^{\times} \xrightarrow{\sim} \Gal(F^{\text{ab}} / F^{\text{nr}})
\]
satisfies the above assumptions, as does the reduced norm
\[
\text{Nrd} \colon \OO_D^{\times} \rightarrow \OO_F^{\times} \xrightarrow{\sim} \Gal(F^{\text{ab}} / F^{\text{nr}})
\]
for any central division algebra $D$ over $F$.
\end{eg}

\begin{defn}
We denote by 
\[
    \Rep_{L, \sm}^{\rtimes}(G)
\]
the full subcategory of $\Rep_L^{\rtimes}(G)$ of objects $V$ for which 
\[
    V = \bigcup_{H \leq_o G} V^H,
\]
as $H$ ranges over the open subgroups of $G$.
\end{defn}

\begin{eg}
When $L/K$ is finite and $G$ is any group with a surjective group homomorphism $G \twoheadrightarrow \Gal(L/K)$, one can take $G$ to have the discrete topology. This satisfies the above assumptions, and $\Rep_{L, \sm}^{\rtimes}(G)$ is nothing but $\Rep_{L}^{\rtimes}(G)$.
\end{eg}

We can characterise this in several equivalent ways.

\begin{lemma}\label{lem:smoothequiv}
    Suppose that $V \in \Rep_L^{\rtimes}(G)$. Then the following are equivalent:
    \begin{enumerate}
        \item $V \in \Rep_{L, \sm}^{\rtimes}(G)$,
        \item Every $v \in V$ is fixed by some open subgroup of $G$,
        \item $\Stab_G(v)$ is open in $G$,
        \item There is an open subgroup $H$ of $G$ such that $L \cdot V^H = V$.
    \end{enumerate}
    When $G$ has a basis of open normal subgroups, these are equivalent to:
    \begin{enumerate}
        \item[(5)] There is an open normal subgroup $N$ of $G$ such that $L \cdot V^N = V$.
    \end{enumerate}
\end{lemma}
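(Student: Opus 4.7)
The plan is to establish the cycle of implications $(1) \Leftrightarrow (2) \Leftrightarrow (3)$, then $(2) \Leftrightarrow (4)$, and finally $(4) \Leftrightarrow (5)$ under the extra hypothesis, using three elementary ingredients: finite rank of $V$ over $L$, continuity of $\sigma$, and the standard Galois-theoretic fact that every finite subextension of $L/K$ is fixed by an open subgroup of $\Gal(L/K)$.

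First, $(1) \Leftrightarrow (2)$ is essentially just unpacking the definition: $v \in V^H$ for some open $H$ means exactly that $v$ is fixed by an open subgroup. The equivalence $(2) \Leftrightarrow (3)$ holds because $\Stab_G(v) \supseteq H$ for some open subgroup $H$ forces $\Stab_G(v)$ itself to be open (it contains an open set around every one of its points, by translation).

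For $(2) \Rightarrow (4)$, I would pick an $L$-basis $v_1, \dots, v_n$ of $V$; by (2)/(3) each stabiliser $\Stab_G(v_i)$ is open, and a finite intersection of open subgroups is open, so $H \coloneqq \bigcap_i \Stab_G(v_i)$ is an open subgroup with $v_1, \dots, v_n \in V^H$, giving $L \cdot V^H = V$. For the converse $(4) \Rightarrow (2)$, write $v = \sum_i \lambda_i w_i$ with $w_i \in V^H$ and $\lambda_i \in L$. The set $\{\lambda_1, \dots, \lambda_n\}$ is contained in some finite subextension $F = K(\lambda_1, \dots, \lambda_n) \subset L$, and by the Galois correspondence for the (possibly infinite) extension $L/K$, $\Gal(L/F)$ is open in $\Gal(L/K)$; continuity of $\sigma$ then makes $U \coloneqq \sigma^{-1}(\Gal(L/F))$ open in $G$. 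For any $g \in U \cap H$ we have $g(\lambda_i) = \lambda_i$ and $g(w_i) = w_i$, hence
\[
g(v) = \sum_i g(\lambda_i)\, g(w_i) = \sum_i \lambda_i w_i = v,
\]
so $v$ is fixed by the open subgroup $U \cap H$, establishing (2).

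Finally, when $G$ has a basis of open normal subgroups, $(5) \Rightarrow (4)$ is immediate since open normal subgroups are open, and $(4) \Rightarrow (5)$ follows by choosing an open normal subgroup $N \leq H$ and noting that $V^H \subseteq V^N$ forces $L \cdot V^N \supseteq L \cdot V^H = V$. No step here is a real obstacle; the only point requiring a small argument is $(4) \Rightarrow (2)$, where one must remember to use continuity of $\sigma$ together with the infinite Galois correspondence to produce an open subgroup of $G$ that simultaneously fixes both the scalars and the basis elements appearing in an expression of $v$.
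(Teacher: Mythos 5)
Your proof is correct and follows essentially the same route as the paper's: (2) $\Rightarrow$ (4) via intersecting stabilisers of a basis, and (4) $\Rightarrow$ (2) by observing that the scalars in a finite expression lie in a finite subextension $F$, so that $\sigma^{-1}(\Gal(L/F)) \cap H$ is an open subgroup fixing $v$. The only cosmetic difference is that the paper takes $F$ to be the finite \emph{Galois} closure, which is not actually needed since $\Gal(L/F)$ is open for any finite subextension.
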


For example, the assumption that $G$ has a basis of open normal subgroups is satisfied in the examples of Example \ref{eg:detandrednorm} above.

\begin{proof}
    The equivalence of the first three conditions is straightforward. To see that (4) implies (2), suppose that $L \cdot V^H = V$ for some open subgroup $H$ of $G$. Then any $v \in V$ is of the form
    \[
    \sum_{i} \lambda_i v_i \in L \cdot V^H.
    \]
    The $\lambda_i$ lie in some finite Galois extension $F$ of $K$, and so are fixed by some open subgroup $\Gal(L/F)$. In particular, $v$ is fixed by the open subgroup $H \cap \sigma^{-1}(\Gal(L/F))$. To see that (2) implies (4), let $e_1, ... , e_n$ be a basis of $V$ over $L$. Taking $H$ to be the intersection of the open subgroups $\Stab_G(e_i)$, we have that $L \cdot V^H = V$. The equivalence of point (5) with (4) is immediate.
\end{proof}

The link between the categories we have considered and $\Rep_{L, \sm}^{\rtimes}(G)$ is the following. If $N$ is an open normal subgroup of $G$, then $L^N / K$ is a finite Galois extension of $K$, because $\sigma(N)$ is open and normal in $\Gal(L/K)$. In particular, there is a surjective map
\[
G/N \twoheadrightarrow \Gal(L^N / K),
\]
and we may consider the category $\Rep_{L^N}^{\rtimes}(G/N)$, which satisfies the assumptions of Section \ref{sect:semilinearrepsGaloisext}, and therefore is described by the results of this paper. The following is a restatement of \cite[Cor.\ 3.9]{TAY4}.

\begin{prop}
    Suppose that $N$ is an open normal subgroup of $G$. Then the functor
    \[
        L \otimes_{L^N} - \colon \Rep_{L^N}^{\rtimes}(G/N) \rightarrow \Rep_{L, \sm}^{\rtimes}(G)
    \]
    is exact, monoidal, fully faithful, and the essential image is closed under sub-quotients. The essential image is explicitly described as those $V \in \Rep_{L, \sm}^{\rtimes}(G)$ for which $L \cdot V^N = V$, and on this full subcategory
    \[
        (-)^N \colon \Rep_{L, \sm}^{\rtimes}(G) \rightarrow \Rep_{L^N}^{\rtimes}(G/N)
    \]
defines a quasi-inverse to $L \otimes_{L^N} -$.
    \end{prop}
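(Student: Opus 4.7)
The plan is to reduce the entire statement to classical Galois descent for the (possibly infinite) Galois field extension $L/L^N$. Note first that $\sigma(N) = \Gal(L/L^N)$, since $\sigma \colon G \twoheadrightarrow \Gal(L/K)$ is surjective and $L^N = L^{\sigma(N)}$; normality of $\sigma(N)$ in $\Gal(L/K)$ (which follows from surjectivity of $\sigma$ and normality of $N$ in $G$) further ensures that $L^N/K$ is itself Galois. The descent theorem — equivalently, the vanishing $\HH^1(\sigma(N), \GL_n(L)) = 1$ applied along finite Galois subextensions — gives an equivalence between finite-dimensional $L^N$-vector spaces and finite-dimensional $L$-vector spaces with continuous $\sigma(N)$-semilinear action, implemented by $W \mapsto L \otimes_{L^N} W$ with quasi-inverse $V \mapsto V^{\sigma(N)}$.

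First I would verify that $L \otimes_{L^N} (-)$ actually lands in $\Rep_{L, \sm}^{\rtimes}(G)$: the diagonal $G$-action (through $G \twoheadrightarrow G/N$ on $W$) is well-defined on the tensor product because $L^N$ is $\sigma(G)$-stable, $\sigma_g$-semilinearity is an immediate computation, and smoothness holds since each pure tensor $\lambda \otimes w$ is fixed by $N \cap \sigma^{-1}(\Gal(L/F))$ for any finite Galois subextension $K \subseteq F \subseteq L$ containing $\lambda$, which is open in $G$. Exactness is immediate from flatness of the field inclusion $L^N \hookrightarrow L$, and monoidality follows from the standard identification $(L \otimes_{L^N} W_1) \otimes_L (L \otimes_{L^N} W_2) \cong L \otimes_{L^N} (W_1 \otimes_{L^N} W_2)$. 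For the unit map $W \to (L \otimes_{L^N} W)^N$: since $N$ acts trivially on $W$, the $N$-invariants coincide with the $\sigma(N)$-invariants, and the map is the descent isomorphism; this yields both fully-faithfulness and that $(-)^N$ is quasi-inverse on the essential image. The description of the essential image then reads as necessity and sufficiency of the condition $L \cdot V^N = V$: necessity is clear from $L \cdot (L \otimes_{L^N} W)^N \supseteq L \cdot (1 \otimes W)$, and sufficiency amounts to the counit $L \otimes_{L^N} V^N \to V$ being an isomorphism, which follows from descent once we know $V^N$ is finite-dimensional over $L^N$ — this is extracted by choosing a finite $L$-basis of $V$ consisting of $N$-fixed vectors, available from smoothness together with $L \cdot V^N = V$. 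Finally, closure under sub-quotients: an $L \rtimes G$-stable subspace $V' \subseteq L \otimes_{L^N} W$ is itself finite-rank over $L$ with continuous $\sigma(N)$-semilinear action, so by descent $V' = L \cdot (V')^{\sigma(N)} = L \cdot (V')^N$, and quotients are handled by exactness.

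The main obstacle is ensuring that the finite-level descent machinery transports cleanly to the potentially infinite extension $L/L^N$, and that the bookkeeping of $N$-fixed points versus $\sigma(N)$-fixed points remains coherent in the presence of the extra $G/N$-action. Both issues dissolve on finite-rank objects: any specific semilinear action of $\sigma(N)$ on a finite-dimensional $L$-vector space factors through a finite Galois quotient where Speiser's theorem applies directly. The extra $G/N$-equivariance is automatic, because $N$ acts on $L$ only through its image $\sigma(N)$ and $G/N$ acts on $L^N$ compatibly through $\Gal(L^N/K)$; this is precisely what the cited \cite[Cor.\ 3.9]{TAY4} packages, and the proof of the proposition reduces to unpacking that statement in the present notation.
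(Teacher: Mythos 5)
The paper itself supplies no argument for this proposition; it is stated with the single remark ``the following is a restatement of \cite[Cor.\ 3.9]{TAY4},'' so there is no in-house proof to compare against. Your write-up supplies a genuine from-scratch proof, and it takes the approach that must underlie the cited result: Galois descent for the (possibly infinite) extension $L/L^N$, enhanced with the extra $G/N$-equivariance. The core ingredients are all present and correct: $\sigma$ open and $N$ open force $\sigma(N)$ to be open (hence closed and of finite index, so $L^N/K$ is finite Galois) and normal, so $\sigma(N) = \Gal(L/L^N)$; exactness and monoidality are formal; fully-faithfulness and the quasi-inverse come from the unit being the descent isomorphism; and the essential image is pinned down by the counit.

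One point worth making explicit, because it is the hinge of the argument and you leave it implicit: for a general $V \in \Rep_{L,\sm}^{\rtimes}(G)$, the subgroup $N$ need not act on $V$ through $\sigma(N)$ — it may contain elements of $\ker(\sigma)$ acting $L$-linearly and nontrivially — so the expression $V^{\sigma(N)}$ is not \emph{a priori} defined, and descent for $L/L^N$ cannot be invoked until one knows the $N$-action factors through $\sigma(N)$. Precisely the hypothesis $L \cdot V^N = V$ furnishes an $L$-basis of $N$-fixed vectors, in which coordinates $N$ acts purely by $\sigma_n$ on entries; this both shows the $N$-action factors through $\sigma(N)$ and gives the finite-dimensionality of $V^N$ over $L^N$ directly. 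You do produce the $N$-fixed basis, but phrasing it as ``descent once we know $V^N$ is finite-dimensional'' obscures that the same basis is what makes the $\sigma(N)$-semilinear structure exist in the first place. With that clarified, the proof reads cleanly and matches what \cite[Cor.\ 3.9]{TAY4} packages.
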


    In particular, the indecomposable (resp.\ irreducible) objects $V$ of $\Rep_{L, \sm}^{\rtimes}(G)$ which satisfy $L \cdot V^N = V$ are canonically identified with $\Ind_{L^N}^{\rtimes}(G/N)$ (resp.\ $\Irr_{L^N}^{\rtimes}(G/N)$), which is described by Theorem B.

    \begin{eg}
    Continuing Example \ref{eg:detandrednorm} above, then taking $N \coloneqq 1 + \pi^r M_n(\OO_F)$ for $r \geq 1$ and a uniformiser $\pi$ of $\OO_F$,
    \[
        G / N = \GL_n(\OO_F / 1 + \pi^r \OO_F),\ \text{ and } \ L^N = F^{\text{nr}}_r,
    \]
    where $F^{\text{nr}}_r$ denotes the $r$th Lubin-Tate extension of $F^{\text{nr}}$ from explicit local class field theory.
    
    For example, when $F = \bQ_p$ and $r = 1$, the surjection $G / N \rightarrow \Gal(L^N / K)$ becomes
    \[
       \det \colon \GL_n(\bF_p) \twoheadrightarrow \bF_p^{\times} \xrightarrow{\sim} \Gal(\bQ_p^{\text{nr}}(\zeta_p) / \bQ_p^{\text{nr}}).
    \]
    \end{eg}

\bibliography{biblio}{}
\bibliographystyle{plain}

\end{document}